\pgfplotsset{compat=1.18}
\numberwithin{equation}{section}
\newcounter{TheoremCounter}
\theoremstyle{plain}
\newtheorem{theorem}[TheoremCounter]{Theorem}
\newtheorem{corollary}{Corollary}
\numberwithin{corollary}{TheoremCounter}
\newcounter{GenCounter}
\numberwithin{GenCounter}{section}
\newtheorem{lemma}[GenCounter]{Lemma}
\newtheorem{proposition}[GenCounter]{Proposition}
\newcounter{QCounter}
\newtheorem{question}[QCounter]{Question}
\theoremstyle{definition}
\newtheorem{definition}[GenCounter]{Definition}
\newtheorem{remark}[GenCounter]{Remark}
\renewcommand\p@subfigure{} 
\DeclareMathOperator{\supp}{supp}
\DeclareMathOperator{\BMO}{BMO}
\DeclareMathOperator{\dist}{dist}
\DeclareMathOperator{\triadic}{triadic}
\DeclareMathOperator{\pv}{p.v.}
\DeclareMathOperator{\dense}{\normalfont{dense}_0}
\DeclareMathOperator{\densesup}{\normalfont{\textbf{dense}}}
\DeclareMathOperator{\size}{\normalfont{\textbf{size}}}
\DeclareMathOperator{\smal}{small}
\DeclareMathOperator{\lac}{lac}
\DeclareMathOperator{\topt}{top}
\DeclareMathOperator{\sig}{sig}
\DeclareMathOperator{\Dil}{Dil}
\DeclareMathOperator{\cone}{cn}
\DeclareMathOperator{\children}{ch}
\DeclareMathOperator{\SO}{SO}
\DeclareMathOperator{\Id}{Id}
\DeclareMathOperator{\Mod}{Mod}
\DeclareMathOperator{\tops}{tops}
\DeclareMathOperator{\dens}{dense}
\DeclareMathOperator{\siz}{size}
\DeclareMathOperator{\heavy}{heavy}
\DeclareMathOperator{\light}{light}
\DeclareMathOperator{\spn}{span}
\DeclareMathOperator{\ann}{ann}
\DeclareMathOperator{\ecc}{Ecc}
\DeclareMathOperator{\lip}{Lip}
\DeclareMathOperator{\CS}{CS}
\newcommand{\Grdn}{\mathrm{\bf{Gr}}(d,n)}
\newcommand{\Gr}{\mathrm{\bf{Gr}}}
\newcommand{\scl}{\mathrm{scl}}
\newcommand{\N}{\mathbb{N}}
\renewcommand{\P}{\mathbb{P}}
\newcommand{\R}{\mathbb{R}}
\newcommand{\T}{\mathbb{T}}
\newcommand{\C}{\mathbb{C}}
\newcommand{\Z}{\mathbb{Z}}
\newcommand{\Abs}[1]{\left|#1\right|}
\newcommand{\abs}[1]{|#1|}
\newcommand{\Norm}[2]{\left\|#1\right\|_{#2}}
\newcommand{\norm}[2]{\|#1\|_{#2}}
\newcommand{\indf}[1]{\bm{1}_{#1}}
\newcommand{\apl}[3]{#1 \colon #2 \longrightarrow #3}
 \def\Xint#1{\mathchoice
	{\XXint\displaystyle\textstyle{#1}}%
	{\XXint\textstyle\scriptstyle{#1}}%
	{\XXint\scriptstyle\scriptscriptstyle{#1}}%
	{\XXint\scriptscriptstyle\scriptscriptstyle{#1}}%
	\!\int}
\def\XXint#1#2#3{{\setbox0=\hbox{$#1{#2#3}{\int}$}
		\vcenter{\hbox{$#2#3$}}\kern-.5\wd0}}
\def\avgint{\Xint-}
\DeclareRobustCommand\widecheck[1]{{\mathpalette\@widecheck{#1}}}
\def\@widecheck#1#2{%
	\setbox\z@\hbox{\m@th$#1#2$}%
	\setbox\tw@\hbox{\m@th$#1%
		\widehat{%
			\vrule\@width\z@\@height\ht\z@
			\vrule\@height\z@\@width\wd\z@}$}%
	\dp\tw@-\ht\z@
	\@tempdima\ht\z@ \advance\@tempdima2\ht\tw@ \divide\@tempdima\thr@@
	\setbox\tw@\hbox{%
		\raise\@tempdima\hbox{\scalebox{1}[-1]{\lower\@tempdima\box
				\tw@}}}%
	{\ooalign{\box\tw@ \cr \box\z@}}}
\tikzset{surface/.style={draw=blue!70!black, fill=blue!40!white, fill opacity=.6}}
\tikzset{3d stuff/.is family,
	3d stuff/.cd,
	parse domain/.code args={#1:#2}{\def\xmin{#1}\def\xmax{#2}},
	sphere segment/.is family,
	sphere segment/.cd,
	r/.initial=1, 
	phi/.initial=0:30, 
	theta/.initial=0:30
}
\colorlet{darkgreen}{green!80!black}
\author[M.\ Fl\'orez-Amatriain]{Mikel Fl\'orez-Amatriain}
\address[M.\ Fl\'orez-Amatriain]{PhD student under the supervision of Ioannis Parissis and Luz Roncal at BCAM - Basque Center for Applied Mathematics,
		48009 Bilbao, Spain, and Universidad del Pa\'is Vasco, Bilbao, Spain}
\email{\href{mailto:mflorez@bcamath.org}{\textnormal{mflorez@bcamath.org}}}
\subjclass[2010]{Primary: 42B20. Secondary: 42B25}
\keywords{directional singular integrals, time-frequency analysis, Zygmund's conjecture, Stein's conjecture, vector-field of directions}
\title[$L^p$-estimates for singular integral operators along codimension one subspaces]{$L^p$-estimates for singular integral operators along codimension one subspaces}
\begin{document}

	\thanks{M. Fl\'orez-Amatriain is partially supported by the projects CEX2021-001142-S, grant PID2021-122156NB-I00 funded by MICIU/AEI/10.13039/501100011033 and FEDER, UE, PID2023-146646NB-I00 funded by MICIU/AEI/10.13039/50110001103, grants BERC 2022-2025 of the Basque Government and predoc Basque Government grant 2022 ``Programa Predoctoral de Formaci\'on de Personal Investigador No Doctor''}


	\begin{abstract} 
		In this paper we study maximal directional singular integral operators in $ \R^n $ given by a H\"ormander--Mihlin multiplier on an $ (n-1)$-dimensional subspace and acting trivially in the perpendicular direction. The subspace is allowed to depend measurably on the first $ n-1 $ variables of $ \R^n $. Assuming the subspace to be \emph{non degenerate} in the sense that it is away from a cone around $e_n$ and the function $ f $ to be frequency supported in a cone away from $ \R^{n-1} $, we prove $ L^p $-bounds for these operators for $ p > 3/2 $. If we assume, additionally, that $ \widehat{f} $ is supported in a single frequency band, we are able to extend the boundedness range to $ p >1 $. The non-degeneracy assumption cannot in general be removed, even in the band-limited case.
	\end{abstract}	

	\maketitle

	\section{Introduction}
	\label{S:Intro}
	
	The main objects of interest in this paper are maximal versions of directional singular integrals in higher dimensions which act trivially in the variable perpendicular to an $(n-1)$-dimensional subspace. More precisely, we will prove $L^p$-bounds in the case where the choice of subspace depends measurably on the first $n-1$ variables of $\R^n $. These operators fall under the scope of maximal singular Radon transforms and have been considered in different forms in several papers; a detailed account will be given in Section \ref{Ss:IntroPast} below. The present formulation is from \cite{BDPPR} where the authors setup and investigate such maximal directional multipliers in codimension $ 1 $; see also \cite{DPP22} for the case of directional averages in any codimension. As we explain in Section \ref{Ss:IntroPast} below, the study of these operators is motivated by Zygmund's conjecture on the differentiation of integrals along Lipschitz vector fields and by the corresponding question of Stein concerning singular integrals along Lipschitz directions.

	\subsection{Basic definitions and main results}
	\label{Ss:IntroMain}
	
	Let $ n \in \N $, $ n \geq 2 $, $ d=n-1 $ and $ \apl{m}{\R^d}{\C} $ be such that $ m \in L^{\infty}(\R^d) $. Let $ \Grdn $ be the oriented Grassmannian, that is, the space of oriented $d$-dimensional subspaces of $ \R^n $. For $ \sigma \in \Grdn $, we will denote by $ v_{\sigma} \in \mathbb{S}^{n-1} $ the unit vector which is \emph{the exterior normal} to the oriented hyperplane $ \sigma $, and by $ V_{\sigma} \in \SO(n)$ a rotation in $ \R^n $ such that $ \apl{V_{\sigma}}{\sigma}{\R^d}$ and $V_\sigma v_\sigma=e_n$. Then, we define the \emph{singular integral operator along $ \sigma $} as
	\begin{equation}
		\label{Eq:SingOperDef}
		T_{m} f(x; \sigma, V_{\sigma} ) \coloneqq \int_{\R^n} m (V_{\sigma} \Pi_{\sigma} \xi) \widehat{f}(\xi) e^{2\pi i \langle x , \xi \rangle} d \xi,\qquad x\in\R^n,
	\end{equation} 
	where $ \apl{\Pi_{\sigma}}{\R^n}{\sigma} $ denotes the orthogonal projection on $ \sigma $; see Figure \ref{Fig:SIOProjection}. Suppose that $ m \in \mathcal{M}_A(d) $ is a \emph{H\"ormander--Mihlin multiplier} on $ \R^d $, namely 
	\begin{equation}
		\label{Eq:MihlinDef}
		\norm{m}{\mathcal{M}_A(d)} \coloneqq \sup_{0\leq \abs{\beta} \leq A} \sup_{ \eta \in \R^d } \abs{\eta}^{\abs{\beta}} \Abs{ D^\beta m(\eta ) } < +\infty,
	\end{equation}
	for some positive integer $ A $ which will be taken to be sufficiently large depending on the integrability exponent $ p $ and the dimension. Typical examples of the singular integral operators \eqref{Eq:SingOperDef} are the $ d $-dimensional Riesz transforms, given by the multiplier
	\begin{equation}
		\label{Eq:RieszDef}
		m( \eta ) = \frac{ \eta }{ \abs{ \eta } }, \quad \eta \in \R^d \qquad \Longrightarrow \qquad T_{m} f(x; \sigma, V_{\sigma} ) = \int_{\R^n} \frac{ V_{\sigma} \Pi_{\sigma} \xi }{ \abs{ V_{\sigma} \Pi_{\sigma} \xi } } \widehat{f}(\xi) e^{2\pi i \langle x , \xi \rangle} d \xi;
	\end{equation}
	see Figure \ref{Fig:SIOMultiplier}. As a consequence of the classical H\"ormander--Mihlin multiplier theorem and Fubini's theorem, $ T_{m} f(\cdot; \sigma, V_{\sigma} ) $ is bounded on $ L^p(\R^n) $ for $ 1 < p < \infty $, uniformly in $ \sigma \in \Grdn $.
	
	\begin{figure}
		\centering
		\begin{subfigure}{0.49\textwidth}
			\tdplotsetmaincoords{75}{140} 
			\resizebox{\textwidth}{!}{%
			\begin{tikzpicture}[scale=1.3,tdplot_main_coords] 
				
				\clip[tdplot_screen_coords] (-3.1, -1.5) rectangle (3.1, 2);
				
				\draw[dashed,black,thick] (0,0,0) -- (0,0,-2);
				
				\filldraw[draw=cyan,fill=cyan!30,fill opacity=0.6,draw opacity=0] 
				(1,2,0) -- (-1,-2,0) --
				(-2,-2,{2*0.15/sqrt(0.8875)-2*0.3/sqrt(0.8875)}) -- (-2,2,{-2*0.15/sqrt(0.8875)-2*0.3/sqrt(0.8875)}) -- cycle;
				
				\draw[draw=cyan,opacity=0.6] 
				(-1,-2,0) --
				(-2,-2,{2*0.15/sqrt(0.8875)-2*0.3/sqrt(0.8875)}) -- (-2,2,{-2*0.15/sqrt(0.8875)-2*0.3/sqrt(0.8875)}) -- (1,2,0) ;
				
				\draw[red,thick] (2*0.3,-2*0.15,{-2*sqrt(0.8875)}) -- (-2*0.3,2*0.15,{2*sqrt(0.8875)}) node[anchor=north west]{$\sigma^{\perp}$} ;
				
				\filldraw[draw=gray, fill=gray!30, opacity=0.6] 
				(-2,-2,0) -- (-2,2,0) -- (2,2,0) -- (2,-2,0) -- cycle;
				
				\draw[thick,->,black] (0,0,0) -- (2,0,0) node[anchor=north]{$\xi_1$};
				\draw[dashed,black,thick] (0,0,0) -- (-2,0,0);
				
				\draw[dashed,black,thick] (0,0,0) -- (0,-2,0);
				
				\tdplotsetthetaplanecoords{-39.8585}
				\tdplotdrawarc[tdplot_rotated_coords,dashed,orange!80!black,dash pattern=on 1pt off 1pt]{(0,0,0)}{1.55594}{90-19.5975}{90}{anchor=south west}{}
				
				\filldraw[draw=cyan,draw opacity=0,fill=cyan!30, fill opacity=0.6] 
				(2,-2,{2*0.15/sqrt(0.8875)+2*0.3/sqrt(0.8875)}) --
				(2,2,{-2*0.15/sqrt(0.8875)+2*0.3/sqrt(0.8875)}) --
				(1,2,0) -- (-1,-2,0) -- cycle;
				
				\draw[draw=cyan,draw opacity=0.6] 
				(-1,-2,0) -- (2,-2,{2*0.15/sqrt(0.8875)+2*0.3/sqrt(0.8875)}) -- (2,2,{-2*0.15/sqrt(0.8875)+2*0.3/sqrt(0.8875)}) -- (1,2,0) ;
				\draw[draw=cyan,dashed,dash pattern=on 1pt off 1pt,draw opacity=0.6] 
				(1,2,0) -- (-1,-2,0) ;
				
				\draw[thick,->,black] (0,0,0) -- (0,2,0) node[anchor=north]{$\xi_2$};
				
				\draw[thick,->,black] (0,0,0) -- (0,0,2) node[anchor=north east]{$\xi_n$};
				
				\draw[very thick,->,blue] (0,0,0) -- (-0.3,0.15,{sqrt(0.8875)}) node[anchor=north west]{$v_{\sigma}$};
				
				\draw[dashed,orange!80!black,dash pattern=on 1pt off 1pt] (0.8,-0.8,1.5) -- ({0.8-(0.108-0.45*sqrt(0.8875))}, {-0.8-(-0.054+0.225*sqrt(0.8875))}, {1.5-(-0.36*sqrt(0.8875)+1.33125)});
				
				\shade[ball color = orange!80!black] (0.8,-0.8,1.5) circle (0.08cm) node[anchor=east,orange!80!black]{$\xi$};
				
				\shade[ball color = orange!80!black] ({0.8-(0.108-0.45*sqrt(0.8875))}, {-0.8-(-0.054+0.225*sqrt(0.8875))}, {1.5-(-0.36*sqrt(0.8875)+1.33125)}) circle (0.08cm) node[anchor=east,orange!80!black]{$\Pi_{\sigma}\xi$};
				
				\shade[ball color = orange!80!black] (1.19439,-0.997195,0) circle (0.08cm) node[anchor=east,orange!80!black]{$V_{\sigma} \Pi_{\sigma} \xi$};
				
				\node[cyan] at (-1.2,2.6,-0.4) {$\sigma$}; 
				\node[gray] at (-1.2,2.6,0.6) {$\R^d$}; 
				
			\end{tikzpicture}
			}
			\caption{Action of the projection $ \Pi_{\sigma} $ and rotation $ V_{\sigma} $.}
			\label{Fig:SIOProjection}
		\end{subfigure}
		\hfill
		\begin{subfigure}{0.49\textwidth}
			\pgfplotsset{
				colormap={blue}{rgb=(0.4,0.8,1) rgb=(0,0,1) rgb=(0,0,0.1)},
			}
			\tdplotsetmaincoords{75}{140}
			\resizebox{\textwidth}{!}{%
				\begin{tikzpicture}[scale=1.3,tdplot_main_coords] 
					
					\clip[tdplot_screen_coords] (-3.1, -1.5) rectangle (3.1, 2);
					
					\draw[dashed,black,thick] (0,0,0) -- (0,0,-2);
					
					\filldraw[draw=gray, fill=gray!30, opacity=0.6] 
					(-2,-2,0) -- (-2,2,0) -- (0,2,0) -- (0,-2,0) -- cycle;
					
					\begin{axis}
						[anchor=center, 
						unit vector ratio=1 1 1, 
						scale=0.82, 
						axis lines=none,
						view={140}{15} , 
						domain=-2:2, 
						colormap name=blue]
						\addplot3[surf, shader=interp, opacity=0.6, samples=30] 
						{-1*x/sqrt(x^2 + y^2)};
						\addplot3 [mesh, line width=0.1mm, draw=blue!30!black, samples=30, opacity=0.6] 
						{-1*x/sqrt(x^2 + y^2)};
					\end{axis}
					
					\filldraw[draw=gray, fill=gray!30, opacity=0.6] 
					(0,-2,0) -- (0,2,0) -- (2,2,0) -- (2,-2,0) -- cycle;
					
					\draw[thick,->,black] (0,0,0) -- (2,0,0) node[anchor=north east]{};
					\draw[dashed,black] (0,0,0) -- (-2,0,0);
					
					\draw[dashed,black,thick] (0,0,0) -- (0,-2,0);
					
					\draw[thick,->,black] (0,0,0) -- (0,2,0) node[anchor=north west]{};
					
					\draw[thick,->,black] (0,0,0) -- (0,0,2) node[anchor=south]{};
					
					\draw[dashed,orange!80!black,dash pattern=on 1pt off 1pt] (1.19439,-0.997195,0) -- (1.19439,-0.997195,{-1*1.19439/sqrt(1.19439^2+0.997195^2)});
					
					\shade[ball color = orange!80!black] (1.19439,-0.997195,0) circle (0.05cm) node[anchor=east, orange!80!black]{$\eta$};
					
					\shade[ball color = orange!80!black] (1.19439,-0.997195,{-1*1.19439/sqrt(1.19439^2+0.997195^2)}) circle (0.05cm) node[anchor=north, orange!80!black]{$m(\eta)$};
					
					\node[gray] at (-1.2,2.7,0.05) {$\R^d$}; 
					\node[blue!70!black] at (-0.3,-1.2,0.9) {$m$}; 
					
				\end{tikzpicture}
			}
			\caption{The Riesz multiplier $ m(\eta_1,\eta_2) = \frac{ -\eta_1 }{ \abs{(\eta_1,\eta_2)}} $, satisfying \eqref{Eq:MihlinDef}.}
			\label{Fig:SIOMultiplier}
		\end{subfigure}
		
		\caption{Graphical explanation of the action of the multiplier of the singular integral operator $ T_{m} $.}
		\label{Fig:SIO}
	\end{figure}

	Now, consider a family of multipliers $ \mathscr{M} = \{m_\sigma\in \mathcal M_A(d) : \, \sigma \in \Gr(d,n) \} $. Then, we define the \emph{maximal operator associated to $ \mathscr{M} $} by the formula 
	\begin{equation}
		\label{Eq:SingOperMaxDef}
		T^{\star}_{ \mathscr{M} } f(x) \coloneqq \sup_{ \sigma\in \Gr(d,n) } \sup_{ V_{\sigma} \in \SO(n) } \Abs{ T_{m_{\sigma} } f( x; \sigma, V_{\sigma} ) }, \qquad x\in \R^n.
	\end{equation}
	In this paper, we will work with a linearized version of this maximal operator. For $ \apl{ \sigma }{ \R^n }{ \Grdn } $ a measurable function, define the operator
	\begin{equation}
		\label{Eq:SingOperLinDef}
		T_{ \mathscr{M},\sigma(\cdot) } f(x) \coloneqq T_{ m_{\sigma(x)} } f( x; \sigma(x), V_{\sigma(x)} ), \qquad x\in \R^n.
	\end{equation}
	The operator $ T_{ \mathscr{M},\sigma(\cdot) } $ is a linearized version of $ T^{\star}_{ \mathscr{M} } $ since we can recover the latter by selecting, for each $ x \in \R^n $, the subspace $ \sigma(x) \in \Grdn $ that achieves the supremum in \eqref{Eq:SingOperMaxDef} and the map $ \R^n \ni x \mapsto \sigma(x) \in \Grdn $ is measurable. Furthermore, we will assume that the family of multipliers $ \mathscr{M} $ satisfies
	\begin{equation}
		\label{Eq:MihlinFamDef}  
		\norm{ \mathscr{M} }{ \mathcal{M}_A( \Gr(d,n) ) } \coloneqq \sup_{ \sigma,\sigma' \in \Gr(d,n) } \left[ \norm{ m_\sigma }{ \mathcal{M}_A(d) } + \log \left( e+ \frac{1}{\dist(\sigma,\sigma')} \right) \norm{ m_\sigma - m_{\sigma'} }{ \mathcal{M}_A(d) } \right] \leq 1.
	\end{equation}
	This formulation for $ T^{\star}_{ \mathscr{M} } $ and $ T_{ \mathscr{M},\sigma(\cdot) } $ in general dimensions and codimension $ 1 $ is from \cite{BDPPR} where the authors also introduce the assumption \eqref{Eq:MihlinFamDef} and give sufficient conditions for it to hold; see \cite[Remark 1.3 and Lemma 5.4]{BDPPR}. Note that \eqref{Eq:MihlinFamDef} is automatically satisfied if $ m_{\sigma} = m $ for all $ \sigma \in \Grdn $ and some H\"ormander--Mihlin multiplier $m\in \mathcal M_A(d)$; the logarithmic H\"older-type assumption is needed in order to allow families of multipliers $ \{ m_{\sigma} \}_{\sigma} $ with the choice of $ m $ depending on $ \sigma $.
	
	We will also assume that $ \sigma $ depends on the first $ n-1 $ variables and that it is almost horizontal. The analogous hypotheses for $ n=2 $ were introduced in \cites{Bateman2013Revista,BatemanThiele}. That is, on the one hand, $ \sigma(x) = \sigma(x_1,\ldots,x_d,x_n) = \sigma(x_1,\ldots,x_d) $ for $ x \in \R^n $. On the other hand, define for $ \sigma, \sigma' \in \Gr(n-1,n) $ the distance
	\begin{equation}
		\label{Eq:DistHyperplanes}
		\dist( \sigma , \sigma' ) 
		\coloneqq \norm{ v_{\sigma}-v_{\sigma'}}{}
		\coloneqq \sqrt{ 1 - \cos( \theta( v_{\sigma} , v_{\sigma'} ) ) },
	\end{equation}
	where $ \theta( v_{\sigma} , v_{\sigma'} ) $ denotes the convex angle between $ v_{\sigma} , v_{\sigma'} \in \mathbb{S}^{d}$ and $\|v\|\coloneqq \frac{1}{\sqrt{2}} |v|$ with $|v|$ denoting the Euclidean norm of some $v\in\R^n$. Then, for some $ 0 < \varepsilon < 1 $, we will assume that
	\begin{equation}
		\label{Eq:FreqConeDef}
		\sigma(x) \in \Sigma_{1-\varepsilon} \coloneqq \left\{ \sigma \in \Gr(n-1,n) :\, \norm{ v_{\sigma}-e_n}{} < 1-\varepsilon\right\}.
	\end{equation}
	Note that $ \norm{ v_{\sigma}-e_n}{} = 1 $ holds exactly when $ v_{\sigma} \in \R^{n-1} $. We call $ \varepsilon = 0 $ the degenerate case in which $\{v_\sigma:\, \sigma\in \Sigma_1\}$ may have accumulation points in $\R^{n-1}$.
	
	Finally, let $ \Psi_{1-\varepsilon} \in L^{\infty}(\R^n) $ be a nonnegative function such that $ \Psi_{1-\varepsilon}|_{\mathbb{S}^{n-1}} \in C^\infty (\mathbb{S}^{n-1})$ satisfying $ \indf{ \Gamma_{1-\varepsilon} } \leq  \Psi_{1-\varepsilon}  \leq \indf{\widetilde{\Gamma}_{1-\varepsilon}}$, where
	\begin{equation}
		\label{Eq:FreqProjConeDef}
		\begin{split} 
			\Gamma_{1-\varepsilon} \coloneqq \left\{ \xi \in \R^n : \, \norm{ \xi' - e_n }{} < 1-\varepsilon \right\}, 
			\qquad
			\widetilde{\Gamma}_{1-\varepsilon} \coloneqq \left\{ \xi \in \R^n :\, \norm{ \xi' - e_n }{} <  \sqrt{1-\varepsilon} \right\},
		\end{split}
	\end{equation}
	with $ \xi' \coloneqq \xi / \abs{\xi} $. Then, define the corresponding frequency cutoff $ P_{\cone,1-\varepsilon} f \coloneqq (\Psi_{1-\varepsilon} \widehat{f})^\vee $. Note that in the degenerate case $ \varepsilon = 0 $ we have no restriction since $ \Psi_{1} \equiv 1$ a.e. in $\R^n$ and thus $ P_{\cone,1} f = f $. With these assumptions and definitions in place, we will prove the following.
	
	\begin{theorem} 
		\label{Thm:Main}
		Let $ 0 < \varepsilon < 1 $ and $ \mathscr{M} = \{ m_{\sigma} \in L^\infty( \R^{n-1}) : \, \sigma \in \Gr(n-1,n) \}$ be a family of multipliers satisfying \eqref{Eq:MihlinFamDef} and $ \apl{ \sigma }{ \R^n }{ \Sigma_{1-\varepsilon} } $ be a measurable function depending only on the first $ n-1 $ variables. Then,
		\begin{equation}
			\label{Eq:DesiredEst}
			\Norm{ T_{ \mathscr{M},\sigma(\cdot) } (P_{\cone,1-\varepsilon} f) }{ L^p(\R^n) }  
			\lesssim_{p,\varepsilon} \Norm{ f }{ L^p(\R^n) }, \qquad \frac{3}{2} < p < \infty. 
		\end{equation}
	\end{theorem}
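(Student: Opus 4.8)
The plan is to deduce Theorem~\ref{Thm:Main} from a \emph{band‑limited} estimate — the case where $\widehat f$ lives in a single dyadic frequency band, which I would prove separately for the wider range $p>1$ — by means of a Littlewood--Paley reduction. Write $T\coloneqq T_{\mathscr{M},\sigma(\cdot)}$. The engine of the reduction is the structural fact that, since $\sigma(x)=\sigma(x_1,\dots,x_{n-1})$ (and hence $V_{\sigma(x)}$ may be chosen to depend only on $x_1,\dots,x_{n-1}$), the symbol $\xi\mapsto m_{\sigma(x)}(V_{\sigma(x)}\Pi_{\sigma(x)}\xi)$ in \eqref{Eq:SingOperLinDef} does not depend on $x_n$, so $T$ does not enlarge the Fourier support in the last frequency variable: for a.e.\ fixed $(x_1,\dots,x_{n-1})$, the function $x_n\mapsto Tf(x_1,\dots,x_{n-1},x_n)$ has its $x_n$‑Fourier support inside the projection of $\supp\widehat f$ onto the $\xi_n$‑axis. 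On the other hand, on $\supp\widehat{P_{\cone,1-\varepsilon}f}\subseteq\widetilde{\Gamma}_{1-\varepsilon}$ one reads off from \eqref{Eq:FreqProjConeDef} that $\xi_n>0$ and $\xi_n$ is comparable to $|\xi|$, with constants depending on $\varepsilon$. Hence, taking a smooth radial Littlewood--Paley decomposition $f=\sum_k\Delta_kf$ along $|\xi|\sim 2^k$ and setting $f_k\coloneqq\Delta_kP_{\cone,1-\varepsilon}f$, each $\widehat{Tf_k}$ is supported in the slab $\{\xi_n\sim 2^k\}$, and these slabs have bounded overlap (depending on $\varepsilon$); the one‑variable Littlewood--Paley inequality in $x_n$ then yields, for $1<p<\infty$,
\begin{equation*}
\Norm{ T(P_{\cone,1-\varepsilon}f) }{L^p(\R^n)}=\Norm{ \sum_k Tf_k }{L^p(\R^n)}\lesssim_{p,\varepsilon}\Norm{ \Bigl(\sum_k\abs{Tf_k}^2\Bigr)^{1/2} }{L^p(\R^n)}.
\end{equation*}

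It therefore suffices to prove the vector‑valued square‑function bound $\Norm{(\sum_k\abs{Tf_k}^2)^{1/2}}{L^p(\R^n)}\lesssim_{p,\varepsilon}\Norm{f}{L^p(\R^n)}$. By the band‑limited estimate, each summand satisfies $\Norm{Tf_k}{L^p}\lesssim_{p,\varepsilon}\Norm{f_k}{L^p}$ for every $p>1$, but these scalar bounds do not aggregate for free: $\sum_k f_k$ is not band‑limited, so a Marcinkiewicz--Zygmund argument is unavailable and the $\ell^2$‑aggregation must be done by hand. This is exactly where the loss from $p>1$ to $p>3/2$ is incurred: the aggregation reduces to a \emph{multi‑scale} Kakeya‑type estimate — controlling, across all scales $k$ simultaneously, the overlaps of the boxes elongated in the admissible normal directions $\{v_{\sigma(x)}\}$ that arise from the time‑frequency analysis, i.e.\ the codimension‑one analogue of the Lipschitz--Kakeya maximal operator of \cite{BatemanThiele} — and this maximal operator is bounded on $L^p$ precisely for $p>3/2$. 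In a genuinely band‑limited situation only one scale is present, the multi‑scale step is vacuous, and the range improves to $p>1$.

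For the band‑limited estimate I would run the time‑frequency analysis for directional singular integrals in codimension one, in the spirit of Lacey--Li and of Bateman~\cite{Bateman2013Revista} and Bateman--Thiele~\cite{BatemanThiele} in the plane, using the codimension‑one framework of \cite{BDPPR}: decompose each H\"ormander--Mihlin symbol $m_\sigma$ into Littlewood--Paley pieces on $\R^{n-1}$, which converts $T$ on a single band into a superposition of model operators built from wave packets adapted to \emph{tiles} — a cube of positions in $\R^{n-1}$, times a cap of admissible normals $v_\sigma$ with $\sigma\in\Sigma_{1-\varepsilon}$, times a dyadic frequency annulus inside $\sigma$ — linearise through $\sigma(\cdot)$, organise the tiles into \emph{trees} over maximal top tiles, and bound the sum by a \emph{size} functional (single‑tree $L^2$ estimates via a $TT^{\ast}$/almost‑orthogonality argument, the logarithmic modulus of continuity of $\sigma\mapsto m_\sigma$ in \eqref{Eq:MihlinFamDef} being used to absorb the $\sigma$‑dependence) and a \emph{density} functional (a counting estimate for how much of the level sets $\{x:\sigma(x)\in\text{a fixed cap}\}$ a single tree can carry); the standard tree‑selection and iteration then deliver the $L^p$ bound for $p>1$. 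The main obstacle, and the genuinely new point beyond the two‑dimensional theory, is the geometric input underlying the density estimate and its multi‑scale counterpart: a covering/Kakeya estimate for boxes elongated along non‑degenerate codimension‑one normal directions that are parametrised measurably by the first $n-1$ coordinates, with explicit control of the dependence on $\varepsilon$. This is where hypothesis \eqref{Eq:FreqConeDef} is indispensable — without it the directions may accumulate on $\R^{n-1}$ and the Kakeya counting, hence the whole estimate, fails, in line with the sharpness asserted in the abstract — and it is simultaneously what forces the restriction $p>3/2$ in the general case; the remaining ingredients (multiplier decomposition, wave‑packet combinatorics, Littlewood--Paley bookkeeping) are, granting that geometry and the model of \cite{BDPPR}, essentially routine.
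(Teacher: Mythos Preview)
Your overall architecture matches the paper's: reduce via Littlewood--Paley in the last variable (using that $\sigma$ is constant in $x_n$, so $T$ preserves $\xi_n$-support) to a vector-valued square-function bound, and feed in the single-band estimate. That much is correct and is exactly what Section~\ref{S:Main} does.

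The gap is in how you pass from the scalar single-band bound to the $\ell^2$-vector-valued one. You write that ``the aggregation reduces to a multi-scale Kakeya-type estimate\dots and this maximal operator is bounded on $L^p$ precisely for $p>3/2$'', as though there were a single maximal inequality one applies and is done. That is not what happens, and I do not see how to make it work that way: the operator $T$ is not translation-invariant, so there is no ambient Calder\'on--Zygmund or weighted theory to invoke, and the Lipschitz--Kakeya covering lemmas (Lemmas~\ref{Lem:MainConstructionG} and~\ref{Lem:MainConstructionH}) are level-set estimates of the form $|\bigcup R_p|\lesssim \mu^{-1}\lambda^{-2}|H|$, not $L^p$ bounds for a maximal function that one could interpolate. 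The paper's mechanism is substantially more indirect: dualise to a restricted-weak-type form $|\langle(\sum|T_{\ann}f_{\ann}|^2)^{1/2},\mathbf 1_G\rangle|\lesssim|H|^{1/p}|G|^{1/p'}$, then run an \emph{iterative major-subset argument} (Lemmas~\ref{Lem:MainInductionL2}, \ref{Lem:MainInductionG2}) in which the Lipschitz--Kakeya covering lemmas are used not to bound a maximal operator but to \emph{construct the exceptional sets} $\widetilde G$ (resp.\ $\widetilde H$) whose removal leaves a major subset $G'$ (resp.\ $H'$). The payoff of this construction is that on the localised data $F\subseteq H'$, $E\subseteq G'$ one can \emph{re-enter the single-annulus model sum} and prove improved bounds for the sums of tops, e.g.\ Lemma~\ref{Lem:MaxImproved}:
\[
\sum_{j,\nu}|R_{\mathbf T_{j,\nu}'}|\lesssim_\varepsilon |E|\Big(\frac{|H|}{|G|}\Big)^{1/2}\densesup(\mathbb P)^{-3/2-\varepsilon}\size(\mathbb P)^{-1-\varepsilon},
\]
which in turn yields the localised single-annulus estimate $|\langle T_{\ann,G',H}f,\mathbf 1_E\rangle|\lesssim(|G'|/|H|)^{1/2-1/p}|F|^{1/2}|E|^{1/2}$. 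It is the exponent arithmetic in summing this over density/size levels (end of \S\ref{Sss:MainInductionEstimateG2} and the analogous computation for $p<2$) that produces the threshold $p>3/2$, not a range for some maximal operator. Your proposal does not contain this localisation/re-entry idea, and without it I do not see how to close the vector-valued estimate.

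A smaller point: in your sketch of the band-limited case you list size and density as the two functionals driving the tree selection. For $p>2$ that suffices, but for $1<p\le 2$ the paper needs a third ingredient, the \emph{maximal estimate} of Proposition~\ref{Prop:DecomMax} (ultimately the single-annulus Kakeya bound of Proposition~\ref{Prop:MaxKey}), and the size/density selection has to be intertwined so that the selected trees simultaneously have large size \emph{and} large density---otherwise the maximal estimate is unavailable. Your remark that ``in a genuinely band-limited situation\dots the multi-scale step is vacuous'' is therefore misleading: the single-band case already requires a (single-annulus) Kakeya input, and the $p>1$ range comes from that, not from its absence.
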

	
	An intermediate step in the proof of Theorem~\ref{Thm:Main} is a single annulus frequency version of it. Let $ \apl{ \zeta }{ \R^n }{ \R_{+} } $ be a smooth radial function such that $ \supp( \zeta ) \subseteq \left\{ \xi \in \R^n :\, 1 < \abs{ \xi } < 3 \right\} $ and for $ k \in \Z $ define the corresponding frequency projection
	\begin{equation}
		\label{Eq:ProjBandDef}
		\widehat{ P_k f } ( \xi ) \coloneqq \zeta( 3^{-k} \xi ) \widehat{ f }( \xi ), \qquad \xi \in \R^n. 
	\end{equation}
	
	\begin{theorem}
		\label{Thm:MainSingleBand}
		Let $ 0 < \varepsilon < 1 $ and $ \mathscr{M} = \{ m_{\sigma} \in L^\infty( \R^{n-1}) : \, \sigma \in \Gr(n-1,n) \} $ be a family of multipliers satisfying \eqref{Eq:MihlinFamDef} and $ \apl{ \sigma }{ \R^n }{ \Sigma_{1-\varepsilon} } $ be a measurable function depending only on the first $ d $ variables. Then, 
		\begin{equation}
			\label{Eq:DesiredSingleBandEst}
			\Norm{ T_{ \mathscr{M},\sigma(\cdot) } ( P_0 f ) }{ L^p(\R^n) } 
			\lesssim_{p,\varepsilon} \Norm{ f }{ L^p(\R^n) }, \qquad 1 < p < \infty . 
		\end{equation}
	\end{theorem}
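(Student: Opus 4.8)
The plan is to reduce \eqref{Eq:DesiredSingleBandEst} to the single--annulus $L^p$ bound, $1<p<\infty$, for the Hilbert transform along a one--variable measurable vector field in the plane, due to Bateman \cite{Bateman2013Revista} (see also \cite{BatemanThiele}); this planar theorem is the source of the exponent range in \eqref{Eq:DesiredSingleBandEst}. The reduction has two stages: first, collapsing the $(n-1)$--dimensional H\"ormander--Mihlin symbol to one--dimensional directional symbols; second, descending from $\R^n$ to $\R^2$ by a slicing argument that uses the hypothesis that $\sigma$ depends only on $x'=(x_1,\dots,x_{n-1})$.

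\emph{Step 1: reduction to a directional Hilbert transform along a vector field.} By measurable selection fix $x'\mapsto V_{\sigma(x')}$ and treat $T_{\mathscr M,\sigma(\cdot)}$ as a single linear operator. Since $f=P_0f$ is frequency--supported in $\{1<|\xi|<3\}$, the symbol $m_{\sigma}(V_{\sigma}\Pi_{\sigma}\xi)$ is sampled only on a ball of radius $3$ in $\sigma$. Decomposing this ball dyadically in $|\Pi_{\sigma}\xi|$ --- which isolates the frequencies of $f$ almost parallel to $v_{\sigma}$, on which $m_\sigma$ is evaluated near its (possibly singular) origin, a regime present here because Theorem~\ref{Thm:MainSingleBand} carries no frequency--cone cutoff --- and then running a method--of--rotations/angular decomposition at each scale, one writes $m_{\sigma}(V_{\sigma}\Pi_{\sigma}\xi)$ as a superposition, absolutely convergent by the $A$--regularity encoded in \eqref{Eq:MihlinDef} with $A$ large, of symbols of one--dimensional Calder\'on--Zygmund operators in directions $\theta=V_{\sigma}^{-1}\omega\in\sigma(x')\subset\R^n$ ($\omega\in\mathbb{S}^{n-2}$), each dressed by a harmless smooth bump. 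Summing these pieces on $L^p$ --- the logarithmic H\"older modulus in \eqref{Eq:MihlinFamDef} absorbing the $\sigma$--dependence of the symbols, and square--function estimates adapted to the single annulus summing the angular pieces --- reduces matters to proving
\[
\bigl\|H_{\theta(\cdot)}(P_0f)\bigr\|_{L^p(\R^n)}\lesssim_{p,\varepsilon}\|f\|_{L^p(\R^n)},\qquad H_{\theta(\cdot)}g(x)\coloneqq\pv\!\int_{\R}g\bigl(x-t\,\theta(x')\bigr)\,\frac{dt}{t},
\]
for $1<p<\infty$, uniformly over measurable $\theta\colon\R^{n-1}\to\mathbb{S}^{n-1}$ that depend only on $x'$ and take values in a cone around $\R^{n-1}$ fixed by $\varepsilon$ (since $\sigma(x')\in\Sigma_{1-\varepsilon}$ forces any $\theta\in\sigma(x')$ to be bounded away from vertical).

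\emph{Step 2: descent to the plane.} Slice $\R^n$ by two--dimensional planes, each spanned by $e_n$ and a horizontal direction: on a leaf of the form $\{(x_1,c,x_n):x_1,x_n\in\R\}$, with $c\in\R^{n-2}$ fixed, the direction $\theta(x_1,c)$ is a function of $x_1$ alone --- this is exactly where the dependence of $\sigma$ on $x'$ only enters. After projecting $\theta$ onto the leaf and renormalising (the out--of--plane correction controlled, like the errors of Step~1, by a sufficiently fine angular decomposition), $H_{\theta(\cdot)}$ restricts on each leaf to the planar Hilbert transform along a one--variable vector field, to which Bateman's theorem \cite{Bateman2013Revista} applies provided the restriction to the leaf of the relevant frequency pieces of $P_0f$ is supported in a single annulus of $\R^2$; integrating over the foliation and summing the pieces yields \eqref{Eq:DesiredSingleBandEst}.

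The main obstacle is this descent, which forces the angular decomposition of Step~1 and the choice of slicing planes to be arranged together: one needs the surviving frequency pieces to be genuinely annular on the two--dimensional leaves --- delicate because the frequency region seen by a given directional piece of $T_{\mathscr M,\sigma(\cdot)}$ is attached to the $x'$--dependent plane $\spn\{v_{\sigma(x')},\theta(x')\}$ --- the per--leaf vector fields to remain one--variable, and only polynomially many pieces to be relevant at each scale, so that the superposition of planar estimates closes on all of $1<p<\infty$; the dyadic--in--$|\Pi_\sigma\xi|$ decomposition near the vertical frequencies must be folded into the same scheme. The non--degeneracy $\varepsilon>0$ is indispensable here, keeping $v_{\sigma(x')}$ and hence the geometry of the slicing planes relative to $\sigma(x')$ uniformly under control --- in accordance with the sharpness asserted in the abstract, that $\varepsilon>0$ cannot be dropped even in the band--limited case.
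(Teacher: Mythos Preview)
Your proposal does not match the paper's proof and, more importantly, Step~2 has a genuine gap that I do not see how to close.

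The paper does \emph{not} reduce to Bateman's planar theorem. It carries out a direct $n$-dimensional time-frequency analysis: one discretises $T_{\mathscr M,\sigma(\cdot)}\circ P_0$ into a model sum over tiles $t=R_t\times\omega_t$ with $R_t$ an almost-horizontal parallelepiped in $\R^n$ and $\omega_t$ a sector of the fixed annulus (this is the framework of \cite{BDPPR}), then proves the restricted weak-type $(p,p)$ bound for the model form via a combined density/size decomposition into trees. The range $p\le 2$ is obtained not by interpolation alone but via an $n$-dimensional analogue of the Lacey--Li/Bateman maximal estimate for sums of tops; this is where the one-variable dependence $\sigma(x)=\sigma(\underline x)$ enters, through a $\mathrm{BMO}$ estimate on $\R^{n-1}$ for a counting function of ``permissible'' projected tiles.

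Your slicing argument breaks for two concrete reasons. First, the direction $\theta(x')\in\sigma(x')$ produced by the method of rotations is a vector in $\R^n$ that generically does \emph{not} lie in the two-plane $\spn\{e_1,e_n\}$ you slice along; hence $H_{\theta(\cdot)}$ does not restrict to a planar operator on the leaf $\{x_2=\dots=x_{n-1}=\mathrm{const}\}$, and ``projecting $\theta$ onto the leaf'' changes the operator by an $O(1)$ amount that no angular refinement can make small (the Hilbert kernel has no off-plane decay to exploit). Second, frequency support in the $n$-dimensional annulus $\{|\xi|\simeq 1\}$ does \emph{not} slice to a two-dimensional annulus: the restriction $f(\cdot,c,\cdot)$ has Fourier transform $\int \widehat f(\xi_1,\xi',\xi_n)e^{2\pi i c\cdot\xi'}\,d\xi'$, which carries no localisation in $(\xi_1,\xi_n)$. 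Without the single-band hypothesis on each leaf, Bateman's theorem is simply unavailable. Note also that after Step~1 you have replaced a codimension-$1$ operator by directional Hilbert transforms along a vector field depending on \emph{all} of $x'\in\R^{n-1}$; absent a working descent to $\R^2$, this is a strictly harder problem than the one you started with, and is in fact known to be unbounded for general measurable directions.
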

	
	Note that the assumption \eqref{Eq:MihlinFamDef} is invariant under isotropic scaling of $\R^n $ and so estimate  estimate \eqref{Eq:DesiredSingleBandEst} is also invariant under the same rescaling. In consequence, \eqref{Eq:DesiredSingleBandEst} holds with the same constant with $ P_0 $ replaced by $ P_k $, for any $ k \in \R $.
	
	Theorem~\ref{Thm:MainSingleBand} holds for the whole range $ 1 < p < \infty $. However, in the range $ 3/2 < p < \infty $ it is a weaker version of Theorem~\ref{Thm:Main} because of the presence of the single annulus restriction given by the operator $ P_0 $ and since Theorem~\ref{Thm:MainSingleBand} is equivalent to its version with $ P_{\cone,1-\varepsilon} P_0 f $ in the place of $ P_0 f $, cf. \eqref{Eq:ConeReduction}. This is because $ T_{ \mathscr{M},\sigma(\cdot) } [ ( \Id - P_{\cone,1-\varepsilon} ) P_0 f ] \lesssim M f $ with $ M $ denoting the Hardy-Littlewood maximal function, as all the frequency singularities of $ T_{ \mathscr{M},\sigma(\cdot) } $ are contained in $\widetilde{\Gamma}_{1-\varepsilon} $.
	
	The specific choice of $ \varepsilon > 0 $ in the statements of Theorems \ref{Thm:Main} and \ref{Thm:MainSingleBand} should not be taken too seriously and for example one can think of $ \varepsilon = 1/2 $. However, some $ \varepsilon > 0 $ is needed and the implicit constant blows up as $ \varepsilon \to 0 $. Indeed, it is impossible to completely remove the non-degeneracy assumption $ \sigma(x) \in \Sigma_{1-\varepsilon} $ and get uniform bounds in $ \varepsilon $ for Theorems \ref{Thm:Main} and \ref{Thm:MainSingleBand}, as shown by the counterexamples in Section \ref{S:Counterexamples}. This is in stark contrast with the result in \cite{BatemanThiele}, where the authors are able to remove all frequency restrictions for $ p > 3/2 $ and vector fields depending only on the first variable because of the two-dimensional setup and the dilation invariance of the Hilbert transform. In the general setup of the present paper there exists counterexamples for the degenerate case when any of these two conditions fail. In the two-dimensional setting the reduction in \cite{BatemanThiele} fails for suitable non dilation invariant multipliers $m\in\mathcal M_A(1)$ as the ones constructed in \cite{CGHS}; see Subsection \ref{Ss:Counterexamples2}. In higher dimensions, Theorems \ref{Thm:Main} and \ref{Thm:MainSingleBand} fail without the non-degeneracy assumption even for homogeneous multipliers; see Subsection \ref{Ss:CounterexamplesN}.
	
	Finally, as a consequence of Theorem~\ref{Thm:MainSingleBand} we recover the following result for maximally modulated H\"ormander–Mihlin multipliers. This is in the spirit of Sj\"olin's work in \cite{Sjolin} and extends the implication in \cite[Theorem 1.6]{BDPPR} to all $ 1<p<\infty $. The proof is the one of \cite[Theorem 1.6]{BDPPR} hence we omit it. The reason why we are able to show boundedness for this range $ 1<p<\infty $ is that Theorem~\ref{Thm:MainSingleBand} holds for such a range, whereas \cite[Theorem 1.1]{BDPPR} only holds for $ p>2 $.
	\setcounter{TheoremCounter}{2}
	
	\begin{corollary}
		\label{Cor:CSO}
		Let $ m \in \mathcal{M}_{A}(d) $. Then, the Carleson--Sj\"olin operator
		\begin{equation}
			\CS (f) (x) \coloneqq \sup_{ N\in\R^d } \Abs{ \int_{\R^d} m(\eta+N) \widehat{f}(\eta) e^{2\pi i \langle x , \eta \rangle} d \eta }
		\end{equation}
		is bounded from $ L^p(\R^d) $ to $ L^p(\R^d) $ for every $ 1<p<\infty $.
	\end{corollary}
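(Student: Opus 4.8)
The plan is to deduce Corollary~\ref{Cor:CSO} from Theorem~\ref{Thm:MainSingleBand} by the Sj\"olin-type lifting of \cite[Theorem 1.6]{BDPPR}, in which the modulation parameter $N\in\R^d$ of $\CS$ is realized as a small tilt of the horizontal hyperplane $\R^d\times\{0\}\subset\R^{n}$, $n=d+1$. First I would reduce, by a standard Littlewood--Paley argument together with the scale invariance of $\mathcal M_A(d)$ (exactly as in \cite{Sjolin,BDPPR}), to the case $\widehat f$ supported in the fixed annulus $\{1\le|\eta|\le 2\}$. Writing $m(D+N)g\coloneqq(m(\,\cdot\,+N)\widehat g)^\vee$, for such $f$ the functions $m(\,\cdot\,+N)$ with $|N|\ge 10$, localized to the annulus, form a bounded family of uniformly smooth and compactly supported symbols, so their kernels are dominated by one integrable radially decreasing function and $\sup_{|N|\ge10}|m(D+N)f|\lesssim Mf$ pointwise, which is admissible for $p>1$. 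Since $N\mapsto m(D+N)f(x')$ is continuous (dominated convergence, $\widehat f\in L^1$), the remaining supremum over $|N|<10$ is linearized by a bounded measurable $N\colon\R^d\to\R^d$, so that $\CS f(x')\le C\,|m(D+N(x'))f(x')|+C\,Mf(x')$.

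Next I would lift. Fix a nonzero $\phi\in C^\infty_c((-1,1))$ and, for a large parameter $R$, set $F_R(x',x_n)\coloneqq f(x')e^{2\pi iRx_n}\check\phi(x_n)$, so that $\widehat{F_R}(\xi',\xi_n)=\widehat f(\xi')\phi(\xi_n-R)$ and $\|F_R\|_{L^p(\R^n)}=\|\check\phi\|_{L^p(\R)}\|f\|_{L^p(\R^d)}$. Let $v(x')\coloneqq(-N(x')/R,1)/|(-N(x')/R,1)|$, let $\sigma(x')\in\Gr(n-1,n)$ have exterior normal $v(x')$, and choose $V_{\sigma(x')}$ to be the rotation acting in the plane $\spn(e_n,v(x'))$; then $\sigma$ depends only on the first $d$ variables, is measurable, and lies in $\Sigma_{1/2}$ once $R$ is large. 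Take the constant family $\mathscr M=\{m_\sigma\equiv m\}_\sigma$, which trivially satisfies \eqref{Eq:MihlinFamDef}. Because $\xi\mapsto V_{\sigma(x')}\Pi_{\sigma(x')}\xi$ is linear, a direct computation on $\supp\widehat{F_R}$ gives $V_{\sigma(x')}\Pi_{\sigma(x')}(\xi',\xi_n)=\xi'+N(x')+O(R^{-1})$ uniformly, the implied constants depending only on $\supp\widehat f$ and on the bound $|N(x')|\le 10$. Choosing $\zeta$ suitably and $R$ a large multiple of a power of $3$ so that $P_kF_R=F_R$, we obtain, with $T\coloneqq T_{\mathscr M,\sigma(\cdot)}$,
\[
T(P_kF_R)(x',x_n)=\check\phi(x_n)\,e^{2\pi iRx_n}\,m(D+N(x'))f(x')+r_R(x',x_n),
\]
where $r_R$ encodes the discrepancy between $m(\xi'+N(x')+O(R^{-1}))$ and $m(\xi'+N(x'))$.

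To finish I would let $R\to\infty$. The bound $\|r_R(x',\cdot)\|_{L^\infty(\R)}\to 0$ holds uniformly in $x'$ (it reduces to an $L^1$-averaged modulus-of-continuity estimate for the bounded function $m$ on the fixed compact set $\supp\widehat f+N(x')$), so that $|T(P_kF_R)(x',x_n)|\ge|\check\phi(x_n)\,m(D+N(x'))f(x')|-\|r_R\|_{L^\infty(\R^n)}$ a.e. Applying the rescaled form of Theorem~\ref{Thm:MainSingleBand} with $\varepsilon=1/2$ (valid with $P_k$ in place of $P_0$ and the same constant) and Fatou's lemma along $R\to\infty$,
\[
\|\check\phi\|_{L^p(\R)}\,\big\|\,m(D+N(\cdot))f\,\big\|_{L^p(\R^d)}\ \le\ \liminf_{R\to\infty}\|T(P_kF_R)\|_{L^p(\R^n)}\ \lesssim_p\ \|\check\phi\|_{L^p(\R)}\,\|f\|_{L^p(\R^d)}.
\]
Cancelling $\|\check\phi\|_{L^p(\R)}>0$, combining with $\|Mf\|_{L^p}\lesssim_p\|f\|_{L^p}$ ($p>1$) and with $\CS f(x')\le C|m(D+N(x'))f(x')|+CMf(x')$, and undoing the Littlewood--Paley reduction, yields $\|\CS f\|_{L^p(\R^d)}\lesssim_p\|f\|_{L^p(\R^d)}$ for every $1<p<\infty$.

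I expect the main obstacle to be the Fatou step for $p<2$: there one genuinely needs $\|r_R(x',\cdot)\|_{L^p(\R)}\to 0$ uniformly in $x'$, which is not delivered by the $L^\infty$ bound alone, but rather by uniform $C^j$-control in $\xi_n$ of the error symbol for some $j>1/p$. This is exactly where the possible singularity of $m$ enters, because the shifted singularity of $m(\,\cdot\,+N(x'))$ can sit inside $\supp\widehat f$; the available derivative bounds then degrade, being comfortable for $n\ge 3$ but requiring an additional spatial-localization argument when $n=2$. For $p\ge 2$ the point is automatic via Plancherel/Hausdorff--Young. The careful treatment of this, together with the routine Littlewood--Paley and large-modulation reductions, is precisely the proof of \cite[Theorem 1.6]{BDPPR}, which we do not reproduce.
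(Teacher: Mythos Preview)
Your approach is correct and matches the paper's, which explicitly defers to the proof of \cite[Theorem 1.6]{BDPPR} with Theorem~\ref{Thm:MainSingleBand} substituted for \cite[Theorem 1.1]{BDPPR} as the input; this is exactly the Sj\"olin-type lifting you sketch.

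One remark on your final paragraph: the concern about the Fatou step for $p<2$ is unnecessary. Fatou's lemma needs only pointwise a.e.\ convergence $|T(P_kF_R)(x)|\to|\check\phi(x_n)\,m(D+N(x'))f(x')|$, which follows because $r_R(x)\to 0$ for each fixed $x$ by dominated convergence in the frequency integral (after the change of variables $\xi_n=R+s$ the integrand is dominated by $2\|m\|_\infty|\widehat f(\xi')||\phi(s)|\in L^1$, and it tends to zero a.e.\ since $m$ is continuous off the origin). No $L^p$ convergence of $r_R$, and hence no derivative control in $\xi_n$, is required; the argument is uniform across $p\in(1,\infty)$.
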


	\subsection{On the novelties, techniques and difficulties}
	\label{Ss:IntroNovelties}
	Theorem~\ref{Thm:Main} and Theorem~\ref{Thm:MainSingleBand} were already proved in dimension $n=2 $ for the directional Hilbert transform in \cite[Theorem 1]{BatemanThiele} and \cite[Theorem 2.1]{Bateman2013Revista} respectively. Indeed, as we have discussed above, in the latter papers the non-degeneracy assumption was removed in the specific context of the directional Hilbert transform in two dimensions. We have generalized these results to the context of codimension $1$ directional multipliers in arbitrary ambient dimension and to more general families of multipliers $\mathscr{M}$. In particular, our main results allow for variable families of H\"ormander-Mihlin multipliers provided the dependence is logarithmically H\"older in the sense of \eqref{Eq:MihlinFamDef}. These generalisations require the non-degeneracy assumption $\sigma(\R^n)\subset \Sigma_{1-\varepsilon}$ for some $\varepsilon>0$, as we show in Section \ref{S:Counterexamples}.
	
	A version of Theorem~\ref{Thm:MainSingleBand} appears in \cite[Theorem 1.1]{BDPPR}; the result in \cite{BDPPR} is stronger for $ p>2$, since the Grassmanian-valued map $\sigma(\cdot)$ is not assumed to only depend on the first $ n-1$ variables. However,  in the present paper we are able to extend the result of \cite{BDPPR} to $p \leq 2$ under the restriction that $\sigma(\cdot)$ does not depend on the last variable, combined with the non-degeneracy assumption. Finally, Corollary~\ref{Cor:CSO} extends the range $1<p<\infty$ from \cite[Theorem 1.6]{BDPPR}.
	
	The main techniques in our analysis involve a time-frequency discretization of the operators and a smart decomposition of tiles which leads to an improved maximal estimate for collections of trees. The discretization of the operator we are studying was already developed in \cite{BDPPR}. The argument of the improved maximal estimate relies on an estimate for a Kakeya-type maximal function. This idea goes back to \cite{LaceyLiMemoirs} and this was further developed in \cite{Bateman2013Revista}. 
	
	Our results deal exclusively with the codimension $1$ case, that is $ d=n-1$. For the more singular case of codimension $n-d>1$ there are some significant challenges to overcome. On the one hand, the time-frequency analysis framework for dealing with such operators has not yet been developed. The natural definition of tiles with $(n-d)$-dimensional space components will lead to trees defined via a top frequency $\omega_{\mathbf{T}}$ which will be a $d$-dimensional subspace of $\R^n$. The corresponding trees fail (as they should) the standard orthogonality estimates, (c.f. Lemma~\ref{Lem:OrthoLacTree}), since the space $L^2(\R^n)$ is subcritical for general codimension. For the same reason, the corresponding maximal estimates are not expected to be valid on $L^2(\R^n)$ but rather on $L^{n-d+1}(\R^n)$; see \cite{DPP22}.

	On a different direction, one would naturally wonder whether the arguments and general proof strategy of this paper could be formalized in the language of \emph{outer measure spaces}; see, for example, \cites{AmUr,DoTh}. It is not immediately apparent whether this is feasible in full generality as the size and density selection algorithms of the present paper are intertwined. Indeed, the size selection algorithm should always maintain the tops of high density. This is because the range $1<p<2$ of Theorem~\ref{Thm:MainSingleBand} is only accessible via \emph{maximal estimates for a Lipschitz/Kakeya operator}, originating in the work of Lacey and Li, \cite{LaceyLiMemoirs}, and Bateman, \cite{Bateman2013Revista}, which require trees with tops which simultaneously have large size and density. In the single annulus case of Theorem~\ref{Thm:MainSingleBand} the maximal estimate improves on the density estimate for a certain range of the size of the trees involved. We note here that these subtleties are relevant for the proof of Theorem~\ref{Thm:Main}, but their role is implicit through the use of Theorem~\ref{Thm:MainSingleBand} within the proof of Theorem~\ref{Thm:Main}. It would be natural to pursue such a method of proof firstly in the case $p>2$ where the maximal estimate is not required.

	\subsection{Past literature and related work}
	\label{Ss:IntroPast}
	
	Let $ \apl{v}{\R^n}{\mathbb{S}^{n-1}} $ be a vector field and write $ V = v(\R^n) $. Define the \emph{directional maximal function along $ v(\cdot) $}
	\begin{equation}
		M_{v(\cdot)} f(x) \coloneqq \sup_{ r>0 } \avgint_{-r}^{r} \Abs{ f(x-v(x) t) } dt, \qquad x \in \R^n,
	\end{equation}
	as well as the \emph{directional Hilbert transform along $ v(\cdot) $}
	\begin{equation}
		H_{v(\cdot)} f(x) \coloneqq \pv \avgint_{\R} \frac{f(x-v(x) t)}{t} dt, \qquad x \in \R^n.
	\end{equation}
	These operators are linearized versions of the corresponding maximal operators $ M_{V} $ and $ H_{V} $, defined by taking the supremum over the directions $ V \subseteq \mathbb{S}^{n-1} $. Note that for $ n=2 $, the operator $ H_{v(\cdot)} $ is a particular example of \eqref{Eq:SingOperLinDef}. However, in higher dimensions, $ H_{v(\cdot)} $ is a singular integral operator along codimension $ n-1 $ subspaces and not codimension $ 1 $ as considered here. We will write $ T_{m,V} $ to denote maximal singular integral operators along codimension $ n-1 $. This is a special case of \eqref{Eq:SingOperMaxDef} when $ d=1 $ and $ \mathscr{M} = \{m\} $.
	
	The existence of Kakeya sets prevents the $ L^p $-boundedness of these operators when we consider vector fields $ v(\cdot) $ without any regularity; see for example \cite{Bateman2013Revista}. Then, there have been three main research lines to study the $ L^p $-boundedness of these operators depending on different restrictions on $ v(\cdot) $: $ V $ finite, $ V $ infinite but with special structure and $ v(\cdot) $ with some regularity.
	
	\textbf{$V$ finite:}
	The goal in this case is to look for the best possible bounds in terms of the cardinality $ \#V $. For dimension $n=2 $ and $ M_V $ with $ V $ arbitrary but finite, the sharp bound in $ L^2(\R^2) $ is $ O( \log (\#V)) $, as proved in \cite{Katz}. This is the same bound as in the case that $ V $ is a $ \delta $-net on $ \mathbb{S}^{1} $ with $ \#V \simeq \delta^{-1} $. This implies the Kakeya maximal conjecture in dimension two, as shown in \cite{Cordoba}. For higher dimensions $ n\geq 3$, sharp bounds for the $ \delta $-net case are proved in \cite{Jongchon}. Other versions of $ M_{V} $ have also been studied. Sharp $ L^2(\R^n) $-bounds for the single-scale version of the operator $ M_{V} $ and $ V $ arbitrary are shown in \cite{DPP21}. Also, the same authors considered more general subspace averaging operators in \cite{DPP22}. 
	
	In the case of singular integrals, for $n=2 $ optimal $ L^2(\R^2) $-bounds with $ V $ arbitrary for $T_{m,V} $ have been proved in \cite{Demeter}. To get this result, the $n=2$ case of Theorem~\ref{Thm:MainSingleBand}, which was previously proved in \cite{LaceyLiTrans}, is used. In higher dimensions $n\geq 3$, sharp $ L^p(\R^n) $-bounds for $ H_{V}$ are shown in \cite{JM}. Regarding codimension $1$ operators, sharp $ L^2(\R^n) $-bounds for $ T^{\star}_{ \mathscr{M} } $ are proved in \cite{BDPPR}. 

	\textbf{$V$ lacunary:}
	Another line of investigation is to consider $ V $ with some special structure. Suppose that $ \# V = \infty $ but $ V $ is a lacunary set. For $ n=2 $, $ M_V $ is bounded on $ L^p(\R^2) $ for $ 1 < p \leq \infty $, as was proved in \cites{NSW,SS}. For higher dimensions, the authors in \cite{PR} extended the definition of lacunarity and proved that $ M_V $ is bounded on $ L^p(\R^n) $ if $ V $ is a lacunary set. 
	
	This result is different for the maximal directional Hilbert transform $ H_V $, which is unbounded on $ L^p(\R^n) $ whenever $\# V = \infty$ because of the existence of non-Kakeya counterexamples; thus $H_V$ is unbounded on $L^p(\R^n)$ even if $V$ is lacunary. This was shown for $ n=2$ in \cite{Karagulyan} and for $ n \geq 3 $ in \cite{LMP}. If one considers $ \# V $ finite and $ V $ contained in a first order lacunary set, sharp $L^p(\R^2)$-bounds for $T_{m,V}$ were proved in \cite{DDP} and for higher order lacunary directions for the Hilbert transform $ H_{V} $ in \cite{DPP18}. The three-dimensional case of the latter was proved in \cite{DPP20} and for any dimension $ n $ and codimension $ n-1 $ H\"ormander--Mihlin multiplier $T_{m,V}$ in \cite{ADPP}.
	
	\textbf{$v(\cdot)$ with some regularity:}
	Another research direction is to look for regularity assumptions on $ v(\cdot) $ that allow us to avoid Kakeya-type counterexamples. To avoid other radial counterexamples, it is necessary to consider truncated versions of $ M_{v(\cdot)} $ and $ H_{v(\cdot)} $. Kakeya counterexamples still show unboundedness when $ v(\cdot) $ is H\"older continuous with index $ < 1 $. In the case of the maximal function, Zygmund's conjecture states that the truncated version of $ M_{v(\cdot)} $ should be bounded on $ L^p(\R^2) $ for $ 2 < p \leq \infty $ and weak-type $ (2,2) $ if $ v(\cdot) $ is Lipschitz. The analogous conjecture for $ H_{v(\cdot)} $ is usually attributed to Stein; see for instance \cite{LaceyLiTrans}.
	 
	It is known that $ M_{v(\cdot)} $, \cite{Bourgain}, \cite{NSW1}, and $ H_{v(\cdot)}$, \cite{SteinStreet}, are bounded on $ L^2(\R^2) $ if $ v $ is real analytic. On the other hand, the authors in \cite{BatemanThiele} proved the equivalent of Theorem~\ref{Thm:Main} for $ n=2 $ and for the Hilbert transform $ H_{v(\cdot)} $. The dilation invariance of the Hilbert transform, together with the two-dimensional setting allows the authors in \cite{BatemanThiele} to remove the restriction to the frequency cone and the almost horizontal restriction of the vector field $ v(\cdot) $. This proof uses the equivalent result of Theorem~\ref{Thm:MainSingleBand} for $ n=2 $ which was proved in \cite{Bateman2013Revista}; see also \cite{LaceyLiTrans}. The authors in \cite{DPGTZ} prove a Littlewood--Paley diagonalization result if $ v(\cdot) $ is a Lipschitz vector field with small Lipschitz constant, which as a consequence gives an alternative proof of the result in \cite{BatemanThiele}. In \cites{Guo15,Guo17}, the author proves the equivalent of Theorem~\ref{Thm:Main} for $ n=2 $, loosening the restriction of the vector field depending only on the first variable to being constant on a suitable family of Lipschitz curves. This dependence on the first $ n-1 $ variables can be completely removed for $ p > 2 $ in the case of Theorem~\ref{Thm:MainSingleBand}, as was shown in \cite{LaceyLiTrans} for $ n=2 $ and in \cite{BDPPR} for $ n \geq 3 $.

	\subsection{Structure of the paper}
	\label{Ss:IntroStructure}
	
	In Section \ref{S:Model}, we will use time-frequency analysis to reduce the study of the operator $ T_{\mathscr{M},\sigma(\cdot)} \circ P_0 $ to that of a discretized model operator. The difficulty arises from the higher-dimensional geometry and the directional nature of the problem at hand. The discretization in \cite{BDPPR} will be used here. In Section \ref{S:Decom} we prove Theorem~\ref{Thm:MainSingleBand} by showing the desired estimate for the model operator constructed in Section \ref{S:Model}. Proving $ L^p $-estimates for $ p<2 $ requires a maximal estimate, an idea which was introduced in \cite{LaceyLiMemoirs} and further developed in \cites{Bateman09Proc,Bateman2013Revista,BatemanThiele}. This maximal estimate is only possible by making a more involved decomposition of the tiles than the one used for example in the proof of the Carleson theorem in \cite{LT}, as it was noted in \cite{Bateman2013Revista}. In Section \ref{S:MaxLemma} we prove the maximal estimate mentioned above in the codimension $ 1 $ setting. In Section \ref{S:Main}, we use Theorem~\ref{Thm:MainSingleBand} to prove Theorem~\ref{Thm:Main}. The method for achieving this reduction for $ n=2 $ and the directional Hilbert transform was developed in \cite{BatemanThiele}; here we develop it for any $ n $, codimension $ 1 $ and families of multipliers $ \mathscr{M} $. Finally, in Section \ref{S:Counterexamples}, we give some counterexamples which show the necessity of the non-degeneracy assumption $ \sigma(x) \in \Sigma_{1-\varepsilon} $, with $ \varepsilon > 0 $, in Theorems \ref{Thm:Main} and \ref{Thm:MainSingleBand}.

	\subsection*{Acknowledgments} 
	I would like to thank my PhD advisors Ioannis Parissis and Luz Roncal for their guidance throughout this project and for providing many helpful suggestions on the paper. I would also like to thank Francesco Di Plinio and Anastasios Fragkos for helpful discussions on the topic of the paper. I would also like to thank Gennady Uraltsev for several suggestions that helped improve the presentation of this paper.

	\section{Reduction to a model operator} 
	\label{S:Model}
	
	In this section, we will use time-frequency analysis to reduce the proof of Theorem~\ref{Thm:MainSingleBand} to an estimate for a model operator; see \eqref{Eq:ModelOperDef} below. The approach is in the spirit of the foundational work on time-frequency analysis in \cite{LT}, but adapted to our setting; the framework we need is the one developed in \cite{BDPPR}. In Subsection \ref{Ss:Tiles} we will define the collection of tiles needed for the discretization of the operator and in Subsection \ref{Ss:Adapted} we will define suitable wave packets which are time-frequency adapted to the tiles above. In Subsection \ref{Ss:Model} we define the model operator and state the desired estimate for it.
	
	We will fix the dimension $ n\geq 2 $, $ d=n-1 $, and the integrability exponent $ 1 < p < \infty $ for the rest of the section and we will take a number $ M \geq 0 $ sufficiently large depending on $ p $ and the dimension $ n $, which will be used as the decay and smoothness parameter for the wave packets.
	
	\subsection{Some basic reductions}
	\label{Ss:Reductions}
	
	First of all, note that when defining the singular integral \eqref{Eq:SingOperDef}, we have to choose a rotation $ V_{\sigma} \in \SO(n) $. The precise choice of such a rotation turns out to be irrelevant in the definition of the maximal operator \eqref{Eq:SingOperMaxDef}, as shown in \cite[Proposition 3.2]{BDPPR}, and one can select a smooth rotation $ V_{\sigma} $ mapping $ \sigma $ to $ \R^d $ for each $ \sigma $, as in \cite[Lemma 2.1]{BDPPR}. This choice is already implicit in the single tree estimate in \cite[Lemma 6.14]{BDPPR} which we will use in the present paper without further comment.

	On the other hand, by hypothesis $ \sigma(x) \in \Sigma_{1-\varepsilon} $, with $ \Sigma_{1-\varepsilon} $ as defined in \eqref{Eq:FreqConeDef}. For the proof and the construction of the model operator we will assume that $ \sigma(x) \in \Sigma_{\gamma} $ for some small dimensional constant $\gamma$. We can recover the result for all $ 0 < \varepsilon < 1 $ by applying the anisotropic rescaling 
	\begin{equation}
		D_{\lambda} ( \xi ) = D_{\lambda} ( \xi_1,\ldots,\xi_d,\xi_n ) \coloneqq \left( \frac{\xi_1}{\lambda},\ldots,\frac{\xi_d}{\lambda}, \xi_n \right) 
	\end{equation}
	for some small $ \lambda > 0 $. More precisely, assume that $ \sigma \in \Sigma_{1-\varepsilon} $ and write $ \sigma = \spn \{ u^1,\ldots,u^d \} $, for some orthonormal basis $ \{ u^1,\ldots,u^d \} \subseteq \mathbb{S}^d $. Then,
	\begin{equation}
		V_{\sigma} \Pi_{\sigma} D_{\lambda} ( \xi ) 
		= \left( u^1 \cdot D_{\lambda} ( \xi ) ,\ldots, u^d \cdot D_{\lambda} ( \xi ) \right)
		= \left( A_{\lambda}(u^{1}) u^{1,\lambda} \cdot \xi ,\ldots, A_{\lambda}(u^{d}) u^{d,\lambda} \cdot \xi \right),
	\end{equation} 
	where, for $u^{j}=(u^{j}_1,\ldots, u^j_n)$, $j\in \{1,\ldots, d\}$, we have defined
	\begin{equation}
		u^{j,\lambda} \coloneqq \frac{ 1 }{ A_{\lambda}(u^{j}) } \left( \frac{u^{j}_1}{\lambda},\ldots,\frac{u^{j}_d}{\lambda},u^{j}_n \right),
		\quad 
		A_{\lambda}(u^{j}) \coloneqq \Abs{ \left( \frac{u^{j}_1}{\lambda},\ldots,\frac{u^{j}_d}{\lambda},u^{j}_n \right) }, \qquad j=1,\ldots,d.
	\end{equation}
	Note that taking $\lambda =O(\gamma\varepsilon) $ we get that the hyperplane defined as $ \sigma^{\lambda} \coloneqq \spn \{ u^{1,\lambda},\ldots,u^{d,\lambda} \} $ is almost horizontal in the sense that $ \sigma^{\lambda} \in \Sigma_{\gamma} $. Note that $\sigma^\lambda$ only depends on the first $d$ coordinates of $x$, adopting the same property from $\sigma$. Now, for fixed $ \lambda>0 $, we need to see that the multiplier
	\begin{equation}
		m_{\sigma,\lambda}(\eta) \coloneqq m\left( A_{\lambda}(u^{1}) \eta_1,\ldots, A_{\lambda}(u^{d}) \eta_d \right)
	\end{equation}
	satisfies the conditions of Theorems \ref{Thm:Main} and \ref{Thm:MainSingleBand}, namely \eqref{Eq:MihlinFamDef}. This is the case with some $\lambda$-dependence in the constant  $\norm{ \{m_{\sigma,\lambda}\}_\sigma }{ \mathcal{M}_A( \Gr(d,n) ) }$  of \eqref{Eq:MihlinFamDef},  because of the anisotropic rescaling. Thus, when we apply this rescaling and the results for the almost horizontal case, we recover our main results for the case $ \norm{ v_{\sigma}-e_n}{} < 1-\varepsilon $, with the constant blowing up with $ \varepsilon $, as $ \varepsilon \to 0 $. 
	
	Thus, we can assume that the hyperplanes $ \sigma(x) \in \Sigma_{\gamma} $ for some small $ \gamma $ and then the singularities of the operator $ T_{m_{\sigma(x)}} $ are contained on the cone $ \Gamma_{\gamma} $. Since $ ( \Id - P_{\cone,\gamma} ) \circ  P_0 f $ is frequency supported away from $ \Gamma_{\gamma} $, we have the estimate
	\begin{equation}
		\sup_{ \sigma \in \Sigma_{\gamma} } \Abs{ T_{ m_{\sigma} } [ ( \Id - P_{\cone,\gamma} ) \circ P_0 f ] } \lesssim M f,
	\end{equation}
	where $M$ denotes the Hardy--Littlewood maximal operator. Thus, the proof of Theorem~\ref{Thm:MainSingleBand} is reduced to the proof of the corresponding  bound for the maximal operator
	\begin{equation}
		\label{Eq:ConeReduction}
		T_{ \mathscr{M},\Sigma_{ \gamma } }^{\star,\cone} f (x) \coloneqq \sup_{ \substack{ \sigma \in \Sigma_{\gamma} \\ \sigma(\underline{x},x_n) = \sigma(\underline{x})}  } \Abs{ T_{ m_{\sigma} } ( P_0  P _{\cone,\gamma} f ) ( x; \sigma, V_{\sigma} ) }, \quad x=(x_1,\ldots,x_d,x_n) = (\underline{x},x_n) \in \R^n.
	\end{equation}
	Observe that the multiplier operators $ g \mapsto T_{ m_{\sigma} } ( P_0  P_{\cone,\gamma} g ) $ have Fourier support in the subset
	\begin{equation}
		\Delta \coloneqq \widetilde{\Gamma}_{\gamma} \cap \left\{ \xi \in \R^n :\, 1 < \abs{ \xi } < 3 \right\},
	\end{equation}
	where $ \widetilde{\Gamma}_{\gamma} $ is defined in \eqref{Eq:FreqProjConeDef}; see Figure \ref{Fig:SupportConeReduction}. Finally, write $\Delta' = \{ \xi' :\, \xi \in \Delta \} \subset \mathbb{S}^d $ and let $ \widetilde{ \Delta' } \subset \mathbb{S}^d $ be the $ 3^{6} \sqrt{d} $-neighborhood of $ \Delta' $ in $ \mathbb{S}^d $. 
	
	\begin{figure}
		\tdplotsetmaincoords{82}{140} 
		
		\resizebox{0.56\textwidth}{!}{%
			\begin{tikzpicture}[tdplot_main_coords,scale=1.36,
				bullet/.style={fill,circle,inner
					sep=1pt},sphere segment/.style={
					/utils/exec=\tikzset{3d stuff/sphere segment/.cd,#1}%
					\pgfkeys{/tikz/3d stuff/parse domain/.expanded=\pgfkeysvalueof{/tikz/3d stuff/sphere segment/phi}}
					\edef\phimin{\xmin}
					\edef\phimax{\xmax}
					\pgfkeys{/tikz/3d stuff/parse domain/.expanded=\pgfkeysvalueof{/tikz/3d stuff/sphere segment/theta}}
					\edef\thetamin{\xmin}
					\edef\thetamax{\xmax},
					insert path={%
						plot[variable=\x,smooth,domain=\phimax:\phimin] 
						(xyz spherical cs:radius=\pgfkeysvalueof{/tikz/3d stuff/sphere segment/r},
						longitude=\x,latitude=\thetamin)
						-- plot[variable=\x,smooth,domain=\thetamin:\thetamax] 
						(xyz spherical cs:radius=\pgfkeysvalueof{/tikz/3d stuff/sphere segment/r},
						longitude=\phimin,latitude=\x)
						--plot[variable=\x,smooth,domain=\phimin:\phimax] 
						(xyz spherical cs:radius=\pgfkeysvalueof{/tikz/3d stuff/sphere segment/r},
						longitude=\x,latitude=\thetamax)
						-- plot[variable=\x,smooth,domain=\thetamax:\thetamin] 
						(xyz spherical cs:radius=\pgfkeysvalueof{/tikz/3d stuff/sphere segment/r},
						longitude=\phimax,latitude=\x) --cycle}}]
				
				\clip[tdplot_screen_coords] (-3.1, -0.64) rectangle (3.1, 2.15);
				
				\coordinate (O) at (0,0,0);
				
				\begin{scope}
					
					\filldraw[draw=gray, fill=gray!30, opacity=0.6] 
					(-2,-2,0) -- (-2,2,0) -- (2,2,0) -- (2,-2,0) -- cycle;
					
					\draw[canvas is xy plane at z=2, draw=red, fill=red, fill opacity=0.3] (-30:1) arc (-30:-230:1) -- (O) --cycle;
					
					\draw[dashed,black,thick] (0,0,0) -- (0,0,-1.6);
					\draw[thick,->] (0,0,0) -- (0,0,2) node[anchor=west]{$\xi_n$};
					
					\draw[dashed,black,thick] (0,0,0) -- (-2,0,0);
					\draw[thick,->,black] (0,0,0) -- (2,0,0) node[anchor=north]{$\xi_1$};
					
					\draw[dashed,black,thick] (0,0,0) -- (0,-2,0);
					\draw[thick,->,black] (0,0,0) -- (0,2,0) node[anchor=north]{$\xi_2$};
					
					\begin{scope}
						\clip (0,0,0) circle (1.5cm); 
						
						\draw[orange, dashed] 
						plot[domain=140:320,smooth,variable=\t]
						({1.5*cos(\t)},{1.5*sin(\t)}, 0); 
						
						\draw[orange, dashed] 
						plot[domain=140:320,smooth,variable=\t]
						({cos(\t)},{sin(\t)}, 0); 
					\end{scope}
					
					\draw[canvas is xy plane at z=2, draw=red, fill=red, fill opacity=0.3] (-30:1) arc (-30:130:1) -- (O) --cycle;
					
				\end{scope}

				\begin{scope}
					
					\clip (0,0,0) circle (1.5cm); 
					
					\draw[canvas is xy plane at z=2, draw=white, fill=white, fill opacity=1] (-30:1) arc (-30:130:1) -- (O) --cycle;
					
					\filldraw[draw=gray, fill=gray!30, opacity=0.6] 
					(-2,-2,0) -- (-2,2,0) -- (2,2,0) -- (2,-2,0) -- cycle;
					
					\draw[canvas is xy plane at z=2, draw=cyan, fill=cyan, fill opacity=0.4] (-30:1) arc (-30:-230:1) -- (O) --cycle;
					
					\draw[dashed,black,thick] (0,0,0) -- (0,0,-1.6);
					\draw[thick,->] (0,0,0) -- (0,0,2);
					
					\draw[dashed,black,thick] (0,0,0) -- (-2,0,0);
					\draw[thick,->,black] (0,0,0) -- (2,0,0) node[anchor=north]{$\xi_1$};
					
					\draw[dashed,black,thick] (0,0,0) -- (0,-2,0);
					\draw[thick,->,black] (0,0,0) -- (0,2,0) node[anchor=north]{$\xi_2$};
					
					\begin{scope}
						\clip (0,0,0) circle (1.5cm); 
						
						\draw[orange, dashed] 
						plot[domain=140:320,smooth,variable=\t]
						({1.5*cos(\t)},{1.5*sin(\t)}, 0); 
						
						\draw[orange, dashed] 
						plot[domain=140:320,smooth,variable=\t]
						({cos(\t)},{sin(\t)}, 0); 
					\end{scope}
					
					\begin{scope}
						\clip[sphere segment={r=1, phi=140:300, theta=63.43495:90}]; 
						\shade[tdplot_screen_coords,ball color=cyan,opacity=0.8] 
						(0,0,0) circle[radius=1];
					\end{scope}
					
					\begin{scope}
						\clip[sphere segment={r=1, phi=-60:140, theta=63.43495:90}]; 
						\shade[tdplot_screen_coords,ball color=cyan,opacity=0.8] 
						(0,0,0) circle[radius=1];
					\end{scope}
					
					\draw[canvas is xy plane at z=2, draw=cyan, fill=cyan, fill opacity=0.4] (-30:1) arc (-30:130:1) -- (O) --cycle;
					
					\begin{scope}
						\clip[sphere segment={r=1.5, phi=140:300, theta=63.43495:90}]; 
						\shade[tdplot_screen_coords,ball color=cyan,opacity=0.8] 
						(0,0,0) circle[radius=1.5];
					\end{scope}
					
					\begin{scope}
						\clip[sphere segment={r=1.5, phi=-60:140, theta=63.43495:90}]; 
						\shade[tdplot_screen_coords,ball color=cyan,opacity=0.8] 
						(0,0,0) circle[radius=1.5];
					\end{scope}
					
					\def\alp{140}
					\def\bet{-40}
					
					\draw[cyan] ({cos(\alp)/sqrt(5)}, {sin(\alp)/sqrt(5)},{2/sqrt(5)}) -- ({1.5*cos(\alp)/sqrt(5)}, {1.5*sin(\alp)/sqrt(5)},{3/sqrt(5)});
					
					\draw[cyan] ({cos(\bet)/sqrt(5)}, {sin(\bet)/sqrt(5)},{2/sqrt(5)}) -- ({1.5*cos(\bet)/sqrt(5)}, {1.5*sin(\bet)/sqrt(5)},{3/sqrt(5)});
					
					\draw[cyan, dashed] 
					plot[domain=140:320,smooth,variable=\t]
					({cos(\t)/sqrt(5)},{sin(\t)/sqrt(5)},{2/sqrt(5)});
					
					\draw[cyan, dashed] 
					plot[domain=140:320,smooth,variable=\t]
					({1.5*cos(\t)/sqrt(5)},{1.5*sin(\t)/sqrt(5)},{3/sqrt(5)});
					
					\foreach \Angle in {-40,-20,...,140}   
					\tdplotsetthetaplanecoords{\Angle}
					\tdplotdrawarc[tdplot_rotated_coords, cyan!80!black, very thin, dash pattern=on 1pt off 1pt]
					{(0,0,0)}{1.5}{90-63.43495}{0}{anchor=south west}{};
					
					\foreach \Angle in {-40,-20,...,140}   
					\tdplotsetthetaplanecoords{\Angle}
					\tdplotdrawarc[tdplot_rotated_coords,cyan!80!black,very thin,dash pattern=on 1pt off 1pt]
					{(0,0,0)}{1}{90-63.43495}{0}{anchor=south west}{};
					
					\foreach \Angle in {-40,-20,...,140}    
					\draw[cyan!80!black, very thin,dash pattern=on 1pt off 1pt] ({cos(\Angle)/sqrt(5)}, {sin(\Angle)/sqrt(5)},{2/sqrt(5)}) -- ({1.5*cos(\Angle)/sqrt(5)}, {1.5*sin(\Angle)/sqrt(5)},{3/sqrt(5)});
					
					\draw[cyan] 
					plot[domain=-40:140,smooth,variable=\t]
					({cos(\t)/sqrt(5)},{sin(\t)/sqrt(5)},{2/sqrt(5)});
					
					\draw[cyan] 
					plot[domain=-40:140,smooth,variable=\t]
					({1.5*cos(\t)/sqrt(5)},{1.5*sin(\t)/sqrt(5)},{3/sqrt(5)});
					
				\end{scope}

				\begin{scope}
					
					\clip (0,0,0) circle (1cm); 
					
					\draw[canvas is xy plane at z=2, draw=white, fill=white, fill opacity=1] (-30:1) arc (-30:130:1) -- (O) --cycle;
					
					\filldraw[draw=gray, fill=gray!30, opacity=0.6] 
					(-2,-2,0) -- (-2,2,0) -- (2,2,0) -- (2,-2,0) -- cycle;
					
					\draw[canvas is xy plane at z=2, draw=red, fill=red, fill opacity=0.4] (-30:1) arc (-30:-230:1) -- (O) --cycle;
					
					\draw[dashed,black,thick] (0,0,0) -- (-2,0,0);
					\draw[thick,->,black] (0,0,0) -- (2,0,0) node[anchor=north]{$\xi_1$};
					
					\draw[dashed,black,thick] (0,0,0) -- (0,-2,0);
					\draw[thick,->,black] (0,0,0) -- (0,2,0) node[anchor=north]{$\xi_2$};
					
					\begin{scope}
						\clip (0,0,0) circle (1.5cm); 
						
						\draw[orange, dashed] 
						plot[domain=140:320,smooth,variable=\t]
						({1.5*cos(\t)},{1.5*sin(\t)}, 0); 
						
						\draw[orange, dashed] 
						plot[domain=140:320,smooth,variable=\t]
						({cos(\t)},{sin(\t)}, 0); 
					\end{scope}
					
					\draw[dashed,black,thick] (0,0,0) -- (0,0,-1.6);
					\draw[thick,->] (0,0,0) -- (0,0,2);

					\draw[canvas is xy plane at z=2, draw=red, fill=red, fill opacity=0.4] (-30:1) arc (-30:130:1) -- (O) --cycle;
					
					\begin{scope}
						\clip[sphere segment={r=1, phi=140:300, theta=63.43495:90}]; 
						\shade[tdplot_screen_coords,ball color=cyan,opacity=0.8] 
						(0,0,0) circle[radius=1];
					\end{scope}
					
					\begin{scope}
						\clip[sphere segment={r=1, phi=-60:140, theta=63.43495:90}]; 
						\shade[tdplot_screen_coords,ball color=cyan,opacity=0.8] 
						(0,0,0) circle[radius=1];
					\end{scope}
					
					\draw[cyan, dashed] 
					plot[domain=140:320,smooth,variable=\t]
					({cos(\t)/sqrt(5)},{sin(\t)/sqrt(5)},{2/sqrt(5)});
					
					\draw[cyan, dashed] 
					plot[domain=140:320,smooth,variable=\t]
					({1.5*cos(\t)/sqrt(5)},{1.5*sin(\t)/sqrt(5)},{3/sqrt(5)});
					
					\foreach \Angle in {-40,-20,...,140}   
					\tdplotsetthetaplanecoords{\Angle}
					\tdplotdrawarc[tdplot_rotated_coords, cyan!80!black, very thin, dash pattern=on 1pt off 1pt]
					{(0,0,0)}{1.5}{90-63.43495}{0}{anchor=south west}{};
					
					\foreach \Angle in {-40,-20,...,140}   
					\tdplotsetthetaplanecoords{\Angle}
					\tdplotdrawarc[tdplot_rotated_coords,cyan!80!black,very thin,dash pattern=on 1pt off 1pt]
					{(0,0,0)}{1}{90-63.43495}{0}{anchor=south west}{};
					
					\foreach \Angle in {-40,-20,...,140}    
					\draw[cyan!80!black, very thin,dash pattern=on 1pt off 1pt] ({cos(\Angle)/sqrt(5)}, {sin(\Angle)/sqrt(5)},{2/sqrt(5)}) -- ({1.5*cos(\Angle)/sqrt(5)}, {1.5*sin(\Angle)/sqrt(5)},{3/sqrt(5)});
					
					\draw[cyan] 
					plot[domain=-40:140,smooth,variable=\t]
					({cos(\t)/sqrt(5)},{sin(\t)/sqrt(5)},{2/sqrt(5)});
					
					\draw[cyan] 
					plot[domain=-40:140,smooth,variable=\t]
					({1.5*cos(\t)/sqrt(5)},{1.5*sin(\t)/sqrt(5)},{3/sqrt(5)});
					
				\end{scope}
				
				\begin{scope}
					\clip (0,0,0) circle (1.5cm); 
					\clip (0,0,2) circle (2.4cm);
					
					\shade[ball color = orange, opacity=0.3] (0,0,0) circle (1cm); 
				\end{scope}
				
				\begin{scope}
					\clip (0,0,0) circle (1.5cm); 
					
					\draw[orange] 
					plot[domain=-40:140,smooth,variable=\t]
					({cos(\t)},{sin(\t)}, 0); 
				\end{scope}
				
				\begin{scope}
					\clip (0,0,0) circle (1.5cm); 
					\clip (0,0,3) circle (3.6cm);
					\shade[ball color = orange, opacity=0.3] (0,0,0) circle (1.5cm); 
				\end{scope}

				\begin{scope}
					\clip (0,0,0) circle (1.5cm); 
					
					\draw[orange] 
					plot[domain=-40:140,smooth,variable=\t]
					({1.5*cos(\t)},{1.5*sin(\t)}, 0); 
				\end{scope}
				
				\node[red] at (-0.75,0.75,1.6) {$\widetilde{\Gamma}_{\gamma}$}; 
				\node[cyan] at (-0.47,0.47,1.05) {$\Delta$}; 
				\node[orange] at (-1,1,0.5) {$\abs{\xi}=1$}; 
				\node[orange] at (-1.2,1.2,0.95) {$\abs{\xi}=3$}; 
				
			\end{tikzpicture}
		}
		\caption{The Fourier support of $ g \mapsto T_{ m_{\sigma} } ( P_0  P_{\cone,\gamma} g ) $.}
		\label{Fig:SupportConeReduction}
	\end{figure}

	\subsection{Grids and tiles} 
	\label{Ss:Tiles} 
	
	In this subsection we define the tiles that we are going to use to construct the model operator \eqref{Eq:ModelOperDef}. The tiles are tensor products $ t = R_t \times \omega_t $, formed by the space component $ R_t $ and the frequency component $ \omega_t $. These will be defined by means of triadic grids, which we use throughout the paper.
	
	\begin{definition}[Triadic grid]
		Given a positive integer $ \kappa $ we define a $ 3^{\kappa} $-separated \emph{triadic grid} $\mathcal{G}$ on $\R^d$ to be a collection of cubes in $ \R^d$ satisfying the following properties:
		\begin{enumerate} [label=(g\arabic*)]
			\item \label{Item:Grid1} (quantized length) $\mathcal{G} =\bigcup_{j\in \mathbb Z} \mathcal{G}_{j}$, where $\mathcal{G}_{j}=\{ Q \in \mathcal{G}: \,\ell(Q) =3^{-j \kappa} \} $;
			\item \label{Item:Grid2} (partition) $ \R^d = \bigcup_{Q\in \mathcal{G}_{j} } Q$ for all $j\in \mathbb Z$;
			\item \label{Item:Grid3} (grid) $ Q \cap Q' \in\{ Q, \, Q', \, \emptyset \}$ for every $ Q, Q' \in \mathcal{G} $.
		\end{enumerate}
	\end{definition}
	
	Above, $ \ell(Q) $ denotes the sidelength of the cube $ Q $. Also, we will denote by $ c(Q) \in \R^d $ the center of $Q$. Below we will fix $ \kappa $ to be a large dimensional constant, for example $ \kappa > 3^3 + \log_{3} d $ will suffice. Note that the separation of scales assumption implies that if $ Q, Q' \in \mathcal{G} $ with $ Q \subsetneqq Q' $, then $ \ell(Q) < 3^{-\kappa} \ell(Q') $. We will denote by $ Q^{(i)} $, $i\in \mathbb{N}$, the $i$-th triadic parent of $ Q $ and by $ \children(Q) $ the collection of triadic children of $Q$. There exists a unique $ Q^{\circ} \in \children(Q) $ which is concentric with $ Q $ which we will call the \emph{center} of $ Q $. On the other hand, we define the \emph{peripheral children} of $Q$ as
	\begin{equation}
		\children_{\square} (Q) 
		\coloneqq \children(Q) \setminus \{ Q^{\circ} \}.
	\end{equation}
	We can and will adopt a universal enumeration in the form
	\begin{equation}
		\children_{\square} (Q) 
		\coloneqq \bigcup_{ \tau=1 }^{3^{\kappa d}-1} Q_{\tau}.
	\end{equation}
	Here \emph{universal} means that for $ Q,Q' \in \mathcal{G} $ and fixed $ \tau \in \{1,\ldots,3^{\kappa d}-1\} $, the cubes $ Q_{\tau}, Q_{\tau}' $ are in the same \emph{relative positions} inside $Q,Q'$, respectively.
	
	We will use the notation $ \xi' = \xi/\abs{\xi} \in \mathbb{S}^d $ for $ \xi \in \R^n $ and $ S' = \{ \xi' :\, \xi \in S \} \subseteq \mathbb{S}^d $ for $ S \subseteq \R^n $.
	
	\begin{definition}[Projected grids]
		Given a triadic grid $ \mathcal{G} $ in $ \R^d $ consider 
		\begin{equation}
			\mathcal{G} \times \{1\} \coloneqq \left\{ Q \times \{1\} : \, Q \in \mathcal{G}, (Q \times \{1\})' \subseteq \widetilde{\Delta'} \right\}.
		\end{equation}
		Note that this is a subcollection of a copy of the triadic grid $ \mathcal{G} $ living on the affine copy  of $\R^d$ 
		$$
		 e_n + \R^d \subseteq \R^n.
		 $$
		  The \emph{projected grid} we will need is
		\begin{equation}
			(\mathcal{G} \times \{1\})' \coloneqq \left\{ (Q \times \{1\})' :\, Q \times \{1\} \in \mathcal{G} \times \{1\} \right\}  \subseteq \mathbb{S}^d,
		\end{equation}
		which is a collection of caps in a small neighborhood of $ e_n $ on $ \mathbb{S}^d $. It is obvious that $ (\mathcal{G} \times \{1\})' $ inherits the grid property from $ \mathcal{G} $.
	\end{definition}
	
	Below for $ v \in \mathbb{S}^d $ denote by $ \sigma_v $ the unique element $ \sigma \in \Grdn $ such that $ v_{\sigma} = v $.
	
	\begin{definition}[Parallelepipeds in $\R^n$]
		Let $\mathcal{L}$ be a triadic grid in $\R^d$, $ \mathcal{I} $ a triadic grid in $ \R $ and $ V \subseteq \mathbb{S}^d $ a collection of directions. For $ L \in \mathcal{L} $, $ I \in \mathcal{I} $ and $ v \in V $ a direction; define the \emph{parallelepiped} $ R(L,I,v) $ by
		\begin{equation}
			\label{Eq:DefParallelepiped}
			R(L,I,v) \coloneqq \left( L \times \R \right) \cap \left( \sigma_{v} + I \right),
		\end{equation}
		where above we are using the notation $ \sigma_{v} + I \coloneqq \left\{ y + (0,\ldots,0,s) :\, y \in \sigma_{v}, \, s \in I \right\} $; see Figure \ref{Fig:TileSpace}.
	\end{definition}
	
	Given $ R = R(L,I,v) $, we will call $L=L_{R}$ the \emph{horizontal component}, $I=I_{R}$ the \emph{vertical component} and $ v=v_{R} $ the \emph{direction} of $R$. The \emph{scale} of $ R $ is defined by $ \scl(R) \coloneqq \ell(L) $.
	
	Let $ V_{\Delta} \coloneqq \{(c(Q) \times \{1\})' : \, Q \times \{1\} \in \mathcal{G} \times \{1\} \} $. Define the collection of \emph{admissible} parallelepipeds (with respect to given grids $ \mathcal{L}, \mathcal{I}, \mathcal{G} $) as
	\begin{equation}
		\mathcal{R}_{\mathcal{L},\mathcal{I},V} \coloneqq \left\{ R(L,I,v) : \, L \in \mathcal{L}, \, I \in \mathcal{I}, \, v \in V_{\Delta}, \ell(L) > \min\{ \ell(Q)^{-1} : \, Q \times \{1\} \in \mathcal{G} \times \{1\} \} \right\}.
	\end{equation}
	Given $ R = R(L,I,v) $ an admissible parallelepiped, the \emph{eccentricity} of $ R $ is defined as
	\begin{equation}
		\ecc(R) \coloneqq (Q \times \{1\})' \subseteq \mathbb{S}^d,
	\end{equation}
	where $ Q \in \mathcal{G} $ is the unique triadic cube with $ (Q \times \{1\})' \in (\mathcal{G} \times \{1\})' $ and such that $ (c(Q) \times \{1\})' = v $ and $ \ell(Q) = \ell(I) / \ell(L) $, see Figure \ref{Fig:Tile}.
	
	\begin{definition}[Tiles]
		A tile $ t = R_t \times \omega_t $ is a cartesian product of an admissible parallelepiped $ R_t = R(L_t,I_t,v_t) $ and a set $ \omega_t \subseteq \R^n $ defined as
		\begin{equation}
			\omega_t \coloneqq \left\{ \xi \in \R^n :\, \ell(I_t)^{-1}/2 < \abs{\xi} < 6 \ell(I_t)^{-1} , \, \xi' \in \ecc(R_t) \right\}.
		\end{equation}
		Note that $ \abs{ R_t } \abs{ \omega_t } \simeq 1 $ with the implicit constant depending only upon dimension. We define
		\begin{equation}
			\ecc(t) \coloneqq \ecc(R_t) \coloneqq (Q_t \times \{1\})'
		\end{equation}
		which also implicitly defines $e_n+ \R^d \supseteq Q_t \in \mathcal{G} $ and by $ \ecc(t)^{\circ} = (Q_{t}^{\circ} \times \{1\})' $ the center of the eccentricity. The \emph{directional support} of $ t $ is defined as
		\begin{equation}
			\label{Eq:DirecSuppDef}
			\alpha_t 
			\coloneqq \bigcup_{\tau=1}^{3^{\kappa d}-1} (Q_{t,\tau} \times \{1\})'
			\eqcolon \bigcup_{\tau=1}^{3^{\kappa d}-1} \alpha_{t,\tau}
			= \ecc(t) \setminus \ecc(t)^{\circ}.
		\end{equation}
		Define also $ \scl(t) \coloneqq \scl(R_t) = \ell(L_t) = \ell(I_t) / \ell(Q_t) $ the \emph{scale} of $ t $ and $ \ann(t) \coloneqq \abs{I_t}^{-1} \in 3^{\Z} $ the \emph{annulus} of $ t $. The \emph{center} of the frequency component $ \omega_t $ is
		\begin{equation}
			\omega_t^{\circ} \coloneqq \left\{ \xi \in \R^n : \, \abs{I_t}^{-1}/2 < \abs{\xi} < 6 \abs{I_t}^{-1} , \, \xi' \in \ecc(t)^{\circ} \right\}.
		\end{equation}
		We will denote by $ \mathcal{T} $ the collection of all tiles, as defined above, and for $ \ann \in 3^{\Z} $ we set
		\begin{equation}
			\mathcal{T}_{\ann} \coloneqq \left\{ t \in \mathcal{T} : \, \ann(t) = \ann \right\}
		\end{equation}
		to be a \emph{fixed annulus} collection of tiles.
	\end{definition}
	
	Note that the collections $ \{ \omega_t :\, t \in \mathcal{T}_{\ann} \} $ and $ \{ \alpha_{t,\tau}: \,t\in \mathcal{T}_{\ann} \} $ adopt the grid property \ref{Item:Grid3} from $ \mathcal{G} \times \{1\}, (\mathcal{G} \times \{1\})' $ when the annulus $ \ann $ is fixed.
	
	\begin{remark}
		The tiles $ \mathcal{T} $ we are considering depend of course on the choice of grids $ \mathcal{L} $, $ \mathcal{I} $ and $ \mathcal{G} $. We typically fix such a choice and suppress it from the notation as it is an unnecessary distraction. There are only a couple of cases in the arguments of this paper where we will have to use tiles with respect to several (shifted) triadic grids and the relevant families will be clear from the context.
	\end{remark}

	\begin{figure}
		\centering
		\begin{subfigure}[b]{0.58\textwidth}
			\tdplotsetmaincoords{70}{170} 
			\resizebox{\textwidth}{!}{%
				\begin{tikzpicture}[scale=1.33,tdplot_main_coords]
					
					\clip[tdplot_screen_coords] (-3.8, -1.65) rectangle (3.2, 3.55);
					
					\coordinate (O) at (0,0,0);
					
					\draw[draw=gray, draw opacity=0.6, fill=gray!30, fill opacity=0.6] 
					({asin(-0.2)}:3) arc ({asin(-0.2)}:{acos(-0.2)}:3) -- (-0.6,-0.6,0) -- cycle;
					
					\draw[dashed,black,thick] (0,0,0) -- (0,0,-0.6);
					\draw[thick,-,black] (0,0,0) -- (0,0,3) ;
					
					\draw[dashed,black,thick] (0,0,0) -- (-0.6,0,0);
					\draw[thick,-,black] (0,0,0) -- (3,0,0) node[anchor=north east]{};
					
					\draw[dashed,black,thick] (0,0,0) -- (0,-0.6,0);
					\draw[thick,-,black] (0,0,0) -- (0,3,0) node[anchor=north west]{};
					
					\begin{scope}
						\clip (0,0,0) circle (3cm); 
						\clip (0,0,2) circle (2.705cm); 
						\clip (0,2,0) circle (2.705cm);
						\clip (2,0,0) circle (2.705cm);
						
						\shade[ball color = orange, opacity=0.3] (0,0,0) circle (1cm);
					\end{scope}
					
					\begin{scope}
						\clip (0,0,0) circle (3cm); 
						
						\draw[orange] 
						plot[domain={asin(-0.6)}:{acos(-0.6)},smooth,variable=\t]
						({cos(\t)},{sin(\t)}, 0);
					\end{scope}
					
					\begin{scope}
						\clip (0,0,0) circle (3cm); 
						\clip (0,0,4) circle (4.875cm); 
						\clip (0,4,0) circle (4.875cm);
						\clip (4,0,0) circle (4.875cm);
						
						\shade[ball color = orange, opacity=0.3] (0,0,0) circle (2cm);
					\end{scope}
					
					\begin{scope}
						\clip (0,0,0) circle (3cm); 
						
						\draw[orange] 
						plot[domain={asin(-0.3)}:{acos(-0.3)},smooth,variable=\t]
						({2*cos(\t)},{2*sin(\t)}, 0);
					\end{scope}
					
					\begin{scope}
						\clip (0,0,0) circle (3cm); 
						\clip (0,0,6) circle (7.0385cm); 
						\clip (0,6,0) circle (7.0385cm);
						\clip (6,0,0) circle (7.0385cm);
						
						\shade[ball color = orange, opacity=0.3] (0,0,0) circle (3cm);
					\end{scope}

					\draw[draw=gray, draw opacity=0.6, fill=gray!30, fill opacity=0.6] 
					({asin(-0.2)}:3) arc ({asin(-0.2)}:{acos(-0.2)}:3) -- (-0.6,3.5,0) -- (3.5,3.5,0) -- (3.5,-0.6,0) -- cycle;
					
					\filldraw[draw=red,fill=red,fill opacity=0.7,draw opacity=1]
					(1,0,0) -- (1,1,0) -- (2,1,0) -- (2,0,0) -- cycle;
					
					\filldraw[draw=blue, fill=blue,fill opacity=0.7,draw opacity=1] 
					(1.4,0.4,0) -- (1.4,0.6,0) -- (1.6,0.6,0) -- (1.6,0.4,0) -- cycle;
					
					\filldraw[draw=darkgreen, fill=darkgreen,fill opacity=0.7,draw opacity=1] 
					(1,0.8,0) -- (1,1,0) -- (1.2,1,0) -- (1.2,0.8,0) -- cycle;
					
					\filldraw[draw=red,fill=red,fill opacity=0.7,draw opacity=1]
					(1,0,1) -- (1,1,1) -- (2,1,1) -- (2,0,1) -- cycle;
					
					\filldraw[draw=blue, fill=blue, opacity=0.6] 
					(1.4,0.4,1) -- (1.4,0.6,1) -- (1.6,0.6,1) -- (1.6,0.4,1) -- cycle;
					
					\filldraw[draw=darkgreen, fill=darkgreen,opacity=0.6] 
					(1,0.8,1) -- (1,1,1) -- (1.2,1,1) -- (1.2,0.8,1) -- cycle;
					
					\coordinate (S11) at ({2*1/sqrt(2)},{2*0/sqrt(2)},{2*1/sqrt(2)});
					\coordinate (S21) at ({2*1/sqrt(3)},{2*1/sqrt(3)},{2*1/sqrt(3)});
					\coordinate (S31) at ({2*2/sqrt(6)},{2*1/sqrt(6)},{2*1/sqrt(6)});
					\coordinate (S41) at ({2*2/sqrt(5)},{2*0/sqrt(5)},{2*1/sqrt(5)});
					
					\coordinate (S12) at ({3*1/sqrt(2)},{3*0/sqrt(2)},{3*1/sqrt(2)});
					\coordinate (S22) at ({3*1/sqrt(3)},{3*1/sqrt(3)},{3*1/sqrt(3)});
					\coordinate (S32) at ({3*2/sqrt(6)},{3*1/sqrt(6)},{3*1/sqrt(6)});
					\coordinate (S42) at ({3*2/sqrt(5)},{3*0/sqrt(5)},{3*1/sqrt(5)});
					
					\coordinate (S13) at ({1/sqrt(2)},{0/sqrt(2)},{1/sqrt(2)});
					\coordinate (S23) at ({1/sqrt(3)},{1/sqrt(3)},{1/sqrt(3)});
					\coordinate (S33) at ({2/sqrt(6)},{1/sqrt(6)},{1/sqrt(6)});
					\coordinate (S43) at ({2/sqrt(5)},{0/sqrt(5)},{1/sqrt(5)});
					
					\coordinate (Q11) at ({2*1.4/sqrt(3.12)},{2*0.4/sqrt(3.12)},{2*1/sqrt(3.12)});
					\coordinate (Q21) at ({2*1.4/sqrt(3.32)},{2*0.6/sqrt(3.32)},{2*1/sqrt(3.32)});
					\coordinate (Q31) at ({2*1.6/sqrt(3.92)},{2*0.6/sqrt(3.92)},{2*1/sqrt(3.92)});
					\coordinate (Q41) at ({2*1.6/sqrt(3.72)},{2*0.4/sqrt(3.72)},{2*1/sqrt(3.72)});
					
					\coordinate (Q12) at ({3*1.4/sqrt(3.12)},{3*0.4/sqrt(3.12)},{3*1/sqrt(3.12)});
					\coordinate (Q22) at ({3*1.4/sqrt(3.32)},{3*0.6/sqrt(3.32)},{3*1/sqrt(3.32)});
					\coordinate (Q32) at ({3*1.6/sqrt(3.92)},{3*0.6/sqrt(3.92)},{3*1/sqrt(3.92)});
					\coordinate (Q42) at ({3*1.6/sqrt(3.72)},{3*0.4/sqrt(3.72)},{3*1/sqrt(3.72)});
					
					\coordinate (Q13) at ({1.4/sqrt(3.12)},{0.4/sqrt(3.12)},{1/sqrt(3.12)});
					\coordinate (Q23) at ({1.4/sqrt(3.32)},{0.6/sqrt(3.32)},{1/sqrt(3.32)});
					\coordinate (Q33) at ({1.6/sqrt(3.92)},{0.6/sqrt(3.92)},{1/sqrt(3.92)});
					\coordinate (Q43) at ({1.6/sqrt(3.72)},{0.4/sqrt(3.72)},{1/sqrt(3.72)});
					
					\coordinate (P1) at ({1/sqrt(2.64)},{0.8/sqrt(2.64)},{1/sqrt(2.64)});
					\coordinate (P2) at ({1/sqrt(3)},{1/sqrt(3)},{1/sqrt(3)});
					\coordinate (P3) at ({1.2/sqrt(3.44)},{1/sqrt(3.44)},{1/sqrt(3.44)});
					\coordinate (P4) at ({1.2/sqrt(3.08)},{0.8/sqrt(3.08)},{1/sqrt(3.08)});
					
					\fill[red,fill opacity=0.7, draw opacity=1] 
					(S13) 
					to[out=-59,in=130] (S23)
					to[out=-161,in=28] (S33)
					to[out=137,in=-62] (S43)
					to[out=63,in=-135] cycle;
					
					\fill[blue,fill opacity=0.7, draw opacity=1] 
					(Q13) 
					to[out=-50,in=130] (Q23)
					to[out=-145,in=35] (Q33)
					to[out=130,in=-53] (Q43)
					to[out=50,in=-130] cycle;
					
					\draw[draw=darkgreen,line width=0.18pt,draw opacity=0.5,dash pattern=on 1pt off 1pt] 
					(1,0.8,1) -- (P1);
					\draw[draw=darkgreen,line width=0.18pt,draw opacity=0.5,dash pattern=on 1pt off 1pt] 
					(1,1,1) -- (P2);
					\draw[draw=darkgreen,line width=0.18pt,draw opacity=0.5,dash pattern=on 1pt off 1pt] 
					(1.2,1,1) -- (P3);
					\draw[draw=darkgreen,line width=0.18pt,draw opacity=0.5,dash pattern=on 1pt off 1pt] 
					(1.2,0.8,1) -- (P4);
					
					\draw[draw=red,line width=0.18pt,draw opacity=0.5,dash pattern=on 1pt off 1pt] 
					(1,0,1) -- (S13);
					\draw[draw=red,line width=0.18pt,draw opacity=0.5,dash pattern=on 1pt off 1pt] 
					(1,1,1) -- (S23);
					\draw[draw=red,line width=0.18pt,draw opacity=0.5,dash pattern=on 1pt off 1pt] 
					(2,1,1) -- (S33);
					\draw[draw=red,line width=0.18pt,draw opacity=0.5,dash pattern=on 1pt off 1pt] 
					(2,0,1) -- (S43);
					
					\fill[darkgreen,fill opacity=0.7, draw opacity=1] 
					(P1) 
					to[out=-51,in=132] (P2)
					to[out=-161,in=24] (P3)
					to[out=140,in=-46] (P4)
					to[out=30,in=-151] cycle;
					
					\draw[draw=blue,line width=0.18pt,draw opacity=0.5,dash pattern=on 1pt off 1pt] 
					(1.4,0.4,0) -- (1.4,0.4,1) ;
					\draw[draw=blue,line width=0.18pt,draw opacity=0.5,dash pattern=on 1pt off 1pt] 
					(1.4,0.6,0) -- (1.4,0.6,1) ;
					\draw[draw=blue,line width=0.18pt,draw opacity=0.5,dash pattern=on 1pt off 1pt] 
					(1.6,0.6,0) -- (1.6,0.6,1) ;
					\draw[draw=blue,line width=0.18pt,draw opacity=0.5,dash pattern=on 1pt off 1pt] 
					(1.6,0.4,0) -- (1.6,0.4,1) ;
					
					\draw[draw=darkgreen,line width=0.18pt,draw opacity=0.5,dash pattern=on 1pt off 1pt] 
					(1,0.8,0) -- (1,0.8,1) ;
					\draw[draw=darkgreen,line width=0.18pt,draw opacity=0.5,dash pattern=on 1pt off 1pt] 
					(1,1,0) -- (1,1,1) ;
					\draw[draw=darkgreen,line width=0.18pt,draw opacity=0.5,dash pattern=on 1pt off 1pt] 
					(1.2,1,0) -- (1.2,1,1) ;
					\draw[draw=darkgreen,line width=0.18pt,draw opacity=0.5,dash pattern=on 1pt off 1pt] 
					(1.2,0.8,0) -- (1.2,0.8,1) ;
					
					\draw[draw=red,line width=0.18pt,draw opacity=0.5,dash pattern=on 1pt off 1pt] 
					(1,0,0) -- (1,0,1) ;
					\draw[draw=red,line width=0.18pt,draw opacity=0.5,dash pattern=on 1pt off 1pt] 
					(1,1,0) -- (1,1,1) ;
					\draw[draw=red,line width=0.18pt,draw opacity=0.5,dash pattern=on 1pt off 1pt] 
					(2,1,0) -- (2,1,1) ;
					\draw[draw=red,line width=0.18pt,draw opacity=0.5,dash pattern=on 1pt off 1pt] 
					(2,0,0) -- (2,0,1) ;
					
					\draw[draw=red,line width=0.18pt,draw opacity=0.5,dash pattern=on 1pt off 1pt] 
					(1,0,1) -- (S11);
					\draw[draw=red,line width=0.18pt,draw opacity=0.5,dash pattern=on 1pt off 1pt] 
					(1,1,1) -- (S21);
					\draw[draw=red,line width=0.18pt,draw opacity=0.5,dash pattern=on 1pt off 1pt] 
					(2,1,1) -- (S31);
					\draw[draw=red,line width=0.18pt,draw opacity=0.5,dash pattern=on 1pt off 1pt] 
					(2,0,1) -- (S41);
					
					\draw[draw=blue,line width=0.18pt,draw opacity=0.5,dash pattern=on 1pt off 1pt] 
					(1.4,0.4,1) -- (Q11);
					\draw[draw=blue,line width=0.18pt,draw opacity=0.5,dash pattern=on 1pt off 1pt] 
					(1.4,0.6,1) -- (Q21);
					\draw[draw=blue,line width=0.18pt,draw opacity=0.5,dash pattern=on 1pt off 1pt] 
					(1.6,0.6,1) -- (Q31);
					\draw[draw=blue,line width=0.18pt,draw opacity=0.5,dash pattern=on 1pt off 1pt] 
					(1.6,0.4,1) -- (Q41);
					
					\draw[draw=blue,line width=0.18pt,draw opacity=0.5,dash pattern=on 1pt off 1pt] 
					(1.4,0.4,1) -- (Q13);
					\draw[draw=blue,line width=0.18pt,draw opacity=0.5,dash pattern=on 1pt off 1pt] 
					(1.4,0.6,1) -- (Q23);
					\draw[draw=blue,line width=0.18pt,draw opacity=0.5,dash pattern=on 1pt off 1pt] 
					(1.6,0.6,1) -- (Q33);
					\draw[draw=blue,line width=0.18pt,draw opacity=0.5,dash pattern=on 1pt off 1pt] 
					(1.6,0.4,1) -- (Q43);
					
					\draw[draw=pink,line width=0.18pt,draw opacity=0.5,dash pattern=on 1pt off 1pt] 
					(S12) -- (S11) ;
					\draw[draw=pink,line width=0.18pt,draw opacity=0.5,dash pattern=on 1pt off 1pt] 
					(S22) -- (S21) ;
					\draw[draw=pink,line width=0.18pt,draw opacity=0.5,dash pattern=on 1pt off 1pt] 
					(S32) -- (S31) ;
					\draw[draw=pink,line width=0.18pt,draw opacity=0.5,dash pattern=on 1pt off 1pt] 
					(S42) -- (S41) ;
					
					\draw[draw=cyan,line width=0.18pt,draw opacity=0.5,dash pattern=on 1pt off 1pt] 
					(Q12) -- (Q11) ;
					\draw[draw=cyan,line width=0.18pt,draw opacity=0.5,dash pattern=on 1pt off 1pt] 
					(Q22) -- (Q21) ;
					\draw[draw=cyan,line width=0.18pt,draw opacity=0.5,dash pattern=on 1pt off 1pt] 
					(Q32) -- (Q31) ;
					\draw[draw=cyan,line width=0.18pt,draw opacity=0.5,dash pattern=on 1pt off 1pt] 
					(Q42) -- (Q41) ;
					
					\draw[pink,fill opacity=0.7, fill=pink, draw opacity=1] 
					(S11) to (S12) to[out=-60,in=130] (S22) to (S21) to[out=132,in=-60] cycle;
					
					\draw[pink,fill opacity=0.7, fill=pink, draw opacity=1] 
					(S21) to (S22) to[out=-160,in=28] (S32) to (S31) to[out=27,in=-162] cycle;
					
					\draw[pink,fill opacity=0.7, fill=pink, draw opacity=1]
					(S12) 
					to[out=-60,in=130] (S22)
					to[out=-160,in=28] (S32)
					to[out=140,in=-57] (S42)
					to[out=60,in=-133] cycle;
					
					\draw[pink,dashed,dash pattern=on 1pt off 1pt] (S31) to[out=140,in=-60] (S41) ;
					\draw[pink,dashed,dash pattern=on 1pt off 1pt] (S11) to[out=-133,in=60] (S41) ;
					
					\draw[cyan,fill opacity=0.7, fill=cyan, draw opacity=1] 
					(Q11) to (Q12) to[out=-48,in=135] (Q22) to (Q21)-- cycle;
					
					\draw[cyan,fill opacity=0.7, fill=cyan, draw opacity=1] 
					(Q21) to (Q22) to[out=-145,in=40] (Q32) to (Q31) -- cycle;
					
					\draw[cyan,fill opacity=0.7, fill=cyan, draw opacity=1]
					(Q12) 
					to[out=-48,in=135] (Q22)
					to[out=-145,in=40] (Q32)
					to[out=135,in=-50] (Q42)
					to[out=46,in=-136] cycle;
					
					\draw[cyan,dashed,dash pattern=on 1pt off 1pt] (Q31) to (Q41) ;
					\draw[cyan,dashed,dash pattern=on 1pt off 1pt] (Q11) to (Q41) ;
					
					\draw[thick,->,black] (2.95,0,0) -- (3.5,0,0) node[anchor=north]{$\xi_1$};
					
					\draw[thick,->,black] (0,2.95,0) -- (0,3.5,0) node[anchor=north]{$\xi_2$};
					
					\draw[thick,->,black] (0,0,2.9) -- (0,0,3.5) node[anchor=west]{$\xi_n$};
					
					\begin{scope}
						\clip (0,0,0) circle (3cm); 
						
						\draw[orange] 
						plot[domain={asin(-0.2)}:{acos(-0.2)},smooth,variable=\t]
						({3*cos(\t)},{3*sin(\t)}, 0); 
					\end{scope}
					
					\node[red] at (2,1.2,-0.15) {$Q_{t}$}; 
					\node[blue] at (1.5,0.85,-0.15) {$Q_{t}^{\circ}$}; 
					\node[cyan] at (2.65,0.45,1.25) {$\omega_t^{\circ}$}; 
					\node[pink] at (3,0.45,1.85) {$\omega_t$}; 
					\node[darkgreen] at (1,1,-0.25) {$Q_{t,\tau}$}; 
					\node[darkgreen] at (0.475,0.45,0.35) {$\alpha_{t,\tau}$}; 
					\node[red] at (0.325,0.45,0.9) {$\ecc(t)$}; 
					\node[blue] at (0,0.45,0.58) {$\ecc(t)^{\circ}$}; 
					\node[orange] at (-1.05,0.9,0.2) {$\abs{\xi}=1$}; 
					\node[orange] at (-1.7,0.9,1) {$\abs{\xi}=\ann(t)/2$}; 
					\node[orange] at (-1.7,0.9,1.9) {$\abs{\xi}=6 \ann(t)$};  
					
				\end{tikzpicture}
				}
		\caption{The frequency component, annulus, eccentricity and directional support of a tile.}
		\label{Fig:TileFrequency}
	\end{subfigure}
	\begin{subfigure}[b]{0.41\textwidth}
		\tdplotsetmaincoords{85}{170} 
		\resizebox{\textwidth}{!}{%
			\begin{tikzpicture}[scale=1.8,tdplot_main_coords] 
				
				\clip[tdplot_screen_coords] (-2.2, -1.2) rectangle (1.4, 1.9);
				
				\draw[dashed,black,thick] (0,0,0) -- (0,0,-0.5);
				
				\draw[draw=cyan] 
				(2,2,{-2*0.15/sqrt(0.8875)+2*0.3/sqrt(0.8875)+1}) -- (2,2,{-2*0.15/sqrt(0.8875)+2*0.3/sqrt(0.8875)+0.75});
				\draw[draw=cyan]
				(2,-1,{1*0.15/sqrt(0.8875)+2*0.3/sqrt(0.8875)+1}) -- (2,-1,{1*0.15/sqrt(0.8875)+2*0.3/sqrt(0.8875)+0.75});
				\draw[draw=cyan]
				(-1,-1,{1*0.15/sqrt(0.8875)-1*0.3/sqrt(0.8875)+1}) -- (-1,-1,{1*0.15/sqrt(0.8875)-1*0.3/sqrt(0.8875)+0.75}); 
				\draw[draw=cyan]
				(-1,2,{-2*0.15/sqrt(0.8875)-1*0.3/sqrt(0.8875)+1}) -- (-1,2,{-2*0.15/sqrt(0.8875)-1*0.3/sqrt(0.8875)+0.75});
				
				\draw[dashed,red,dash pattern=on 1pt off 1pt] (0.5,0.5,-0.5) -- (0.5,0.5,{-0.5*0.15/sqrt(0.8875)+0.5*0.3/sqrt(0.8875)}) ;
				\draw[dashed,red,dash pattern=on 1pt off 1pt] (1.5,0.5,-0.5) -- (1.5,0.5,{-0.5*0.15/sqrt(0.8875)+1.5*0.3/sqrt(0.8875)}) ;
				\draw[dashed,red,dash pattern=on 1pt off 1pt] (1.5,1.5,-0.5) -- (1.5,1.5,{-1.5*0.15/sqrt(0.8875)+1.5*0.3/sqrt(0.8875)}) ;
				\draw[dashed,red,dash pattern=on 1pt off 1pt] (0.5,1.5,-0.5) -- (0.5,1.5,{-1.5*0.15/sqrt(0.8875)+0.5*0.3/sqrt(0.8875)}) ;
				
				\filldraw[draw=cyan,fill=cyan!30,fill opacity=0.6,draw opacity=0] 
				(1,2,0) -- (-1/2,-1,0) --
				(-1,-1,{1*0.15/sqrt(0.8875)-1*0.3/sqrt(0.8875)}) -- (-1,2,{-2*0.15/sqrt(0.8875)-1*0.3/sqrt(0.8875)}) -- cycle;
				
				\draw[draw=cyan,opacity=0.6] 
				(-1/2,-1,0) --
				(-1,-1,{1*0.15/sqrt(0.8875)-1*0.3/sqrt(0.8875)}) -- (-1,2,{-2*0.15/sqrt(0.8875)-1*0.3/sqrt(0.8875)}) -- (1,2,0) ;
				
				\filldraw[draw=gray,fill=gray!30,opacity=0.6] 
				(-1,-1,0) -- (-1,2,0) -- (2,2,0) -- (2,-1,0) -- cycle;
				
				\draw[black,->,thick] (0,0,0) -- (2,0,0) node[anchor=north]{$x_1$};
				\draw[dashed,black,thick] (0,0,0) -- (-1,0,0);
				
				\draw[dashed,black,thick] (0,0,0) -- (0,-1,0);
				
				\filldraw[draw=red,fill=red,fill opacity=0.6,draw opacity=1] 
				(0.5,0.5,0) -- (1.5,0.5,0) -- (1.5,1.5,0) -- (0.5,1.5,0) -- cycle;
				
				\filldraw[draw=cyan,draw opacity=0,fill=cyan!30, fill opacity=0.6] 
				(2,-1,{1*0.15/sqrt(0.8875)+2*0.3/sqrt(0.8875)}) --
				(2,2,{-2*0.15/sqrt(0.8875)+2*0.3/sqrt(0.8875)}) --
				(1,2,0) -- (-1/2,-1,0) -- cycle;
				
				\draw[draw=cyan,draw opacity=0.6] 
				(-1/2,-1,0) -- (2,-1,{1*0.15/sqrt(0.8875)+2*0.3/sqrt(0.8875)}) -- (2,2,{-2*0.15/sqrt(0.8875)+2*0.3/sqrt(0.8875)}) -- (1,2,0) ;
				\draw[draw=cyan,dashed,dash pattern=on 1pt off 1pt,draw opacity=0.6] 
				(1,2,0) -- (-1/2,-1,0) ;
				
				\draw[black,->,thick] (0,0,0) -- (0,2,0) node[anchor=north]{$x_2$};
				
				\draw[black,->,thick] (0,0,0) -- (0,0,1.7) node[anchor=west]{$x_n$};
				
				\draw[dashed,red,dash pattern=on 1pt off 1pt] (0.5,0.5,{-0.5*0.15/sqrt(0.8875)+0.5*0.3/sqrt(0.8875)}) -- (0.5,0.5,{-0.5*0.15/sqrt(0.8875)+0.5*0.3/sqrt(0.8875)+0.75});
				\draw[dashed,red,dash pattern=on 1pt off 1pt]  (1.5,0.5,{-0.5*0.15/sqrt(0.8875)+1.5*0.3/sqrt(0.8875)}) -- (1.5,0.5,{-0.5*0.15/sqrt(0.8875)+1.5*0.3/sqrt(0.8875)+0.75});
				\draw[dashed,red,dash pattern=on 1pt off 1pt]  (1.5,1.5,{-1.5*0.15/sqrt(0.8875)+1.5*0.3/sqrt(0.8875)}) -- (1.5,1.5,{-1.5*0.15/sqrt(0.8875)+1.5*0.3/sqrt(0.8875)+0.75}) ;
				\draw[dashed,red,dash pattern=on 1pt off 1pt]  (0.5,1.5,{-1.5*0.15/sqrt(0.8875)+0.5*0.3/sqrt(0.8875)}) -- (0.5,1.5,{-1.5*0.15/sqrt(0.8875)+0.5*0.3/sqrt(0.8875)+0.75});
				
				\filldraw[draw=cyan, fill=cyan!30,fill opacity=0.6, draw opacity=1] 
				(2,2,{-2*0.15/sqrt(0.8875)+2*0.3/sqrt(0.8875)+0.75}) --
				(2,-1,{1*0.15/sqrt(0.8875)+2*0.3/sqrt(0.8875)+0.75}) --
				(-1,-1,{1*0.15/sqrt(0.8875)-1*0.3/sqrt(0.8875)+0.75}) -- (-1,2,{-2*0.15/sqrt(0.8875)-1*0.3/sqrt(0.8875)+0.75}) -- cycle;
				
				\draw[dashed,red]
				(0.5,0.5,{-0.5*0.15/sqrt(0.8875)+0.5*0.3/sqrt(0.8875)+0.75}) -- (0.5,0.5,{-0.5*0.15/sqrt(0.8875)+0.5*0.3/sqrt(0.8875)+1});
				\draw[dashed,red,dash pattern=on 1pt off 1pt]  (1.5,0.5,{-0.5*0.15/sqrt(0.8875)+1.5*0.3/sqrt(0.8875)+0.75}) -- (1.5,0.5,{-0.5*0.15/sqrt(0.8875)+1.5*0.3/sqrt(0.8875)+1});
				\draw[dashed,red,dash pattern=on 1pt off 1pt]  (1.5,1.5,{-1.5*0.15/sqrt(0.8875)+1.5*0.3/sqrt(0.8875)+0.75}) -- (1.5,1.5,{-1.5*0.15/sqrt(0.8875)+1.5*0.3/sqrt(0.8875)+1}) ;
				\draw[dashed,red,dash pattern=on 1pt off 1pt]  (0.5,1.5,{-1.5*0.15/sqrt(0.8875)+0.5*0.3/sqrt(0.8875)+0.75}) -- (0.5,1.5,{-1.5*0.15/sqrt(0.8875)+0.5*0.3/sqrt(0.8875)+1});
				
				\coordinate (P11) at (0.5,0.5,{-0.5*0.15/sqrt(0.8875)+0.5*0.3/sqrt(0.8875)+0.75});
				\coordinate (P21) at (1.5,0.5,{-0.5*0.15/sqrt(0.8875)+1.5*0.3/sqrt(0.8875)+0.75});
				\coordinate (P31) at (1.5,1.5,{-1.5*0.15/sqrt(0.8875)+1.5*0.3/sqrt(0.8875)+0.75});
				\coordinate (P41) at (0.5,1.5,{-1.5*0.15/sqrt(0.8875)+0.5*0.3/sqrt(0.8875)+0.75});
				
				\coordinate (P12) at (0.5,0.5,{-0.5*0.15/sqrt(0.8875)+0.5*0.3/sqrt(0.8875)+1});
				\coordinate (P22) at (1.5,0.5,{-0.5*0.15/sqrt(0.8875)+1.5*0.3/sqrt(0.8875)+1});
				\coordinate (P32) at (1.5,1.5,{-1.5*0.15/sqrt(0.8875)+1.5*0.3/sqrt(0.8875)+1});
				\coordinate (P42) at (0.5,1.5,{-1.5*0.15/sqrt(0.8875)+0.5*0.3/sqrt(0.8875)+1});
				
				\draw[draw=red, dashed] 
				(P11) -- (P21) -- (P31) -- (P41) -- cycle;
				
				\draw[draw=red,fill=red,fill opacity=0.6,draw opacity=1] 
				(P32) -- (P22) -- (P21) -- (P31) -- cycle;
				\draw[draw=red,fill=red,fill opacity=0.6,draw opacity=1] 
				(P32) -- (P42) -- (P41) -- (P31) -- cycle;
				
				\filldraw[draw=cyan, fill=cyan!30,fill opacity=0.6, draw opacity=1] 
				(2,2,{-2*0.15/sqrt(0.8875)+2*0.3/sqrt(0.8875)+1}) --
				(2,-1,{1*0.15/sqrt(0.8875)+2*0.3/sqrt(0.8875)+1}) --
				(-1,-1,{1*0.15/sqrt(0.8875)-1*0.3/sqrt(0.8875)+1}) -- (-1,2,{-2*0.15/sqrt(0.8875)-1*0.3/sqrt(0.8875)+1}) -- cycle;
				
				\draw[dashed,red,dash pattern=on 1pt off 1pt] (0.5,0.5,{-0.5*0.15/sqrt(0.8875)+0.5*0.3/sqrt(0.8875)+1}) -- (0.5,0.5,1.7) ;
				\draw[dashed,red,dash pattern=on 1pt off 1pt] (1.5,0.5,{-0.5*0.15/sqrt(0.8875)+1.5*0.3/sqrt(0.8875)+1}) -- (1.5,0.5,1.7) ;
				\draw[dashed,red,dash pattern=on 1pt off 1pt] (1.5,1.5,{-1.5*0.15/sqrt(0.8875)+1.5*0.3/sqrt(0.8875)+1}) -- (1.5,1.5,1.7) ;
				\draw[dashed,red,dash pattern=on 1pt off 1pt] (0.5,1.5,{-1.5*0.15/sqrt(0.8875)+0.5*0.3/sqrt(0.8875)+1}) -- (0.5,1.5,1.7) ;
				
				\draw[draw=red,fill=red,fill opacity=0.6,draw opacity=1] 
				(P12) -- (P22) -- (P32) -- (P42) -- cycle;
				
				\draw[very thick,->,cyan] (0,0,0) -- (-0.3,0.15,{sqrt(0.8875)}) node[anchor=north west]{$v_{t}$};
				
				\draw[very thick,|-|,blue] (0,0,1) -- (0,0,0.75) node[anchor=south west]{$I_{t}$};
				
				\node[cyan] at (-1.1,1,-0.38) {$\sigma_{v_t}$};
				\node[cyan] at (-0.5,1,1.25) {$\sigma_{v_t}+I_{t}$};
				\node[red] at (1,1,1.4) {$R_t$};
				\node[red] at (1.2,0.8,-0.2) {$L_t$};
				
			\end{tikzpicture}
		}
		\caption{A parallelepiped in $ \R^n $.}
		\label{Fig:TileSpace}
	\end{subfigure}
	\caption{Example of the frequency and space component of a tile.}
	\label{Fig:Tile}
	\end{figure}
	
	In the next lemmas, we state some geometric properties of the tiles that we will use. We begin with the case in which the tiles have the same annulus. For a number $ A>0 $ and a parallelepiped $ R = R(L,I,v) $ we denote by $ A R = R(A L,A I,v) $ the scaled parallelepiped with the same center as $ R $ with the scaling factor $ A $.
	
	\begin{lemma}[Geometric Lemma] 
		\label{Lem:Geom} 
		There exists a constant $K_n$ depending only upon dimension, such that the following holds: For every number $ A \geq 1 $ and two tiles $ t, t' \in \mathcal{T}_{\ann} $ for a fixed $ \ann \in 3^{\Z} $ and such that $ \ecc(t') \subseteq \ecc(t) $ and $ A R_t \cap A R_{t'} \neq \emptyset $, there holds $ A R_t \subseteq K_n A R_{t'} $.
	\end{lemma}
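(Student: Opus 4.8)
\emph{The plan} is to reduce the statement to an explicit description of the admissible parallelepipeds $R(L,I,v)$ as affine prisms over their horizontal footprints, and then to exploit the single scale-matching identity $\ell(Q)\,\ell(L)=\ell(I)$ that is built into the definition of the eccentricity. Throughout write $x=(\underline{x},x_n)\in\R^d\times\R=\R^n$. The first step is to unwind the hypothesis $\ecc(t')\subseteq\ecc(t)$ at the level of the underlying triadic cubes: the map $Q\mapsto(Q\times\{1\})'$ is an inclusion-preserving bijection of $\mathcal{G}\times\{1\}$ onto the projected grid $(\mathcal{G}\times\{1\})'$ (which inherits the grid property \ref{Item:Grid3}), so $\ecc(t')\subseteq\ecc(t)$ forces $Q_{t'}\subseteq Q_t$. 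Since $t,t'\in\mathcal{T}_{\ann}$ we have $\ell(I_t)=\ell(I_{t'})=\ann^{-1}$, and from $\ell(Q_t)=\ell(I_t)/\ell(L_t)$ together with $\ell(Q_{t'})\le\ell(Q_t)$ we get $\ell(L_{t'})\ge\ell(L_t)$; moreover, since $c(Q_{t'})\in Q_{t'}\subseteq Q_t$,
\[
	\abs{c(Q_t)-c(Q_{t'})}\le\tfrac{\sqrt{d}}{2}\,\ell(Q_t)=\tfrac{\sqrt{d}}{2}\,\frac{|I_t|}{\ell(L_t)},\qquad\text{hence}\qquad\abs{c(Q_t)-c(Q_{t'})}\cdot\ell(L_t)\le\tfrac{\sqrt{d}}{2}\,|I_t|.
\]
This last \emph{scale-matching inequality} is the heart of the matter.

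Next I would make the parallelepipeds explicit (cf.\ Figure~\ref{Fig:TileSpace}). Writing $v_t=(c(Q_t),1)/\abs{(c(Q_t),1)}$, the hyperplane $\sigma_{v_t}$ is the graph $\{x_n=-\langle\underline{x},c(Q_t)\rangle\}$, and therefore for every $A\ge 1$
\[
	AR_t=R(AL_t,AI_t,v_t)=\bigl\{(\underline{y},z):\ \underline{y}\in AL_t,\ \abs{z-h_t(\underline{y})}\le\tfrac{A}{2}|I_t|\bigr\},\qquad h_t(\underline{y}):=-\langle\underline{y},c(Q_t)\rangle+c(I_t),
\]
with $h_t$ affine and with linear part $-c(Q_t)$; the analogous formula holds for $AR_{t'}$ with an affine function $h_{t'}$ of linear part $-c(Q_{t'})$. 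In particular $AR_t\subseteq AL_t\times\R$ and $AR_{t'}\subseteq AL_{t'}\times\R$.

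Now I would use the intersection hypothesis. Pick $(\underline{y}_0,z_0)\in AR_t\cap AR_{t'}$; then $\underline{y}_0\in AL_t\cap AL_{t'}$ and, since $|I_t|=|I_{t'}|$,
\[
	\abs{h_t(\underline{y}_0)-h_{t'}(\underline{y}_0)}\le\abs{z_0-h_t(\underline{y}_0)}+\abs{z_0-h_{t'}(\underline{y}_0)}\le A|I_t|.
\]
For an arbitrary $\underline{y}\in AL_t$ one has $\abs{\underline{y}-\underline{y}_0}\le\sqrt{d}\,A\,\ell(L_t)$, and since $h_t-h_{t'}$ is affine with linear part $-(c(Q_t)-c(Q_{t'}))$ the scale-matching inequality yields
\[
	\abs{h_t(\underline{y})-h_{t'}(\underline{y})}\le\abs{h_t(\underline{y}_0)-h_{t'}(\underline{y}_0)}+\abs{\underline{y}-\underline{y}_0}\,\abs{c(Q_t)-c(Q_{t'})}\le A|I_t|+\tfrac{d}{2}A|I_t|.
\]
On the other hand $AL_t$ and $AL_{t'}$ are cubes with $\ell(L_t)\le\ell(L_{t'})$ sharing the point $\underline{y}_0$, so $AL_t\subseteq 3AL_{t'}$. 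Combining: if $(\underline{y},z)\in AR_t$, then $\underline{y}\in 3AL_{t'}$ and $\abs{z-h_{t'}(\underline{y})}\le\tfrac{A}{2}|I_t|+(1+\tfrac{d}{2})A|I_t|=(\tfrac{3}{2}+\tfrac{d}{2})A|I_{t'}|$, whence $(\underline{y},z)\in K_nAR_{t'}$ as soon as $K_n\ge d+3$. This is the asserted inclusion, with $K_n$ depending only upon $n$.

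The one genuinely structural point — the reason a statement of this type is available at all — is the scale-matching inequality of the first paragraph: the relative tilt of $\sigma_{v_t}$ against $\sigma_{v_{t'}}$, accumulated over the full horizontal width $\ell(L_t)$ of the footprint, is comparable to the vertical thickness $|I_t|$, precisely because the eccentricity cube has sidelength $\ell(I_t)/\ell(L_t)$ and the annulus is fixed. Everything else is routine bookkeeping with triadic cubes and affine functions. The only mild care needed is that $v_t,v_{t'}$ lie in the almost-horizontal regime, so that $\abs{(c(Q_t),1)}\simeq 1$ and the Euclidean and angular distances on the relevant caps near $e_n$ are comparable; this is automatic since all eccentricities live in a fixed small neighbourhood of $e_n$ on $\mathbb{S}^d$.
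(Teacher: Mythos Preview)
Your proof is correct and complete. The paper, by contrast, omits the argument entirely, indicating only that one may reduce to $n=2$ by noting that the two hyperplanes $\sigma_{v_t},\sigma_{v_{t'}}$ intersect in some $\sigma_0\in\Gr(d-1,n)$ and then work in the two-dimensional orthogonal complement $\sigma_0^\perp$.

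Your route is genuinely different: rather than reducing the dimension, you compute directly in $\R^n$ by writing each $AR_t$ as the region between two parallel affine graphs over the horizontal footprint $AL_t$, and then use the scale-matching identity $\ell(Q_t)\ell(L_t)=\ell(I_t)$ (forced by the fixed-annulus assumption) to control the relative tilt $\abs{c(Q_t)-c(Q_{t'})}$ against the vertical thickness. This is more elementary and self-contained than the paper's sketch, and it yields an explicit constant $K_n=d+3$. The paper's reduction to $\sigma_0^\perp$ is slicker conceptually---it explains \emph{why} the problem is intrinsically two-dimensional---but it still requires the 2D computation, which is essentially the $d=1$ instance of what you wrote. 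Either way, the key structural input is the same: the eccentricity nesting bounds the angle between $v_t$ and $v_{t'}$ by $\ell(I_t)/\ell(L_t)$, so the tilt accumulated across the footprint never exceeds the vertical scale.
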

	
	We omit the proof of the lemma above, and just note that it is enough to prove it in two dimensions. Indeed, assuming $n\ge 3$, two elements of $\mathrm{Gr}(d,n)$ intersect on an element $\sigma_0\in\mathrm{Gr}(d-1,n)$, and the lemma can be proved in the orthogonal subspace $\sigma_0 ^\perp\in \mathrm{Gr}(2,n)$ reducing the proof to a two-dimensional argument.
 
	Next, we state a geometric property for the case in which the tiles have different annulus but still have similar directions relative to $ \ell(Q_t) $ since their eccentricities are nested.
	
	\begin{lemma}
		\label{Lem:Geom2} 
		There exists a constant $K_n$ depending only upon dimension, such that the following holds: For every two tiles $ t, t' \in \mathcal{T} $ with $ \ann(t) \leq \ann(t') $ and $ \ecc(t') \subseteq \ecc(t) $, the following hold.
		\begin{enumerate}[label=(\roman*)]
			\item[(i)] \label{Item:GeomV1} If $ L_t = L_{t'} $ and $ R_t \cap R_{t'} \neq \emptyset $, then $ R_{t'} \subseteq  R(L_t,K_n I_t,v_t)$.
			\item[(ii)] \label{Item:GeomV2} If $ L_t = L_{t'} $, $ K_n \abs{ I_{t'} } \leq \abs{ I_t } $ and $ R_t \cap K_n R_{t'} \neq \emptyset $ then $ R(L_{t'},K_n I_{t'},v_{t'}) \subseteq R(L_t,K_n I_t,v_t) $.
		\end{enumerate}
	\end{lemma}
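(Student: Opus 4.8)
\emph{Proof plan for Lemma~\ref{Lem:Geom2}.} The plan is to exploit that $L_t=L_{t'}=:L$ in both parts, so that everything reduces to how the two tilted vertical slabs $\sigma_{v_t}+I_t$ and $\sigma_{v_{t'}}+I_{t'}$ sit over the common footprint $L$; the mechanism is that $\ecc(t')\subseteq\ecc(t)$ makes $v_{t'}$ extremely close to $v_t$ on the scale of $L$.

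First I would put the almost-horizontal hyperplanes in graph form. Since all tiles have directional support in $\widetilde{\Delta'}$, a small neighbourhood of $e_n$, each $\sigma_v$ is the graph $\{(\underline y,h_v(\underline y)):\underline y\in\R^d\}$ of a linear function $h_v$ with $|\nabla h_v|$ small, and $|\nabla h_v-\nabla h_{v'}|\simeq\norm{v-v'}{}$ for $v,v'$ in that neighbourhood; in these coordinates $R(L,I,v)=\{(\underline x,x_n):\underline x\in L,\ x_n-h_v(\underline x)\in I\}$. The single quantitative input needed is $\norm{v_t-v_{t'}}{}\lesssim_n|I_t|/\ell(L)$: indeed $v_{t'}=(c(Q_{t'})\times\{1\})'\in\ecc(t')\subseteq\ecc(t)$ while $v_t=\ecc(t)^\circ$, and $\operatorname{diam}(\ecc(t))\simeq\ell(Q_t)=\ell(I_t)/\ell(L)$ because $Q_t\times\{1\}$ lies at bounded distance from the origin where radial projection is bi-Lipschitz. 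Writing $g:=h_{v_{t'}}-h_{v_t}$, a linear function, this gives $|g(\underline x)-g(\underline z)|\lesssim_n|I_t|$ whenever $\underline x,\underline z\in L$, since $\operatorname{diam}(L)=\sqrt d\,\ell(L)$.

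For (i), from $R_t\cap R_{t'}\neq\emptyset$ I would pick $(\underline z,z_n)$ in the intersection; then $\underline z\in L$, $z_n-h_{v_t}(\underline z)\in I_t$ and $z_n-h_{v_{t'}}(\underline z)\in I_{t'}$, so $g(\underline z)\in I_t-I_{t'}$. For arbitrary $(\underline x,x_n)\in R_{t'}$, decompose $x_n-h_{v_t}(\underline x)=(x_n-h_{v_{t'}}(\underline x))+g(\underline z)+(g(\underline x)-g(\underline z))$, whose summands lie in $I_{t'}$, in $I_t-I_{t'}$, and in $\{|\cdot|\lesssim_n|I_t|\}$ respectively. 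Since $|I_{t'}|\le|I_t|$ (equivalently $\ann(t)\le\ann(t')$), the sum of the first two lies in the concentric triple of $I_t$, so the total lies in $K_nI_t$ for a large dimensional $K_n$; as $\underline x\in L=L_t$ this shows $R_{t'}\subseteq R(L_t,K_nI_t,v_t)$. Part (ii) runs identically once one observes that, because $L_{t'}=L$, enlarging the footprint of $R_{t'}$ to $K_nL\supseteq L$ does not change its trace over $L$, so $R_t\cap K_nR_{t'}=R_t\cap R(L,K_nI_{t'},v_{t'})$; picking $(\underline z,z_n)$ there gives $g(\underline z)\in I_t-K_nI_{t'}$, an interval of length $\le2|I_t|$ by the hypothesis $K_n|I_{t'}|\le|I_t|$, and the same decomposition applied to $(\underline x,x_n)\in R(L_{t'},K_nI_{t'},v_{t'})=R(L,K_nI_{t'},v_{t'})$ places $x_n-h_{v_t}(\underline x)$ in $K_nI_{t'}+(I_t-K_nI_{t'})+\{|\cdot|\lesssim_n|I_t|\}\subseteq K_nI_t$. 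The only point requiring care will be the bookkeeping of constants, so that a single dimensional $K_n$ works simultaneously in the hypothesis of (ii) and in both conclusions; this is arranged by taking $K_n$ above all the accumulated constants, and the geometry itself is elementary once the graph coordinates are fixed.
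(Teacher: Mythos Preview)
Your argument is correct and is exactly the ``straightforward generalisation to higher dimensions'' that the paper alludes to (the paper omits the proof and refers to \cite[Lemma~6]{BatemanThiele} for the two-dimensional case). The graph parametrisation $R(L,I,v)=\{(\underline x,x_n):\underline x\in L,\ x_n-h_v(\underline x)\in I\}$ together with the eccentricity bound $\|v_t-v_{t'}\|\lesssim \ell(Q_t)=|I_t|/\ell(L)$ is precisely the mechanism, and your bookkeeping of constants in (ii) is sound since the constant $C$ governing $|g(\underline x)-g(\underline z)|\le C|I_t|$ is independent of $K_n$, so choosing $K_n\ge 3+2C$ closes the loop. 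One cosmetic slip: you write ``$v_t=\ecc(t)^\circ$'', but $v_t$ is a point while $\ecc(t)^\circ$ is a set; what you use (and what holds) is simply that $v_t,v_{t'}\in\ecc(t)$, which already gives the needed distance bound.
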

	
	The two-dimensional version of this Lemma \ref{Lem:Geom2} can be found in \cite[Lemma 6]{BatemanThiele} and it is straightforward to generalise it to higher dimensions. 
	
	We will always denote by $ K_n $ the constant given by these geometric Lemmas \ref{Lem:Geom} and \ref{Lem:Geom2} and making it bigger if necessary, we will assume that it is of the form $ 3^{k_n} $ for some $ k_n \in \N $. Finally, we need the following properties in which we do not have that the eccentricities are nested.
	
	\begin{lemma} 
		\label{Lem:Geom3} 
		Let $ t,t' \in \mathcal{T} $ such that $ L_{t} = L_{t'} $ and $ C>0 $ a constant. Then,
		\begin{enumerate}[label=(\roman*)]
			\item \label{Item:Geom2C} If $ \abs{ c(Q_{t}) - c(Q_{t'}) } \leq C $ and $ R_t \cap R_{t'} \neq \emptyset$, then $ \displaystyle{ R_t \subseteq R\Big( L_t , 2 \Big(1 + \frac{ \abs{I_{t}} }{ \abs{I_{t'}} } + \frac{ \sqrt{d} \ell(L) C }{ \abs{I_{t'}} } \Big) I_{t'}, v_{t'} \Big) } $.
			\item \label{Item:Geom2F} If $ \abs{ c(Q_{t}) - c(Q_{t'}) } \geq C $, then $ \displaystyle{ \Abs{ R_t \cap R_{t'} } \leq \frac{ \abs{ I_t } \abs{ I_{t'} } \ell(L)^{d-1} }{C} } $.
		\end{enumerate}
	\end{lemma}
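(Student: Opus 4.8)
The plan is to reduce both parts to elementary computations by first writing an admissible parallelepiped in convenient shear coordinates. Since the direction of a tile is $v_R=(c(Q_R)\times\{1\})'$, the hyperplane $\sigma_{v_R}$ has normal proportional to $(c(Q_R),1)\in\R^n$, so $\sigma_{v_R}=\{(\underline y,-\underline y\cdot c(Q_R)):\underline y\in\R^d\}$ and hence, by \eqref{Eq:DefParallelepiped},
\[
R(L,I,v)=\{(\underline x,x_n)\in\R^n:\ \underline x\in L,\ x_n+\underline x\cdot c(Q_R)\in I\}.
\]
Writing $c_I$ for the center of an interval $I$, the dilate $AI$ (for $A\ge1$) is the interval centered at $c_I$ of length $A|I|$, so $R(L,AI,v)=\{(\underline x,x_n):\underline x\in L,\ |x_n+\underline x\cdot c(Q_R)-c_I|\le A|I|/2\}$. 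Throughout I write $L\coloneqq L_t=L_{t'}$ and use this description; note that neither part of the lemma needs any hypothesis on the annuli or eccentricities.

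For \ref{Item:Geom2C} I would fix a point $(\underline z,z_n)\in R_t\cap R_{t'}$ and an arbitrary $(\underline x,x_n)\in R_t$ (so $\underline x,\underline z\in L$), and decompose
\[
x_n+\underline x\cdot c(Q_{t'})-c_{I_{t'}}
=\big([x_n+\underline x\cdot c(Q_t)]-[z_n+\underline z\cdot c(Q_t)]\big)
+(\underline x-\underline z)\cdot(c(Q_{t'})-c(Q_t))
+\big([z_n+\underline z\cdot c(Q_{t'})]-c_{I_{t'}}\big).
\]
The first bracket is a difference of two elements of $I_t$ (both $(\underline x,x_n)$ and $(\underline z,z_n)$ lie in $R_t$), hence has absolute value at most $|I_t|$; the middle term is bounded by $\sqrt d\,\ell(L)\,C$ since $|\underline x-\underline z|\le\sqrt d\,\ell(L)$ and $|c(Q_t)-c(Q_{t'})|\le C$; the last bracket is bounded by $|I_{t'}|/2$ because $(\underline z,z_n)\in R_{t'}$. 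Summing gives $|x_n+\underline x\cdot c(Q_{t'})-c_{I_{t'}}|\le |I_t|+\sqrt d\,\ell(L)C+|I_{t'}|/2$, which is $\le\tfrac12\cdot 2\big(1+|I_t|/|I_{t'}|+\sqrt d\,\ell(L)C/|I_{t'}|\big)|I_{t'}|$; by the coordinate description this says precisely that $(\underline x,x_n)$ lies in $R\big(L_t,2(1+|I_t|/|I_{t'}|+\sqrt d\,\ell(L)C/|I_{t'}|)I_{t'},v_{t'}\big)$, which is the claimed inclusion.

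For \ref{Item:Geom2F} I would note that the shear $\Phi(\underline x,x_n)\coloneqq(\underline x,\,x_n+\underline x\cdot c(Q_t))$ is measure preserving and, by the description above, maps $R_t\cap R_{t'}$ onto $\{(\underline x,a):\underline x\in L,\ a\in I_t,\ a-\underline x\cdot w\in I_{t'}\}$ with $w\coloneqq c(Q_t)-c(Q_{t'})$, $|w|\ge C$. Fubini (integrating in $\underline x$ first) gives $|R_t\cap R_{t'}|=\int_{I_t}\big|\{\underline x\in L:\underline x\cdot w\in a-I_{t'}\}\big|\,da$; for each $a$ the inner set is the intersection of the cube $L$ with a slab of width $|I_{t'}|/|w|\le|I_{t'}|/C$ normal to $w$, whose measure is controlled by $|I_{t'}|\,\ell(L)^{d-1}/C$ — integrate last the coordinate along which $|w_k|$ is maximal, using $|w_k|\ge|w|/\sqrt d$ — and integrating in $a\in I_t$ yields the bound. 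I expect the only mildly delicate point of the whole lemma to be this last slab–cube measure estimate and the bookkeeping of the accompanying dimensional constant; this is harmless, since the lemma is applied only with $C$ a fixed power of $3$, so an absolute constant may be absorbed into $C$ if one insists on the stated form.
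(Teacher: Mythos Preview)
Your proposal is correct and is precisely the higher-dimensional generalization the paper has in mind: the paper gives no argument of its own, merely citing the two-dimensional case \cite[Lemma~13]{BatemanThiele} and asserting that it is easy to extend, and your shear-coordinate description $R(L,I,v)=\{(\underline x,x_n):\underline x\in L,\ x_n+\underline x\cdot c(Q_R)\in I\}$ together with the telescoping identity in (i) and the slab--cube Fubini argument in (ii) is exactly that extension. Your caveat about the harmless dimensional constant in (ii) is appropriate; the lemma is only used in Lemmas~\ref{Lem:MainConstructionG} and~\ref{Lem:MainConstructionH}, where such a constant is indeed absorbed.
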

	
	The two-dimensional proof of Lemma \ref{Lem:Geom3} can be found in \cite[Lemma 13]{BatemanThiele} and again it is easy to generalise to higher dimensions. Lemmas \ref{Lem:Geom2} and \ref{Lem:Geom3} are only needed in the proofs of Lemmas \ref{Lem:MainConstructionG} and \ref{Lem:MainConstructionH}.
	
	\subsection{Trees} 
	\label{Ss:Defs}
	
	In this subsection we will define \emph{trees} which will be used in Section \ref{S:Decom} to decompose an arbitrary collection of tiles. To do so, we fix for the rest of the subsection $ \ann \in 3^{\Z} $ and a finite collection of tiles $ \P \subset \mathcal{T}_{\ann} $, where recall that $ \mathcal{T}_{\ann} $ is the collection of tiles $ t \in \mathcal{T} $ satisfying $ \ann(t) = \ann $. 
	
	\begin{definition}[$ 3^{\rho} $-trees] \label{Def:Tree} 
		Given a nonnegative integer $ \rho $ we define a \emph{$ 3^{\rho} $-tree} as a collection of tiles $ \mathbf{T} \subseteq \P $ such that there exists a \emph{top} 
		$ ( \xi_{\mathbf{T}}, R_{\mathbf{T}} ) $ with $ \xi_{\mathbf{T}} \in \mathbb{S}^d $ and $ R_{\mathbf{T}} \in \{ R_t: \, t \in \mathcal{T}_{\ann} \} $  such that
		\begin{enumerate}[label=(T\arabic*)]
			\item $ \xi_{\mathbf{T}} \in \ecc(t) $ for all $ t \in \mathbf{T} $;
			\item $ \scl(R_t) \leq \scl(R_{\mathbf{T}}) $ and $ R_t \cap 3^{\rho} R_{\mathbf{T}} \neq \emptyset $ for every $ t \in \mathbf{T} $.
		\end{enumerate}
		A $3^\rho$-\emph{tree} will be called \emph{lacunary} if $ \xi_{\mathbf{T}} \in \ecc(t) \setminus \ecc(t)^{\circ} $ for all $ t \in \mathbf{T} $, and it will be called \emph{overlapping} if $ \xi_{\mathbf{T}} \in \ecc(t)^{\circ} $.
	\end{definition}
	
	For $ \rho = 0 $ a $ 1 $-tree will be just called a tree and in this case we will omit the $ 1 $ from the notation. Note that the geometric Lemma \ref{Lem:Geom} implies that if $ \mathbf{T} $ is a $ 3^{\rho } $ tree, then
	\begin{equation}
		\label{Eq:TreeShadow}
		\bigcup_{ t \in \mathbf{T} } R_{t} \subseteq K_n 3^{\rho} R_{ \mathbf{T} }.
	\end{equation}
	Given $ t,t' \in \mathcal{T}_{\ann} $ and $ k \in \N $ we define
	\begin{equation}
		t\leq_{k} t' \qquad \overset{\mathrm{def}}{ \Longleftrightarrow} 
		\qquad 
		\ecc(t') \subseteq \ecc(t) \quad \text{and} \quad R_t \cap K_n^{k-1} R_{t'}\neq \emptyset.
	\end{equation}
	If $ k=1 $, we will just write $ t \leq t' $. We note that $ \leq $ is \emph{not} a partial order as it fails to satisfy the transitivity property. However, we can deal with the lack of transitivity by increasing the value of $ k $. More precisely, if $ t \leq t' \leq t'' $, then $ t \leq_{2} t'' $. This is an immediate consequence of the geometric Lemma \ref{Lem:Geom}, which also implies that if $ t \leq_k t' $, then $ R_t \subseteq K_n^k R_{t'} $. 
	
	Now, given a tree $\mathbf{T}$, there exists a \emph{top tile} $ \topt(\mathbf{T})=R_{\mathbf{T}}\times \omega_{\mathbf{T}} \in \mathcal{T}_{\ann} $ such that $ \xi_{\mathbf{T}} \in \ecc(\mathbf{T}) \coloneqq \ecc(\topt(\mathbf{T})) $ and $ t \leq \topt(\mathbf{T}) $ for all $ t \in \mathbf{T} $. This fact allows us to assume that the point $ \eta_{\mathbf{T}} \in Q_{\mathbf{T}} $ that satisfies $ (\eta_{\mathbf{T}} \times \{1\})' = \xi_{\mathbf{T}} $ has no triadic coordinates.
	
	\begin{remark}\label{rmrk:pairincorp}
		Some further remarks concerning $ \leq $ are in order. Firstly we note that we only define $ \leq $ for tiles $ t \in \mathcal{T}_{\ann} $ for some fixed $ \ann \in 3^{\Z} $. With this in mind we note that if $ t,t' \in \mathcal{T}_{\ann} $ are \emph{not comparable} under $ \leq $ and  $ \omega_t \cap \omega_{t'} \neq \emptyset $ then necessarily $ R_t \cap R_{t'} = \emptyset $. This basic principle will be used throughout the paper. This fails rather dramatically if $ \ann(t) \neq \ann(t') $ because the frequency (and space) components of each tile are two-parameter objects and, for example, we could have $ \omega_t \cap \omega_{t'} \neq \emptyset $ without $ \omega_t \subseteq \omega_{t'} $ or $ \omega_{t'} \subseteq \omega_{t} $.
	\end{remark}
	
	Next we state a result that will allow us to decompose a $ 3^{\rho} $-tree into $1$-trees.
	
	\begin{lemma}
		\label{Lem:Split} 
		If $\,\mathbf{T} $ is a $ 3^{\rho} $-tree with top $(\xi_\mathbf{T},R_{\mathbf T})$ there exist $O(3^{\rho n})$ disjoint trees $\mathbf{T}_{\nu}$ such that $ \mathbf{T} = \bigcup_{ \nu } \mathbf{T}_{\nu}$, and each $\mathbf T_\nu$ has top $ (\xi_{\mathbf{T}}, R_{\mathbf{T_\nu}}) $ with $ R_{\mathbf{T_\nu}} $ a congruent copy of $ R_{\mathbf{T}} $.
	\end{lemma}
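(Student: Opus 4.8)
The plan is to split the dilated top $3^{\rho}R_{\mathbf{T}}$ into $O(3^{\rho n})$ congruent translates of $R_{\mathbf{T}}$ and then sort the tiles of $\mathbf{T}$ according to which translate their space component meets. Write the top parallelepiped as $R_{\mathbf{T}}=R(L,I,v)$ in the notation of \eqref{Eq:DefParallelepiped}, so that $R_{\mathbf{T}}=(L\times\R)\cap(\sigma_{v}+I)$ and $\scl(R_{\mathbf{T}})=\ell(L)$. Since by definition $3^{\rho}R_{\mathbf{T}}=R(3^{\rho}L,3^{\rho}I,v)=(3^{\rho}L\times\R)\cap(\sigma_{v}+3^{\rho}I)$, I would partition the cube $3^{\rho}L$ into the $3^{\rho d}$ subcubes $L_{1},\dots,L_{3^{\rho d}}$ of sidelength $\ell(L)$ (which are translates of $L$) and partition the interval $3^{\rho}I$ into the $3^{\rho}$ subintervals $I_{1},\dots,I_{3^{\rho}}$ of length $\abs{I}$ (translates of $I$). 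This yields
\[
3^{\rho}R_{\mathbf{T}}=\bigcup_{j=1}^{3^{\rho d}}\bigcup_{k=1}^{3^{\rho}}\big(L_{j}\times\R\big)\cap\big(\sigma_{v}+I_{k}\big)=\bigcup_{j,k}R(L_{j},I_{k},v),
\]
a partition of $3^{\rho}R_{\mathbf{T}}$ into exactly $3^{\rho d}\cdot 3^{\rho}=3^{\rho n}$ parallelepipeds, and one checks that each $R(L_{j},I_{k},v)$ is a translate, hence a congruent copy, of $R_{\mathbf{T}}=R(L,I,v)$.

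Next, for each $t\in\mathbf{T}$ I would invoke property (T2) of Definition~\ref{Def:Tree}, which gives $R_{t}\cap 3^{\rho}R_{\mathbf{T}}\neq\emptyset$; picking a point of this intersection, it lies in a unique piece $R(L_{j},I_{k},v)$, and I assign to $t$ that index $\nu(t)=(j,k)$. Setting $\mathbf{T}_{\nu}\coloneqq\{t\in\mathbf{T}:\nu(t)=\nu\}$ produces pairwise disjoint subcollections with $\mathbf{T}=\bigcup_{\nu}\mathbf{T}_{\nu}$ and at most $3^{\rho n}$ nonempty ones. It then remains to verify that each nonempty $\mathbf{T}_{\nu}$, with $\nu=(j,k)$, is a tree in the sense of Definition~\ref{Def:Tree} with $\rho=0$, top $(\xi_{\mathbf{T}},R_{\mathbf{T}_{\nu}})$, and $R_{\mathbf{T}_{\nu}}\coloneqq R(L_{j},I_{k},v)$. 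Property (T1) is immediate since $\mathbf{T}_{\nu}\subseteq\mathbf{T}$, so $\xi_{\mathbf{T}}\in\ecc(t)$ for each $t\in\mathbf{T}_{\nu}$. For (T2) with $\rho=0$: on one hand $\scl(R_{t})\leq\scl(R_{\mathbf{T}})=\ell(L)=\scl(R_{\mathbf{T}_{\nu}})$ by the original (T2); on the other hand $R_{t}\cap R_{\mathbf{T}_{\nu}}\neq\emptyset$ by the choice of $\nu(t)$. This is exactly what is required.

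This argument is essentially bookkeeping, and I do not expect a genuine obstacle. The only point deserving minor care is the clean splitting of $3^{\rho}R_{\mathbf{T}}$: it works because the parallelepiped $R(L,I,v)$ is a product-type object whose horizontal cube $L$ and vertical interval $I$ scale independently under $R\mapsto 3^{\rho}R=R(3^{\rho}L,3^{\rho}I,v)$, so that no near-horizontality of $\sigma_{v}$ is needed here. If one insists that the tops $R_{\mathbf{T}_{\nu}}$ be literal space components of tiles in $\mathcal{T}_{\ann}$, one may further pass to appropriately shifted triadic grids; this is harmless, as every subsequent use of Lemma~\ref{Lem:Split} relies only on properties (T1) and (T2) of the resulting collections.
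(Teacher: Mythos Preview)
Your proof is correct and is precisely the natural argument; the paper omits the proof entirely, calling it easy, so there is nothing to compare against. Your remark about the tops $R_{\mathbf{T}_\nu}$ not literally lying in $\{R_t:\,t\in\mathcal{T}_{\ann}\}$ is well taken, and your observation that the subsequent uses of the lemma depend only on (T1)--(T2) and on the congruence and disjointness of the $R_{\mathbf{T}_\nu}$ is accurate.
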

	
	We omit the easy proof. Finally, we will use the following definition to encode orthogonality properties of lacunary trees.
	
	\begin{definition}[strongly disjoint families] 
		\label{Def:StrongDisjFam} 
		Let $\mathfrak{T}$ be a family of lacunary trees with $ \mathbf{T} \subset \mathbb{P} \subseteq \mathcal{T}_{\ann} $ for every $ \mathbf{T} \in \mathfrak{T} $. The family $ \mathfrak{T} $ is called strongly disjoint if for $ t \in \mathbf{T} \in \mathfrak{T} $ and $ t' \in \mathbf{T}' \in \mathfrak{T} $ with $ \mathbf{T} \neq \mathbf{T}' $ and $ \ecc(t) \subseteq \ecc(t')^{\circ} $ then $ R_{t'} \cap  K_n^2 R_{\mathbf{T}}= \emptyset $.
	\end{definition}

	\subsection{Adapted classes}
	\label{Ss:Adapted} 
	
	In this subsection, we define the class of functions adapted to each tile $ t \in \mathcal{T} $. Recall the big number $ M > 0 $ we have fixed in the beginning of the section and let $ \Theta_M $ be the unit ball of the Banach space of functions
	\begin{equation}
		u \in \mathcal C^{M}(\R^n), \qquad \norm{u}{ \star,M }\coloneqq \sup_{0\leq \abs{\beta} \leq M} \Norm{ \left\langle x \right\rangle^{M} \partial_x^\beta u(x) }{ L^{\infty}(\R^n) }<\infty,
	\end{equation}
	where $ \langle x \rangle \coloneqq (1+\abs{x}^2)^{\frac{1}{2}} $. As commented before, $ M $ will be chosen sufficiently large depending on the integrability exponent $ p $ and the dimension. We will often use the special bump function $ \chi^M \in \mathcal C^\infty $ given by $ \chi^M(x) \coloneqq \langle x \rangle^{-M} $. It is convenient to associate to each parallelepiped $ R = R(L_R,I_R,v_R) $ the translation and dilation operator
	\begin{equation}
		\Dil_{R}^{(p)} f (x) \coloneqq \frac{1}{\abs{R}^{1/p}} f\left( \frac{ \Pi_{ \sigma_{ v_{R} } }(x-c(R)) }{ \ell(L_{R}) } +
		\frac{ \Pi_{ e_n }(x-c(R))}{ \ell(I_{R}) } \right), \qquad x\in \R^n. 
	\end{equation}
	Furthermore, we define the $L^p$-normalized bump function adapted to the parallelepiped $R$
	\begin{equation}
		\label{Eq:DefBumpParallelepiped}
		\chi_{R,p}^{M} (x) \coloneqq \Dil_{R}^{(p)} \chi^M(x).
	\end{equation}
	Finally, for a tile $t \in \mathcal{T}$, define the modulation operator $  \Mod_t g(x) \coloneqq e^{ 2 \pi i \langle x, v_t \ann(t) \rangle} g(x) $, for every $ x \in \R^n $. Now, we are ready to define the adapted classes of wave packets.
	
	Given a tile $ t = R_t \times \omega_t \in \mathcal{T} $ and a large positive integer $ M \gg 1 $, we define $ \mathcal{F}_t^M $ as the collection of functions $ \varphi = \Mod_{t} \Dil_{R_t}^{(2)} \Phi $ for some $ \Phi \in \Theta_M $ such that $ \supp \widehat {\varphi} \subseteq \omega_t^{\circ} $. Furthermore, define $\mathcal{A}_t^{M}$ as the collection of functions $ \vartheta = \vartheta (\cdot,\sigma) $ on $\R^{n} \times \Sigma_{\gamma} $ satisfying $ \vartheta(\cdot,\sigma) \in \mathcal{F}_t^M $ for all fixed $ \sigma \in \Sigma_{\gamma} $ and $ \vartheta(\cdot,\sigma) = 0 $ for all $ \sigma \in \Sigma_{\gamma} $ such that $ v_\sigma \notin \alpha_t$ and satisfying the adaptedness condition
	\begin{equation}
		\Abs{ \vartheta ( \cdot,\sigma) - \vartheta (\cdot,\sigma') } \leq  \max\left\{ \scl(t) \ann(t) \dist(\sigma,\sigma') , \, \frac{1}{\log( e+ \left[\dist(\sigma,\sigma') \right]^{-1})} \right\} \chi_{R_t,2}^M
	\end{equation}
	for all $\sigma,\sigma' \in \Gr(d,n)$ with $ \scl(t) \ann(t) \dist(\sigma ,\sigma') \lesssim  1 $.
	
	On the other hand, define the tile coefficient maps 
	\begin{equation}
		\label{Eq:CoeffDef1}
		F_M[f](t) \coloneqq \sup_{\varphi \in \mathcal {F}_{t}^M } \Abs{ \langle f, \varphi\rangle }
	\end{equation}
	and, for $ \apl{ \sigma }{ \R^n }{ \Sigma_{ \gamma } } $ and $ 1 \leq \tau \leq 3^{\kappa d}-1 $,
	\begin{equation}
		\label{Eq:CoeffDef2}
		A_{\sigma,\tau,M}[g] (t) \coloneqq \sup_{\vartheta \in \mathcal {A}_{t}^M } \Abs{ \left\langle g, \vartheta\left(\cdot, \sigma(\cdot) \right) \indf{\alpha_{t,\tau}}\left(v_{\sigma(\cdot)}\right)\right\rangle }.
	\end{equation}
	
	\subsection{Discretization of the single annulus operator}
	\label{Ss:Model}
	
	Let $ \P \subset\mathcal{T}_{\ann} $, where recall that $ \mathcal{T}_{\ann} $ is the collection of tiles with $ \ann(t) = \ann \in 3^{\Z} $. Then, define the model bisublinear operator
	\begin{equation}
		\label{Eq:ModelOperDef}
		\Lambda_{\P;\sigma,\tau,M} (f,g) 
		\coloneqq \sum_{t\in \P}F_M[f](t) A_{\sigma,\tau,M}[g] (t).
	\end{equation}
	Here the coefficient maps $ F_M[f] $ and $ A_{\sigma,\tau,M}[g] $ are defined in \eqref{Eq:CoeffDef1} and \eqref{Eq:CoeffDef2} respectively. We point out that $ F_M[f](t) $ represents the localization of $ f $ to the tile $ t $ as the inner product between $ f $ and the fast decaying function $ \varphi_t $ adapted to the tile $ t $; on the other hand, $ A_{\sigma,\tau,M}[g] (t) $ represents the localization of $ g $ to the tile $ t $ at the appropriate scale and direction of the singular operator. This is the analogue to the model operator of \cite[Inequality (3)]{LT}.
	
	We have the following bound for the model operator.
		
	\begin{proposition}
		\label{Prop:Model}
		Let $ \apl{ \sigma }{ \R^n }{ \Sigma_{1-\varepsilon} } $ be a measurable function depending only on the first $ n-1 $ variables. Then, there exists $ M=M(n,p) $ sufficiently large such that
		\begin{equation}
			\label{Eq:DesiredModelEst}
			\Lambda_{\P; \sigma, \tau_{0}, M} (f\indf{F},g\indf{E})
			\lesssim \abs{F}^{\frac{1}{p}} \abs{E}^{1-\frac{1}{p}} , \qquad 1<p<\infty,
		\end{equation}
		uniformly over all $ f,g \in L^\infty_0(\R^n)$ with the normalization $ \norm{f}{\infty} = \norm{g}{\infty} = 1 $, all sets $ F,E \subset \R^n $ of finite measure, a fixed choice of $ \tau_{0} \in \{1,\ldots,3^{\kappa d}-1\} $, and a finite collection of tiles $ \mathbb{P} \subset \mathcal{T}_{\ann} $.
	\end{proposition}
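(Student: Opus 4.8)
The plan is to prove Proposition~\ref{Prop:Model} in Section~\ref{S:Decom} by the Lacey--Thiele-style organization of the tile collection $\P$ into trees, but driven by two \emph{intertwined} selection algorithms---one for size, one for density---together with the single tree estimate of \cite[Lemma~6.14]{BDPPR} and the Kakeya-type maximal estimate of Section~\ref{S:MaxLemma}. Using the normalization $\norm f\infty=\norm g\infty=1$ we may freely bound $\norm{f\indf F}2^2\le\abs F$ and $\norm{g\indf E}2^2\le\abs E$. For a subcollection $\P'\subseteq\P$ I would introduce its \emph{size}, the largest $\ell^2$-normalized mass $\big(\abs{R_{\mathbf T}}^{-1}\sum_{t\in\mathbf T}F_M[f\indf F](t)^2\big)^{1/2}$ over lacunary trees $\mathbf T\subseteq\P'$, and its \emph{density}, the largest $\abs{R_t}^{-1}$-averaged amount of $\indf E$ seen by $A_{\sigma,\tau_0,M}[g\indf E](t)$ through the directional window $\alpha_{t,\tau_0}$ (this is where the vector field $\sigma(\cdot)$, depending only on the first $d$ variables, enters); the point is that $\P$ itself has size and density $\lesssim1$. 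The reduction is then to the \emph{single tree estimate}
\begin{equation*}
 \Lambda_{\mathbf T;\sigma,\tau_0,M}(f\indf F,g\indf E)\lesssim 2^{-k}\,2^{-j}\,\abs{R_{\mathbf T}}
\end{equation*}
for a tree $\mathbf T$ of size $\sim2^{-k}$ and density $\sim2^{-j}$, which is \cite[Lemma~6.14]{BDPPR} used as a black box, plus bounds on the number of tree tops.

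Next I would run the two decompositions. \emph{Density selection}: iteratively extract maximal trees whose tops carry density $\sim2^{-j}$, peeling off strata $\P_j$ with density $\lesssim2^{-j}$; maximality forces a Carleson/finite-overlap property on the selected tops, yielding the naive count $\sum_{\mathbf T}\abs{R_{\mathbf T}}\lesssim2^{j}\abs E$. In the present single-annulus situation one does better: the Lipschitz/Kakeya maximal estimate of Section~\ref{S:MaxLemma} controls the measure of the union of the shadows $K_n R_{\mathbf T}$ of tops that \emph{simultaneously} have density $\sim2^{-j}$ and size $\lesssim2^{-k}$ by a smaller power of the density, giving (schematically) $\sum_{\mathbf T}\abs{R_{\mathbf T}}\lesssim2^{j}(2^{k})^{\theta}\abs E$ for a fixed gain $\theta>0$---this is exactly what opens the range $p\le2$. \emph{Size selection}: iteratively extract maximal lacunary trees of size $\sim2^{-k}$ to form strata $\P_k$ with size $\lesssim2^{-k}$, arranged so that the collection of tops is \emph{strongly disjoint} in the sense of Definition~\ref{Def:StrongDisjFam}; the orthogonality estimate for lacunary trees then gives $\sum_{\mathbf T}\abs{R_{\mathbf T}}\lesssim2^{2k}\norm{f\indf F}2^2\lesssim2^{2k}\abs F$. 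Crucially, the size selection must be performed \emph{within} the density strata so that every tree handed to the maximal estimate has controlled size and density at once; this is the subtle entanglement flagged in Subsection~\ref{Ss:IntroNovelties}, and it is the reason for the more elaborate tile decomposition (peripheral children, the directional supports $\alpha_{t,\tau}$) rather than the plain splitting used for Carleson's theorem.

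With these pieces, I would fix dyadic levels $2^{-j}$ (density) and $2^{-k}$ (size), apply the single tree estimate term by term over $\P_{j,k}$, and sum using the three top-counting bounds, so that
\begin{equation*}
 \Lambda_{\P_{j,k};\sigma,\tau_0,M}(f\indf F,g\indf E)\lesssim 2^{-j}\,2^{-k}\,\min\!\bigl\{\,2^{j}(2^{k})^{\theta}\abs E,\ 2^{2k}\abs F\,\bigr\}.
\end{equation*}
Splitting the $(j,k)$-sum at the balancing point between the two terms and summing the resulting geometric series (the decay being genuine thanks to $\theta>0$ on the density side, once the size range is cut appropriately) produces $\abs F^{1/p}\abs E^{1-1/p}$ for every $1<p<\infty$, with the implicit constant depending on $p$ and on $\varepsilon$ through the $\gamma$-dependence of the tile geometry fixed in Subsection~\ref{Ss:Reductions}. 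For $p>2$ the naive density count $\sum\abs{R_{\mathbf T}}\lesssim2^{j}\abs E$ already suffices and the maximal estimate is not invoked; it is only in descending below $p=2$ that the gain $\theta$ is essential.

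I expect the main obstacle to be the improved density counting, i.e. setting up and exploiting the Kakeya-type maximal estimate in the codimension $1$, higher-dimensional setting (Section~\ref{S:MaxLemma}): one must show the shadows of tree tops of density $\sim\delta$ and bounded size cannot overlap too much, which is what forces the refined tile splitting above, as in \cite{Bateman2013Revista}. Secondary difficulties are the bookkeeping keeping the size and density selections compatible, and the two-parameter nature of the frequency components $\omega_t$, handled via the geometric Lemmas~\ref{Lem:Geom}--\ref{Lem:Geom3}, the relation $\abs{R_t}\abs{\omega_t}\simeq1$, and Remark~\ref{rmrk:pairincorp}; the single-tree input from \cite[Lemma~6.14]{BDPPR} and the lacunary orthogonality lemma are then used essentially without modification.
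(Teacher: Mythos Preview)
Your overall architecture is exactly that of the paper: the intertwined density/size decomposition (Proposition~\ref{Prop:CombinedDensitySize}), the single tree estimate (Lemma~\ref{Lem:Tree}), three top-counting bounds for $\sum_{\mathbf T}|R_{\mathbf T}|$, and a double dyadic summation. The density count $\lesssim\delta^{-1}|E|$ and the size count $\lesssim\sigma^{-2}|F|$ are correct, and your remark that for $p>2$ these two alone suffice matches the paper.

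However, your formulation of the maximal estimate is wrong in a way that would not close the case $p<2$. You write (in your notation, $\delta=2^{-j}$, $\sigma=2^{-k}$)
\[
\sum_{\mathbf T}|R_{\mathbf T}|\lesssim \delta^{-1}\sigma^{-\theta}|E|,
\]
describing it as ``a smaller power of the density''. Since $\sigma\lesssim1$, this is never better than the plain density bound $\delta^{-1}|E|$, so it gives no gain; your two-term minimum $\min\{\delta^{-1}\sigma^{-\theta}|E|,\sigma^{-2}|F|\}$ then only reproduces the $p\ge2$ outcome. The actual maximal estimate (Proposition~\ref{Prop:DecomMax}) is
\[
\sum_{\mathbf T}|R_{\mathbf T}|\lesssim_\varepsilon \frac{|F|^{1-\varepsilon}|E|^\varepsilon}{\delta\,\sigma^{1+\varepsilon}},
\]
and the essential feature is the appearance of $|F|$, not $|E|$. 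The mechanism is: big size forces big $F$-average on dilates of the tree top, namely $|F\cap CR_{\mathbf T}|\gtrsim\sigma^{1+\varepsilon}|CR_{\mathbf T}|$ (Lemma~\ref{Lem:MaxBigSizeBigInter}); big density forces $|v_\sigma^{-1}(\alpha_{t,\tau_0})\cap CR_t|\gtrsim\delta|CR_t|$; and the Kakeya-type Proposition~\ref{Prop:MaxKey} then bounds $\sum|R_t|\lesssim|F|/(\mu\lambda^{1+\varepsilon})$ with $\mu\simeq\delta$ and $\lambda\simeq\sigma^{1+\varepsilon}$. With this correct third estimate, the summation in \S\ref{Ss:ProofModel} for $p<2$, $|F|\le|E|$ splits the $(j,k)$-sum into four regions (using all three bounds, not two) and yields $|F|^{1/p}|E|^{1-1/p}$.
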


	The authors in \cite{BDPPR} proved that there exists $A=A(M,n)$ such that, if $\mathscr M$ satisfies \eqref{Eq:MihlinFamDef} for this choice of $A$ then the proof of Theorem~\ref{Thm:MainSingleBand} is reduced to proving Proposition \ref{Prop:Model}. We remark that the decay and smoothness parameter $M$ will have to be chosen sufficiently large depending on the dimension and the integrability index $p$; for Theorem~\ref{Thm:MainSingleBand} this means that $M$, and thus also $A$, tend to $\infty$ as $p\to 1$. The same happens for Theorem~\ref{Thm:Main} when $p\to 3/2$ and $p\to \infty$.

	\subsection{Orthogonality estimates for lacunary trees} 
	\label{Ss:Orthogonality}
	
	To finish this section, we recall two standard orthogonality estimates for lacunary trees. The proofs can be found  in \cite[Lemmas 6.6 and 6.9]{BDPPR}. We recall that the definition of the coefficient map $ F_N[f]$ was given in \eqref{Eq:CoeffDef1}.

	\begin{lemma} \label{Lem:OrthoLacTree} 
		Let $\mathfrak{T}$ be a strongly disjoint family of lacunary trees, $\mathbb T\coloneqq \cup_{t\in \mathfrak{T}}t$, and $\mathbf{T}$ a lacunary tree. Then, if $\,2n \leq K\leq  N/4$ there holds
		\begin{enumerate}[label=\normalfont{(\arabic*)}]
			\item \label{Item:LemOrtho2} 
			$ \displaystyle{ \sum_{t\in\mathbf{T}} F_N[f](t)^2 
				\lesssim \int_{\R^n} \abs{f}^2 \chi_{ R_{\mathbf{T}},\infty}^{2K} 
				\lesssim \int_{\R^n} \abs{f}^2 } $.
			
			\item \label{Item:LemOrthoStrong} 
			$ \displaystyle{ \bigg( \sum_{\mathbf{T}\in\mathfrak T} \sum_{t\in\mathbf{T}} F_N[f](t)^2 \bigg)^{1/2}
				\lesssim \norm{ f }{L^2(\R^n)} + \bigg( \sup_{ t\in\T} \frac{F_N[f](t)}{\abs{ R_t } ^{1/2}} \bigg( \sum_{\mathbf{T}\in\mathfrak T} \abs{ R_{\mathbf{T}} }\bigg)^{1/2} \bigg)^{1/3} \norm{f}{L^2(\R^n)}^{2/3} } $.
		\end{enumerate}
	\end{lemma}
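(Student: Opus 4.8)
The plan is to prove both estimates by the classical almost‑orthogonality / $TT^{\star}$ scheme for wave packets; the only structural inputs are the frequency disjointness forced by lacunarity, the spatial confinement \eqref{Eq:TreeShadow}, and the strong disjointness hypothesis of Definition~\ref{Def:StrongDisjFam}. First a measurable selection: for each tile $t$ pick (measurably, exhausting a countable dense subset of $\mathcal F_t^N$) a near‑extremizer $\varphi_t\in\mathcal F_t^N$ with $F_N[f](t)\le 2\Abs{\langle f,\varphi_t\rangle}$, and set $c_t\coloneqq\Abs{\langle f,\varphi_t\rangle}$; it then suffices to bound the two quantities with $F_N[f](t)$ replaced by $c_t$. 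Recall that $\supp\widehat{\varphi_t}\subseteq\omega_t^\circ$, that $\varphi_t$ is $L^2$‑normalized up to the absolute constant from $\Theta_N$, and that $\Abs{\varphi_t}\lesssim\chi_{R_t,2}^N$ with all derivatives controlled accordingly.

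\emph{Single tree} \ref{Item:LemOrtho2}. Since $\mathbf T\subset\mathcal T_{\ann}$ is lacunary, all its tiles share the annulus, the caps $\{\ecc(t)\}_{t\in\mathbf T}$ are totally ordered by inclusion and all contain $\xi_{\mathbf T}$, while $\xi_{\mathbf T}\notin\ecc(t)^\circ$. Hence for $t\ne t'$ in $\mathbf T$: if $\scl(t)\ne\scl(t')$ then $\ecc(t)^\circ\cap\ecc(t')^\circ=\emptyset$, so $\omega_t^\circ\cap\omega_{t'}^\circ=\emptyset$ and $\langle\varphi_t,\varphi_{t'}\rangle=0$; if $\scl(t)=\scl(t')$ then $\ecc(t)=\ecc(t')$ and $\varphi_t,\varphi_{t'}$ are translates of a single $L^2$‑normalized bump. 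Thus $\{\varphi_t\}_{t\in\mathbf T}$ is a Bessel system: $\bigl\|\sum_t a_t\varphi_t\bigr\|_2^2$ decouples over scales by frequency disjointness, and at a fixed scale a Schur estimate in the position lattice (legitimate because $N\ge 4K\ge 8n$) bounds it by $\sum_t\abs{a_t}^2$. For the weighted form, dualize: with suitable unimodular $\varepsilon_t$,
\begin{equation*}
\sum_{t\in\mathbf T}c_t^2=\Bigl\langle f,\ \sum_{t\in\mathbf T}\varepsilon_t c_t\varphi_t\Bigr\rangle\le\Bigl(\int_{\R^n}\abs{f}^2\chi_{R_{\mathbf T},\infty}^{2K}\Bigr)^{1/2}\Bigl(\int_{\R^n}\Bigl|\sum_{t\in\mathbf T}\varepsilon_t c_t\varphi_t\Bigr|^2\chi_{R_{\mathbf T},\infty}^{-2K}\Bigr)^{1/2}.
\end{equation*}
By \eqref{Eq:TreeShadow}, $R_t\subseteq K_n R_{\mathbf T}$ for every $t\in\mathbf T$, so on the relevant region $\chi_{R_{\mathbf T},\infty}^{-2K}$ grows at most polynomially of degree $2K\le N/2$, absorbed by the order‑$N$ decay of $\varphi_t\overline{\varphi_{t'}}$; running the same Bessel/Schur bound against this inverse weight gives $\int\bigl|\sum\varepsilon_t c_t\varphi_t\bigr|^2\chi_{R_{\mathbf T},\infty}^{-2K}\lesssim\sum_{t}c_t^2$, and dividing yields the first inequality. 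The second is immediate from $\chi_{R_{\mathbf T},\infty}^{2K}\le 1$.

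\emph{Strongly disjoint family} \ref{Item:LemOrthoStrong}. Set $S\coloneqq\bigl(\sum_{\mathbf T\in\mathfrak T}\sum_{t\in\mathbf T}c_t^2\bigr)^{1/2}$, $\Delta\coloneqq\sup_t c_t/\abs{R_t}^{1/2}$ and $\mathcal R\coloneqq\bigl(\sum_{\mathbf T\in\mathfrak T}\abs{R_{\mathbf T}}\bigr)^{1/2}$, so the claim is $S\lesssim\norm{f}{L^2(\R^n)}+(\Delta\mathcal R)^{1/3}\norm{f}{L^2(\R^n)}^{2/3}$. Dualizing,
\begin{equation*}
S^2=\Bigl\langle f,\ \sum_{\mathbf T\in\mathfrak T}\sum_{t\in\mathbf T}\varepsilon_t c_t\varphi_t\Bigr\rangle\le\norm{f}{L^2(\R^n)}\,\Bigl\|\sum_{\mathbf T\in\mathfrak T}\sum_{t\in\mathbf T}\varepsilon_t c_t\varphi_t\Bigr\|_{L^2(\R^n)},
\end{equation*}
and expanding $\bigl\|\sum\varepsilon_t c_t\varphi_t\bigr\|_2^2$ produces diagonal, same‑tree and cross‑tree contributions. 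The first two are $\lesssim S^2$ by the Bessel property of \ref{Item:LemOrtho2}. In the cross sum only pairs $t\in\mathbf T$, $t'\in\mathbf T'$, $\mathbf T\ne\mathbf T'$, with $\omega_t^\circ\cap\omega_{t'}^\circ\ne\emptyset$ survive; at the fixed annulus this forces the eccentricities to be nested, hence (up to the borderline case of equal caps, absorbed into the Bessel count) $\ecc(t)\subseteq\ecc(t')^\circ$ or $\ecc(t')\subseteq\ecc(t)^\circ$, and Definition~\ref{Def:StrongDisjFam} together with \eqref{Eq:TreeShadow} then forces $R_t$ and $R_{t'}$ to be quantitatively separated at the scale of the coarser of $R_{\mathbf T},R_{\mathbf T'}$. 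Exploiting the order‑$N$ decay of the wave packets, summing the geometric series over the scales of the $t'$, and using $c_{t'}\le\Delta\abs{R_{t'}}^{1/2}$, one obtains
\begin{equation*}
\sum_{\substack{t\in\mathbf T,\ t'\in\mathbf T'\\ \mathbf T\ne\mathbf T'}}c_t c_{t'}\Abs{\langle\varphi_t,\varphi_{t'}\rangle}\ \lesssim\ \Delta\sum_{\mathbf T\in\mathfrak T}\abs{R_{\mathbf T}}^{1/2}\Bigl(\sum_{t\in\mathbf T}c_t^2\Bigr)^{1/2}\ \le\ \Delta\,\mathcal R\,S
\end{equation*}
by Cauchy--Schwarz in $\mathbf T$. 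Hence $\bigl\|\sum\varepsilon_t c_t\varphi_t\bigr\|_2^2\lesssim S^2+\Delta\mathcal R\,S$, so $S^2\lesssim\norm{f}{L^2(\R^n)}\bigl(S+(\Delta\mathcal R S)^{1/2}\bigr)$; absorbing $\norm{f}{L^2}S$ into $S^2$ and applying Young's inequality with exponents $(4,\tfrac43)$ to the term $\norm{f}{L^2}(\Delta\mathcal R)^{1/2}S^{1/2}$ gives $S^2\lesssim\norm{f}{L^2(\R^n)}^2+\norm{f}{L^2(\R^n)}^{4/3}(\Delta\mathcal R)^{2/3}$, which is \ref{Item:LemOrthoStrong}.

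The main obstacle is the cross‑tree interaction bound: one must extract from Definition~\ref{Def:StrongDisjFam} the precise quantitative separation between $t\in\mathbf T$ and those tiles $t'$ of other trees whose frequency support meets $\omega_t^\circ$, treat carefully the degenerate case where $\ecc(t)$ and $\ecc(t')$ coincide, and organize the multi‑scale sum over such $t'$ so that the order‑$N$ tails of the wave packets sum to a convergent geometric series delivering exactly the gain $\Delta\abs{R_{\mathbf T}}^{1/2}(\sum_{t\in\mathbf T}c_t^2)^{1/2}$ per tree. This, together with the weighted Schur test in \ref{Item:LemOrtho2}, is where the hypotheses $2n\le K\le N/4$ and the largeness of $N$ are consumed.
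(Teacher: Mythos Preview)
The paper does not actually prove this lemma: it states that the proofs can be found in \cite[Lemmas 6.6 and 6.9]{BDPPR} and moves on. Your proposal supplies the classical $TT^\star$/Bessel argument that those references carry out, so in that sense you are following the same route as the literature the paper defers to.

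Your treatment of \ref{Item:LemOrtho2} is correct: the frequency disjointness across scales in a lacunary tree (via $\xi_{\mathbf T}\notin\ecc(t)^\circ$ together with the grid property and the scale separation $\kappa$), the Schur bound at fixed scale, and the absorption of the inverse weight $\chi_{R_{\mathbf T},\infty}^{-2K}$ by the order-$N$ tails under $2K\le N/2$ are exactly the mechanism in \cite{BDPPR}. The final algebra in \ref{Item:LemOrthoStrong} is also fine.

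The one place where your write-up is thinner than a full proof is the cross-tree bound
\[
\sum_{\substack{t\in\mathbf T,\ t'\in\mathbf T'\\ \mathbf T\ne\mathbf T'}}c_t c_{t'}\Abs{\langle\varphi_t,\varphi_{t'}\rangle}\ \lesssim\ \Delta\sum_{\mathbf T\in\mathfrak T}\abs{R_{\mathbf T}}^{1/2}\Bigl(\sum_{t\in\mathbf T}c_t^2\Bigr)^{1/2}.
\]
You correctly identify that $\omega_t^\circ\cap\omega_{t'}^\circ\ne\emptyset$ at fixed annulus forces (after handling the equal-cap case) $\ecc(t)\subseteq\ecc(t')^\circ$ up to symmetry, whence Definition~\ref{Def:StrongDisjFam} gives $R_{t'}\cap K_n^2 R_{\mathbf T}=\emptyset$. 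What is left implicit is the organization of the sum: one fixes the \emph{finer} tile $t\in\mathbf T$, observes that the admissible $t'$ have $\scl(t')\ge\scl(t)$ and, at each fixed scale of $t'$, have pairwise disjoint spatial components lying outside $K_n^2 R_{\mathbf T}\supseteq K_n R_t$; the order-$N$ decay of $\varphi_t$ then sums the $t'$ at each scale to $O(1)$ and the scales geometrically, producing the factor $\Delta\abs{R_t}^{1/2}$ per $t$. Summing over $t\in\mathbf T$ by Cauchy--Schwarz and \eqref{Eq:TreeShadow} gives the displayed line. You flag this as the main obstacle, which is accurate; spelling out the scale-by-scale disjointness of the $R_{t'}$ and why the equal-eccentricity case folds into the Bessel count would make the argument complete.
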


	\section{Decompositions of tiles and proof of the estimate for the single annulus model operator} 
	\label{S:Decom}
	
	In this section we prove Proposition~\ref{Prop:Model}, which is the desired estimate \eqref{Eq:DesiredModelEst} for the model operator. As in the classical time-frequency proofs, for example in \cite{LT}, the local $ L^p $-case for $ p > 2 $ is somewhat easier and relies on the standard size and density estimates produced by the size and density lemmas, respectively. In the classical proofs the case $ p \leq 2 $ is typically dealt with by removing exceptional sets in the restricted weak-type estimates. Instead, here we rely on an argument going back to \cites{Bateman2013Revista,LaceyLiMemoirs}, involving a maximal estimate for a Kakeya-type maximal function; this maximal function bound provides improved estimates for collections of trees which simultaneously have big size and density. See the rightmost estimate in \eqref{Eq:DecomEstimatesDS} and corresponding proof in Section \ref{S:MaxLemma}.
	
	For the rest of this section we will take $ d=n-1 $ and fix a finite collection of tiles $ \P \subseteq \mathcal{T}_{\ann} $ where recall that $ \mathcal{T}_{\ann} $ is the collection of tiles with a fixed annulus $ \ann \in 3^{\Z} $. We also fix two sets $ E,F \subset \R^n $ of finite measure, two functions $ f,g \in L^\infty_0(\R^n)$ with $ \norm{f}{\infty} = \norm{g}{\infty} = 1 $ and a $ \tau_{0} \in \{1,\ldots,3^{\kappa d}-1\} $, which remain fixed throughout the rest of the section.

	\subsection{Density and size of a collection of tiles} 
	\label{Sss:DefsDenseSize}
	
	First, we have to define two gauges associated to any collection of tiles. 
	
	\begin{definition}[Size] 
		Let $ \P \subset \mathcal{T}_{\ann} $ be a finite collection of tiles. For $ f \in L_0^{\infty}(\R^n)$, define
		\begin{equation} \label{Eq:SizeDef}
			\size(\mathbb{Q}) 
			\coloneqq \sup_{\substack{ \mathbf{T}\text{ lac. tree} \\ \mathbf{T} \subseteq \mathbb{Q}}} \bigg(\frac{1}{\abs{ R_{\mathbf{T}} }} \sum_{t\in{\mathbf{T}}} F_{10n}[f](t)^2 \bigg)^{1/2}, \qquad \mathbb{Q} \subseteq \mathbb{P} .
		\end{equation}
	\end{definition}
	
	For the next definition, we need the following notation, where the directional support $ \alpha_t $ of a tile $ t $ was defined in \eqref{Eq:DirecSuppDef}
	\begin{equation}
		\label{Eq:DefSigma}
		E_t \coloneqq E \cap v_{ \sigma }^{-1}(\alpha_{t,\tau_0}) \coloneqq E \cap \left\{ x \in \R^n : \, v_{\sigma(x)} \in \alpha_{t,\tau_0} \right\}.
	\end{equation}
	
	\begin{definition}[Density] Let $ \P \subset \mathcal{T}_{\ann} $ be a finite collection of tiles and $t\in\P$. For the set $ E \subset \R^n $ of finite measure that we have fixed in Subsection \ref{Ss:Model} and $ \mathbb{Q} \subseteq \mathbb{P} $, define
		\begin{equation} \label{Eq:DenseDef}	
			\dense(t) \coloneqq \int_{E_t} \chi_{R_t,1}^{10n}, \qquad 
			\densesup(t)  \coloneqq \sup_{ t' \in \mathcal{T}_{\ann}, \, t' \geq t } \dense (t'), \qquad
			\densesup(\mathbb{Q}) \coloneqq \sup_{t\in \mathbb{Q}}\densesup(t),
		\end{equation}
		where recall the definition of $ \chi_{R_t,1}^{10n} $ in \eqref{Eq:DefBumpParallelepiped}.
	\end{definition}

	\subsection{A general decomposition algorithm}
	\label{Ss:DecomAlgorithm}
	
	We explain here a useful algorithm that allows us to sort any collection of tiles $ \T \subseteq \mathcal{T}_{\ann} $ into subcollections with pairwise incomparable maximal spatial components.
	
	\begin{lemma}
		\label{Lem:DecomAlgorithm}
		Let $ \T \subseteq \mathcal{T}_{\ann} $ be a collection of tiles and $ \mu \geq 0 $ be a fixed nonnegative integer. There exists $ \{ P_j \}_{j=1}^{N} \subseteq \T $ and disjoint collections $ \T_{\mu}(P_j) \subseteq \T $ such that
		\begin{equation}
			\label{Eq:SelectionIncomparableDecom}
			\T = \bigcup_{ j=1 }^{N} \T_{\mu}(P_j)
		\end{equation}
		and the following hold:
		\begin{enumerate}[label=\normalfont{(\roman*)}] \setlength\itemsep{.5em}
			\item \label{Item:DecomAlgorithm1} Each $ \T_{\mu}(P_j) $ is a $ K_n 3^{\mu} $-tree with top $ P_j $. Furthermore, if $ \mu = 0 $ then each $ \T_{\mu}(P_j) $ is a $1$-tree.
			
			\item \label{Item:DecomAlgorithm2} If $\, \T $ consists of pairwise incomparable tiles, then for each fixed $ j \in \{1,\ldots,N\} $ the collection $ \{ R_t :\, t \in \T_{\mu}(P_j) \} $ is pairwise disjoint.
			
			\item \label{Item:DecomAlgorithm3} For each fixed $ j \in \{1,\ldots,N\} $ there holds $ \bigcup_{ t \in \T_{\mu}(P_j) } R_t\subseteq K_n 3^{\mu} R_{P_j} $.
			
			\item \label{Item:DecomAlgorithm4} If $ J \subseteq \{1,\ldots,N\} $ is such that $ \bigcap_{ j \in J } \ecc(P_j) \neq \emptyset $ then the collection $ \{ 3^{\mu} R_{P_j} \}_{ j \in J} $ is pairwise disjoint.
			
			\item \label{Item:DecomAlgorithm5} If $\, \T $ is a lacunary/overlapping tree then each $ \T_{\mu}(P_j) $ is lacunary/overlapping. 
		\end{enumerate}
	\end{lemma}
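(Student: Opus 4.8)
The plan is to perform a greedy ``maximal tree extraction'' on $\T$ and then verify \ref{Item:DecomAlgorithm1}--\ref{Item:DecomAlgorithm5} one at a time, outsourcing all of the geometry to the Geometric Lemma~\ref{Lem:Geom} and the shadow bound \eqref{Eq:TreeShadow}. Since $\T$ is finite in all of our applications we assume so (the general case follows by exhausting $\T$ by finite subcollections). I would run the following recursion: set $\T^{(1)}:=\T$, and having chosen $\T^{(j)}\neq\emptyset$, let $P_j\in\T^{(j)}$ be a tile with $\scl(R_{P_j})$ maximal among all tiles of $\T^{(j)}$. The point of this choice is the fixed‑annulus identity $\scl(t)=\ann^{-1}/\ell(Q_t)$: maximal scale is the same as minimal eccentricity cap, which by Definition~\ref{Def:Tree} (and the remark that a tree admits a top tile) is exactly the position a top must occupy. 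Now declare
\[
\T_\mu(P_j):=\bigl\{t\in\T^{(j)}:\ \ecc(P_j)\subseteq\ecc(t)\ \text{ and }\ R_t\cap K_n3^{\mu}R_{P_j}\neq\emptyset\bigr\}\qquad(\mu\geq1),
\]
and $\T_\mu(P_j):=\{t\in\T^{(j)}:t\leq P_j\}$ for $\mu=0$; put $\T^{(j+1)}:=\T^{(j)}\setminus\T_\mu(P_j)$. Since $P_j\in\T_\mu(P_j)$, the recursion halts after some $N\leq\#\T$ steps, yielding the pairwise disjoint decomposition \eqref{Eq:SelectionIncomparableDecom}. Note that the maximal scale is non‑increasing along the recursion, so $j<j'$ forces $\scl(R_{P_j})\geq\scl(R_{P_{j'}})$.

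Granting this, \ref{Item:DecomAlgorithm1}, \ref{Item:DecomAlgorithm3}, \ref{Item:DecomAlgorithm5} are almost immediate. For $t\in\T_\mu(P_j)\subseteq\T^{(j)}$, maximality gives $\scl(R_t)\leq\scl(R_{P_j})$, and the membership rule gives $\ecc(P_j)\subseteq\ecc(t)$ and $R_t\cap K_n3^{\mu}R_{P_j}\neq\emptyset$; fixing any $\xi\in\ecc(P_j)$, the pair $(\xi,R_{P_j})$ satisfies (T1) and (T2), so $\T_\mu(P_j)$ is a $K_n3^{\mu}$‑tree with top tile $P_j$ --- and a $1$‑tree when $\mu=0$, since then the rule reads $t\leq P_j$, for which $\scl(R_t)\leq\scl(R_{P_j})$ is automatic from $\ecc(P_j)\subseteq\ecc(t)$. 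This is \ref{Item:DecomAlgorithm1}, and \ref{Item:DecomAlgorithm3} is then \eqref{Eq:TreeShadow} applied to this tree (after absorbing, as usual, a further dimensional factor into $K_n$). For \ref{Item:DecomAlgorithm5}, if $\T$ is itself lacunary (resp.\ overlapping) with top direction $\xi_{\mathbf{T}}$, then $\xi_{\mathbf{T}}\in\ecc(t)\setminus\ecc(t)^{\circ}$ (resp.\ $\xi_{\mathbf{T}}\in\ecc(t)^{\circ}$) for every $t\in\T\supseteq\T_\mu(P_j)$, so taking $\xi_{\mathbf{T}}$ as the top direction of $\T_\mu(P_j)$ exhibits it as lacunary (resp.\ overlapping). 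For \ref{Item:DecomAlgorithm2}, suppose $\T$ consists of pairwise incomparable tiles and $t,t'\in\T_\mu(P_j)$ are distinct with $R_t\cap R_{t'}\neq\emptyset$; then $\ecc(P_j)\subseteq\ecc(t)\cap\ecc(t')$ is nonempty, so by the grid property of the eccentricities one of $\ecc(t),\ecc(t')$ contains the other, say $\ecc(t')\subseteq\ecc(t)$, whence $t\leq t'$, contradicting incomparability (note $R_t=R_{t'}$ forces $t=t'$ since $\ann$ is fixed). Hence the $R_t$, $t\in\T_\mu(P_j)$, are pairwise disjoint.

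I expect \ref{Item:DecomAlgorithm4} to be the crux, and it is precisely what forces the dilation $K_n3^{\mu}$ rather than $3^{\mu}$ in the membership rule. Let $J$ be such that $\bigcap_{j\in J}\ecc(P_j)\neq\emptyset$; then the caps $\{\ecc(P_j)\}_{j\in J}$ pairwise intersect, hence form a chain, so for $j<j'$ in $J$ we have $\scl(R_{P_j})\geq\scl(R_{P_{j'}})$ and therefore $\ecc(P_j)\subseteq\ecc(P_{j'})$. Assume, towards a contradiction, that $3^{\mu}R_{P_j}\cap 3^{\mu}R_{P_{j'}}\neq\emptyset$. Applying Lemma~\ref{Lem:Geom} to the pair $P_{j'},P_j$ with scaling factor $3^{\mu}$ (using $\ecc(P_j)\subseteq\ecc(P_{j'})$) yields $3^{\mu}R_{P_{j'}}\subseteq K_n3^{\mu}R_{P_j}$, hence $R_{P_{j'}}\subseteq K_n3^{\mu}R_{P_j}$; but then $P_{j'}$, which lies in $\T^{(j)}$ (being extracted only at the later step $j'$), meets the membership rule defining $\T_\mu(P_j)$ and would have been removed at step $j$ --- contradicting its survival to step $j'$. (For $\mu=0$ this is even simpler: $R_{P_j}\cap R_{P_{j'}}\neq\emptyset$ together with $\ecc(P_j)\subseteq\ecc(P_{j'})$ already says $P_{j'}\leq P_j$.) Thus $\{3^{\mu}R_{P_j}\}_{j\in J}$ is pairwise disjoint, which is \ref{Item:DecomAlgorithm4}. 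The only remaining work is the bookkeeping of dimensional constants across \ref{Item:DecomAlgorithm1}, \ref{Item:DecomAlgorithm3} and \ref{Item:DecomAlgorithm4}, where one uses \eqref{Eq:TreeShadow}, the fact that a $3^{\rho}$‑tree is a fortiori a $3^{\rho'}$‑tree for $\rho'\geq\rho$, and that $K_n$ may be enlarged as a dimensional constant --- which is routine.
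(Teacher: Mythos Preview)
Your proof is correct and follows the same greedy selection scheme as the paper, but with a slightly different membership rule: the paper defines
\[
\T_{\mu}(P_j)\coloneqq\bigl\{t\in\T^{(j)}:\ 3^{\mu}R_t\cap 3^{\mu}R_{P_j}\neq\emptyset,\ \ecc(t)\cap\ecc(P_j)\neq\emptyset\bigr\},
\]
using a \emph{symmetric} dilation on both $R_t$ and $R_{P_j}$, whereas you use the asymmetric condition $R_t\cap K_n3^{\mu}R_{P_j}\neq\emptyset$. The effect is a swap in where the Geometric Lemma does the work: for the paper, \ref{Item:DecomAlgorithm4} is immediate from the definition and Lemma~\ref{Lem:Geom} is invoked for \ref{Item:DecomAlgorithm1}/\ref{Item:DecomAlgorithm3}; for you, \ref{Item:DecomAlgorithm1} is immediate (your rule literally is condition (T2) for a $K_n3^{\mu}$-tree) and Lemma~\ref{Lem:Geom} is invoked for \ref{Item:DecomAlgorithm4}. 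The only cost is that your \ref{Item:DecomAlgorithm3} via \eqref{Eq:TreeShadow} yields $R_t\subseteq K_n^2 3^{\mu}R_{P_j}$ rather than $K_n3^{\mu}R_{P_j}$; as you note, this is harmless upon enlarging the dimensional constant, but the paper's rule hits the stated constant exactly.
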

	
	\begin{proof}
		Choose $ P_1 \in \T $ with $ R_{P_1} $ of maximal measure and set
		\begin{equation}
			\label{Eq:DecomAlgorithm}
			\T_{\mu}(P_1) 
			\coloneqq \left\{ t \in \T : \, 3^{\mu} R_{t} \cap 3^{\mu} R_{P_1} \neq \emptyset, \ecc(t) \cap \ecc(P_1) \neq \emptyset \right\}.
		\end{equation}
		Now replace $ \T $ by $ \T \setminus \T_{\mu}(P_1) $ and repeat so that $ P_1,\ldots,P_N $ are selected and \eqref{Eq:SelectionIncomparableDecom} is satisfied.
		
		First, let $ t \in \T_{\mu}(P_j) $. Then $ 3^{\mu} R_{t} \cap 3^{\mu} R_{P_j} \neq \emptyset $ and $ \ecc(t) \cap \ecc(P_j) \neq \emptyset $. Since $ \scl(t) \leq \scl(P_j) $ we get by the geometric Lemma \ref{Lem:Geom} that $ R_t \subseteq 3^{\mu} R_t \subseteq K_n 3^{\mu} R_{P_j} $. This shows (iii) and also (i) by the definition of $ 3^{\rho} $-tree and by the construction of $ \T_{\mu}(P_j) $ which implies that $ P_j $ is a top. Furthermore, (iv) is immediately satisfied by the definition of $ \T_{\mu}(P_j) $.
		
		For (ii) consider $ t,t' \in \T_{\mu}(P_j) $ for some fixed $ j \in \{1,\ldots,N\} $ with $ R_t \cap R_{t'} \neq \emptyset $. Then, by the definition of $ \T_{\mu}(P_j) $ we have $ \ecc(t) \cap \ecc(P_j) \neq \emptyset $ and $ \ecc(t') \cap \ecc(P_j) \neq \emptyset $. Furthermore, by the selection process $ \scl(P_j) \geq \scl(t),\scl(t') $ so $ \ell(Q_{P_j}) \leq \ell(Q_t), \ell(Q_{t'}) $ and thus $ \ecc(P_j) \subseteq \ecc(t) \cap \ecc(t') \neq \emptyset $. In consequence, either $ t \leq t' $ or $ t' \leq t $, which is impossible since all tiles were pairwise incomparable; see also Remark~\ref{rmrk:pairincorp}. This shows (ii).
		
		To prove (v) note that, for $ j \in \{1,\ldots,N\} $ fixed, we have that $ \ecc(t) \cap \ecc(P_j) \neq \emptyset $ and $ \ell(Q_{P_j}) \leq \ell(Q_t) $ for all $ t \in \T_{\mu}(P_j) $. Thus all $ \ecc(t) $ are nested, namely $ \ecc(t) \supseteq \ecc(P_j) $. If $ \T $ is an overlapping tree then $ \ecc(t) \subseteq \ecc(t')^{\circ} $ if $ \ell(Q_t) < \ell(Q_{t'}) $ for $ t,t' \in \T_{\mu}(P_j) $ so $ \ecc(P_j) \subseteq \ecc(t)^{\circ} $ for all $ t \in \T_{\mu}(P_j) $ and the tree $ \T_{\mu}(P_j) $ is overlapping. A similar argument shows that $ \T_{\mu}(P_j) $ is a lacunary tree if $ \T $ is lacunary.
	\end{proof}
	
	For technical reasons it will be useful to introduce a modified notion of $ \size $ defined as
	\begin{equation}
		\size'(\mathbb{Q}) 
		\coloneqq \sup_{\substack{ \mathbf{T}\text{ lac. tree} \\ \mathbf{T} \subseteq \mathbb{Q}, \ \topt(\mathbf{T}) \in \mathbf{T}}} \bigg(\frac{1}{\abs{ R_{\mathbf{T}} }} \sum_{t\in{\mathbf{T}}} F_{10n}[f](t)^2 \bigg)^{1/2}, 
		\qquad \mathbb{Q} \subseteq \mathbb{P} \subseteq \mathcal{T}_{\ann},
	\end{equation}
	where the supremum is taken over lacunary trees with $ \topt(\mathbf{T}) \in \mathbf{T} $.
	
	\begin{lemma}
		\label{Lem:ModifiedSize}
		There holds
		\begin{equation}
			\size'(\mathbb{Q}) \simeq \size(\mathbb{Q}),
			\qquad \mathbb{Q} \subseteq \mathbb{P} \subseteq \mathcal{T}_{\ann}, 
		\end{equation}
		with the implicit constant depending only on dimension.
	\end{lemma}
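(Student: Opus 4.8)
The inequality $\size'(\mathbb{Q}) \le \size(\mathbb{Q})$ is immediate, since the supremum defining $\size'$ ranges over a subfamily of the lacunary trees appearing in the definition of $\size$. The content of the lemma is thus the reverse estimate $\size(\mathbb{Q}) \lesssim \size'(\mathbb{Q})$. To prove it I would fix an arbitrary lacunary tree $\mathbf{T} \subseteq \mathbb{Q}$ with top $(\xi_{\mathbf{T}}, R_{\mathbf{T}})$ and show that $\abs{R_{\mathbf{T}}}^{-1}\sum_{t\in\mathbf{T}} F_{10n}[f](t)^2 \lesssim_n \size'(\mathbb{Q})^2$, after which one takes the supremum over all such $\mathbf{T}$.

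The strategy is to split $\mathbf{T}$ into boundedly overlapping sub-trees that each contain their own top tile. Apply Lemma~\ref{Lem:DecomAlgorithm} with $\mu=0$ to the collection $\mathbf{T}$: this yields tiles $P_1,\dots,P_N \in \mathbf{T}$ and a disjoint decomposition $\mathbf{T} = \bigcup_{j=1}^N \T_0(P_j)$ into $1$-trees, which are moreover lacunary by part (v) of that lemma. I claim each $\T_0(P_j)$ is an admissible competitor in the supremum defining $\size'(\mathbb{Q})$. Indeed $\T_0(P_j)\subseteq \mathbf{T}\subseteq\mathbb{Q}$; from \eqref{Eq:DecomAlgorithm} with $\mu=0$ one has $P_j\in\T_0(P_j)$ and, for every $t\in\T_0(P_j)$, $R_t\cap R_{P_j}\neq\emptyset$, $\scl(t)\le\scl(P_j)$, and $\ecc(t)\cap\ecc(P_j)\neq\emptyset$; since for a fixed annulus the eccentricities lie in the grid $(\mathcal{G}\times\{1\})'$, the last relation together with $\ell(Q_t)\ge\ell(Q_{P_j})$ forces $\ecc(P_j)\subseteq\ecc(t)$. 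Hence $R_{P_j}$, together with the direction $\xi_{\mathbf{T}}$ (which lies in $\ecc(t)\setminus\ecc(t)^{\circ}$ for every $t\in\mathbf{T}$ by lacunarity of $\mathbf{T}$), is a valid top for the lacunary tree $\T_0(P_j)$. Because a tile is uniquely determined by its spatial component --- the frequency component $\omega_t$ and the eccentricity $\ecc(R_t)$ being functions of $R_t$ alone --- the top tile $\topt(\T_0(P_j))$ has spatial component $R_{P_j}$ and therefore \emph{equals} $P_j$, so $\topt(\T_0(P_j))\in\T_0(P_j)$. Consequently $\sum_{t\in\T_0(P_j)}F_{10n}[f](t)^2 \le \abs{R_{P_j}}\,\size'(\mathbb{Q})^2$.

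Summing over $j$ gives $\sum_{t\in\mathbf{T}}F_{10n}[f](t)^2 \le \size'(\mathbb{Q})^2\sum_{j=1}^N\abs{R_{P_j}}$, so it remains to verify $\sum_{j=1}^N\abs{R_{P_j}}\lesssim_n\abs{R_{\mathbf{T}}}$. Since every $P_j\in\mathbf{T}$, tree property (T1) gives $\xi_{\mathbf{T}}\in\ecc(P_j)$ for all $j$, hence $\bigcap_{j=1}^N\ecc(P_j)\neq\emptyset$; part (iv) of Lemma~\ref{Lem:DecomAlgorithm} (with $\mu=0$) then shows that $\{R_{P_j}\}_{j=1}^N$ is pairwise disjoint. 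On the other hand, the shadow estimate \eqref{Eq:TreeShadow} with $\rho=0$ gives $R_{P_j}\subseteq K_n R_{\mathbf{T}}$ for each $j$, so $\sum_j\abs{R_{P_j}} = \bigl|\bigcup_j R_{P_j}\bigr| \le \abs{K_n R_{\mathbf{T}}} \lesssim_n \abs{R_{\mathbf{T}}}$, which closes the argument. The only genuinely delicate point is the bookkeeping in the preceding paragraph: one must check that the sub-trees produced by the decomposition algorithm can be taken with top direction $\xi_{\mathbf{T}}$ and hence literally contain their own top tile $P_j$, so that they are legitimate competitors for $\size'$ rather than merely for $\size$; everything else is routine once Lemma~\ref{Lem:DecomAlgorithm} and \eqref{Eq:TreeShadow} are in hand.
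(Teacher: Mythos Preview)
Your proof is correct and follows essentially the same route as the paper: apply Lemma~\ref{Lem:DecomAlgorithm} with $\mu=0$ to a given lacunary tree $\mathbf{T}\subseteq\mathbb{Q}$, observe that each resulting sub-tree $\T_0(P_j)$ is a lacunary tree with $P_j\in\T_0(P_j)$ serving as its top tile, bound each piece by $\size'(\mathbb{Q})^2\abs{R_{P_j}}$, and sum using the disjointness from part~(iv) together with the containment $R_{P_j}\subseteq K_n R_{\mathbf{T}}$. Your extra bookkeeping---checking explicitly that $P_j$ is the top tile of $\T_0(P_j)$ and hence that the sub-trees are legitimate competitors for $\size'$---is a point the paper leaves implicit, so your write-up is if anything more careful.
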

	
	\begin{proof}
		Let $ \mathbf{T} \subseteq \mathbb{Q} $ be a lacunary tree. Use Lemma \ref{Lem:DecomAlgorithm} with parameter $ \mu = 0 $ to write $ \mathbf{T} = \bigcup_{j=1}^{N} \mathbf{T}(P_j) $, where $ \mathbf{T}(P_j) $ is a lacunary tree with top $ P_j \in \mathbf{T}(P_j) $ for every $ j \in \{1,\ldots,N\} $. These are consequences of (v) and (i) of Lemma \ref{Lem:DecomAlgorithm}. Thus, 
		\begin{equation}
			\begin{split}
				\sum_{t\in \mathbf{T}} F_{10n}[f](t)^2
				& = \sum_{j=1}^{N} \sum_{t\in \mathbf{T}(P_j)} F_{10n}[f](t)^2
				\leq \sum_{j=1}^{N} \size'(\mathbb{Q})^2 \abs{ R_{P_j} }
				\lesssim \size'(\mathbb{Q})^2 \abs{ R_{\mathbf{T}} },
			\end{split}
		\end{equation}
		where in the last inequality we have used that $ \{P_j\}_{j=1}^{N} $ is pairwise disjoint by (iv) of Lemma \ref{Lem:DecomAlgorithm} since $ \mathbf{T} $ is a tree and that $ \bigcup_{j=1}^{N} R_{P_j} \subseteq K_n R_{\mathbf{T}} $, again using that $ \mathbf{T} $ is a tree and \eqref{Eq:TreeShadow}. This shows that $ \size(\mathbb{Q}) \lesssim \size'(\mathbb{Q}) $. The converse inequality is obvious.
	\end{proof}

	\subsection{Sorting by density}
	\label{Ss:SelectionDensity}
	
	The first step of the decomposition of tiles consists of an application of the standard density lemma. The proof is the same as in the one dimensional case of \cite[Proposition 3.1]{LT}, but with some extra steps to deal with the lack of transitivity of the relation $ \leq $ of the tiles.
	
	\begin{lemma}[Selection of Density tops] 
		\label{Lem:Dense} 
		Let $ \P \subset \mathcal{T}_{\ann} $ be a finite collection of tiles. There exists a disjoint decomposition
		\begin{equation}
			\label{Eq:DenseDecom}
			\P = \P^{\light} \cup \P^{\heavy}
		\end{equation}
		satisfying $ \densesup(\P^{\light}) \leq \frac{1}{2} \densesup(\P)$, and a collection of pairwise incomparable tiles $ \T^{\tops}_{\dens} \subseteq \mathcal{T}_{\ann} $ such that
		\begin{enumerate}[label=\normalfont{(\roman*)}] \setlength\itemsep{.5em}
			\item \label{Item:Dense1} $ \displaystyle{ \dense(t) > \frac{1}{2} \densesup(\P) } $ for all $ t \in \T^{\tops}_{\dens} $,
			
			\item \label{Item:Dense2} $ \displaystyle{ \P^{\heavy} = \bigcup_{ t \in \T^{\tops}_{\dens} } \left\{ s \in \P^{\heavy} :\, s \leq_2 t \right\} } $,
			
			\item \label{Item:Dense3} $ \displaystyle{ \sum_{ t \in \T^{\tops}_{\dens} } \abs{ R_{t} }  
				\lesssim \frac{\abs{E}}{\densesup(\P)} } $.
		\end{enumerate}
	\end{lemma}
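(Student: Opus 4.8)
\emph{Approach.} The plan is to prove the lemma by a greedy stopping‑time selection of density tops, mirroring the one–dimensional argument of \cite[Proposition~3.1]{LT}; the only genuinely new feature is the bookkeeping forced by the fact that $\leq$ is not transitive, which I would handle by building $\P^{\heavy}$ out of the relation $\leq_2$ and by invoking the geometric Lemma~\ref{Lem:Geom}. Set $\lambda\coloneqq\densesup(\P)$ (if $\lambda=0$ take $\P^{\heavy}=\emptyset$, $\P^{\light}=\P$, $\T^{\tops}_{\dens}=\emptyset$, so assume $\lambda>0$). Define $t_1,t_2,\dots$ recursively: having chosen $t_1,\dots,t_{k-1}$ and set $\P^{\heavy}_{k-1}\coloneqq\bigcup_{j<k}\{s\in\P:\,s\leq_2 t_j\}$ (with $\P^{\heavy}_0=\emptyset$), stop if $\densesup(\P\setminus\P^{\heavy}_{k-1})\leq\tfrac12\lambda$; otherwise choose, among all $t'\in\mathcal{T}_{\ann}$ with $\dense(t')>\tfrac12\lambda$ and $t'\geq s$ for some $s\in\P\setminus\P^{\heavy}_{k-1}$, one of maximal scale $\scl(t')$ and call it $t_k$. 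This maximum is attained, since $\tfrac12\lambda<\dense(t')=\int_{E_{t'}}\chi^{10n}_{R_{t'},1}\leq\Norm{\chi^{10n}_{R_{t'},1}}{L^\infty}\abs{E}=\abs{R_{t'}}^{-1}\abs{E}$ forces $\abs{R_{t'}}<2\abs{E}/\lambda$, and at the fixed annulus $\abs{R_{t'}}=\ann^{-1}\scl(t')^{d}$ ranges over a discrete set. The recursion terminates in finitely many steps, because the witness $s$ of the selection of $t_k$ satisfies $s\leq t_k$, hence $s\leq_2 t_k$, so $s$ is removed from $\P\setminus\P^{\heavy}_{k}$ while $\P$ is finite. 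Finally set $\T^{\tops}_{\dens}\coloneqq\{t_k\}_k$, $\P^{\heavy}\coloneqq\bigcup_k\{s\in\P:\,s\leq_2 t_k\}$, $\P^{\light}\coloneqq\P\setminus\P^{\heavy}$.

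\emph{Easy properties and incomparability.} Item (i) is the selection criterion; item (ii) holds since $\P^{\heavy}=\bigcup_k\{s\in\P:\,s\leq_2 t_k\}$ and each of these sets lies in $\P^{\heavy}$; the bound $\densesup(\P^{\light})\leq\tfrac12\densesup(\P)$ is exactly the stopping criterion, because $\densesup(\mathbb{Q})\leq\tfrac12\lambda$ means precisely that no $t'\in\mathcal{T}_{\ann}$ with $t'\geq s$ for some $s\in\mathbb{Q}$ has $\dense(t')>\tfrac12\lambda$. For pairwise incomparability, fix $j<k$ and let $s_k\in\P\setminus\P^{\heavy}_{k-1}$, $s_k\leq t_k$, be the witness for $t_k$. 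If $t_k\leq t_j$, then $s_k\leq t_k\leq t_j$ yields $s_k\leq_2 t_j$, so $s_k\in\P^{\heavy}_j\subseteq\P^{\heavy}_{k-1}$, a contradiction (this also shows $t_j\ne t_k$). If $t_j\leq t_k$, then $t_k$ was an admissible candidate at step $j$ as well, since $s_k\in\P\setminus\P^{\heavy}_{j-1}\supseteq\P\setminus\P^{\heavy}_{k-1}$, so maximality of $\scl(t_j)$ at step $j$ gives $\scl(t_j)\geq\scl(t_k)$; on the other hand $\ecc(t_k)\subseteq\ecc(t_j)$ (the first clause of $t_j\leq t_k$) forces $\ell(Q_{t_k})\leq\ell(Q_{t_j})$ and hence $\scl(t_k)\geq\scl(t_j)$, the vertical lengths both being $\ann^{-1}$. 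Thus $\ecc(t_j)=\ecc(t_k)$, so $t_j$ and $t_k$ share direction, horizontal sidelength and vertical sidelength, and since moreover $R_{t_j}\cap R_{t_k}\ne\emptyset$ the triadic grid property forces $L_{t_j}=L_{t_k}$ and $I_{t_j}=I_{t_k}$, i.e.\ $t_j=t_k$, a contradiction.

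\emph{The volume count (iii).} For $t\in\T^{\tops}_{\dens}$ one has $\dense(t)>\tfrac12\lambda$, so, using $\abs{R_t}\chi^{10n}_{R_t,1}=\chi^{10n}_{R_t,\infty}$,
\begin{equation*}
	\abs{R_t}<\frac{2}{\lambda}\,\abs{R_t}\dense(t)=\frac{2}{\lambda}\int_E\indf{\alpha_{t,\tau_0}}(v_{\sigma(x)})\,\chi^{10n}_{R_t,\infty}(x)\,dx .
\end{equation*}
Summing in $t$, it suffices to prove the pointwise bound $\sum_{t\in\T^{\tops}_{\dens}}\indf{\alpha_{t,\tau_0}}(v_{\sigma(x)})\,\chi^{10n}_{R_t,\infty}(x)\lesssim1$. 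Fix $x$ and put $v\coloneqq v_{\sigma(x)}$; every contributing $t$ has $v\in\alpha_{t,\tau_0}\subseteq\ecc(t)$, so the eccentricities of the contributing tiles all contain $v$ and are therefore pairwise nested by the grid property; combined with the incomparability of $\T^{\tops}_{\dens}$ just proved, this forces the parallelepipeds $R_t$ of the contributing tiles to be \emph{pairwise disjoint}. Grouping these by scale, within a single scale they are congruent disjoint boxes adapted to a common triadic lattice, for which the contribution is $\lesssim1$ by the $10n$–order decay; the $3^{\kappa}$–separation of the grids together with Lemma~\ref{Lem:Geom} limits to $O(1)$ the number of scales contributing at a given point, which gives the claim and hence (iii) with the stated implicit constant.

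\emph{Main obstacle.} The delicate point is the non‑transitivity of $\leq$: one cannot simply let $\P^{\heavy}$ be ``everything below a top'', and must work with $\leq_2$ instead, which is exactly what makes the loop terminate (the witness of each selected top is removed) while still keeping the incomparability argument valid. Lemma~\ref{Lem:Geom} is the tool that upgrades the overlap conditions $R_t\cap K_n^{\,m}R_{t'}\ne\emptyset$ to honest inclusions of dilated parallelepipeds, needed both for the incomparability of $\T^{\tops}_{\dens}$ and for the overlap count in (iii); verifying the latter count carefully (handling the Schwartz tails of $\chi^{10n}_{R_t,1}$ across scales) is the most technical part of the argument.
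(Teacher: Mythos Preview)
Your construction of $\T^{\tops}_{\dens}$ by greedy selection at maximal scale, and your proofs of (i), (ii), pairwise incomparability, and $\densesup(\P^{\light})\leq\tfrac12\densesup(\P)$, are correct and in fact slightly cleaner than the paper's (which takes maximal elements of $\{t_s\}$ and leaves implicit why every $s\in\P^{\heavy}$ is $\leq_2$ some maximal element).

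The gap is in (iii). The pointwise bound
\[
\sum_{t\in\T^{\tops}_{\dens}}\indf{\alpha_{t,\tau_0}}(v_{\sigma(x)})\,\chi^{10n}_{R_t,\infty}(x)\lesssim1
\]
over a pairwise incomparable family with nested eccentricities is \emph{false}. Fix $x$ and, for each $j\geq0$, take a tile $t_j$ of scale $3^{j\kappa}$ whose eccentricity contains $v_{\sigma(x)}$ and whose horizontal cube $L_{t_j}$ sits at distance $\sim 3^{j\kappa}$ from $\underline{x}$ (same vertical slab). Because the scales are $3^\kappa$-separated, the $L_{t_j}$ live in pairwise disjoint annuli around $\underline{x}$, so the $R_{t_j}$ are disjoint and the tiles are pairwise incomparable; yet each contributes $\chi^{10n}_{R_{t_j},\infty}(x)\gtrsim1$ and the sum diverges. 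Neither Lemma~\ref{Lem:Geom} nor the $3^\kappa$-separation limits the number of scales here: Lemma~\ref{Lem:Geom} only says that if the \emph{same} dilate of two boxes meet then one is contained in a bounded dilate of the other, but the conclusion $CR_{t_j}\subseteq K_nCR_{t_{j'}}$ for $j<j'$ is perfectly compatible with $R_{t_j}\cap R_{t_{j'}}=\emptyset$. Your own closing sentence flags exactly this step as ``the most technical part'', and it is the one that does not go through.

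The paper does not attempt a pointwise bound on the Schwartz tails. Instead it unfolds the tail in $\dense(t)>\tfrac12\delta$ into sharp annuli: for each $t\in\T^{\tops}_{\dens}$ there is a least $k\geq0$ with $\abs{E_t\cap3^kR_t}\gtrsim\delta\,3^{5nk}\abs{3^kR_t}$, and one sorts $\T^{\tops}_{\dens}=\bigcup_k\T_k$ accordingly. For fixed $k$ one applies the decomposition algorithm (Lemma~\ref{Lem:DecomAlgorithm}) with parameter $\mu=k$; disjointness within groups, the containment $\bigcup R_t\subseteq K_n3^kR_{P_{k,j}}$, and the sharp density lower bound on $3^kR_{P_{k,j}}$ give $\sum_{t\in\T_k}\abs{R_t}\lesssim\delta^{-1}3^{-5nk}\abs{E}$, because for fixed $x$ the contributing $3^kR_{P_{k,j}}$ are pairwise disjoint by (iv) of that lemma. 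The geometric factor $3^{-5nk}$ is precisely what makes the sum over $k$ converge. Replace your pointwise tail argument by this annular decomposition; the rest of your proof stands.
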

	
	\begin{remark}
		\label{Rmrk:DenseMap}
		Because of (ii) above we can define a map $ \P^{\heavy} \ni s \mapsto t(s) \in \T_{\dens}^{\tops} $ such that $ s \leq_{2} t(s) $ for all $ s \in \P^{\heavy} $ and
		\begin{equation}
			\label{Eq:DenseMap}
			\P^{\heavy} 
			= \bigcup_{ t \in \T_{\dens}^{\tops} } \left\{ s \in \P^{\heavy} :\, t(s) = t \right\}
			\eqqcolon \bigcup_{ t \in \T_{\dens}^{\tops} } \P^{\heavy}_{t}. 
		\end{equation}
	\end{remark}
	
	\begin{proof}
		Set $ \delta \coloneqq \densesup(\P) $ and let 
		\begin{equation}
			\P^{\heavy} \coloneqq \left\{ s \in \P : \, \densesup(s) > \delta/2 \right\}, \qquad \P^{\light} \coloneqq \left\{ s \in \P :\, \densesup(s) \leq \delta/2 \right\}.
		\end{equation}
		For each $ s \in \P^{\heavy} $ there exists a tile $ t_s \in \mathcal{T}_{\ann} $ such that $ s \leq t_s $ and $ \dense(t_s) > \delta/2 $. Note that possibly this $ t_s $ is not unique but we choose one for each $ s \in \P^{\heavy} $. This defines the map described in Remark~\ref{Rmrk:DenseMap}. Now let $ \T = \T_{\dens}^{\tops} $ denote the maximal elements with respect to $ \leq $ of the collection $ \left\{ t_s :\, s \in \P^{\heavy} \right\} $. It remains to prove (iii), since the other properties are immediately satisfied. By the definition of $ \dense $, we can decompose 
		\begin{equation}
			\T = \T_{\dens}^{\tops} = \bigcup_{ k=0 }^{\infty} \T_k,
		\end{equation}
		where for each $ t \in \T_k $, $k$ is the least positive integer for which
		\begin{equation}
			\label{Eq:DenseProofDensityDecomp}
			\Abs{ E \cap v_{ \sigma }^{-1}(\alpha_{t,\tau_0}) \cap 3^{k} R_{t} } \geq \frac{1}{100} \delta 3^{5 n k} \Abs{ 3^{k} R_{t} }
		\end{equation}
		holds, with $ \alpha_{t,\tau_0} $ defined in \eqref{Eq:DirecSuppDef} and recall that $ \tau_0 \in \{1,\ldots,3^{\kappa d} - 1 \} $ was fixed at the beginning of this section. Now we apply Lemma \ref{Lem:DecomAlgorithm} with the parameter $ \mu = k $ to get
		\begin{equation}
			\sum_{ t \in \T } \abs{ R_{t} } 
			= \sum_{k=0}^{\infty} \sum_{ t \in \T_k } \abs{ R_{t} }
			= \sum_{k=0}^{\infty} \sum_{ j=1 }^{N_k} \sum_{ t \in \T_k(P_{k,j}) } \abs{ R_{t} } 
			= \sum_{k=0}^{\infty} \sum_{ j=1 }^{N_k} \Abs{ \bigcup_{ t \in \T_k(P_{k,j}) } R_{t} }
			\leq \sum_{k=0}^{\infty} \sum_{ j=1 }^{N_k} K_n^n \Abs{ 3^{k} R_{P_{k,j}} },
		\end{equation}
		where in the third equality we have used (ii) of Lemma \ref{Lem:DecomAlgorithm} and in the last inequality (iii) of the same lemma. Now, using \eqref{Eq:DenseProofDensityDecomp} since $ P_{k,j} \in \T_k $ for every $ j \in \{1,\ldots,N_k\} $
		\begin{equation}
			\begin{split}
				\sum_{ t \in \T } \abs{ R_{t} } 
				& \lesssim \delta^{-1} \sum_{k=0}^{\infty}  3^{-5nk} \sum_{ j=1 }^{N_k} \Abs{ E \cap v_{ \sigma }^{-1}(\alpha_{P_{k,j},\tau_0}) \cap 3^{k} R_{P_{k,j}} } \\
				& = \delta^{-1} \sum_{k=0}^{\infty} 3^{-5nk} \int_{E} \sum_{ j=1 }^{N_k} \indf{ v_{ \sigma }^{-1}(\alpha_{P_{k,j},\tau_0}) }(x) \indf{ 3^{k} R_{P_{k,j} } }(x) dx.
			\end{split}
		\end{equation}
		Fix $ x \in E $ and consider the index set $ J_x \subseteq \{1,\ldots,N_k\} $ such that $ \indf{ v_{ \sigma }^{-1}(\alpha_{P_{k,j},\tau_0}) }(x) \indf{ 3^{k} R_{P_{k,j} } }(x) \neq 0 $ for every $ j \in J_x $. Then, $ v_{\sigma}(x) \in \ecc(P_{k,j}) $ for all $ j \in J_x $ and thus $ \{ \ecc(P_{k,j}) \}_{j\in J_x} $ are nested. In consequence, the collection $ \{ 3^{k} R_{P_{k,j}} \}_{j\in J_x} $ is pairwise disjoint by (iv) of Lemma \ref{Lem:DecomAlgorithm} and the proof follows.
	\end{proof}

	\subsection{Sorting by size}
	\label{Ss:SelectionSize}
	
	The next step is to sort the tiles by their size. The heuristic is to apply the standard size lemma to the collection of tiles $ \P^{\heavy} $ from \eqref{Eq:DenseDecom}. However, we cannot do it directly since we would lose the sorting into the tops $ \T^{\tops}_{\dens} $ and the density estimate (iii) of Lemma \ref{Lem:Dense}. Instead, we follow a more careful procedure. 
	
	The combined density-size decomposition below is the main goal of this subsection.
	
	\begin{proposition}
		\label{Prop:CombinedDensitySize}
		Let $ \P \subseteq \mathcal{T}_{\ann} $ be a finite collection of tiles. There exists a disjoint decomposition
		\begin{equation}
			\label{Eq:SizeDecom}
			\P = \P^{\light} \cup \P^{\heavy}_{\smal} \cup \bigcup_{j} \bigcup_{\nu=1}^{V_n} \mathbf{T}_{j,\nu}'
		\end{equation}
		with $ V_n = O_n(1) $ uniformly in $ j $ and such that $ \densesup(\P^{\light}) \leq \frac{1}{2} \densesup(\P) $, $ \size(\P^{\heavy}_{\smal}) \leq \frac{1}{2} \size(\P) $ and $ \densesup(\P^{\heavy}_{\smal}) > \frac{1}{2} \densesup(\P) $ if $ \,\P^{\heavy}_{\smal} $ is non-empty and where each $ \mathbf{T}_{j,\nu}' $ is a tree. Furthermore, for each $ j $ the collection $ \{ R_{ \mathbf{T}_{j,\nu}' } \}_{\nu=1}^{V_n} $ is formed by $ V_n $ congruent copies of one $ R_{\mathbf{T}_{j,\nu_0}'} $ and they satisfy
		\begin{equation}
			\label{Eq:SizeEstimates}
			\sum_{j,\nu} \Abs{ R_{ \mathbf{T}_{j,\nu}' } } 
			\lesssim \min \left\{ \frac{ \abs{E} }{ \densesup(\P) }, \, \frac{ \norm{f}{L^2(\R^n)}^2}{ \size(\P)^2 } \right\}.
		\end{equation}
		If $ \{ \mathbf{T}_{j,\nu}' \}_{\nu=1}^{V_n} \neq \emptyset $ then for every $ j $ and $ \nu $ there holds $ \densesup( \mathbf{T}_{j,\nu}' ) \simeq \densesup( \P ) $ and for every $ j $ there exists $ \nu_{0} \in \{1,\ldots,V_n\} $ and a lacunary tree $ \mathbf{T}_{j,\nu_0} \subseteq \mathbf{T}_{j,\nu_0}' $ with $ R_{ \mathbf{T}_{j,\nu_0} } = R_{ \mathbf{T}_{j,\nu_0}' } $ and such that 
		\begin{equation}
			\sum_{ t \in \mathbf{T}_{j,\nu_0} } F_{10n}[f](t)^2 \simeq \size( \P )^2 \Abs{ R_{\mathbf{T}_{j,\nu_0}} }.
		\end{equation}
	\end{proposition}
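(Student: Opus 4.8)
The plan is to combine the density selection of Lemma~\ref{Lem:Dense} with a size selection algorithm applied to the subcollection $\P^\heavy$, carefully arranged so that the size tops are extracted from the density tops rather than rebuilt from scratch. First I would apply Lemma~\ref{Lem:Dense} to $\P$ to obtain the splitting $\P = \P^\light \cup \P^\heavy$, the pairwise incomparable family of density tops $\T^\tops_\dens$, and the map $s\mapsto t(s)$ of Remark~\ref{Rmrk:DenseMap} which groups $\P^\heavy = \bigcup_{t\in\T^\tops_\dens} \P^\heavy_t$; this already gives the first term $\P^\light$ with the density decay. The remaining work is to further decompose $\P^\heavy$ by size.

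\textbf{The size selection.} On $\P^\heavy$ I would run the usual greedy size algorithm, but using $\size'$ (which by Lemma~\ref{Lem:ModifiedSize} is comparable to $\size$ and, crucially, is achieved by lacunary trees containing their own top tile). Set $\delta_s \coloneqq \size(\P^\heavy)$. As long as there remains a lacunary tree $\mathbf T\subseteq$ (the current collection) with $\frac{1}{|R_\mathbf T|}\sum_{t\in\mathbf T}F_{10n}[f](t)^2 > \tfrac14 \delta_s^2$ and $\topt(\mathbf T)\in\mathbf T$, select a maximal such $\mathbf T_{j,\nu_0}$, and then attach to it \emph{all} tiles of the remaining collection that are $\leq$-comparable to a tile of $\mathbf T_{j,\nu_0}$ (or, more precisely, belong to the $3^\rho$-tree generated by its top), producing a $3^\rho$-tree which I then split via Lemma~\ref{Lem:Split} into $O_n(1)=V_n$ ordinary trees $\mathbf T_{j,\nu}'$, all with congruent spatial tops $R_{\mathbf T_{j,\nu}'}$. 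Remove these and iterate; what is left is $\P^\heavy_\smal$ with $\size(\P^\heavy_\smal)\le\tfrac12\size(\P)$. Since at no point did we discard any tile of $\P^\heavy$, and since every tile of $\P^\heavy$ has $\densesup > \tfrac12\densesup(\P)$, we automatically get $\densesup(\P^\heavy_\smal)>\tfrac12\densesup(\P)$ (when nonempty) and $\densesup(\mathbf T_{j,\nu}')\simeq\densesup(\P)$. The claimed identity $\sum_{t\in\mathbf T_{j,\nu_0}}F_{10n}[f](t)^2\simeq\size(\P)^2|R_{\mathbf T_{j,\nu_0}}|$ holds by construction: the lower bound is the selection threshold, and the upper bound comes from $\size(\P)$ being the global supremum together with $\size'\simeq\size$.

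\textbf{The Bessel-type counting bound.} The estimate $\sum_{j,\nu}|R_{\mathbf T_{j,\nu}'}|\lesssim \|f\|_{L^2}^2/\size(\P)^2$ is the standard size-lemma conclusion: the selected lacunary trees $\mathbf T_{j,\nu_0}$ (one per $j$) form a strongly disjoint family by the usual maximality argument — a tile violating strong disjointness would have been absorbed into an earlier tree — so Lemma~\ref{Lem:OrthoLacTree}\ref{Item:LemOrtho2} (summed over a strongly disjoint family, e.g.\ via \ref{Item:LemOrthoStrong} or directly) gives $\sum_j \size(\P)^2|R_{\mathbf T_{j,\nu_0}}| \lesssim \sum_j\sum_{t\in\mathbf T_{j,\nu_0}}F_{10n}[f](t)^2 \lesssim \|f\|_{L^2}^2$, and then $V_n = O_n(1)$ and $R_{\mathbf T_{j,\nu}'}$ congruent to $R_{\mathbf T_{j,\nu_0}'}$ absorbs the sum over $\nu$. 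For the other half of the minimum, $\sum_{j,\nu}|R_{\mathbf T_{j,\nu}'}|\lesssim |E|/\densesup(\P)$, I would not re-prove it but rather note that each $\mathbf T_{j,\nu}'$ carries, by the first step, a density top in $\T^\tops_\dens$ sitting above its tiles; reorganizing the trees according to which density top they attach to and invoking the counting estimate (iii) of Lemma~\ref{Lem:Dense} — together with a bounded-overlap argument exactly as in the proof of that lemma, using (iv) of Lemma~\ref{Lem:DecomAlgorithm} on nested eccentricities — yields the bound.

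\textbf{Main obstacle.} The delicate point, and the reason for the extra care beyond a naive "apply density lemma, then size lemma", is the bookkeeping that keeps \emph{both} gauges under control on the output trees: one must ensure that the size selection does not destroy the grouping $\P^\heavy = \bigcup_t\P^\heavy_t$ and the density counting bound, which forces the size tops to be chosen compatibly with the density tops (hence the $\densesup(\mathbf T_{j,\nu}')\simeq\densesup(\P)$ assertion and the need to prove $\sum|R_{\mathbf T_{j,\nu}'}|$ is dominated by \emph{both} $|E|/\densesup(\P)$ and $\|f\|_{L^2}^2/\size(\P)^2$ simultaneously). Managing the lack of transitivity of $\leq$ — which is why $\leq_2$, the geometric Lemmas~\ref{Lem:Geom} and \ref{Lem:Geom3}, and the tree-splitting Lemma~\ref{Lem:Split} all intervene when passing between $3^\rho$-trees and ordinary trees — is the other technical nuisance, but it is routine given those lemmas. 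I expect the genuinely new content relative to the one-dimensional argument of \cite{LT} to be confined to this simultaneous density/size bookkeeping and the higher-dimensional geometry of eccentricities.
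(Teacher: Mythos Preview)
Your overall architecture matches the paper's proof exactly: apply the density lemma first, run a size selection on $\P^{\heavy}$ using $\size'$ so that selected lacunary trees contain their own top, enlarge each to a $K_n^2$-tree, split via Lemma~\ref{Lem:Split}, and then derive the density half of \eqref{Eq:SizeEstimates} by pushing the size tops up to the density tops through the map of Remark~\ref{Rmrk:DenseMap} and applying Lemma~\ref{Lem:DecomAlgorithm}.

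There is, however, a genuine gap at the point where you claim strong disjointness ``by the usual maximality argument.'' You never specify the \emph{selection order}, and ``select a maximal such $\mathbf T_{j,\nu_0}$'' does not give one. In the paper the order is not by maximality of the spatial top but by \emph{minimal signature} of $\eta_{\mathbf T}$ (the Definition just before the proof): this device, imported from \cite{BDPPR}, guarantees that whenever $t_1\in\mathbf T_{j_1}$, $t_2\in\mathbf T_{j_2}$ satisfy $\ecc(t_1)\subseteq\ecc(t_2)^{\circ}$, one has $\sig(\eta_{\mathbf T_{j_1}})<\sig(\eta_{\mathbf T_{j_2}})$, hence $\mathbf T_{j_1}$ was chosen first and $t_2$ would have been absorbed into the maximal $K_n^2$-tree $\mathbf T_{j_1}'$. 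Without such an ordering the absorption argument has no starting point --- you cannot decide which of the two trees was selected earlier --- and strong disjointness does not follow. This is precisely the higher-dimensional replacement for the one-dimensional ``leftmost/rightmost top interval'' order in \cite{LT}, and it is the one missing idea in your sketch.

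A minor secondary point: the pairwise incomparability of the size tops $\T^{\tops}_{\siz,t}$, which you need to invoke (ii) of Lemma~\ref{Lem:DecomAlgorithm} in the density estimate, is not automatic; in the paper it is a consequence of the fact that each size top lies in a distinct maximal $K_n^2$-tree $\mathbf T_j'$, so two comparable tops would have been merged. You implicitly rely on this but do not state it.
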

	
	For the proof of Proposition \ref{Prop:CombinedDensitySize} we will need the following technical definition, introduced in \cite[Definition 6.11]{BDPPR}.
	
	\begin{definition}[Signature]
		Let $ \mathcal{G} $ be a triadic grid in $ \R^d $ and $ \eta \in \R^d $ having no triadic coordinates. The signature of $ \eta $ is defined as the number
		\begin{equation}
			\sig(\eta) \coloneqq \sum_{j=1}^{\infty} \frac{ a_{j} }{ 3^j }, \qquad 
			a_j \coloneqq \left\{ 
			\begin{array}{lcl} 
				0, & \text{ if } & \eta \in Q^{\circ}, \text{ for some } Q \in \mathcal{G}_{ j } , \\ 
				1, & \text{ if } & \eta \not\in Q^{\circ}, \text{ for all } Q \in \mathcal{G}_{ j }.
			\end{array} 
			\right.
		\end{equation} 
	\end{definition}
	
	As noted in \cite[Remark 6.12]{BDPPR}, the importance of this definition is that for two different lacunary trees $ \mathbf{T}_1, \mathbf{T}_2 $, if $ t_j \in \mathbf{T}_j $ for $ j=1,2 $, with $ \ecc(t_1) \subseteq \ecc(t_2)^{\circ} $, then $ \sig( \eta_{\mathbf{T}_1} ) < \sig( \eta_{\mathbf{T}_2} ) $, where recall that $ \eta_{\mathbf{T}_j} \in Q_{\mathbf{T}_j} $ is the point that satisfies $ (\eta_{\mathbf{T}_j} \times \{1\})' = \xi_{\mathbf{T}_j} $. This fact is used to show that the collection of lacunary trees $ \mathfrak{T}^{\lac} $ below is strongly disjoint.
	
	\begin{proof}[Proof of Proposition \ref{Prop:CombinedDensitySize}] As the proof of the proposition is rather lengthy, we divide it into clearly marked and mostly self-contained blocks.

		\emph{Decomposition of the collection of tiles:} By Lemma \ref{Lem:Dense} we have the decomposition $ \P = \P^{\light} \cup \P^{\heavy} $. We abbreviate $ \delta \coloneqq \densesup(\P) $ and $ \sigma \coloneqq \size( \P^{\heavy} ) $. We can assume that $ \sigma > \size(\P)/2 $ for otherwise there is nothing to prove. Now, consider all lacunary trees $ \mathbf{T} \subseteq \P^{\heavy} $ with $ \topt(\mathbf{T}) \in \mathbf{T} $ and such that
		\begin{equation}
			\label{Eq:BigSizeTrees}
			\sum_{ t \in \mathbf{T} } F_{10n}[f](t)^2 > \frac{ \sigma^2}{C_n} \Abs{ R_{\mathbf{T}} }
		\end{equation}
		with $ C_n $ a large dimensional constant that will be chosen in the course of the proof. Among those, let $ \mathbf{T}_1 $ be the one with $ \eta_{\mathbf{T}_1} $ having minimal signature, where recall again that $ \eta_{\mathbf{T}_1} \in Q_{\mathbf{T}_1} $ is the point that satisfies $ (\eta_{\mathbf{T}_1} \times \{1\})' = \xi_{\mathbf{T}_1} $. Next, let $ \mathbf{T}_1' \subseteq \P^{\heavy} $ be the maximal $ K_n^2 $-tree with the same top as $ \mathbf{T}_1 $ and containing $ \mathbf{T}_1 $. Replace $ \P^{\heavy} $ by $ \P^{\heavy} \setminus \mathbf{T}_1' $ and repeat. The algorithm terminates after finitely many steps since the initial collection $ \P $ is finite. That way we choose a collection of lacunary trees $ \{ \mathbf{T}_j \}_j $ with $ \topt(\mathbf{T}_j) \eqqcolon s_j \in \P^{\heavy} $ and the collection of $ K_n^2 $-trees $ \{ \mathbf{T}_j' \}_j $ satisfying $ \mathbf{T}_j' \supseteq \mathbf{T}_j $. On the other hand, for the remaining collection of tiles, set $ \P_{\smal}^{\heavy} \coloneqq \P^{\heavy} \setminus \bigcup_j \mathbf{T}_j' $ and by Lemma \ref{Lem:ModifiedSize}
		\begin{equation}
			\size \left( \P_{\smal}^{\heavy} \right)
			\leq \gamma_n \size'\left( \P_{\smal}^{\heavy} \right)
			\leq \gamma_n \frac{ \sigma }{ \sqrt{C_n} },
		\end{equation}
		with $ \gamma_n $ the implicit constant in $ \size(\mathbb{Q}) \lesssim \size'(\mathbb{Q}) $ from Lemma \ref{Lem:ModifiedSize}. Choosing $ C_n = 4 \gamma_n^2 $ we get
		\begin{equation}
			\P = \P^{\light} \cup \P^{\heavy}_{\smal} \cup \bigcup_{j} \mathbf{T}_{j}'
		\end{equation}
		with $ \densesup(\P^{\light}) \leq \frac{1}{2} \densesup(\P) $ and $ \size(\P^{\heavy}_{\smal}) \leq \frac{1}{2} \sigma \leq \frac{1}{2} \size(\P) $.
		
		Now, by Lemma \ref{Lem:Split} each $ \mathbf{T}_{j}' $ can be written as a disjoint union of $ O_n(1) = V_n $ $1$-trees $ \{ \mathbf{T}_{j,\nu}' \}_{\nu=1}^{V_n} $ each one with top $ (\xi_{\mathbf{T}_{j}'},R_{\mathbf{T}_{j,\nu}'}) $ and $ R_{ \mathbf{T}_{j,\nu}' } $ a congruent copy of $ R_{ \mathbf{T}_{j} } $ with $ \{ R_{ \mathbf{T}_{j,\nu}' } \}_{\nu=1}^{V_n} $ being pairwise disjoint. This shows \eqref{Eq:SizeDecom} and furthermore we get
		\begin{equation}
			\label{Eq:SizeCongruentCopies}
			\sum_{\nu=1}^{V_n} \Abs{ R_{\mathbf{T}_{j,\nu}'}} \simeq \Abs{ R_{ \mathbf{T}_{j} } }.
		\end{equation}

		\emph{The size and density estimates in \eqref{Eq:SizeEstimates}:} Let us denote by $ \T_{\siz}^{\tops} $ the collection $ \{ \topt(\mathbf{T}_j) \}_{j} \subseteq \P^{\heavy} $. By Lemma \ref{Lem:Dense} and Remark \ref{Rmrk:DenseMap} we have the disjoint decomposition
		\begin{equation}
			\label{Eq:DecomSizeTopsByDense}
			\T_{\siz}^{\tops} = \bigcup_{ t \in \T_{\dens}^{\tops} } \left\{ s \in \T_{\siz}^{\tops} :\, t(s) = t \right\} \eqqcolon \bigcup_{ t \in \T_{\dens}^{\tops} } \T_{\siz,t}^{\tops},
		\end{equation}
		where $ \T_{\dens}^{\tops} \subseteq \mathcal{T}_{\ann} $ is a pairwise incomparable collection of tiles satisfying $ \dense(t) > \densesup(\P)/2 $. Thus,
		\begin{equation}
			\sum_j \Abs{ R_{ \mathbf{T}_{j} } } = \sum_{ s \in \T_{\siz}^{\tops} } \abs{R_s} = \sum_{ t \in \T_{\dens}^{\tops} } \sum_{ s \in \T_{\siz,t}^{\tops} } \abs{R_s}.
		\end{equation}
		We apply Lemma \ref{Lem:DecomAlgorithm} with $ \mu = 0 $ to the collection $ \T_t = \T_{\siz,t}^{\tops} $ for each $ t \in \T_{\dens}^{\tops} $ to get
		\begin{equation}
			\label{Eq:SizeTopsDecomAlgorithm}
			\T_t = \T_{\siz,t}^{\tops} = \bigcup_{\ell=1}^{N_t} \T_{t}(P_{t,\ell}),
		\end{equation}
		where $ P_{t,\ell} \in \T_{\siz,t}^{\tops} $ for every $ \ell \in \{1,\ldots,N_t\}$. The collection of $ \T_{\siz,t}^{\tops} $ consists of pairwise incomparable tiles since each tile in $ \T_{\siz,t}^{\tops} $ is contained in a different $ K_n^2 $-tree $ \{ \mathbf{T}_j' \}_j $ constructed above. Then, (ii) of Lemma \ref{Lem:DecomAlgorithm} implies for each $ \ell \in \{1,\ldots,N_t\} $
		\begin{equation}
			\sum_{ s \in \T_{t}(P_{t,\ell}) } \abs{ R_s } 
			= \Abs{ \bigcup_{ s \in \T_{t}(P_{t,\ell}) } R_s } 
			\lesssim \Abs{ R_{P_{t,\ell}}},
		\end{equation}
		where in the last inequality we have used (iii) of Lemma \ref{Lem:DecomAlgorithm}. Note that $ t(P_{t,\ell}) = t $ for every $ \ell \in \{1,\ldots,N_t\} $ which implies that $ \ecc(t) \subseteq \ecc(P_{t,\ell}) $ and by (iv) of Lemma \ref{Lem:DecomAlgorithm}
		\begin{equation}
			\sum_{ \ell=1 }^{N_t} \abs{ R_{P_{t,\ell}} } 
			= \Abs{ \bigcup_{ \ell=1 }^{N_t} R_{P_{t,\ell}} }
			\lesssim \abs{ R_t },
		\end{equation}
		where in the last inequality we have used the geometric Lemma \ref{Lem:Geom}. In conclusion,
		\begin{equation}
			\sum_j \Abs{ R_{ \mathbf{T}_{j} } } 
			= \sum_{ t \in \T_{\dens}^{\tops} } \sum_{ \ell=1 }^{N_t} \sum_{ s \in \T_{t}(P_{t,\ell}) } \abs{R_s} 
			\lesssim \sum_{ t \in \T_{\dens}^{\tops} } \abs{R_t}
			\lesssim \frac{ \abs{E} }{ \delta },
		\end{equation}
		by (iii) of Lemma \ref{Lem:Dense}. This shows the first estimate in \eqref{Eq:SizeEstimates}.
		
		To prove the second estimate in \eqref{Eq:SizeEstimates}, first we show that the lacunary trees $ \{ \mathbf{T}_j \}_j $ are strongly disjoint. For this let $ t_1 \in \mathbf{T}_{j_1} $ and $ t_2 \in \mathbf{T}_{j_2} $ with $ j_1 \neq j_2 $ and $ \ecc(t_1) \subseteq \ecc(t_2)^{\circ} $. Then, the tree $ \mathbf{T}_{j_1} $ was selected first because of signature considerations and $ \scl(R_{t_2}) \leq \scl( R_{\mathbf{T}_{j_1}} ) $. Suppose that $ \{ \mathbf{T}_j \}_j $ are not strongly disjoint, that is $ R_{t_2} \cap K_n^2 R_{ \mathbf{T}_{j_1} } \neq \emptyset $. Then $ t_2 $ would qualify for $ \mathbf{T}_{j_1}' $, leading to a contradiction. Thus, since $ \{ \mathbf{T}_j \}_j $ is a collection of strongly disjoint lacunary trees we can apply (2) in Lemma \ref{Lem:OrthoLacTree} together with \eqref{Eq:BigSizeTrees} to get
		\begin{equation}
			\sum_{j} \sum_{ \nu=1 }^{ V_n } \abs{ R_{\mathbf{T}_{j,\nu}'} }
			\simeq \sum_{j} \abs{ R_{\mathbf{T}_j} } 
			\lesssim \sigma^{-2} \sum_{j} \sum_{ t \in \mathbf{T}_j } F_{10n}[f](t)^2
			\lesssim \sigma^{-2} \norm{f}{L^2(\R^n)},
		\end{equation}
		where in the first equality we have used \eqref{Eq:SizeCongruentCopies}. This finishes the proof of the second estimate in \eqref{Eq:SizeEstimates}.
		
		\emph{Big size and density of the trees:} Note that all the trees constructed above are inside $ \P^{\heavy} $ so they satisfy $ \densesup(\mathbf{T}_{j,\nu}') \geq \delta/2 $. For the size we can only conclude that for each $ j $ there exists $ \nu_0 \in \{1,\ldots,V_n\} $ such that
		\begin{equation}
			\sum_{ t \in \mathbf{T}_{j,\nu_0} } F_{10n}[f](t)^2 \gtrsim \sigma^2 \Abs{ R_{ \mathbf{T}_{j,\nu_0} } },
		\end{equation}
		where $ \mathbf{T}_{j,\nu_0} $ is a lacunary tree contained in $ \mathbf{T}_j $. Indeed note that the splitting $ \mathbf{T}_j' = \bigcup_{\nu=1}^{V_n} \mathbf{T}_{j,\nu}' $ induces a corresponding splitting of $ \mathbf{T}_j $
		\begin{equation}
			\mathbf{T}_j = \bigcup_{\nu=1}^{V_n} \mathbf{T}_{j,\nu}, \qquad \mathbf{T}_{j,\nu} = \mathbf{T}_{j} \cap \mathbf{T}_{j,\nu}' ,
		\end{equation}
		with each $ R_{ \mathbf{T}_{j,\nu} } $ congruent to $ R_{\mathbf{T}_{j} } $ and $ V_n $ depends only on the dimension. The trees $ \mathbf{T}_{j,\nu} $ are produced via Lemma \ref{Lem:Split} and it is easy to see that each $ \mathbf{T}_{j,\nu} $ is lacunary. Since
		\begin{equation}
			\sum_{ t \in \mathbf{T}_{j} } F_{10n}[f](t)^2 
			= \sum_{\nu=1}^{V_n} \sum_{ t \in \mathbf{T}_{j,\nu} } F_{10n}[f](t)^2 \gtrsim \sigma^2 \Abs{ R_{ \mathbf{T}_{j} } },
		\end{equation} 
		it follows by pigeonholing that there exists $ \nu_0 \in \{1,\ldots,V_n\} $ such that
		\begin{equation}
			\sum_{ t \in \mathbf{T}_{j,\nu_0} } F_{10n}[f](t)^2 
			\gtrsim \sigma^2 \Abs{ R_{ \mathbf{T}_{j} } }
			= \sigma^2 \Abs{ R_{ \mathbf{T}_{j,\nu_0} } },
		\end{equation}
		concluding the proof of the proposition.
	\end{proof}

	\subsection{The maximal estimate for the tops}
	\label{Ss:DecomMax}
	
	Besides the size and density estimates in \eqref{Eq:SizeEstimates} of Proposition \ref{Prop:CombinedDensitySize}, we need an additional estimate which is critical for accessing the range $ 1 < p < 2 $ in Theorem~\ref{Thm:MainSingleBand}.
	
	\begin{proposition}
		\label{Prop:DecomMax}
		For every $0 < \varepsilon < 1 $, the trees constructed in Proposition \ref{Prop:CombinedDensitySize} additionally satisfy
		\begin{equation}
			\label{Eq:MaxEstimate}
			\sum_{j,\nu} \Abs{ R_{\mathbf{T}_{j,\nu}'} } \lesssim_{\varepsilon} \frac{ \abs{ F }^{1-\varepsilon} \abs{E}^{\varepsilon} }{ \densesup(\P) \size(\P)^{1+\varepsilon} }.
		\end{equation}
	\end{proposition}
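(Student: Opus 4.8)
The plan is to deduce \eqref{Eq:MaxEstimate} from a Lipschitz/Kakeya maximal estimate for the top parallelepipeds of the trees $\mathbf{T}_{j,\nu}'$, which will be established in Section~\ref{S:MaxLemma}, by feeding into it the size and density data carried by those trees. First I would reduce the double sum to a single one: for each fixed $j$ the family $\{ R_{\mathbf{T}_{j,\nu}'} \}_{\nu=1}^{V_n}$ consists of $V_n = O_n(1)$ congruent copies of $R_{\mathbf{T}_j}$, so it suffices to prove the bound with $\sum_j \Abs{ R_{\mathbf{T}_j} }$ on the left. I abbreviate $\delta \coloneqq \densesup(\P)$ and $s \coloneqq \size(\P)$, and recall that $\delta, s \lesssim 1$ because of the normalisation $\norm{f}{\infty} = \norm{g}{\infty} = 1$.

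The second step is to record, in a form adapted to a maximal estimate, the two structural features of the trees from Proposition~\ref{Prop:CombinedDensitySize}. On the density side, unwinding $\densesup(\mathbf{T}_{j,\nu}') \simeq \delta$ through the definition of $\densesup$, Lemma~\ref{Lem:Dense}, Remark~\ref{Rmrk:DenseMap} and the geometric Lemmas~\ref{Lem:Geom}--\ref{Lem:Geom2} (used to descend from the density-witnessing tile to a fixed dilate of $R_{\mathbf{T}_j}$), I would produce a set $E_j \subseteq E$ contained in $K_n^{O(1)} R_{\mathbf{T}_j}$ and in the directional slab $\{ x : v_{\sigma(x)} \in \ecc(\mathbf{T}_j) \}$, with $\Abs{ E_j } \gtrsim \delta \Abs{ R_{\mathbf{T}_j} }$; in particular a fixed dilate of $R_{\mathbf{T}_j}$ is an admissible average realising $\mathcal{M}_\sigma \indf{E} \gtrsim \delta$ on $E_j$, where $\mathcal{M}_\sigma$ denotes the operator averaging a function over admissible parallelepipeds $R$ with $x \in K_n^{O(1)} R$ and $v_{\sigma(x)} \in \ecc(R)$. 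On the size side, applying the orthogonality bound~\ref{Item:LemOrtho2} of Lemma~\ref{Lem:OrthoLacTree} to the lacunary subtree $\mathbf{T}_{j,\nu_0}$ of Proposition~\ref{Prop:CombinedDensitySize}, together with $\Abs{f} \leq \indf{F}$, yields $\int_F \chi_{R_{\mathbf{T}_j},\infty}^{4n} \gtrsim s^2 \Abs{ R_{\mathbf{T}_j} }$, so that $\mathcal{M}_\sigma \indf{F} \gtrsim s^2$ on $E_j$ as well.

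The heart of the matter is the maximal bound of Section~\ref{S:MaxLemma}: the operator $\mathcal{M}_\sigma$ is bounded on $L^q(\R^n)$ for every $q>1$, uniformly in $\sigma$, with $\Norm{\mathcal{M}_\sigma}{L^q \to L^q} \lesssim_{q,n} 1$. Granting it, one obtains $\bigcup_j E_j \subseteq \mathcal{E} \coloneqq \{ x \in E : \mathcal{M}_\sigma\indf{F}(x) \gtrsim s^2 \}$, whence $\Abs{\mathcal{E}} \lesssim_q \min\{ \Abs{E},\, s^{-2q}\Abs{F} \}$, and the decisive geometric input --- to be made precise in Section~\ref{S:MaxLemma} --- is that the sets $E_j$ have overlap bounded by a dimensional constant: if $x \in E_j \cap E_k$ then $v_{\sigma(x)} \in \ecc(\mathbf{T}_j) \cap \ecc(\mathbf{T}_k)$, so the eccentricities are nested, and combining the \emph{single-valuedness} of $\sigma(\cdot)$, the triadic-grid structure of the eccentricities, and the strong disjointness of the lacunary trees $\{\mathbf{T}_j\}_j$ established inside the proof of Proposition~\ref{Prop:CombinedDensitySize} forces only $O_n(1)$ of the $\mathbf{T}_j$ to be active at any given point. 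Then $\delta\sum_j\Abs{R_{\mathbf{T}_j}} \lesssim \sum_j\Abs{E_j} \lesssim_n \Abs{\mathcal{E}}$, and the resulting level-set estimate, optimised over $q>1$ and combined with the two bounds of \eqref{Eq:SizeEstimates} and $\delta, s \lesssim 1$, delivers \eqref{Eq:MaxEstimate}.

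The main obstacle is the Lipschitz/Kakeya maximal estimate deferred to Section~\ref{S:MaxLemma}, together with the precise overlap bound for the sets $E_j$: both hinge on the fact that $\sigma(\cdot)$ is single-valued, so that --- by the triadic-grid structure of the eccentricities --- the directions of the parallelepipeds relevant at a fixed point form a \emph{lacunary} family, which is exactly what makes $\mathcal{M}_\sigma$ bounded on every $L^q$, $q>1$, in contrast with the genuine Kakeya maximal function (which fails at $L^1$); as for the geometric Lemma~\ref{Lem:Geom}, the two-dimensional core of this estimate is recovered from the general case by working in the orthogonal complement of the intersection of two hyperplanes. A secondary, bookkeeping-type difficulty, local to this proposition, is to convert honestly ``$\densesup(\mathbf{T}_{j,\nu}') \simeq \densesup(\P)$'' into the assertion that $E$ fills a $\delta$-fraction of a fixed dilate of $R_{\mathbf{T}_j}$ itself, since a priori the density is witnessed only at a possibly much larger parallelepiped; this is handled using the geometric Lemmas~\ref{Lem:Geom}--\ref{Lem:Geom2} together with the comparability --- established inside the proof of Proposition~\ref{Prop:CombinedDensitySize} --- between the total parallelepiped measure of the size tops attached to a common density top and that of the density top, as well as a careful balancing of the exponents in the optimisation above.
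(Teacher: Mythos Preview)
Your plan has a genuine gap at the step you call ``decisive'': the claim that the sets $E_j$ have overlap bounded by a dimensional constant is false, and strong disjointness does not come close to implying it. Strong disjointness (Definition~\ref{Def:StrongDisjFam}) is an \emph{$L^2$-orthogonality} statement about pairs of tiles in different trees whose eccentricities are nested in a specific way; it gives no pointwise control on $\sum_j \indf{E_j}$. Concretely: the tops $s_j=\topt(\mathbf{T}_j)$ are pairwise incomparable, so if $v_{\sigma(x)}\in\bigcap_j\ecc(s_j)$ then the $R_{s_j}$ are pairwise disjoint, but the dilates $K_n^{O(1)}R_{s_j}$ that contain your $E_j$ can overlap arbitrarily often. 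The paper never claims bounded overlap; instead it shows (Lemma~\ref{Lem:MaxKeyHBMO} inside the proof of Proposition~\ref{Prop:MaxKey}) that for a pairwise incomparable collection the overlap function $\sum_t\indf{R_t}$ lies in triadic $\BMO$ with norm $O(\mu^{-1})$, which via John--Nirenberg yields $\sum_t|R_t|\lesssim_\varepsilon |F|/(\mu\lambda^{1+\varepsilon})$ for collections satisfying \eqref{Eq:MaxKeyHypo1}--\eqref{Eq:MaxKeyHypo2}. This is the correct replacement for your bounded-overlap claim, and it is strictly weaker --- $\BMO$ rather than $L^\infty$ --- which is why the final exponent on $\size(\P)$ is $1+\varepsilon$ and not $1$.

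There are two further issues. First, the conversion from big size to an $F$-average is not the $s^2$ you get from the $L^2$ orthogonality Lemma~\ref{Lem:OrthoLacTree}\ref{Item:LemOrtho2}: the Schwartz tails of $\chi_{R_{\mathbf T},\infty}^{2K}$ can absorb all of $s^2|R_{\mathbf T}|$ far from $R_{\mathbf T}$, so one needs the $L^{1+\varepsilon}$ square-function bound of Lemma~\ref{Lem:BigSizeBigInterSquareFuncLp} to prove Lemma~\ref{Lem:MaxBigSizeBigInter}, and the outcome is $|F\cap(1+s^{-\varepsilon})R_{\mathbf T}|\gtrsim s^{1+\varepsilon}|(1+s^{-\varepsilon})R_{\mathbf T}|$. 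Second, the ``bookkeeping'' you flag is not secondary: the density is witnessed at a parallelepiped $3^kK_n^2R_t$ possibly much larger than $R_{\mathbf T_j}$, and the ratio $2^{-u}\simeq\sum_\ell|R_{P_{t,\ell}}|/|R_t|$ between size-top and density-top measures is unbounded. The paper's proof in Subsection~\ref{Ss:MaxProof} organises the tops by the parameters $(u,k)$, applies Proposition~\ref{Prop:MaxKey} to the density tops $\T_{\dens,u,k}^{\tops}$ (not to the $\mathbf T_j$ directly), and uses Lemma~\ref{Lem:MaxSizeHypo} to pass the size lower bound up to the density top; the two regimes $3^k\gtrless 1+\sigma^{-\varepsilon}$ and the sums in $u$ and $k$ are what produce the correct exponents in \eqref{Eq:MaxEstimate}. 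Your level-set optimisation over $q>1$, even granting bounded overlap, would yield $\delta^{-1}\min(|E|,s^{-2q}|F|)$, which after interpolation gives $s^{-2q(1-\varepsilon)}$ in place of $s^{-(1+\varepsilon)}$ and forces $q<1$.
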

	
	The proof of this estimate is rather involved and relies on several auxiliary results together with a rather deep geometrical-analytical estimate for a Kakeya-type maximal function from \cites{Bateman09Proc,Bateman2013Revista}. We postpone the proof until Section \ref{S:MaxLemma} in order not to interrupt the logical flow of the main argument.
	
	\subsection{Iterative decomposition}
	\label{Ss:Decom}
	
	The combined density and size decomposition of Proposition \ref{Prop:CombinedDensitySize} together with the maximal estimate of Proposition \ref{Prop:DecomMax} lead to the following final decomposition of all tiles into trees with the desirable density, size and maximal estimates.
	
	\begin{lemma} 
		\label{Lem:Decom}
		Let $ \ann \in 3^{\Z} $ be fixed and $ \P \subseteq \mathcal{T}_{\ann} $ be a finite collection of tiles. There exist integers $K,J\leq 0$ depending only on the dimension and a disjoint decomposition
		\begin{equation} \label{Eq:Decom}
			\P = \bigcup_{ k\geq K } \bigcup_{ j\geq J } \bigcup_{\tau=1}^{N_{k,j}} \mathbf{T}_{k,j,\tau}
		\end{equation}
		with each $ \mathbf{T}_{k,j,\tau} $ being a tree and for each $ k\geq K$ ,$j\geq J$
		\begin{equation} 
			\label{Eq:DecomEstimatesDS}
			\sum_{\tau=1}^{N_{k,j}} \Abs{ R_{\mathbf{T}_{k,j,\tau}} } \lesssim_{\varepsilon} \min \left\{  2^{k} \abs{E}, 2^{2 j} \abs{ F }, \, 2^{k} 2^{(1+\varepsilon) j } \abs{F}^{1-\varepsilon} \abs{E}^{\varepsilon} \right\},
		\end{equation}
		for every $ \varepsilon > 0 $.
	\end{lemma}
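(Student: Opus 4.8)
The plan is to run an iterative (double-)stopping-time decomposition, extracting at each stage the tiles of largest density and then, among those, the trees of largest size, using Propositions~\ref{Prop:CombinedDensitySize} and~\ref{Prop:DecomMax} as the engine of each step. Concretely, I would first normalize so that $\densesup(\P)\le 2^{K_0}$ and $\size(\P)\le 2^{J_0}$ for some integers $K_0,J_0$; since $\P$ is finite and $f,g$ are bounded with $\norm{f}{\infty}=\norm{g}{\infty}=1$, one has the crude bounds $\densesup(\P)\lesssim 1$ and $\size(\P)\lesssim 1$, so one may take $K_0,J_0=O(1)$; this is where the constants $K,J\le 0$ come from after relabelling $k=K_0-(\text{iteration count})$, $j=J_0-(\text{iteration count})$. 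The iteration is: given a collection with $\densesup\le 2^{k}$ and $\size\le 2^{j}$, apply Proposition~\ref{Prop:CombinedDensitySize} to split off $\P^{\light}$ (with $\densesup$ halved), $\P^{\heavy}_{\smal}$ (with $\size$ halved but $\densesup$ still $\simeq 2^k$), and a family of trees $\{\mathbf{T}'_{j,\nu}\}$; collect these trees into the layer $\mathbf{T}_{k,j,\tau}$ and recurse on $\P^{\light}$ (decreasing $k$) and on $\P^{\heavy}_{\smal}$ (decreasing $j$, same $k$). Because each application strictly decreases a dyadic parameter and $\P$ is finite, the process terminates, yielding the disjoint decomposition~\eqref{Eq:Decom}.

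The three bounds in~\eqref{Eq:DecomEstimatesDS} then come directly from the estimates attached to the trees produced in each step. For a fixed pair $(k,j)$ the trees $\mathbf{T}_{k,j,\tau}$ are exactly the trees $\mathbf{T}'_{j,\nu}$ produced by Proposition~\ref{Prop:CombinedDensitySize} applied to a subcollection $\widetilde{\P}_{k,j}\subseteq \P$ with $\densesup(\widetilde{\P}_{k,j})\simeq 2^{k}$ and $\size(\widetilde{\P}_{k,j})\simeq 2^{j}$. The size–density estimate~\eqref{Eq:SizeEstimates} of Proposition~\ref{Prop:CombinedDensitySize} gives
\begin{equation}
	\sum_{\tau} \Abs{ R_{\mathbf{T}_{k,j,\tau}} } \lesssim \min\left\{ \frac{\abs{E}}{\densesup(\widetilde{\P}_{k,j})}, \frac{\norm{f}{L^2}^2}{\size(\widetilde{\P}_{k,j})^2} \right\} \lesssim \min\left\{ 2^{-k}\abs{E}, 2^{-2j}\abs{F} \right\},
\end{equation}
where in the last step I use $\norm{f\indf{F}}{L^2}^2\le \abs{F}$ coming from $\norm{f}{\infty}=1$. (One should replace $2^k$ by $2^{-k}$ throughout or, equivalently, keep the convention in the statement that $\densesup\simeq 2^{k}$ with $k\le 0$; I would fix the sign convention so that the RHS reads $\min\{2^{k}\abs{E},2^{2j}\abs{F},\dots\}$ as written, i.e.\ $k,j\le 0$ index $\densesup\simeq 2^{-|k|}$.) The third bound $2^{k}2^{(1+\varepsilon)j}\abs{F}^{1-\varepsilon}\abs{E}^{\varepsilon}$ is precisely the content of Proposition~\ref{Prop:DecomMax} applied to $\widetilde{\P}_{k,j}$, since $\densesup(\widetilde{\P}_{k,j})\simeq 2^{-|k|}$ and $\size(\widetilde{\P}_{k,j})\simeq 2^{-|j|}$, and since the trees in layer $(k,j)$ are exactly those to which that proposition applies.

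The one genuinely delicate point — and the main obstacle — is the bookkeeping that guarantees the trees produced across different $(k,j)$-stages are \emph{disjoint} and together exhaust $\P$, while simultaneously each stage's input still has $\densesup$ and $\size$ comparable to the nominal $2^{k},2^{j}$. The subtlety, already flagged in the paper's discussion of the intertwining of the size and density selections, is that one must run the size decomposition \emph{inside} $\P^{\heavy}$ (so that the extracted trees retain large density, a prerequisite for Proposition~\ref{Prop:DecomMax}), rather than independently. This is handled by the combined decomposition of Proposition~\ref{Prop:CombinedDensitySize}, which outputs $\P^{\light}$ (recurse with smaller $k$), $\P^{\heavy}_{\smal}$ (recurse with smaller $j$, same $k$), and the high-size-high-density trees (a completed layer); one checks by induction on the lexicographic pair $(k,j)$ that this produces a disjoint partition with the claimed properties, the base case being the finiteness of $\P$. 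The remaining estimates are then a bounded union over $\tau$ and an application of the two propositions, with the $O(1)$ factors $V_n$ and $N_{k,j}$ absorbed into the implicit constants. I would also remark that only finitely many pairs $(k,j)$ contribute (again by finiteness of $\P$), so the decomposition~\eqref{Eq:Decom} is in fact a finite union.
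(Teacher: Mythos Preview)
Your proposal is correct and follows essentially the same approach as the paper: iterate Proposition~\ref{Prop:CombinedDensitySize} (with Proposition~\ref{Prop:DecomMax} supplying the third estimate), running an outer loop on density and an inner loop on size within the heavy part. The only issue is your sign/index convention, which you flag yourself: the paper sets $\densesup\simeq 2^{-k}$ and $\size\simeq 2^{-j}$ with $k,j$ starting at $K,J\le 0$ and \emph{increasing} as the parameters halve, so the estimates read $|E|/\densesup\simeq 2^{k}|E|$, $\|f\|_2^2/\size^2\le 2^{2j}|F|$, and $|F|^{1-\varepsilon}|E|^\varepsilon/(\densesup\cdot\size^{1+\varepsilon})\simeq 2^{k}2^{(1+\varepsilon)j}|F|^{1-\varepsilon}|E|^\varepsilon$ directly; your relabelling $k=K_0-(\text{iteration count})$ goes the wrong direction. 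One small point you could make explicit to dispel the ``delicate'' bookkeeping worry: since $\P^{\heavy}_{\smal}\subseteq\P^{\heavy}$, every tile in it already has $\densesup(t)>\tfrac12\densesup(\P)$, so when Proposition~\ref{Prop:CombinedDensitySize} is reapplied to $\P^{\heavy}_{\smal}$ the new light part is automatically empty and the density level genuinely stays fixed throughout the inner loop---no drift occurs.
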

	
	\begin{proof}
		Starting with a finite collection $ \P $ note that we have the a priori estimate $ \densesup(\P)\lesssim 1 $ and also $ \size(\P) \lesssim 1 $ as consequence of (1) of Lemma \ref{Lem:OrthoLacTree}. Then, we can assume that $2^{-K-1} < \densesup(\P) \leq 2^{-K} $ and $ 2^{-J-1}<\size(\P) \leq 2^{-J} $ for some nonpositive integers $J,K$ depending only on the dimension. Applying Proposition \ref{Prop:CombinedDensitySize} and using Proposition \ref{Prop:DecomMax} we get
		\begin{equation}
			\P = \P^{\light} \cup \P^{\heavy}_{\smal} \cup \bigcup_{\tau=1}^{N_{K,J}} \mathbf{T}_{K,J,\tau},
		\end{equation}
		where each $ \mathbf{T}_{K,J,\tau} $ is a tree and where $ \densesup( \P^{\light}) < 2^{-K-1} $, $ \size(\P^{\heavy}_{\smal}) \leq 2^{-J-1} $ and 
		\begin{equation}
			\sum_{\tau=1}^{N_{K,J}} \Abs{ R_{ \mathbf{T}_{K,J,\tau} } } 
			\lesssim_{\varepsilon} \min \left\{ 2^K \abs{E}, \, 2^{2J} \abs{F}, \, 2^{K} 2^{(1+\varepsilon) J } \abs{F}^{1-\varepsilon} \abs{E}^{\varepsilon} \right\}.
		\end{equation}
		As long as $ \P^{\heavy}_{\smal} \neq \emptyset $ we can iterate Proposition \ref{Prop:CombinedDensitySize} starting from $ \P^{\heavy}_{\smal} $, producing trees $ \{ \mathbf{T}_{K,j,\tau} \}_{\tau=1}^{N_{K,j}}$ for $j\geq J$. This way we decompose 
		\begin{equation}
			\P = \P^{\light} \cup \bigcup_{j=J}^{\infty} \bigcup_{\tau=1}^{N_{K,j}} \mathbf{T}_{K,j,\tau}
		\end{equation}
		with
		\begin{equation}
			\sum_{\tau=1}^{N_{K,j}} \Abs{ R_{ \mathbf{T}_{K,j,\tau} } } 
			\lesssim_{\varepsilon} \min \left\{ 2^K \abs{E}, \, 2^{2j} \abs{F}, \, 2^{K} 2^{(1+\varepsilon) j } \abs{F}^{1-\varepsilon} \abs{E}^{\varepsilon} \right\}
		\end{equation}
		for every $ j \geq J $. We can now iterate the selection algorithm starting from $ \P^{\light} $ where we have $ \densesup( \P^{\light}) \leq 2^{-K-1} $ and $ \size(\P^{\light}) \leq 2^{-J} $. Note that it may happen that no trees are constructed in some of the iterations.
	\end{proof}
	
	\begin{remark}\label{Remark:MaximalCase}
		A simple calculation shows that the density estimate is stronger than the maximal estimate for every $\varepsilon>0$, namely the rightmost estimate in \eqref{Eq:DecomEstimatesDS}, whenever $\size(\P) \abs{E}\leq \abs{F}$
		\begin{equation}
			\frac{|E|}{\densesup(\mathbb P)}\leq \frac{|E|^{\varepsilon} |F|^{1-\varepsilon}}{\densesup(\P) \size(\P)^{1-\varepsilon}}\lesssim \frac{|E|^{\varepsilon} |F|^{1-\varepsilon}}{\densesup(\P)\size(\P)^{1+\varepsilon}}
		\end{equation}
		since we always have $\size(\P)\lesssim 1$. Because of the latter apriori estimate we also note that $\size(\P) \abs{E}\lesssim \abs{F}$ \emph{always holds} whenever $|E|\leq |F|$. We conclude that that the maximal estimate needs only be used in the case that $|E|>|F|$.
	\end{remark}
	
	\subsection{Proof of the estimate for the model sum}
	\label{Ss:ProofModel}
	
	In this subsection, we will put everything together in order to prove the desired inequality \eqref{Eq:DesiredModelEst} in Proposition \ref{Prop:Model} and conclude the proof of Theorem~\ref{Thm:MainSingleBand}. First, since we decompose the collection of tiles $ \P $ into trees, we need the following single tree estimate. The version which is suitable for our setup is taken verbatim from \cite[Lemma 6.14]{BDPPR}. We remark that in \cite[Lemma 6.14]{BDPPR} the authors use a slightly different definition for lacunary trees, but this creates no problem in the proof of the tree lemma in \cite{BDPPR}*{Proof of (6.5)}.
	
	\begin{lemma}[Tree Lemma] 
		\label{Lem:Tree} 
		Let $\mathbf{T} \subseteq \mathcal{T}_{\ann} $ be a tree for some fixed $ \ann \in 3^{\Z} $. There holds
		\begin{equation}
			\Lambda_{\mathbf{T}; \sigma, \tau_{0}, M}(f,g\indf{E})\lesssim \size(\mathbf{T}) \densesup(\mathbf{T})\abs{ R_{\mathbf{T}} },
		\end{equation}
		with $ \size $ defined with respect to $ f \in L_{0}^{\infty}(\R^n) $, $ \densesup $  defined with respect to $ E \subseteq \R^n $ with $ \abs{ E } < \infty $ and $ g \in L_{0}^{\infty}(\R^n) $ with $ \norm{ g }{\infty} = 1 $.
	\end{lemma}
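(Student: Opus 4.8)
This lemma is \cite[Lemma~6.14]{BDPPR} and the proof is the one given there; we outline its structure. Since $M\geq 10n$ we have $\mathcal F_t^M\subseteq\mathcal F_t^{10n}$, hence $F_M[f](t)\leq F_{10n}[f](t)$, and it suffices to bound $\sum_{t\in\mathbf T}F_{10n}[f](t)\,A_{\sigma,\tau_0,M}[g\indf E](t)$. Moreover, for any $\vartheta\in\mathcal A_t^M$ one has $|\vartheta(\cdot,\sigma)|\leq\chi_{R_t,2}^M\leq|R_t|^{1/2}\chi_{R_t,1}^{10n}$, so that $\|g\|_\infty=1$ and the definition of $E_t$ give the crude pointwise bound $A_{\sigma,\tau_0,M}[g\indf E](t)\lesssim|R_t|^{1/2}\dense(t)$, to be used together with the \emph{a priori} inequality $\dense(t)\leq\densesup(\mathbf T)$. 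We then split $\mathbf T=\mathbf T_{\mathrm{ov}}\cup\mathbf T_{\lac}$ with $\mathbf T_{\mathrm{ov}}\coloneqq\{t\in\mathbf T:\xi_{\mathbf T}\in\ecc(t)^\circ\}$ and $\mathbf T_{\lac}\coloneqq\mathbf T\setminus\mathbf T_{\mathrm{ov}}$, and treat the two sums separately.

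For the lacunary sum the plan is to apply Cauchy--Schwarz. On the $f$--side, Lemma~\ref{Lem:ModifiedSize} lets us split $\mathbf T_{\lac}$ into lacunary subtrees containing their tops and Lemma~\ref{Lem:OrthoLacTree}\ref{Item:LemOrtho2} then gives $\sum_{t\in\mathbf T_{\lac}}F_{10n}[f](t)^2\lesssim\size(\mathbf T)^2|R_{\mathbf T}|$. On the $g$--side one needs $\sum_{t\in\mathbf T_{\lac}}A_{\sigma,\tau_0,M}[g\indf E](t)^2\lesssim\densesup(\mathbf T)^2|R_{\mathbf T}|$; by the crude bound above this reduces to a geometric overlap estimate for $\sum_{t\in\mathbf T_{\lac}}|R_t|\dense(t)$, whose point is that inside a lacunary tree the tiles sharing a common frequency direction are spatially nested along the $e_n$--axis — a consequence of the Geometric Lemmas~\ref{Lem:Geom} and \ref{Lem:Geom2} — so that the overlap function $\sum_{t\in\mathbf T_{\lac}}\indf{\alpha_{t,\tau_0}}(v_{\sigma(x)})\,|R_t|\chi_{R_t,1}^{10n}(x)$ is $O(1)$ pointwise, supported in $K_nR_{\mathbf T}$ by \eqref{Eq:TreeShadow}, and, after also invoking the definition of $\densesup(\mathbf T)$ to control the relevant directional portion of $E$ inside the dilates $K_n^jR_{\mathbf T}$, contributes the extra factor $\densesup(\mathbf T)$. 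Multiplying the two Cauchy--Schwarz factors closes the lacunary part.

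For the overlapping sum the condition $\xi_{\mathbf T}\in\ecc(t)^\circ$ means that, up to the modulation $\Mod_t$, every wave packet in $\mathcal F_t^M$ is a bump at scale $\scl(R_t)$ whose Fourier transform is localized near the direction $\xi_{\mathbf T}$ at the common frequency scale $\ann$, \emph{uniformly in $t$}; this allows one to telescope the sum over scales and dominate $\sum_{t\in\mathbf T_{\mathrm{ov}}}F_{10n}[f](t)\,A_{\sigma,\tau_0,M}[g\indf E](t)$ by the integral over $K_nR_{\mathbf T}$ of a local $L^2$/maximal average of $f$ (controlled by $\size(\mathbf T)$) against a local average of $g\indf E$ (controlled by $\densesup(\mathbf T)$), producing $\size(\mathbf T)\densesup(\mathbf T)|R_{\mathbf T}|$. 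Throughout — and already in the crude bound for $A$ — one must handle the fact that $\vartheta(\cdot,\sigma(\cdot))$ is not a fixed wave packet: the adaptedness condition defining $\mathcal A_t^M$ shows that replacing $\sigma(x)$ by the center direction of $\ecc(t)$ costs only an error carrying the weight $1/\log(e+\dist(\sigma,\sigma')^{-1})$, which is summable against the log-Hölder gauge \eqref{Eq:MihlinFamDef} of $\mathscr M$. I expect the main obstacle to be exactly this interaction between the directional variation of $\sigma(\cdot)$ and the time-frequency localization: one has to verify that the stacking/overlap geometry underpinning both the lacunary and the overlapping estimates survives replacing a single frequency cone by the family $\{\ecc(t)\}$ of caps coming from the projected grids, and that the wave-packet adaptedness errors accumulate to no more than a constant.
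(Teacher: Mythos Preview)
The proposal is correct and matches the paper's approach exactly: the paper does not reproduce the proof either, but simply states that the lemma is taken verbatim from \cite[Lemma~6.14]{BDPPR} and remarks that the slightly different definition of lacunary trees there causes no problem in \cite{BDPPR}*{Proof of (6.5)}. Your additional outline of the lacunary/overlapping split and the Cauchy--Schwarz plus size/density scheme is a reasonable sketch of that argument, but it is extra relative to what the paper does and would need the full details from \cite{BDPPR} to be made rigorous.
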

	
	Now we are ready to prove the desired inequality \eqref{Eq:DesiredModelEst} for the model operator in Proposition \ref{Prop:Model}.
	
	\begin{proof}[Proof of Proposition \ref{Prop:Model}]
		As in the proof of Lemma \ref{Lem:Decom}, we can assume that $ 2^{-K-1}<\densesup(\P)\leq 2^{-K} $ and $2^{-J-1}< \size(\P) \leq 2^{-J} $ for some negative integers $ J,K $. A combination of Lemma \ref{Lem:Tree} and Lemma \ref{Lem:Decom} yields that we can bound the left hand side of \eqref{Eq:DesiredModelEst} by
		\begin{equation}
			\label{Eq:EstimatesDenseSizeMax}
			\Lambda_{\P; \sigma, \tau_{0}, M} (f\indf{F},g\indf{E})
			\lesssim_{\varepsilon} \sum_{ k=K }^{\infty} \sum_{j=J}^{\infty} 2^{-k} 2^{-j} \min \left\{ 2^k \abs{E}, \, 2^{2j} \abs{F}, \, 2^{k} 2^{(1+\varepsilon) j } \abs{F}^{1-\varepsilon} \abs{E}^{\varepsilon} \right\},
		\end{equation}
		for every $\varepsilon>0$. The case $ p=2 $ follows exactly as the weak-type $ (2,2) $ bound from \cite{BDPPR}. Indeed, we have
		\begin{equation}
			\begin{split}
				\Lambda_{\P; \sigma, \tau_{0}, M} (f\indf{F},g\indf{E})
				& \lesssim \sum_{ k=K }^{\infty} \sum_{j=J}^{ \frac{1}{2} \log \frac{2^k \abs{ E }}{ \abs{ F } } } 2^{-k} 2^{j} \abs{ F } + \sum_{ k=K }^{\infty} \sum_{j= \frac{1}{2} \log \frac{2^k \abs{ E }}{ \abs{ F } } }^{\infty} 2^{-j} \abs{ E } \\
				& \lesssim \sum_{ k=K }^{\infty} 2^{-k/2} \abs{ E }^{1/2} \abs{ F }^{1/2}
				\lesssim \abs{ E }^{1/2} \abs{ F }^{1/2}.
			\end{split}
		\end{equation}
		For $ p \geq 2 $ we may assume that $ \abs{ E } \leq \abs{ F } $ for otherwise the desired estimate follows from the case $ p=2 $. Then, recalling Remark \ref{Remark:MaximalCase}, we need to use only the first two estimates in the minimum in the right hand side of \eqref{Eq:EstimatesDenseSizeMax}. Thus,
		\begin{equation}
		\begin{split}
			\Lambda_{\P; \sigma, \tau_0, M} (f\indf{F},g\indf{G}) 
			& \lesssim \sum_{j=J}^{\infty} \sum_{k=K}^{ \log \frac{2^{2j} \abs{F}}{\abs{E}} } 2^{-j} \abs{ E } + \sum_{j=J}^{\infty} \sum_{ \log \frac{2^{2j} \abs{F}}{\abs{E}} }^{\infty} 2^{-k} 2^{ j } \abs{ F } \\
			& \lesssim \sum_{j=J}^{\infty} 2^{-j} \abs{ E } \left( j + \log \frac{ \abs{F} }{ \abs{E} } \right) + \sum_{j=J}^{\infty} 2^{ -j } \abs{ E }
			\lesssim \abs{E} \left( 1 + \log \frac{ \abs{F} }{ \abs{E} } \right)
			\lesssim \abs{F}^{\frac{1}{p}} \abs{E}^{1-\frac{1}{p}}.
		\end{split}
		\end{equation}
		Finally, for $ p \leq 2 $, we can assume that $ \abs{ F } \leq \abs{ E } $ for otherwise the desired estimate follows from the case $ p=2 $. Then, splitting the sum as
		\begin{equation}
		\begin{split}
			\Lambda_{\P; \sigma, \tau_0, M} (f\indf{F},g\indf{G}) 
			& \lesssim \sum_{j=J}^{ \log \frac{ \abs{E}}{\abs{F}}} \sum_{k=K}^{j} 2^{j \varepsilon} \abs{F}^{1-\varepsilon} \abs{E}^{\varepsilon}
			+ \sum_{j=J}^{\log \frac{ \abs{E}}{\abs{F}}} \sum_{k=j}^{\infty} 2^{-k} 2^{j} \abs{F} \\
			& \phantom{==} + \sum_{j= \log \frac{ \abs{E}}{\abs{F}}}^{\infty} \sum_{k=K}^{\log \frac{2^{2 j} \abs{F}}{\abs{E}} } 2^{-j} \abs{ E }
			+ \sum_{ j= \log \frac{ \abs{E}}{\abs{F}} }^{\infty} \sum_{k=\log \frac{2^{2j} \abs{F}}{\abs{E}} }^{\infty} 2^{-k} 2^{j} \abs{F}
		\end{split}
		\end{equation}
		and doing the calculations, we get
		\begin{equation}
		\begin{split}
			\Lambda_{\P; \sigma, \tau,M} (f\indf{F},g\indf{G}) 
			& \lesssim \abs{F}^{1-2\varepsilon} \abs{E}^{2 \varepsilon} \left( 1+\log \frac{\abs{E}}{\abs{F}} \right) + \abs{F} \log \frac{\abs{E}}{\abs{F}} + \Abs{F} 
			\lesssim \abs{F}^{\frac{1}{p}} \abs{E}^{1-\frac{1}{p}},
		\end{split}
		\end{equation}
		by appropriately choosing $ \varepsilon < 1 $ depending on $ 1<p<2 $.
	\end{proof}

	\section{The maximal estimate for the tops} 
	\label{S:MaxLemma}
	
	This section is dedicated to the proof of Proposition \ref{Prop:DecomMax}. Before going to the heart of the proof we state and prove several auxiliary results.
	
	\subsection{The maximal function estimate}
	\label{Ss:MaxLemma}
	
	The main ingredient for the proof of estimate \eqref{Eq:MaxEstimate} for the sums of tops is the following geometric/analytical maximal function estimate originally proved by Bateman in two dimensions in \cite{Bateman09Proc}, see also \cite{Bateman2013Trans}.
	
	\begin{proposition}
		\label{Prop:MaxKey}
		Let $ \ann \in 3^{\Z} $, $ k \in \N $ be fixed and $ \T \subseteq \mathcal{T}_{\ann} $ be a pairwise incomparable collection of tiles $ t $ such that, for some $\mu,\lambda\in(0,1)$, there holds
		\begin{equation}
			\label{Eq:MaxKeyHypo1}
			\Abs{ v_{\sigma}^{-1}(\alpha_{t,\tau_0}) \cap 3^k R_t } \geq \mu \Abs{ 3^k R_t },
		\end{equation}
		and
		\begin{equation}
			\label{Eq:MaxKeyHypo2}
			\Abs{ F \cap 3^k R_t } > \lambda \Abs{ 3^k R_t },
		\end{equation}
		for every $ t \in \T $. Then, for every $ \varepsilon > 0 $,
		\begin{equation}
			\sum_{ t \in \T } \Abs{ R_t } \lesssim_{\varepsilon} \frac{ \abs{ F } }{ \mu \lambda^{1+\varepsilon} }.
		\end{equation}
	\end{proposition}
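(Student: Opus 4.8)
The plan is to reduce Proposition~\ref{Prop:MaxKey} to a Kakeya-type maximal function estimate for collections of parallelepipeds in the style of \cite{Bateman09Proc,Bateman2013Revista}, and to use the two hypotheses \eqref{Eq:MaxKeyHypo1}--\eqref{Eq:MaxKeyHypo2} to set up the geometry correctly. First I would associate to each tile $t\in\T$ the ``shadow set'' $U_t\coloneqq v_{\sigma}^{-1}(\alpha_{t,\tau_0})\cap 3^kR_t$, so that \eqref{Eq:MaxKeyHypo1} says $|U_t|\geq\mu|3^kR_t|$. The key point is that on $U_t$ the direction $v_{\sigma(x)}$ lies in $\alpha_{t,\tau_0}\subseteq\ecc(t)$, so $R_t$ is (up to the constant $K_n$ and the dilation $3^k$) one of the slabs pointing in the direction selected by the linearizing vector field at every point of $U_t$. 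This is exactly the configuration in which a Lipschitz--Kakeya maximal estimate applies: define the maximal operator
\begin{equation}
	\mathcal{M}_{\sigma}h(x)\coloneqq\sup_{\substack{R\ni x\\ v_{\sigma(x)}\in\ecc(R)}}\frac{1}{|R|}\int_{K_n3^kR}|h|,
\end{equation}
the supremum over admissible parallelepipeds $R$ whose eccentricity contains $v_{\sigma(x)}$, and observe that for $x\in U_t$ one has $\mathcal{M}_{\sigma}\indf{F}(x)\gtrsim\lambda$ by \eqref{Eq:MaxKeyHypo2}.

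Next I would use the pairwise incomparability of $\T$ together with Remark~\ref{rmrk:pairincorp} and the decomposition algorithm of Lemma~\ref{Lem:DecomAlgorithm} to organize the slabs $\{3^kR_t\}_{t\in\T}$ into $O_n(1)$ subfamilies, within each of which the parallelepipeds with nested eccentricities are spatially disjoint (so that the $\ell^1$ sum $\sum_t|R_t|$ can be compared to an $L^1$ norm of a sum of indicator functions). On such a subfamily, the bound $\sum_{t}|R_t|\lesssim|F|\mu^{-1}\lambda^{-1-\varepsilon}$ should follow from: (a) the lower bound $|U_t|\geq\mu|3^kR_t|\simeq\mu|R_t|$, which lets us replace $\sum_t|R_t|$ by $\mu^{-1}\sum_t|U_t|\leq\mu^{-1}\int\sum_t\indf{U_t}$; and (b) a single-scale-per-direction reduction followed by an application of Bateman's Kakeya maximal inequality, which controls the measure of the set where $\sum_t\indf{U_t}$ is large — equivalently, where $\mathcal{M}_{\sigma}(\sum_t\indf{3^kR_t})$ is large — in terms of $|F|$ with the gain $\lambda^{-1-\varepsilon}$ coming from the superlevel-set formulation. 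The parameter $\varepsilon>0$ enters precisely through the $L^{1+\varepsilon}\to L^{1+\varepsilon}$-type (or restricted weak type) bound for the Kakeya operator, which is why one cannot take $\varepsilon=0$.

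I would carry out the steps in this order: (1) reduce to a single eccentricity-nested subfamily via Lemma~\ref{Lem:DecomAlgorithm} and incomparability; (2) for such a subfamily fix, for each direction cap appearing, the relevant scales and use the geometric Lemmas~\ref{Lem:Geom}--\ref{Lem:Geom2} to pass between $R_t$, $3^kR_t$ and $K_n3^kR_t$ at the cost of dimensional constants; (3) convert $\sum_t|R_t|$ into an integral of $\sum_t\indf{U_t}$ using \eqref{Eq:MaxKeyHypo1}; (4) bound this integral by $|F|$ using \eqref{Eq:MaxKeyHypo2} and the Kakeya/Lipschitz maximal estimate of \cite{Bateman09Proc,Bateman2013Revista}; (5) sum over the $O_n(1)$ subfamilies. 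The main obstacle — the genuinely hard, non-formal input — is step (4): correctly formulating and invoking the higher-dimensional, codimension-one analogue of Bateman's two-dimensional Kakeya maximal function estimate, ensuring that the slabs $R(L,I,v)$ of our tiles (which are genuinely $n$-dimensional, elongated in the $e_n$-direction but also carrying a $d$-dimensional ``base'' $L$) fit the hypotheses of that estimate, and tracking that the dependence on $\mu$ is linear and on $\lambda$ is $\lambda^{-1-\varepsilon}$. Everything else is bookkeeping with the geometric lemmas and the decomposition algorithm already established above.
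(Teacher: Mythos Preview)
Your outline has the right shape---the two hypotheses play separate roles, $\mu$ enters linearly, and the $\lambda^{-1-\varepsilon}$ comes from an endpoint maximal-type estimate---but step (4) is not an input you can invoke: it is the content of the proposition. Bateman's results in \cite{Bateman09Proc,Bateman2013Revista} are two-dimensional, and there is no off-the-shelf codimension-one analogue to cite. The paper therefore proves the needed estimate from scratch. So the proposal, as written, defers exactly the hard part.

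Two further issues. First, your step~(a) writes $|3^kR_t|\simeq|R_t|$, which would make the implicit constant depend on $k$; since the proposition is applied later with $k$ ranging over $\N$, this is not allowed. The paper handles this by a separate reduction to $k=0$: using shifted triadic grids one finds tiles $\tilde t$ with $3^kR_t\subseteq R_{\tilde t}\subseteq 3^{k+2}R_t$ and the same eccentricity, applies Lemma~\ref{Lem:DecomAlgorithm} with parameter $\mu=k+2$ to the incomparable collection, and reduces to the case $k=0$ for the new tiles with comparable $\mu,\lambda$.

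Second, and more substantially, for $k=0$ the paper does \emph{not} pass through a maximal operator $\mathcal{M}_\sigma$. Instead it proves the reverse-H\"older inequality
\[
\int\Big(\sum_{t\in\T}\indf{R_t}\Big)^{(1+\varepsilon)/\varepsilon}\lesssim_\varepsilon \mu^{-1/\varepsilon}\sum_{t\in\T}|R_t|,
\]
from which the conclusion follows by H\"older and \eqref{Eq:MaxKeyHypo2}. The proof of this inequality expands the power, fixes a top tile $t_0$, and reduces to a pointwise count $f_{t_0}(x)=\#\{t\in\T:x\in R_t,\ L_t\subseteq L_{t_0}\}$. Here the fact that $\sigma$ depends only on $\underline{x}$ is crucial: hypothesis \eqref{Eq:MaxKeyHypo1} becomes a condition on $L_t$ alone, so the count projects to a purely $d$-dimensional problem. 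One then introduces a recursively defined collection of ``permissible'' projected tiles $B(L)$, shows that the associated averages form a Carleson sequence, and deduces that an auxiliary function $h_{t_0}$ lies in $\BMO_{\triadic}(L_{t_0})$. John--Nirenberg then gives the $L^{1/\varepsilon}$ bound and closes the argument. Your route via $\int\sum_t\indf{U_t}$ still requires controlling the overlap $\sum_t\indf{U_t}$ at each point, and that overlap control is precisely what the $\BMO$/John--Nirenberg argument supplies.
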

	
	The rest of the subsection is devoted to the proof of Proposition \ref{Prop:MaxKey}. First, we will reduce to the case $ k=0 $ and then prove the proposition for this particular case.
	
	\subsubsection{Reduction to the case $ k=0 $:}
	We first show that it suffices to prove the proposition for $ k=0 $. Indeed, assuming we have the conclusion for $ k=0 $, fix now some $ k>0 $ and a collection $ \T \subseteq \mathcal{T}_{\ann} $ for some fixed $ \ann \in 3^{\Z} $. By using standard shifted grid techniques, see for example \cite{LernerNazarovDyadic}, we can find triadic grids $ \{ \mathcal{L}_{j} \}_{j=1}^{2^n} $, $ \{ \mathcal{I}_{j} \}_{j=1}^{2} $ such that for each $ t \in \T $ there exist $ \widetilde{L}_t \in \bigcup_{j=1}^{2^d} \mathcal{L}_{j} $ and $ \widetilde{I}_t \in \mathcal{I}_{1} \cup \mathcal{I}_{2} $ satisfying $ 3^k L_t \subseteq \widetilde{L}_t \subseteq 3^{k+2} L_t $ and $ 3^k I_t \subseteq \widetilde{I}_t \subseteq 3^{k+2} I_t $ with $ \ell(\widetilde{L}_t) = 3^{k+1} \ell(L_t) $ and $ \ell(\widetilde{I}_t) = 3^{k+1} \ell(I_t) $. Then, taking $ \widetilde{R}_t = R(\widetilde{L}_t,\widetilde{I}_t,v_{t}) $ we have
	\begin{equation}
		\label{Eq:MaxTriadicInclusion}
		3^k R_t \subseteq \widetilde{R}_t \subseteq 3^{k+2} R_t,
	\end{equation} 
	since $ 3^k R_t = R(3^k L_t, 3^k I_t, v_t) $. Now, recall that $ Q_t \in \mathcal{G} $ is the unique triadic cube with $ (Q_t \times \{1\})' \in (\mathcal{G} \times \{1\})' $ and such that $ (c(Q_t) \times \{1\})' = v_t $ and $ \ell(Q_t) = \ell(I_t) / \ell(L_t) $. Then, we have that $ \widetilde{Q}_t = Q_t $, where $ \widetilde{Q}_t \in \mathcal{G} $ is the unique triadic cube with $ (\widetilde{Q}_t \times \{1\})' \in (\mathcal{G} \times \{1\})' $ and such that $ (c(\widetilde{Q}_t) \times \{1\})' = \tilde{v}_t \coloneqq v_t $ and $ \ell(\widetilde{Q}_t) = \ell(\widetilde{I}_t) / \ell(\widetilde{L}_t) = \ell(Q_t) $. Thus, $ \ecc(R_t) = \ecc(\widetilde{R}_t) $. Given $ t $ and $ \widetilde{R}_t $ as above we can define
	\begin{equation}
		\widetilde{\omega}_t \coloneqq \left\{ \xi \in \R^n :\, \frac{1}{2} \abs{ \widetilde{I}_t }^{-1} < \abs{\xi} < 6 \abs{ \widetilde{I}_t }^{-1}, \, \xi' \in \ecc(\widetilde{R}_t) \right\}
	\end{equation}
	and by splitting $ \T $ into $ O_n(1) $ subcollections we can assume that 
	\begin{equation}
		\widetilde{\T} 
		\coloneqq \left\{ \tilde{t} \coloneqq \widetilde{R}_t \times \widetilde{\omega}_t :\, t \in \T \right\} 
		\subset \mathcal{T}_{\widetilde{\ann}}, 
		\qquad \widetilde{\ann} = 3^{-k-1} \ann,
	\end{equation}
	are tiles with $ 3^k R_t \subseteq R_{\tilde{t}} = \widetilde{R}_t \subseteq 3^{k+2} R_t $ and $ \ecc(\tilde{t}) = \ecc( \widetilde{R}_t ) = \ecc(R_t) = \ecc(t) $.
	
	Now apply Lemma \ref{Lem:DecomAlgorithm} with $ \mu = k+2 $ so that
	\begin{equation}
		\sum_{t\in\T} \abs{ R_t } 
		= \sum_{ j=1 }^{N} \sum_{ t\in\T_{\mu}(P_j) } \abs{ R_t } 
		= \sum_{ j=1 }^{N} \Abs{ \bigcup_{ t\in\T_{\mu}(P_j) } R_{t}}
	\end{equation}
	by (ii) of Lemma \ref{Lem:DecomAlgorithm} since $ \T $ consists of pairwise incomparable tiles and by (iii) of the same Lemma \ref{Lem:DecomAlgorithm}
	\begin{equation}
		\sum_{t\in\T} \Abs{ R_t } 
		\lesssim \sum_{j=1}^{N} \Abs{ K_n 3^k R_{P_j} } 
		\lesssim \sum_{j=1}^{N} \Abs{ R_{\widetilde{P}_j} }.
	\end{equation}
	By construction $ \{ P_j \}_{j=1}^{N} \subseteq \T $. Clearly each $ \{ \widetilde{P}_j \}_{j=1}^{N} $ satisfies the assumptions \eqref{Eq:MaxKeyHypo1} and \eqref{Eq:MaxKeyHypo2} for $ k=0 $, for some $ \widetilde{\mu} \simeq \mu $ and $ \widetilde{\lambda} \simeq \lambda $ with implicit constants depending only on dimension. Furthermore, the collection $ \{ \widetilde{P}_j \}_{j=1}^{N} $ is pairwise incomparable. Indeed, if $ \ecc(\widetilde{P}_1) \cap \ecc(\widetilde{P}_2) \neq \emptyset $ for some $ \widetilde{P}_1, \widetilde{P_2} \in \{ \widetilde{P}_j \}_{j=1}^{N} $ then, since $ \ecc(\widetilde{P}_j) = \ecc(P_j) $ for $ j\in\{1,2\} $ this implies that $ \ecc(P_1) \cap \ecc(P_2) \neq \emptyset $ and so $ 3^{k+2} R_{P_1} \cap 3^{k+2} R_{P_2} = \emptyset $ by (iv) of Lemma \ref{Lem:DecomAlgorithm}. By \eqref{Eq:MaxTriadicInclusion}, $ R_{\widetilde{P}_j} \subseteq 3^{k+2} R_{P_j} $ for every $ j \in \{1,\ldots,N\} $, so this shows that the collection $ \{ \widetilde{P}_j \}_{j=1}^{N} $ consists of pairwise incomparable tiles. Thus, using Proposition \ref{Prop:MaxKey} for $ \{ \widetilde{P}_j \}_{j=1}^{N} $ and $ k=0 $ we get
	\begin{equation}
		\sum_{ t \in \T } \Abs{ R_t } \lesssim \sum_{j=1}^{N} \Abs{ R_{\widetilde{P}_j}} \lesssim \frac{ \abs{F} }{ \widetilde{\mu} \widetilde{\lambda}^{1+\varepsilon} } \simeq \frac{ \abs{F} }{ \mu \lambda^{1+\varepsilon} }
	\end{equation}  
	as desired. 
	
	\subsubsection{Proof of Proposition \ref{Prop:MaxKey} for $ k=0 $:}
	It remains to proof the proposition for the case $ k=0 $. Let $ \varepsilon > 0 $ and $ \T \subseteq \mathcal{T}_{\ann} $ be a finite collection of pairwise incomparable tiles satisfying the hypotheses \eqref{Eq:MaxKeyHypo1} and \eqref{Eq:MaxKeyHypo2}. The desired estimate will be a consequence of the second hypothesis \eqref{Eq:MaxKeyHypo2} and H\"older's inequality, together with the estimate
	\begin{equation}
		\label{Eq:MaxKeyLemDesired2}
		\int \bigg( \sum_{ t \in \T }  \indf{ R_t } \bigg)^{\frac{1+\varepsilon}{\varepsilon}} \lesssim_{\varepsilon} \frac{ 1 }{ \mu^{ 1/\varepsilon } } \sum_{ t \in \T } \Abs{ R_t }
	\end{equation}
	which we show below. Firstly, we can assume that $ 1/\varepsilon $ is an integer by taking $ \varepsilon $ smaller since the desired estimate becomes trivial when $\varepsilon\to \infty$. Then, expanding the power
	\begin{equation}
		\label{Eq:MaxKeyDefF}
		\begin{split}
			\int \bigg( \sum_{ t \in \T } \indf{ R_t } \bigg)^{ \frac{1+\varepsilon}{\varepsilon} } 
			& \leq \left( \frac{1+\varepsilon}{\varepsilon} \right)! \int \sum_{ t_0 \in \T } \sum_{ \substack{ t_1,\ldots,t_{\varepsilon^{-1}} \in \T \\ L_{t_1} \subseteq \cdots \subseteq L_{ t_{ \varepsilon^{-1} } } \subseteq L_{t_0} } } \indf{ R_{t_0} } \indf{ R_{t_1} } \cdots \indf{ R_{ t_{ \varepsilon^{-1} } } } \\
			& \lesssim_{\varepsilon} \sum_{ t_0 \in \T }  \int_{ R_{t_0} } \bigg( \sum_{ t \in \T, \, L_{t} \subseteq L_{t_0} } \indf{ R_{t} } \bigg)^{ 1/\varepsilon } 
			\eqcolon \sum_{ t_0 \in \T }  \int_{ R_{t_0} } f_{t_0}(x)^{ 1/\varepsilon },
		\end{split}
	\end{equation}
	where in the first line we have used symmetry and the grid property of the triadic cubes $ L_t $. Then, \eqref{Eq:MaxKeyLemDesired2} reduces to showing
	\begin{equation}
		\label{Eq:MaxKeyEstF}
		\int_{ R_{t_{0}} } f_{t_{0}} (x)^{ 1/\varepsilon } \lesssim_{\varepsilon} \frac{ 1 }{ \mu^{ 1/\varepsilon } } \Abs{ R_{t_0} }, \qquad t_0 \in \T.
	\end{equation}
	
	Our goal now is to prove \eqref{Eq:MaxKeyEstF}. The main idea of the proof is to count the number of pairwise incomparable tiles that can exist such that their space component contains a point $x$ and satisfies the first hypothesis \eqref{Eq:MaxKeyHypo1} of the lemma. We do that by defining the permissible set of tiles in Definition \ref{Def:Permissible}. Fix a tile $ t_0 \in \T $. Observe that by definition,
	\begin{equation}
		\label{Eq:MaxKeyEstFProofCardP}
		f_{t_0} ( x ) = \# \mathcal{P}_{ L_{t_0} } (x), 
		\qquad 
		\mathcal{P}_{ L_{t_0} } (x) \coloneqq \left\{ t \in \T :\, x \in R_t, \, L_{t} \subseteq L_{t_0} \right\}.
	\end{equation}
	On the other hand, if $ x \in R_{t} $ then $ \underline{x} \coloneqq (x_1,\ldots,x_d) \in L_{t} $. Thus, to estimate $ \# \mathcal{P}_{ L_{t_0} } (x) $ we will count the number of pairwise incomparable tiles $ t \in \mathcal{T}_{\ann} $ that we can construct satisfying the first hypothesis \eqref{Eq:MaxKeyHypo1} and such that $ \underline{x} \in L_{t} \subseteq L_{t_0} $. To do that, we need some notation.
	
	Given a tile $ p = R_p \times \omega_p \in \mathcal{T}_{\ann} $, recall the notation $ R_{p} = R(L_{p}, I_{p},v_{p}) $ in \eqref{Eq:DefParallelepiped} and define the projected tile $ p_{\pi} = L_{p} \times \omega_{p} $. In general, given a collection of tiles $ \P $ we will write $ \P_{\pi} \coloneqq \{ p_{\pi} :\, p \in \P \} $. Just as with the tiles $ p $, we use the following relation for the projected tiles:
	\begin{equation}
		p_{\pi} \leq p_{\pi}' \qquad \overset{\mathrm{def}}{\Longleftrightarrow} \qquad \ecc(p') \subseteq \ecc(p)  \quad \text{and} \quad L_{p} \cap L_{p'} \neq \emptyset.
	\end{equation}
	As with the standard tiles, $ \ecc(p) \subseteq \mathbb{S}^d $ is the projection of $ \omega_{p} $ into the sphere $ \mathbb{S}^d $.	Next, we need the following definition introduced in \cite[(3.20)]{Bateman09Proc}. The idea is to define all the projected tiles that can appear in $ ( \mathcal{P}_{ L_{t_0} } (x) )_{\pi} $ but removing the ones that would lead to comparable tiles. A key fact in achieving this is that since $ \sigma $ depends only on the first $ d $ coordinates, then
		\begin{equation}
			\Abs{ v_{ \sigma }^{-1}(\alpha_{t,\tau_0}) \cap R_{t} } \geq \mu \Abs{ R_{t} } \qquad \Longleftrightarrow \qquad \Abs{ v_{ \sigma }^{-1}(\alpha_{t,\tau_0}) \cap L_{t} } \geq \mu \Abs{ L_{t} }.
		\end{equation}

	\begin{definition}
		\label{Def:Permissible}
		Let $ t_0 \in \T $. First, define the \emph{permissible set of tiles} of the triadic cube $ L_{t_0} $ by 
		\begin{equation}
			B \left( L_{t_0} \right) 
			\coloneqq \left\{ p_{\pi} \in ( \mathcal{T}_{\ann} )_{\pi} :\, L_{p} = L_{t_0} , \, \Abs{ v_{ \sigma }^{-1}(\alpha_{p,\tau_0}) \cap L_{p} } \geq \mu \Abs{ L_{p} } \right\}.
		\end{equation}
		Next, let $ L \subseteq L_{t_0} $ be triadic and suppose that we have defined $ B( L' ) $ for every $ L' $ triadic, with $ L \subsetneq L' \subseteq L_{t_0} $. Then, we define the \emph{permissible set of tiles} of the cube $ L $ by 
		\begin{equation}
			\label{Eq:PermissibleDef}
			B( L ) 
			\coloneqq \left\{ p_{\pi} \in ( \mathcal{T}_{\ann} )_{\pi} :\, L_{p} = L, \, \frac{ \Abs{ v_{ \sigma }^{-1}(\alpha_{p,\tau_0}) \cap L_{p} } }{\abs{ L_{p} }} \geq \mu , \, p_{\pi} \not\leq p_{\pi}' \text{ for any } p_{\pi}' \in B(L') , \, L' \supseteq L \right\}.
		\end{equation}
	\end{definition}
	
	We now claim that with $ B(L) $ defined in \eqref{Eq:PermissibleDef},
	\begin{equation}
		\label{Eq:MaxKeyDefG}
		f_{t_0}(x) \leq \sum_{ L \subseteq L_{t_0} } \indf{ L }( \underline{ x } ) \# B ( L ) \eqqcolon g_{t_0} ( \underline{x} ), \qquad x = (\underline{x},x_n) \in \R^n .
	\end{equation}
	Indeed, \eqref{Eq:MaxKeyDefG} follows from the observation that 
	\begin{equation}
		g_{t_0}(\underline{x}) = \# \mathcal{Q}_{ L_{t_0} }(\underline{x}), 
		\qquad 
		\mathcal{Q}_{ L_{t_0} }(\underline{x}) \coloneqq \left\{ p_{\pi} \in (\mathcal{T}_{\ann})_{\pi} : \,\underline{x} \in L_{p} \subseteq L_{t_0} , \, p_{\pi} \in B(L_{p}) \right\},
	\end{equation}
	and that there are more elements in $ \mathcal{Q}_{ L_{t_0} }(\underline{x}) $ than in $ \mathcal{P}_{ L_{t_0} } (x) $, since for each $ t \in \mathcal{P}_{ L_{t_0} } (x) $ we can associate a different $ p_{\pi} \in \mathcal{Q}_{ L_{t_0} } (x) $ such that $ L_{t} \subseteq L_{p} $ and $ \omega_{p} \subseteq \omega_{t} $. Next, we bound $ g_{t_0} $ by
	\begin{equation}
		\label{Eq:MaxKeyDefH}
		g_{t_0}( \underline{x} ) \leq \frac{1}{\mu} \sum_{ L \subseteq L_{t_0} } \sum_{ p_{\pi} \in B( L ) } \indf{ L }( \underline{ x } ) \frac{ \Abs{ v_{ \sigma }^{-1}(\alpha_{p,\tau_0}) \cap L } }{ \abs{ L } } \eqqcolon \frac{1}{\mu} h_{ t_0 } ( \underline{x} ), \qquad \underline{x} \in \R^d,
	\end{equation}
	where the inequality is satisfied by the definition of $ g_{t_0} $ and the condition $ \abs{ v_{ \sigma }^{-1}(\alpha_{p,\tau_0}) \cap L_{p} } \geq \mu \abs{ L_{p} } $, for $ p_{\pi} \in B(L) $. 
	
	Next, we show that $ h_{t_0} $ is a $ \BMO_{ \triadic } $-function; $ \BMO_{ \triadic } $ is just the standard $ \BMO $ space, but considering only triadic cubes.

	\begin{lemma}
		\label{Lem:MaxKeyHBMO}
		There holds $ \norm{ h_{ t_0 } }{ \BMO_{ \triadic }( L_{t_0} ) } \leq 1 $.
	\end{lemma}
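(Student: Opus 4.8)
The plan is to use the standard oscillation characterization of $\BMO_{\triadic}(L_{t_0})$: it suffices to produce, for each triadic cube $L^*\subseteq L_{t_0}$, a constant $c_{L^*}$ with $\frac{1}{\abs{L^*}}\int_{L^*}\abs{h_{t_0}-c_{L^*}}\le 1$. Writing $h_{t_0}=\sum_{L\subseteq L_{t_0}}\indf{L}\,a_L$ with $a_L\coloneqq\sum_{p_\pi\in B(L)}\abs{v_{\sigma}^{-1}(\alpha_{p,\tau_0})\cap L}/\abs{L}\ge 0$, on $L^*$ we split $h_{t_0}=c_{L^*}+r_{L^*}$ where $c_{L^*}\coloneqq\sum_{L^*\subseteq L\subseteq L_{t_0}}a_L$ is constant on $L^*$ and $r_{L^*}(\underline x)\coloneqq\sum_{L\subsetneq L^*,\ \underline x\in L}a_L\ge 0$. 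Taking $c=c_{L^*}$ and using $r_{L^*}\ge 0$, the claim reduces to $\frac{1}{\abs{L^*}}\int_{L^*}r_{L^*}\le 1$, i.e. to
\[
\frac{1}{\abs{L^*}}\sum_{L\subseteq L^*}\sum_{p_\pi\in B(L)}\Abs{v_{\sigma}^{-1}(\alpha_{p,\tau_0})\cap L}\le 1 .
\]
Since $\sigma$ depends only on $\underline x$, Fubini turns the left-hand side into $\frac{1}{\abs{L^*}}\int_{L^*}N(\underline x)\,d\underline x$, where $N(\underline x)\coloneqq\sum_{L\subseteq L^*,\ \underline x\in L}\#\{p_\pi\in B(L):\,v_{\sigma(\underline x)}\in\alpha_{p,\tau_0}\}$. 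Thus it is enough to establish the pointwise bound $N(\underline x)\le 1$.

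The bound on $N$ is where the combinatorics of the permissible sets is used, and I would deduce it from two observations for a fixed $\underline x$ with $v\coloneqq v_{\sigma(\underline x)}$ (if $v$ lies outside the portion of $\mathbb{S}^d$ covered by the projected grids every indicator vanishes and there is nothing to prove). First, for each triadic $L$ and for the fixed annulus $\ann$, a projected tile $p_\pi=L_p\times\omega_p$ with $L_p=L$ is completely determined by its eccentricity $\ecc(p)$, a cap of $(\mathcal G\times\{1\})'$ at the scale forced by $\ell(Q_p)=(\ann\,\ell(L))^{-1}$; and $v\in\alpha_{p,\tau_0}$ forces $\alpha_{p,\tau_0}$ to be \emph{the} cap of the corresponding finer scale containing $v$ and $\ecc(p)$ to be its triadic parent (which must put $\alpha_{p,\tau_0}$ in relative position $\tau_0$). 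Hence $\#\{p_\pi\in B(L):v\in\alpha_{p,\tau_0}\}\in\{0,1\}$. Second, at most one scale contributes: if $L_1\subsetneq L_2$, both $\subseteq L_{t_0}$ and containing $\underline x$, admitted $p_\pi^{(i)}\in B(L_i)$ with $v\in\alpha_{p^{(i)},\tau_0}$, then $L_{p^{(1)}}\cap L_{p^{(2)}}=L_1\neq\emptyset$, and because $\ell(L_1)<\ell(L_2)$ makes $\ell(Q_{p^{(1)}})>\ell(Q_{p^{(2)}})$, the cap $\ecc(p^{(2)})$ is finer than $\ecc(p^{(1)})$ while both contain $v$, so the grid property of $(\mathcal G\times\{1\})'$ at fixed annulus gives $\ecc(p^{(2)})\subseteq\ecc(p^{(1)})$; therefore $p_\pi^{(1)}\le p_\pi^{(2)}$, which contradicts $p_\pi^{(1)}\in B(L_1)$ since $L_2\supseteq L_1$ and $p_\pi^{(2)}\in B(L_2)$ (Definition~\ref{Def:Permissible}). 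Combining the two observations with the nestedness of triadic cubes through $\underline x$ yields $N(\underline x)\le 1$, and the lemma follows.

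The only genuinely delicate part is the geometric bookkeeping with the projected grids. One must fix once and for all the half-open convention for the triadic cubes generating $\mathcal G$ so that, for each fixed scale and fixed annulus, the caps $(Q\times\{1\})'$ partition their portion of $\mathbb{S}^d$; this makes ``the cap of a given scale containing $v$'' unambiguous and the nesting $\ecc(p^{(2)})\subseteq\ecc(p^{(1)})$ exact, regardless of $v$ landing on a boundary. One must also keep straight the (easily reversed) correspondence between the spatial scale $\ell(L)$ and the directional scale $\ell(Q_p)=(\ann\,\ell(L))^{-1}$, i.e. that shrinking $L$ \emph{enlarges} the eccentricity cap. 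Everything else — the splitting of $h_{t_0}$, the reduction from integrals over $R_t\subseteq\R^n$ to integrals over $L_t\subseteq\R^d$ (already isolated in the text), and the passage to the counting function $N$ — is routine, so I do not expect an obstacle beyond organizing these grid-theoretic facts cleanly.
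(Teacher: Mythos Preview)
Your proof is correct and is essentially the same as the paper's. The paper phrases the key step as the observation that the family $\{v_\sigma^{-1}(\alpha_{p,\tau_0})\cap L_p:\ p_\pi\in B(L'),\ L'\subseteq L\}$ is pairwise disjoint (hence $\mu_{L'}\coloneqq\sum_{p_\pi\in B(L')}|v_\sigma^{-1}(\alpha_{p,\tau_0})\cap L'|$ is a Carleson sequence), then chooses the same constant $b_{L^*}=\sum_{L'\supseteq L^*}\mu_{L'}/|L'|=c_{L^*}$ and carries out the identical oscillation computation; your pointwise bound $N(\underline{x})\le 1$ is exactly this pairwise disjointness, and your two observations supply the justification that the paper leaves to the phrase ``by the definition of $B(L)$''.
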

	
	\begin{proof}
		It is enough to find, for each triadic cube $ L \subseteq L_{t_0} $, a number $ b_{ L } $ such that
		\begin{equation}
			\label{Eq:MaxKeyHBMODesired}
			\frac{ 1 }{ \abs{ L } } \int_{ L } \Abs{ h_{ t_0 }( \underline{x} ) - b_{ L } } d \underline{x} \leq 1.
		\end{equation}
		First, for every triadic cube $ L $, define the number $ \mu_{ L } \coloneqq \sum_{ p_{\pi} \in B( L ) } \abs{ v_{ \sigma }^{-1}(\alpha_{p,\tau_0}) \cap L_{p} } $. Note that by the definition of $ B(L) $, the collection $ \{ v_{ \sigma }^{-1}(\alpha_{p,\tau_0}) \cap L_{p} :\, p_{\pi} \in B(L') , \, L' \subseteq L \} $ is pairwise disjoint. Thus, $ \mu_{ L } $ is a Carleson sequence, that is,
		\begin{equation}
			\begin{split}
				\sum_{ L' \subseteq L } \mu_{ L' } = \sum_{ L' \subseteq L } \sum_{ p_{\pi} \in B( L ) } \Abs{ v_{ \sigma }^{-1}(\alpha_{p,\tau_0}) \cap L_{p} } \leq \abs{ L }.
			\end{split}
		\end{equation}
		Now, for every triadic cube $ L \subseteq L_{t_0} $, we choose the number
		\begin{equation}
			b_{ L } \coloneqq \sum_{ L' \supseteq L } \frac{ \mu_{ L' } }{ \abs{ L' } }.
		\end{equation}
		Thus, using the grid property of the triadic cubes,
		\begin{equation}
			\begin{split}
				\frac{ 1 }{ \abs{ L } } \int_{ L } \Abs{ h_{ t_0 } ( \underline{ x } ) - b_{ L } } d \underline{x}
				& = \frac{ 1 }{ \abs{ L } } \int_{ L } \sum_{ \substack{ \tilde{L} \subseteq L_{t_0}, \, \tilde{L} \supseteq L } } \frac{ \mu_{ \tilde{L} } }{ \abs{ \tilde{L} } } + \sum_{ \substack{ \tilde{L} \subseteq L_{t_0} , \, \tilde{L} \subsetneq L } } \indf{ \tilde{L} }( \underline{ x } ) \frac{ \mu_{ \tilde{L} } }{ \abs{ \tilde{L} } } - \sum_{ L' \supseteq L } \frac{ \mu_{ L' } }{ \abs{ L' } } d \underline{x} \\ 
				& = \frac{ 1 }{ \abs{ L } } \int_{ L } \sum_{ L' \subsetneq L } \indf{ L' }( \underline{ x } ) \frac{ \mu_{ L' } }{ \abs{ L' } } d \underline{x} = \frac{ 1 }{ \abs{ L } } \sum_{ L' \subsetneq L } \mu_{ L' } \leq 1,
			\end{split}
		\end{equation}
		where we have used that $ \mu_{ L } $ is a Carleson sequence.
	\end{proof}
	
	Finally, we are ready to prove the desired inequality \eqref{Eq:MaxKeyEstF}. We just need to apply \eqref{Eq:MaxKeyDefG} and \eqref{Eq:MaxKeyDefH}, together with the John-Nirenberg inequality, since $ h_{ t_0 } \in \BMO_{ \triadic }( L_{t_0} ) $, to get 
	\begin{equation}
		\begin{split}
			\int_{ R_{t_0}} f_{ t_0 } (x)^{ 1/\varepsilon } dx 
			& \leq \int_{ R_{t_0}} \left( \frac{ 1 }{ \mu } h_{ t_0 }( \underline{x} ) \right)^{ 1/\varepsilon } dx 
			= \frac{ \abs{ I_{t_0} } }{ \mu^{ 1/\varepsilon } } \Norm{ h_{ t_0 } }{ L^{ 1/\varepsilon }( L_{t_0} ) }^{ 1/\varepsilon }
			\lesssim \frac{ \abs{ I_{t_0} } }{ \mu^{ 1/\varepsilon } } \Abs{ L_{t_0} }
			= \frac{ \abs{ R_{t_0} } }{ \mu^{ 1/\varepsilon } }.
		\end{split}
	\end{equation}

	\subsection{Big average with respect to size}\label{Ss:BigSizeBigInter} 
	In order to apply Proposition \ref{Prop:MaxKey} we will need three types of assumptions. The first is that we will have to apply it to a pairwise incomparable collection of tiles. We will reduce to that case in Subsection \ref{Ss:MaxProof} by using the properties of the collection $ \T_{\siz}^{\tops} $, which contains the tops of the trees constructed in Proposition \ref{Prop:CombinedDensitySize}, together with suitable applications of Lemma \ref{Lem:DecomAlgorithm}. On the other hand, estimate \eqref{Eq:MaxKeyHypo1} will be a consequence of the large density of the trees in Proposition \ref{Prop:CombinedDensitySize}. Here we devote our attention to the tools that will eventually allow us to verify \eqref{Eq:MaxKeyHypo2}. The first such tool gives big average of the set $ F $ with $ \abs{f} \leq \indf{F} $ used in $ \size $, with respect to dilated tops of lacunary trees of big size. A two-dimensional version was proved in \cite[Lemma 6.3]{Bateman2013Revista}, while the idea goes back to \cite[Lemma 4.55]{LaceyLiMemoirs}.
	
	\begin{lemma}
		\label{Lem:MaxBigSizeBigInter}
		Let $ \mathbf{T} \subseteq \mathcal{T}_{\ann} $ for fixed $ \ann \in 3^{\Z} $ be a lacunary tree satisfying
		\begin{equation}
			\sum_{t\in\mathbf{T}} F_{10n}[f](t)^2 \gtrsim \sigma^2 \abs{ R_{\mathbf{T}} }. 
		\end{equation}
		Then, for every $ \varepsilon>0 $ there holds
		\begin{equation}
			\Abs{ F \cap (1+\sigma^{-\varepsilon}) R_{\mathbf{T}} } \gtrsim_{\varepsilon} \sigma^{1+\varepsilon} \Abs{ (1+\sigma^{-\varepsilon}) R_{\mathbf{T}} }.
		\end{equation}
	\end{lemma}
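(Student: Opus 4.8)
The plan is to convert the assumed lower bound on the energy of the lacunary tree $\mathbf T$ into a lower bound for the mass of $F$ on a controlled dilate of the spatial top $R_{\mathbf T}$, exploiting both the near‑orthogonality of lacunary trees and the normalisation $|f|\le\mathbf 1_F$.

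\emph{Single‑tile bounds.} Every $\varphi\in\mathcal F_t^{10n}$ is $L^2$‑normalised and obeys $|\varphi|\lesssim|R_t|^{-1/2}\chi^{10n}_{R_t,\infty}$, so $|f|\le\mathbf 1_F$ and Cauchy--Schwarz give $F_{10n}[f](t)^2\lesssim\int_F\chi^{10n}_{R_t,\infty}$, whereas the pointwise bound for $\varphi$ yields the $L^1$‑type improvement $F_{10n}[f](t)^2\lesssim|R_t|^{-1}\big(\int_F\chi^{10n}_{R_t,\infty}\big)^2=\rho_t\int_F\chi^{10n}_{R_t,\infty}$, where $\rho_t\coloneqq|R_t|^{-1}\int_F\chi^{10n}_{R_t,\infty}\lesssim 1$ is the weighted density of $F$ near $R_t$. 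Because $\mathbf T$ is lacunary and lives in the single annulus $\ann$, the frequency centres $\omega_t^\circ$ are pairwise disjoint, so the wave packets are essentially orthogonal and spatially concentrated near $K_nR_{\mathbf T}$; this is exactly Lemma~\ref{Lem:OrthoLacTree}\ref{Item:LemOrtho2}, which together with the hypothesis gives
\begin{equation*}
\sigma^2|R_{\mathbf T}|\lesssim\sum_{t\in\mathbf T}F_{10n}[f](t)^2\lesssim\int_F\chi^{2K}_{R_{\mathbf T},\infty},\qquad 2n\le K\le\tfrac{5n}{2}.
\end{equation*}

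\emph{Localisation.} Decomposing $\R^n$ into $R_{\mathbf T}$ and the dyadic coronas $2^{l+1}R_{\mathbf T}\setminus 2^lR_{\mathbf T}$, on which $\chi^{2K}_{R_{\mathbf T},\infty}\simeq 2^{-2Kl}$ and $|2^{l+1}R_{\mathbf T}|\simeq 2^{ln}|R_{\mathbf T}|$, one obtains $\sigma^2|R_{\mathbf T}|\lesssim\sum_{l\ge0}2^{-2Kl}|F\cap 2^{l+1}R_{\mathbf T}|$, and the part of the sum with $2^l\gtrsim\sigma^{-\varepsilon}$ is $\lesssim\sigma^{\varepsilon(2K-n)}|R_{\mathbf T}|$, which for $K\ge 2n$ is negligible once the dilation radius has been fixed. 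This already yields a bound of the shape $|F\cap(1+\sigma^{-\varepsilon})R_{\mathbf T}|\gtrsim\sigma^2|R_{\mathbf T}|$, but it is too weak: the assertion asks for density $\sigma^{1+\varepsilon}$ against the dilated volume $|(1+\sigma^{-\varepsilon})R_{\mathbf T}|\simeq\sigma^{-\varepsilon n}|R_{\mathbf T}|$, that is, essentially $|F\cap(1+\sigma^{-\varepsilon})R_{\mathbf T}|\gtrsim\sigma^{\,1-(n-1)\varepsilon}|R_{\mathbf T}|$, which is strictly stronger.

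\emph{The sharp exponent.} Recovering this is the heart of the matter, and here I would adapt the two‑dimensional argument of \cite[Lemma~6.3]{Bateman2013Revista} (originating in \cite[Lemma~4.55]{LaceyLiMemoirs}). The point is not to discard the orthogonality: one keeps the $L^1$‑type bound $F_{10n}[f](t)^2\lesssim\rho_t\int_F\chi^{10n}_{R_t,\infty}$, uses that the spatial components $R_t$ of tiles of a common scale are pairwise disjoint inside $K_nR_{\mathbf T}$ (so that summing over one scale costs only $\rho^\ast\coloneqq\sup_{t\in\mathbf T}\rho_t$ times $|F\cap K_nR_{\mathbf T}|$), organises the tiles by scale and by the dyadic value of $\rho_t$, and sums the resulting geometric series; an optimisation of the dilation radius — which is precisely where the factor $\sigma^{-\varepsilon}$ enters, giving $F$ enough room to spread — then converts the lower bound on $\rho^\ast$ forced by the large energy into the claimed density on $(1+\sigma^{-\varepsilon})R_{\mathbf T}$. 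I expect the main obstacle to be exactly this last step, namely preventing the (a priori unbounded) number of scales of the lacunary tree from entering the estimate; this is where the scale‑by‑scale disjointness of the $R_t$, the a priori bound $\rho_t\lesssim 1$, and the a priori bound $\size(\P)\lesssim 1$ are used.
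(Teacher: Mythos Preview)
Your first two paragraphs are correct and in fact coincide with the opening moves of the paper's argument: one uses the lacunary orthogonality to get
\[
\sigma^2\,|R_{\mathbf T}|\ \lesssim\ \int_F \chi^{2K}_{R_{\mathbf T},\infty},
\]
and then a corona decomposition to see that this falls short by one power of $\sigma$.

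The gap is in your third paragraph. The tile-by-tile scheme you outline --- bounding $F_{10n}[f](t)^2\lesssim \rho_t\int_F\chi^{10n}_{R_t,\infty}$, pulling out $\rho^\ast=\sup_t\rho_t$, and using spatial disjointness at each fixed scale --- unavoidably produces a factor equal to the number of scales in $\mathbf T$, which is not controlled. You acknowledge this as ``the main obstacle'' but offer no mechanism to remove it; none of ``scale-by-scale disjointness'', ``$\rho_t\lesssim 1$'', or ``$\size\lesssim 1$'' addresses the scale count. Incidentally, the arguments in \cite[Lemma~6.3]{Bateman2013Revista} and \cite[Lemma~4.55]{LaceyLiMemoirs} are \emph{not} tile-by-tile in the way you describe; they go through $L^p$ bounds for an intrinsic square function, which is exactly what the paper does here.

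Concretely, the missing idea is to pass through the square function
\[
\Delta_{\mathbf T} f\ \coloneqq\ \Big(\sum_{t\in\mathbf T} F_N[f](t)^2\,\frac{\indf{R_t}}{|R_t|}\Big)^{1/2}
\]
and use two facts about it: (i) an $L^{1+\varepsilon}$ bound $\|\Delta_{\mathbf T}f\|_{1+\varepsilon}\lesssim\|f\,\chi^{2K}_{R_{\mathbf T},\infty}\|_{1+\varepsilon}$, proved by a Calder\'on--Zygmund decomposition adapted to the geometry of $R_{\mathbf T}$; and (ii) an exponential-integrability / reverse H\"older bound $\|\Delta_{\mathbf T}f\|_{L^2}\lesssim |R_{\mathbf T}|^{-1/2}\int_{K_nR_{\mathbf T}}\Delta_{\mathbf T}f$. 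Chaining these with H\"older,
\[
\sigma\ \lesssim\ \frac{\|\Delta_{\mathbf T}f\|_2}{|R_{\mathbf T}|^{1/2}}\ \lesssim\ \frac{1}{|R_{\mathbf T}|}\int_{K_nR_{\mathbf T}}\Delta_{\mathbf T}f\ \lesssim\ \frac{|R_{\mathbf T}|^{\frac{\varepsilon}{1+\varepsilon}}}{|R_{\mathbf T}|}\,\|\Delta_{\mathbf T}f\|_{1+\varepsilon}\ \lesssim\ |R_{\mathbf T}|^{-\frac{1}{1+\varepsilon}}\,\|f\,\chi^{2K}_{R_{\mathbf T},\infty}\|_{1+\varepsilon},
\]
so that $\sigma^{1+\varepsilon}|R_{\mathbf T}|\lesssim\int_F(\chi^{2K}_{R_{\mathbf T},\infty})^{1+\varepsilon}$, and your corona/tail argument then finishes the job. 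It is precisely step (ii) that replaces the uncontrolled scale count in your sketch: it is a BMO-type statement that automatically absorbs all scales.
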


	\subsubsection{Proof of Lemma \ref{Lem:MaxBigSizeBigInter}} 
	For the proof of Lemma~\ref{Lem:MaxBigSizeBigInter}, first we will first need some auxiliary results which are best stated in terms of a Littlewood-Paley type square function which we describe below. 

	Let $M>N\geq 8n$ and define the square function
	\begin{equation}\label{Eq:BigSizeBigInterDeltaDef}
		f \quad 
		\longmapsto \quad
		\Delta_{\mathbf T,N} f (x) \coloneqq \left( \sum_{ t \in \mathbf{T} }  F_N[f](t)^2  \frac{\indf{ R_t }(x)}{ \Abs{ R_t } } \right)^{ 1/2 }, \qquad x \in \R^n.
	\end{equation}
	Clearly, for some collection of adapted functions $ \{ \varphi_t \in \mathcal{F}_t^{N} :\, t \in \mathbf{T} \}$
	\begin{equation}
		\label{Eq:BigSizeBigInterDeltaL2}
		\norm{ \Delta_{\mathbf T,N} f }{ L^2( \R^n ) }^2 = \sum_{ t \in \mathbf{T} } \Abs{ \left\langle f ,\varphi_t \right\rangle }^2 \lesssim \norm{f}{L^2(\R^n)}^2,
	\end{equation}
	by the first orthogonality estimate of Lemma \ref{Lem:OrthoLacTree}. In the following Lemma \ref{Lem:BigSizeBigInterSquareFuncLp} we will prove some $ L^p $-boundedness properties of $ \Delta_{\mathbf T,N} $ which will be used to prove Lemma \ref{Lem:MaxBigSizeBigInter}. In particular, the following lemma is a higher dimensional version of \cite[(4.85)]{LaceyLiMemoirs}.

	\begin{lemma}
		\label{Lem:BigSizeBigInterSquareFuncLp} For positive integers $\,2n\leq K\leq N/4$ there holds
		\begin{equation}
			\Norm{ \Delta_{\mathbf T,N} f }{ L^{ p }( \R^n ) } \lesssim  \norm{ f \chi_{ R_{\mathbf{T}} , \infty }^{2K} }{ L^{ p }( \R^n ) }, \qquad 1 < p \leq 2,
		\end{equation}
		where the implicit constant depends on $p$, $ K $ and the dimension, but not on $ \mathbf{T} $.
	\end{lemma}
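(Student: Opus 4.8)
The plan is to prove the $L^p$ bound, $1<p\le 2$, by the Fefferman--Stein/Calder\'on--Zygmund machinery for vector-valued singular integrals after lifting the square function $\Delta_{\mathbf T,N}$ to a genuine operator. First I would make the frequency modulation explicit: for each $t\in\mathbf T$ and each admissible $\varphi_t\in\mathcal F_t^N$ we may write $\langle f,\varphi_t\rangle=\langle \Mod_{-t}f, \Dil^{(2)}_{R_t}\Phi_t\rangle$ with $\Phi_t\in\Theta_N$, and since $\mathbf T$ is lacunary all the $\omega_t^{\circ}$ lie inside a common cone around a fixed direction $\xi_{\mathbf T}$ with eccentricity comparable to that of the top; after demodulating by $\xi_{\mathbf T}\ann$ (which does not affect $L^p$-norms) all the frequency supports $\omega_t^{\circ}-\xi_{\mathbf T}\ann(t)$ become lacunary annuli around the origin, adapted to the scales $\ell(L_t)$. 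Thus $\Delta_{\mathbf T,N}f$ is, up to harmless modulations, a standard Littlewood--Paley square function built from convolutions with bumps at dyadic/triadic scales, but with the extra feature that the spatial "output" is localized to $R_t$ rather than smeared out. So I would set up the linearization: pick measurable $\varphi(x)=\{\varphi_t(x)\}_{t\in\mathbf T}$ with $\|\varphi(x)\|_{\ell^2}\le 1$ achieving (up to a constant) the pointwise supremum defining $\Delta_{\mathbf T,N}$, and study the scalar operator $f\mapsto \sum_{t}\varphi_t(x)\langle f,\varphi_t\rangle \indf{R_t}(x)|R_t|^{-1/2}$ — actually it is cleaner to study the vector-valued operator $U f(x)\coloneqq \big(\langle f,\varphi_t\rangle\,\indf{R_t}(x)\,|R_t|^{-1/2}\big)_{t\in\mathbf T}$ from scalars to $\ell^2(\mathbf T)$ and observe $\|Uf(x)\|_{\ell^2}=\Delta_{\mathbf T,N}f(x)$.

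The core step is then to run a Calder\'on--Zygmund argument for $U$. We already have the $L^2\to L^2(\ell^2)$ bound from \eqref{Eq:BigSizeBigInterDeltaL2} (Lemma~\ref{Lem:OrthoLacTree}\ref{Item:LemOrtho2}); the weighted refinement there with the weight $\chi_{R_{\mathbf T},\infty}^{2K}$ gives $\|\Delta_{\mathbf T,N}f\|_{L^2}\lesssim \|f\chi_{R_{\mathbf T},\infty}^{K}\|_{L^2}$, which is why the statement carries the factor $\chi_{R_{\mathbf T},\infty}^{2K}$ and why we need $K\le N/4$ (enough decay of the wave packets to absorb the weight). To interpolate down to $1<p<2$ I would prove a weak $(1,1)$ estimate for the truncated operator $f\mapsto \Delta_{\mathbf T,N}(f\chi_{R_{\mathbf T},\infty}^{-K})$ — equivalently a weak $(1,1)$ bound for $U$ with input $f\chi_{R_{\mathbf T},\infty}^{K}$ — via the Calder\'on--Zygmund decomposition of $f$ at height $\alpha$. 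The good part is handled by the $L^2$ estimate; for the bad part $b=\sum_Q b_Q$, supported on a union of cubes $\{Q\}$ with $\int b_Q=0$, I would use the cancellation of $b_Q$ against the smooth wave packet $\varphi_t$: on a tile $t$ with $\ell(L_t)\ge \ell(Q)$ one gains a factor $\ell(Q)/\ell(L_t)$ times the sup of the gradient of the (rescaled, $N$-smooth) bump, and the spatial cutoff $\indf{R_t}$ together with the $\chi^{N}$-decay lets one sum over all tiles $t$ at scales $\ge \ell(Q)$ whose $R_t$ is not too far from $Q$; over-the-scale tails are summed geometrically. The anisotropy (tiles are long thin parallelepipeds pointing in direction $v_t$, but all $v_t$ are within the narrow directional support $\alpha_t\subseteq\ecc(\mathbf T)$) is exactly what makes the kernel of $U$ satisfy the appropriate (anisotropic, one-directional) H\"ormander condition; here the lacunarity of $\mathbf T$, hence the nestedness of the eccentricities and Lemma~\ref{Lem:Geom}, is essential so that all the $R_t$ behave like rectangles in a \emph{single} skewed coordinate system and the Whitney/dilation structure is the usual one in that system.

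Concretely the steps, in order, would be: (1) demodulate and reduce to the origin-centered lacunary situation; (2) linearize $\Delta_{\mathbf T,N}$ to the vector-valued operator $U$ and record the weighted $L^2$ bound from Lemma~\ref{Lem:OrthoLacTree}\ref{Item:LemOrtho2}; (3) absorb the weight $\chi_{R_{\mathbf T},\infty}^{2K}$, i.e. work with $\tilde U f\coloneqq U(\chi_{R_{\mathbf T},\infty}^{K} f)$ which is $L^2$-bounded, and reduce the Lemma to $\|\tilde Uf\|_{L^p(\ell^2)}\lesssim\|f\|_{L^p}$; (4) prove weak $(1,1)$ for $\tilde U$ via Calder\'on--Zygmund: good part by $L^2$, bad part by the mean-zero/smoothness estimate on each $b_Q$ plus geometric summation over scales $\ell(L_t)\ge\ell(Q)$ and over the $O(1)$-overlapping $R_t$'s at each scale (using that at fixed scale and fixed annulus the $R_t$ tile space up to bounded overlap, Lemma~\ref{Lem:DecomAlgorithm}(ii) or the grid property of $\{\omega_t\}$); (5) interpolate between weak $(1,1)$ and strong $(2,2)$ to get $1<p<2$, and unwind the substitution to recover the stated inequality with $\chi_{R_{\mathbf T},\infty}^{2K}$ on the right. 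I expect the main obstacle to be step (4): getting the bad-part estimate to sum cleanly requires juggling three scales simultaneously — the Calder\'on--Zygmund cube $Q$, the horizontal scale $\ell(L_t)$, and the vertical scale $\ell(I_t)=\ell(L_t)\ell(Q_t)$ — while keeping track that the only directions $v_t$ that occur lie in a cap of the right size, so that the "essential support" one loses by replacing $\varphi_t$ with its value on a neighbouring cube is controlled by the decay parameter $N$ and not by the (large) eccentricity; this is precisely where the constraint $2n\le K\le N/4$ and the lacunarity of $\mathbf T$ are used, and it is the one computation I would actually have to do carefully rather than cite.
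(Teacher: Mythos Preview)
Your overall strategy---absorb the weight into the wave packets to reduce to an unweighted $L^p$ bound, then prove weak $(1,1)$ via Calder\'on--Zygmund and interpolate with the known $L^2$ estimate---is exactly the route the paper takes. The weight absorption in your step (3) matches the paper's observation that $\chi_{R_{\mathbf T},\infty}^{-2K}\varphi_t\in\mathcal F_t^{N/2}$, which is where the constraint $K\le N/4$ enters.

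The main divergence is in the Calder\'on--Zygmund step. You propose standard CZ cubes with full mean zero and then plan to handle the anisotropy via an anisotropic H\"ormander condition, anticipating (correctly) that the ``three-scale juggling'' will be the hard part. The paper sidesteps this entirely by exploiting the single-annulus structure: since every $t\in\mathbf T$ has $\ell(I_t)=\ann^{-1}$ fixed, the CZ decomposition is performed over \emph{anisotropic rectangles} $R=R(L,I,v_{\mathbf T})$ with $\ell(I)=\ann^{-1}$ fixed and only the horizontal scale $\ell(L)$ varying. The bad functions carry only \emph{horizontal} mean zero, $b_R(x)=\big[f(x)-\avgint_{L_R}f(\underline y,x_n)\,d\underline y\big]\indf{R}(x)$, and the stopping condition uses Schwartz tails. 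The pointwise bound on the good part $g\lesssim\lambda$ is obtained via a one-dimensional Bernstein inequality in the $x_n$-variable (since one may assume $\supp\widehat f\subseteq B(0,6\ann)$). This reduces the bad-part estimate to a one-parameter computation in the horizontal scale, eliminating your anticipated obstacle. Your approach with isotropic cubes could likely be made to work, but it does not exploit the fixed vertical scale and would indeed require the more delicate multi-scale analysis you flag; the paper's anisotropic decomposition is the cleaner route here. Also, the paper's preliminary reduction is not demodulation to the origin but rather a rotation plus shifted-grid argument to make all $R_t$ axis-parallel, which serves the same purpose as your step (1).
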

	
	We have stated the above Lemma \ref{Lem:BigSizeBigInterSquareFuncLp} for the range $ 1 < p \leq 2 $ since it is what we need. However the same estimate holds for $ 2 < p < \infty $ by a $ \BMO $-type estimate.
	
	\begin{proof}  
		Let us fix a lacunary tree $ \mathbf{T} \subseteq \mathcal{T}_{\ann} $ and a positive integer $N\geq 8n$. By rotational invariance we can assume that $ \xi_{\mathbf{T} } = e_n $. By the geometric Lemma \ref{Lem:Geom} and using $ O_n(1) $ shifted grids we can assure that for every $ t\in\mathbf{T} $ there holds
		\begin{equation}
			R_{t} = R(L_t,I_t,v_t) \subseteq R(\widetilde{L}_t,\widetilde{I}_t,e_n) \eqqcolon R_{\tilde{t}}
		\end{equation}
		with $ \{ \widetilde{I}_t \}_{t\in\mathbf{T}} \subseteq \bigcup_{j=1}^{2} \mathcal{I}_j $ and $ \{ \widetilde{L}_t \}_{t\in\mathbf{T}} \subseteq \bigcup_{j=1}^{2^d} \mathcal{L}_j $, each $ \mathcal{I}_j $ and $ \mathcal{L}_j $ being a triadic grid and $ I_t \subseteq \widetilde{I}_t = I_{\tilde{t}} \subseteq c_n I_t $, $ L_t \subseteq \widetilde{L}_t = L_{\tilde{t}} \subseteq c_n L_t $. Thus, $ R_t \subseteq R_{\tilde{t}} \subseteq c_n R_t $. We use the same triadic grid $ \mathcal{G} $ for the eccentricities of $ \tilde{t} $, defining $ Q_{\tilde{t}} $ to be the unique triadic cube in $ \mathcal{G} $ with center $ 0 $ and sidelength $ \ell(I_{\tilde{t}})/\ell(L_{\tilde{t}}) = \ell(I_{t})/\ell(L_{t}) $. This ensures that $ \ecc(R_t) = \ecc(R_{\tilde{t}}) $. That way we see that there exists a universal dimensional constant $ \gamma_n $ such that for each $ \varphi_t \in \mathcal{F}_t^{N} $ the wave packet $ \varphi_{\tilde{t}} \coloneqq \gamma_n \varphi_t  \in \mathcal{F}_{\tilde{t}}^{N} $ for every $ t \in \mathbf{T} $.  We now rename $ \widetilde{\mathbf{T}} $ again to $ \mathbf{T} \subseteq \mathcal{T}_{\ann} $ for some $ \ann \in 3^{\Z} $ and proceed with the assumption that all the space components $\{R_t:\, t\in\mathbf T\}$ have sides parallel to the coordinate axes. Furthermore, we claim that it is enough to show the weaker estimate
		\begin{equation}\label{Eq:BigSizeBigInterSquareFuncLpLemWeak}
			\Norm{ \Delta_{\mathbf T,N} f }{ L^{ p }( \R^n ) } 
			\lesssim \Norm{ f }{ L^{ p }( \R^n ) }, 
			\qquad 1 < p \leq 2,\qquad N\geq 4n.
		\end{equation}
		This is because $ \chi_{ R_{\mathbf{T}} , \infty }^{-2K} \varphi_t \in\mathcal F_t ^{N/2} $ is still adapted to the tile $ t $ and we can write for some choice of wave packets $ \{ \varphi_t \in \mathcal{F}_t^{N} :\, t \in \mathbf{T} \}$
		\begin{equation}
			\Delta_{\mathbf T,N} f 
			= \left( \sum_{ t \in \mathbf{T} } \Abs{ \left\langle f ,\varphi_t \right\rangle }^2 \frac{\indf{ R_t }}{ \Abs{ R_t } } \right)^{ 1/2 } 
			= \left( \sum_{ t \in \mathbf{T} } \Abs{ \left\langle f \chi_{ R_{\mathbf{T}} , \infty }^{2K} , \chi_{ R_{\mathbf{T}} , \infty }^{-2K} \varphi_t \right\rangle }^2 \frac{\indf{ R_t }}{ \Abs{ R_t } } \right)^{ 1/2 } 
			\leq  \Delta_{\mathbf T,N/2} \left( f \chi_{ R_{\mathbf{T}} , \infty }^{2K} \right),
		\end{equation}
		so that the result of the lemma will be proved by applying \eqref{Eq:BigSizeBigInterSquareFuncLpLemWeak} to $ f \chi_{ R_{\mathbf{T}} , \infty }^{2K} $.
		
		With these reductions in mind, we now write $\Delta f$ in place of $\Delta_{\mathbf T,N} f$ in order to simplify the notation. Then, our goal is to prove \eqref{Eq:BigSizeBigInterSquareFuncLpLemWeak}. First, the case $ p=2 $ is \eqref{Eq:BigSizeBigInterDeltaL2}. Then, we get the range $ 1 < p < 2 $ by interpolating the $ L^2 $-estimate with the weak $ L^1(\R^n) $-estimate
		\begin{equation}\label{Eq:BigSizeBigInterSquareFuncLpWeakDesired}
			\Abs{ \left\{ \Delta f > \lambda \right\} } \lesssim \frac{ \norm{f}{L^1(\R^n)} }{ \lambda }, \qquad \lambda>0.
		\end{equation}
		
		To prove \eqref{Eq:BigSizeBigInterSquareFuncLpWeakDesired}, we use a Calder\'on-Zygmund type decomposition for the function $ f $. Fix $ \lambda > 0 $. Denote by $ \mathcal{E} $ the maximal elements of the collection of rectangles $ R = R(L,I,v_{\mathbf{T}}) $, with $ L \subseteq \R^d $ a triadic cube and $ I \subseteq \R $ an interval of length $ \ann^{-1} $, and such that
		\begin{equation}
			\int_{\R^n} \abs{ f(x) } \chi_{ R , 1 }^{10n} (x) dx > \lambda.
		\end{equation}
		We consider the stopping condition with Schwartz tails as above, anticipating the application of Bernstein's inequality below. We define
		\begin{equation}
			b_{R}(x) 
			\coloneqq \left[ f(x) - \avgint_{L_R} f(\underline{y}, x_n) d \underline{y} \right] \indf{R}(x), \qquad R \in \mathcal{E};
		\end{equation}
		\begin{equation}
			g(x) 
			\coloneqq f(x) - \sum_{ R \in \mathcal{E} } b_{R}(x)
			= f(x) \indf{ ( \bigcup_{R \in \mathcal{E}} R )^c } (x) + \sum_{ R \in \mathcal{E} } \avgint_{L_R} f(\underline{y}, x_n) d \underline{y} \indf{R}(x) \eqcolon \text{I} + \text{II}.
		\end{equation}
		Then, to prove \eqref{Eq:BigSizeBigInterSquareFuncLpWeakDesired}, it is enough to show
		\begin{equation}
			\label{Eq:BigSizeBigInterSquareFuncLpWeakDesiredGood}
			\Abs{ \left\{ \Delta g > \lambda \right\} } \lesssim \frac{ \norm{f}{L^1(\R^n)} }{ \lambda },
			\qquad
			\Abs{ \left\{ \Delta \left( \sum_{R \in \mathcal{E}} b_{R} \right) > \lambda \right\} } \lesssim \frac{ \norm{f}{L^1(\R^n)} }{ \lambda }.
		\end{equation}
		
		The first inequality in \eqref{Eq:BigSizeBigInterSquareFuncLpWeakDesiredGood} is a consequence of the weak $ L^2 $-estimate together with $ g \lesssim \lambda $ almost everywhere. To see the pointwise inequality, observe that, since the adapted functions $ \varphi_t $ have frequency support contained in $ B(0,6 \ann) $, we can assume that $ \supp \widehat{f} \subseteq B(0,6 \ann ) $. Then, using a version of Bernstein's inequality from \cite[Lemma 2.6]{DLMV2022}, we have
		\begin{equation}
			\label{Eq:BigSizeBigInterBernstein}
			\Norm{ \chi_{ I,\infty}^{10n} f(\underline{x},\cdot) }{ L^{\infty}(\R) } 
			\lesssim \ann^{1/p} \Norm{ \chi_{ I, \infty}^{10n} f(\underline{x},\cdot) }{ L^p(\R) }, 
			\qquad p \geq 1,
		\end{equation}
		uniformly in $ \underline{x} \in \R^d $and for intervals $ I \subseteq \R^n $ with $ \ell(I) = \ann^{-1} $. Now, for $ \text{II} $, suppose that $ x \in R $ for some $ R \in \mathcal{E} $. Then,
		\begin{equation}
			\begin{split}
				g(x) 
				& \lesssim \int_{ \R^d } \abs{ f(\underline{y},x_n) } \chi_{ L_{R},1 }^{10n}(\underline{y}) d\underline{y}
				\leq \int_{ \R^d } \norm{ \chi_{ I_{R},\infty }^{10n} f(\underline{y},\cdot) }{L^{\infty}(\R)} \chi_{ L_{R},1 }^{10n}(\underline{y}) d\underline{y} \\ 
				& \lesssim \int_{ \R^d } \int_{ \R } \abs{ f(\underline{y},t) } \chi_{ I_{R},1}^{10n} (t) dt \chi_{ L_{R},1 }^{10n} (\underline{y}) d\underline{y}
				\lesssim \int_{ \R^n } \abs{ f(y) } \chi_{ R( 3 L_{R}, I_{R}, v_{\mathbf{T}} ), 1}^{10n} (y) dy 
				\leq \lambda,
			\end{split}
		\end{equation}
		where we have used Bernstein's inequality \eqref{Eq:BigSizeBigInterBernstein} with $ p=1 $ in the second line and the maximality of $ \mathcal{E} $ in the last inequality. This shows the pointwise inequality for $ \text{II} $. Similarly, if $ x \not\in \bigcup_{ R \in \mathcal{E} } R $ then for every rectangle $ R' \ni x $ we have
		\begin{equation}
			\begin{split}
				\avgint_{L_{R'}} \abs{ f(\underline{y},x_n) } d\underline{y} 
				& \lesssim \int_{ \R^d } \int_{ \R } \abs{ f(\underline{y},t) } \chi_{ I_{R'},1 }^{10n} (t) dt \chi_{ L_{R'},1 }^{10n}(\underline{y}) d\underline{y} 
				= \int_{ \R^n } \abs{ f(y) } \chi_{ R',1}^{10n} (y) dy
				\leq \lambda,
			\end{split}
		\end{equation}
		so $ g(x) = f(x) \lesssim \lambda $ for $ \text{I} $ as well.
		
		Next, for the proof of the second inequality in  \eqref{Eq:BigSizeBigInterSquareFuncLpWeakDesiredGood},  first of all observe that by the definition of $ \mathcal{E} $ we have that
		\begin{equation}
			\label{Eq:BigSizeBigInterSquareUnionEstimate}
			\Abs{ \bigcup_{ R \in \mathcal{ E } } R } \leq \Abs{ \left\{ M_{ \mathcal{R} } f (x) \gtrsim \lambda \right\} } \lesssim \frac{ \norm{ f }{ 1 } }{ \lambda },
		\end{equation}
		where $ M_{\mathcal{R}} $ denotes the maximal function over rectangles with sides parallel
			to the coordinate axes and with $ \ell(I_R) = \ann^{-1} $. Then, we claim that it is enough to show
		\begin{equation}
			\label{Eq:BigSizeBigInterSquareFuncLpBad}
			\int_{ \left( K_n R \right)^c } \Abs{ \Delta \left( b_{R}(x) \right) } dx \lesssim \norm{ b_{R} }{L^1(\R^n)}, \qquad R \in \mathcal{E}. 
		\end{equation}
		Indeed, assuming \eqref{Eq:BigSizeBigInterSquareFuncLpBad}, the second inequality in \eqref{Eq:BigSizeBigInterSquareFuncLpWeakDesiredGood} follows from \eqref{Eq:BigSizeBigInterSquareUnionEstimate} for the points $ x \in K_n R $ for some $ R \in \mathcal{ E } $; combined with Chebyshev's inequality, which gives
		\begin{equation}
			\begin{split}
				\Abs{ \left\{ x \in \left( \bigcup_{ R \in \mathcal{E} } K_n R \right)^c :\, \Delta \left( \sum_{R \in \mathcal{E} } b_{R} \right) > \lambda \right\} } 
				& \leq\frac{1}{\lambda} \sum_{R \in \mathcal{E}} \int_{ \left( K_n R \right)^c } \Abs{ \Delta \left( b_{R}(x) \right) } dx,
			\end{split}
		\end{equation}
		together with
		\begin{equation}
			\begin{split}
				\norm{ b_{R} }{L^1(\R^n)} 
				& \simeq \int_{ R } \Abs{ f(x) } dx 
				\lesssim \abs{ R } \int_{ R } \Abs{ f(x) } \chi_{ R( 3 L_{R}, I_{R}, v_{\mathbf{T}} ),1 }^{10n} dx
				\lesssim \abs{ R } \lambda,
			\end{split}
		\end{equation}
		for the rest of the points. In the last calculation, we have used the maximality of the elements in $ \mathcal{E} $.

		It only remains to prove \eqref{Eq:BigSizeBigInterSquareFuncLpBad}. First, note that by the definition of $ \Delta $ we need to integrate only on $ \Gamma = K_n R_{ \mathbf{T} } \cap ( K_n R )^{c} $. Then, split the tiles in the definition of $ \Delta $ by $ \scl(t) \leq \scl(R) $ and $ \scl(t) > \scl(R) $. In the first case, we use the fast decay of the wave packets $ \varphi_t $ together with the fact that $ \abs{ x - c(R_{t}) } \gtrsim \dist(R_t,R) $ for $ x \in R $. Then, \eqref{Eq:BigSizeBigInterSquareFuncLpBad} follows by splitting the sum over scales $ \scl(t) = 3^m \scl(R) $ and using that for $ x \in R_t $ we have $ \dist(x,R) \geq \dist(R_t,R) $, together with the definition of $ \Gamma $, which gives that we are integrating on $ \{ \dist( \underline{x} , L_{R} ) \gtrsim \scl(R) \} $. The second case is similar but one needs to use also the cancellation of $ b_{R} $ to get a better decay of the wave packets $ \varphi_t $ and also split between the cases $ \dist( R,R_t ) \leq \scl(R_t) $ and $ \dist( R,R_t ) \geq \scl(R_t) $.
	\end{proof}

	Next, we need the following bound for the square function $ \Delta_{\mathbf T,n} $, which is an expression of exponential square integrability of $\Delta_{\mathbf T,n}$. The proof is the same as in the two-dimensional case in  \cite[Lemma 10.3]{Bateman2013Revista} and we omit it.
	
	\begin{lemma}
		\label{Lem:BigSizeBigInterSquareFuncL2}
		For $N\geq 4n$ there holds
		\begin{equation} 
			\label{Eq:BigSizeBigInterSquareFuncL2}
			\Norm{ \Delta_{\mathbf T,N} f }{ L^{ 2 }( \R^n ) } \lesssim \frac{ 1 }{ \Abs{ R_{ \mathbf{T} } }^{ 1/2 } } \int_{ K_n R_{ \mathbf{T} } } \Delta_{\mathbf T,N} f.
		\end{equation}
	\end{lemma}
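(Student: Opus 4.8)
The statement is a self-improving \emph{reverse H\"older} estimate for $\Delta_{\mathbf T,N}f$. Since $\mathbf T$ is a tree, \eqref{Eq:TreeShadow} gives $\bigcup_{t\in\mathbf T}R_t\subseteq K_nR_{\mathbf T}$, so $\Delta_{\mathbf T,N}f$ is supported in $K_nR_{\mathbf T}$, a set of measure $\simeq\abs{R_{\mathbf T}}$; the claim is then exactly that the $L^2$-average of $\Delta_{\mathbf T,N}f$ over $K_nR_{\mathbf T}$ is controlled by its $L^1$-average there (the reverse of Cauchy--Schwarz, which is the content). The plan is to follow verbatim the two-dimensional argument of \cite[Lemma 10.3]{Bateman2013Revista}, replacing the planar geometry by the geometric Lemma~\ref{Lem:Geom} (and Lemmas~\ref{Lem:Geom2}--\ref{Lem:Geom3} wherever nested or overlapping configurations appear), which as usual localises all nesting arguments to a two-dimensional plane.

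First I would reduce, by the distribution-function formula, to an exponential integrability bound: it suffices to show
\begin{equation*}
	\frac{1}{\abs{K_nR_{\mathbf T}}}\int_{K_nR_{\mathbf T}}\exp\Bigl(c\,\Delta_{\mathbf T,N}f(x)\big/A\Bigr)\,dx\lesssim 1,\qquad A\coloneqq\frac{1}{\abs{K_nR_{\mathbf T}}}\int_{K_nR_{\mathbf T}}\Delta_{\mathbf T,N}f,
\end{equation*}
for a small dimensional $c>0$; integrating $e^{-ct/A}$ against $t\,dt$ then yields the stated $L^2$ bound with supp contained in $K_nR_{\mathbf T}$.

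The heart is to prove this exponential estimate from the \emph{lacunary} structure of $\mathbf T$. I would decompose $\mathbf T=\bigcup_s\mathbf T_s$ by scale: within each $\mathbf T_s$ the spatial components $\{R_t\}_{t\in\mathbf T_s}$ are pairwise disjoint and the wave packets share one annulus and one direction, while across scales the frequency supports $\omega_t^{\circ}$ are disjoint and lacunarily arranged around $\xi_{\mathbf T}$ (this is precisely what being a lacunary tree encodes, through the center/peripheral children of the triadic eccentricities). Consequently $\Delta_{\mathbf T,N}f$ behaves like a Littlewood--Paley/martingale square function subordinate to a filtration of $K_nR_{\mathbf T}$ by triadic parallelepipeds, and the coefficients $F_N[f](t)^2$ satisfy, thanks to the first orthogonality estimate of Lemma~\ref{Lem:OrthoLacTree} applied to the lacunary subtree $\{t\in\mathbf T:\,R_t\subseteq R\}$, a Carleson condition on triadic sub-parallelepipeds. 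A Chang--Wilson--Wolff type good-$\lambda$ iteration (exactly as in \cite[Lemma 10.3]{Bateman2013Revista}) then produces the sub-Gaussian concentration of $\Delta_{\mathbf T,N}f$ about its $L^1$-average.

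The main obstacle is that the exponential bound must be taken relative to the $L^1$-average $A$ of $\Delta_{\mathbf T,N}f$: the analogous statement relative to the $L^2$-average is a triviality by Cauchy--Schwarz and carries no information, while the stated conclusion is exactly this reverse H\"older gain. Extracting it requires using the \emph{sharp} form of the subtree orthogonality estimate (and its Schwartz-tail localisation $\norm{f\chi^{2K}_{R_{\mathbf T},\infty}}{}$) and propagating it through the John--Nirenberg/Chang--Wilson--Wolff iteration so that the Carleson constant feeding the iteration is the averaged quantity, not merely the crude $O(1)$ bound; everything else transfers from the two-dimensional argument with only bookkeeping changes coming from the parallelepiped geometry.
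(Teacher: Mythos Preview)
Your proposal is correct and aligns with the paper's approach: the paper omits the proof entirely, pointing to \cite[Lemma~10.3]{Bateman2013Revista} and describing the content as ``exponential square integrability of $\Delta_{\mathbf T,N}$'', which is exactly the John--Nirenberg/stopping-time mechanism you outline. Two small remarks: first, since $\mathbf T\subset\mathcal T_{\ann}$ is a single-annulus tree, only the geometric Lemma~\ref{Lem:Geom} is needed---Lemmas~\ref{Lem:Geom2} and~\ref{Lem:Geom3} concern tiles across different annuli and do not enter here; second, the iteration in Bateman's argument gives a sub-exponential tail $|\{\Delta>\lambda\}|\lesssim e^{-c\lambda/A}|R_{\mathbf T}|$ directly in terms of the $L^1$-average $A$ (via a stopping time on the level sets of $\Delta$, using that the ``coarse'' part of $\Delta^2$ is constant on each stopping parallelepiped), rather than needing to identify the Carleson constant with $A$---so your last paragraph's concern is already handled by the self-similar structure of the recursion.
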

	
	With these two Lemmas \ref{Lem:BigSizeBigInterSquareFuncLp} and \ref{Lem:BigSizeBigInterSquareFuncL2}, we are ready to prove the desired Lemma \ref{Lem:MaxBigSizeBigInter}.
	
	\begin{proof}[Proof of Lemma \ref{Lem:MaxBigSizeBigInter}] 
		It is clearly enough to prove the lemma under the assumption $\sigma<1/2$, otherwise the conclusion follows trivially by the $L^2$-bound for $\Delta_{\mathbf T,N}$. Also, by hypothesis, there exists a lacunary tree $ \mathbf{T} $, a collection of adapted functions $ \{ \varphi_t \in \mathcal{F}_t^{10n} :\, t \in \mathbf{T} \} $ such that, for every large positive integer $N\geq 3n$, there holds
		\begin{equation}
			\begin{split}
			\sigma 
			& \lesssim \left( \frac{ 1 }{ \Abs{ R_{ \mathbf{T} } } } \sum_{ t \in \mathbf{T} } \Abs{ \left\langle f ,\varphi_t \right\rangle }^2 \right)^{ 1/2 } 
			\leq \frac{ \norm{ \Delta_{	\mathbf T,4N} f }{ L^2( \R^n ) }  }{ \Abs{ R_{ \mathbf{T} } }^{ 1/2 } } 
			\lesssim \frac{ 1 }{ \Abs{ R_{ \mathbf{T} } } } \int_{ K_n R_{ \mathbf{T} } } \Delta_{	\mathbf T,4N} f
			\\
			&\leq \frac{ \Abs{ K_n R_{ \mathbf{T} } }^{ \frac{ \varepsilon }{ 1+\varepsilon } } }{ \Abs{ R_{ \mathbf{T} } } } \Norm{ \Delta_{	\mathbf T,4N} f }{ L^{1+\varepsilon}(\R^n) },
			\end{split}
		\end{equation}
		where we have applied Lemma \ref{Lem:BigSizeBigInterSquareFuncL2} to bound the $ L^2 $-norm and H\"older's inequality in the last inequality. Thus, applying Lemma \ref{Lem:BigSizeBigInterSquareFuncLp} and taking the $ (1+\varepsilon) $-power,
		\begin{equation}
			\begin{split}
				\sigma^{1+\varepsilon} \Abs{ R_{ \mathbf{T} } }
				& \lesssim \int ( f \chi_{ R_{\mathbf{T}} , \infty }^{2N} )^{ 1+\varepsilon } = \int_{F} ( \chi_{ R_{\mathbf{T}} , \infty }^{2N} )^{ 1+\varepsilon },
			\end{split}
		\end{equation}
		where we have taken $ f = \indf{ F } $ in the last equality. Using the fast decay of $ \chi_{ R_{\mathbf{T}} , \infty }^{2N} $ we have
		\begin{equation}
			\begin{split}
				\int_{F} ( \chi_{ R_{\mathbf{T}} , \infty }^{2N} )^{ 1+\varepsilon } 
				& \lesssim \Abs{ F \cap \sigma^{-\varepsilon} R_{\mathbf{T}} } + \Abs{ \sigma^{ -\varepsilon} R_{\mathbf{T}}} \sum_{k=1}^{\infty} \sigma^{(2N-n) \varepsilon k} = \Abs{ F \cap \sigma^{-\varepsilon} R_{\mathbf{T}} } + \frac{\sigma^{(2N-n)\varepsilon}}{1-\sigma^{(2N-n)\varepsilon}} \Abs{ \sigma^{ -\varepsilon} R_{\mathbf{T}}}.
			\end{split}
		\end{equation}
		Combining the estimates above and taking $ M>N $ sufficiently large with respect to $\varepsilon $, we get
		\begin{equation}
			\sigma^{ 1+(n+1)\varepsilon } \Abs{ \sigma^{-\varepsilon} R_{\mathbf{T}} } 
			\lesssim \Abs{ F \cap \sigma^{ -\varepsilon} R_{\mathbf{T}}}
		\end{equation}
		which yields the desired estimate. Note that this is the point where the smoothness and decay  parameter $M$ of the wave packets has to be chosen sufficiently large depending on $\varepsilon$. This will incur a dependence of $M$ on the integrability exponent $p$ later on in the proof.
	\end{proof}

	\subsubsection{Refinement of Lemma \ref{Lem:MaxBigSizeBigInter}}
	In order to verify the second assumption of Proposition \ref{Prop:MaxKey} we need a certain technical refinement of Lemma \ref{Lem:MaxBigSizeBigInter} that we state and prove below.
	
	\begin{lemma}
		\label{Lem:MaxSizeHypo}
		Let $ \varepsilon>0 $ and $ t \in \mathcal{T}_{\ann} $ for some fixed $ \ann \in 3^{\Z} $. Suppose that $ \T_t \subseteq \mathcal{T}_{\ann} $ is a collection of pairwise incomparable tiles satisfying $ s \leq_2 t $ for every $ s \in \T_t $. We further assume that each $ s \in \T_t $ is the top of a lacunary tree $ \mathbf{T}_s $ with
		\begin{equation}
			\sum_{s'\in\mathbf{T}_s} F_{10n}[\indf{F}](s')^2 
			\gtrsim  \sigma^2 \abs{ R_{s} }. 
		\end{equation}
		Then, there holds
		\begin{equation}
			\Abs{ F \cap (1+\sigma^{-\varepsilon}) K_n^2 R_t } \gtrsim \sigma^{1+\varepsilon} \sum_{s\in\T_t} \abs{ R_{s} }.
		\end{equation}
	\end{lemma}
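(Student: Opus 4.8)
The plan is to derive the estimate from the single‑tree bound of Lemma~\ref{Lem:MaxBigSizeBigInter} by a Vitali‑type covering argument applied to the \emph{dilated} spatial components of the tiles in $\T_t$. Write $\rho\coloneqq 1+\sigma^{-\varepsilon}$ (one may assume $\sigma<1$, hence $\rho\ge 2$; if $\sigma\gtrsim 1$ then $\rho$ is a bounded constant and the same argument goes through). Since each $s\in\T_t$ is the top of a lacunary tree $\mathbf T_s\subseteq\mathcal T_{\ann}$ with $\sum_{s'\in\mathbf T_s}F_{10n}[\indf F](s')^2\gtrsim\sigma^2\abs{R_s}$, Lemma~\ref{Lem:MaxBigSizeBigInter} applied to $\mathbf T_s$ will give
\begin{equation*}
	\Abs{F\cap\rho R_s}\gtrsim_\varepsilon\sigma^{1+\varepsilon}\Abs{\rho R_s},\qquad s\in\T_t,
\end{equation*}
equivalently $\abs{\rho R_s}\lesssim_\varepsilon\sigma^{-1-\varepsilon}\abs{F\cap\rho R_s}$ for every $s\in\T_t$. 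So the whole difficulty is to pass from this collection of individual estimates, for a heavily overlapping family $\{\rho R_s\}$, to a bound in which the measure of $F$ is counted inside the fixed set $(1+\sigma^{-\varepsilon})K_n^2R_t$ only once.

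The key structural observations come from the two standing hypotheses, that $s\leq_2 t$ for every $s\in\T_t$ and that $\T_t$ is pairwise incomparable. From $s\leq_2 t$ one gets $\ecc(t)\subseteq\ecc(s)$ and $R_s\subseteq K_n^2R_t$; in particular all caps $\ecc(s)$ contain $\ecc(t)\neq\emptyset$, and since $\ecc(s)\in(\mathcal G\times\{1\})'$, which is a grid, any two of them are nested. Together with incomparability this forces pairwise disjointness of the \emph{undilated} components: for $s\neq s'$ the frequency parts $\omega_s,\omega_{s'}$ intersect (the radial ranges agree on $\mathcal T_{\ann}$ and the caps are nested), hence $R_s\cap R_{s'}=\emptyset$ by Remark~\ref{rmrk:pairincorp}. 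Thus $\{R_s:s\in\T_t\}$ is a disjoint family inside $K_n^2R_t$. Next, enumerating $\T_t=\{s_1,s_2,\dots\}$ so that $\ell(L_{s_1})\ge\ell(L_{s_2})\ge\cdots$ (equivalently, since $\ell(Q_s)=\ann^{-1}/\ell(L_s)$, so that $\ecc(s_1)\subseteq\ecc(s_2)\subseteq\cdots$), the Geometric Lemma~\ref{Lem:Geom}, applied with dilation parameter $\rho$, yields: \emph{(a)} if $i<j$ and $\rho R_{s_i}\cap\rho R_{s_j}\neq\emptyset$ then $\rho R_{s_i}\subseteq K_n\rho R_{s_j}$; and, applied instead to the pair $s,t\in\mathcal T_{\ann}$ with dilation parameter $\rho K_n$ together with $R_s\cap K_nR_t\neq\emptyset$, \emph{(b)} $\rho R_s\subseteq\rho K_n^2R_t=(1+\sigma^{-\varepsilon})K_n^2R_t$ for every $s\in\T_t$.

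Finally I would run the Vitali selection: process $s_1,s_2,\dots$ greedily, retaining $s_j$ exactly when $\rho R_{s_j}$ is disjoint from $\rho R_{s_i}$ for all already‑retained $s_i$ with $i<j$. This produces $\mathcal M\subseteq\T_t$ with $\{\rho R_s:s\in\mathcal M\}$ pairwise disjoint and, by (a), $\bigcup_{s\in\T_t}\rho R_s\subseteq\bigcup_{s\in\mathcal M}K_n\rho R_s$. Then, using successively the disjointness of $\{R_s\}_{s\in\T_t}$, this covering together with subadditivity, the reformulated Lemma~\ref{Lem:MaxBigSizeBigInter}, the disjointness of $\{\rho R_s\}_{s\in\mathcal M}$, and the inclusion (b), one obtains
\begin{equation*}
	\sum_{s\in\T_t}\abs{R_s}=\Bigl|\bigcup_{s\in\T_t}R_s\Bigr|\le K_n^{\,n}\sum_{s\in\mathcal M}\abs{\rho R_s}\lesssim_\varepsilon\sigma^{-1-\varepsilon}\Bigl|F\cap\bigcup_{s\in\mathcal M}\rho R_s\Bigr|\le\sigma^{-1-\varepsilon}\Abs{F\cap(1+\sigma^{-\varepsilon})K_n^2R_t},
\end{equation*}
which rearranges to the claim. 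The main obstacle, as anticipated, is the uncontrolled multiplicity of $\{\rho R_s\}$ (it blows up as $\sigma\to 0$, so a naive summation of the pointwise estimates is hopeless); the resolution is that the disjointness of the \emph{undilated} $R_s$ — itself a consequence of incomparability plus the common sub‑eccentricity $\ecc(t)$ — combined with the nesting (a) that Lemma~\ref{Lem:Geom} delivers for free, lets the Vitali argument replace the overlapping family by a genuinely disjoint subfamily at the cost of a dimensional constant only, introducing no further loss in $\sigma$.
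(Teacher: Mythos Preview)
Your proof is correct and follows essentially the same Vitali-type covering strategy as the paper, which packages the selection step into an application of Lemma~\ref{Lem:DecomAlgorithm} with parameter $\mu$ chosen so that $3^{\mu}\simeq K_n(1+\sigma^{-\varepsilon})$; you simply carry out the same selection by hand using Lemma~\ref{Lem:Geom}. One small slip: your statement (a) has the indices reversed --- with your enumeration ($i<j$ means $\scl(s_i)\ge\scl(s_j)$, hence $\ecc(s_i)\subseteq\ecc(s_j)$) the Geometric Lemma gives $\rho R_{s_j}\subseteq K_n\rho R_{s_i}$, not the other way round --- but this is exactly the direction your Vitali covering actually uses, so the chain of inequalities and the conclusion are unaffected.
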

	
	\begin{proof}
		First of all, since $ s \leq_2 t $ for every $ s \in \T_t $ , we have $ (1+\sigma^{-\varepsilon}) K_n^2 R_t \supseteq \bigcup_{ t \in \T_s } (1+\sigma^{-\varepsilon}) R_s $. Thus,
		\begin{equation}
			\Abs{ F \cap (1+\sigma^{-\varepsilon}) K_n^2 R_t }
			\geq \Abs{ F \cap \bigcup_{s \in\T_t } (1+\sigma^{-\varepsilon}) R_s }.
		\end{equation}
		Choose a nonnegative integer $ \mu $ such that
		\begin{equation}
			3^{\mu} \geq K_n (1+\sigma^{-\varepsilon}) > 3^{\mu-1}.
		\end{equation} 
		Applying Lemma \ref{Lem:DecomAlgorithm} to the collection $ \T_t $ with parameter $ \mu $ we get
		\begin{equation}
			\bigcup_{ s\in \T_t } R_s = \bigcup_{j=1}^{N} \bigcup_{ s \in \T_{\mu}(P_{j,t}) } R_s, \qquad \T_t = \bigcup_{j=1}^{N} \T_{\mu}(P_{j,t})
		\end{equation}
		with $ \{ P_{j,t} \} \subseteq \T_t $. Thus,
		\begin{equation}
			\Abs{ F \cap \bigcup_{s \in\T_t } (1+\sigma^{-\varepsilon}) R_s }
			\geq \Abs{ F \cap \bigcup_{ j=1 }^{N} (1+\sigma^{-\varepsilon}) R_{P_{j,t}} }
			= \sum_{ j=1 }^{N} \Abs{ F \cap (1+\sigma^{-\varepsilon}) R_{P_{j,t}} }
		\end{equation}
		since $ \{ 3^{\mu} R_{P_{j,t}} \}_{ j=1 }^{N} $ is pairwise disjoint; this follows by (iv) of Lemma \ref{Lem:DecomAlgorithm} together with the assumption $ s \leq_2 t $ for every $ s \in \T_t $.
		
		Since $ P_{j,t} \in \T_t $ we know that $ P_{j,t} $ is the top of a lacunary tree $ \mathbf{T}_{P_{j,t}} $ with
		\begin{equation}
			\sum_{s'\in\mathbf{T}_{P_{j,t}}} F_{10n}[\indf{F}](s')^2 \gtrsim \sigma^2 \abs{ R_{P_{j,t}} } 
		\end{equation}
		and so Lemma \ref{Lem:MaxBigSizeBigInter} implies
		\begin{equation}
			\Abs{ F \cap (1+\sigma^{-\varepsilon}) K_n^2 R_t }
			\geq \sum_{ j=1 }^{N} \Abs{ F \cap (1+\sigma^{-\varepsilon}) R_{P_{j,t}} }
			\gtrsim \sum_{ j=1 }^{N} \sigma^{1+\varepsilon} \Abs{ (1+\sigma^{-\varepsilon}) R_{P_{j,t}} }
			\simeq \sum_{ j=1 }^{N} \sigma^{1+\varepsilon} \Abs{ 3^{\mu} K_n R_{P_{j,t}} }.
		\end{equation}
		By (iii) of Lemma \ref{Lem:DecomAlgorithm} there holds
		\begin{equation}
			3^{\mu} K_n R_{P_{j,t}} 
			\supseteq \bigcup_{ s \in \T_{\mu}(P_{j,t}) } R_s.
		\end{equation}
		Now, (ii) of Lemma \ref{Lem:DecomAlgorithm} yields that $ \{ R_s :\,  s \in \T_{\mu}(P_{j,t}) \} $ is a pairwise disjoint collection since $ \T_t $ was pairwise incomparable. It follows that
		\begin{equation}
			\Abs{ F \cap (1+\sigma^{-\varepsilon}) K_n^2 R_t }
			\gtrsim \sigma^{1+\varepsilon} \sum_{ j=1 }^{N} \sum_{ s \in \T_{\mu}(P_{j,t}) } \Abs{ R_{s} }
			= \sigma^{1+\varepsilon} \sum_{ s \in \T_{t} } \Abs{ R_{s} },
		\end{equation}
		as desired.
	\end{proof}

	\subsection{Proof of Proposition \ref{Prop:DecomMax}}
	\label{Ss:MaxProof}
	
	We now have all the necessary ingredients in order to conclude the proof of Proposition \ref{Prop:DecomMax}.
	
	\begin{proof}[Proof of Proposition \ref{Prop:DecomMax}]
		We have to show \eqref{Eq:MaxEstimate}, where the sum in the left hand side of \eqref{Eq:MaxEstimate} is over the trees $ \{ \mathbf{T}_{j,\nu}' : \, j, \nu =1,\ldots,V_n \} $ constructed in Proposition \ref{Prop:CombinedDensitySize}. Recall that for every $ j $ there exists $ \nu_0 \in \{1,\ldots,V_n\} $ and a lacunary tree $ \mathbf{T}_{j,\nu_0} \subseteq \mathbf{T}_{j,\nu_0}' $ such that
		\begin{equation}
			\sum_{ t \in \mathbf{T}_{j,\nu_0} } F_{10n}[f](t)^2 \gtrsim \size( \P )^2 \Abs{ R_{\mathbf{T}_{j,\nu_0}} }
		\end{equation}
		and the left hand side of the desired inequality \eqref{Eq:MaxEstimate} is bounded as
		\begin{equation}
			\sum_{j,\nu} \Abs{ R_{\mathbf{T}_{j,\nu}'}}
			\lesssim \sum_{j} \Abs{ R_{ \mathbf{T}_{j,\nu_{0}} } }.
		\end{equation}
		Write $ \mathbf{T}_{j} = \mathbf{T}_{j,\nu_0} $ for this proof and as in \eqref{Eq:DecomSizeTopsByDense} we have
		\begin{equation}
			\T^{\tops}_{\siz} 
			\coloneqq \{ \topt(\mathbf{T}_j) \}_j
			= \bigcup_{ t \in \T^{\tops}_{\dens} } \left\{ s \in \T^{\tops}_{\siz} :\, t(s) = t \right\}
			= \bigcup_{ t \in \T^{\tops}_{\dens} } \T^{\tops}_{\siz,t},
		\end{equation}
		where the map $ s \mapsto t(s) $ is as defined in Remark \ref{Rmrk:DenseMap} after Lemma \ref{Lem:Dense} and it implies that $ s \leq_2 t $ for every $ s \in \T^{\tops}_{\siz,t} $. Applying Lemma \ref{Lem:DecomAlgorithm} to each $ \T^{\tops}_{\siz,t} $ with $ \mu=0 $ we further decompose
		\begin{equation}
			\label{Eq:MaxIncomparableDecom}
			\T^{\tops}_{\siz} = \bigcup_{ t\in\T_{\dens}^{\tops} } \bigcup_{\ell=1}^{N_t} \T^{\tops}_{\siz,t}(P_{t,\ell})
		\end{equation}
		with $ P_{t,\ell} \in \T^{\tops}_{\siz,t} $ for every $ \ell \in\{1,\ldots,N_t\} $ and $ \T^{\tops}_{\siz,t}(P_{t,\ell}) $ are trees with top $ P_{t,\ell} $. Now, $ \T^{\tops}_{\siz} $ consists of pairwise incomparable tiles; see the discussion after \eqref{Eq:SizeTopsDecomAlgorithm}. Thus, we have by (ii) of Lemma \ref{Lem:DecomAlgorithm} that
		\begin{equation}
			\sum_{ s\in \T^{\tops}_{\siz,t}(P_{t,\ell}) } \Abs{ R_s }
			= \Abs{ \bigcup_{ s\in \T^{\tops}_{\siz,t}(P_{t,\ell}) } R_s }
			\lesssim \Abs{ R_{P_{t,\ell}} },
		\end{equation}
		where the last inequality is by (iii) of Lemma \ref{Lem:DecomAlgorithm}. Thus,
		\begin{equation}
			\sum_{ s\in\T^{\tops}_{\siz} } \Abs{R_{s}}
			= \sum_{ t\in\T^{\tops}_{\dens} } \sum_{ \ell=1 }^{N_t} \sum_{ s \in \T^{\tops}_{\siz,t}(P_{t,\ell}) } \Abs{ R_s }
			\lesssim \sum_{ t\in\T^{\tops}_{\dens} } \sum_{ \ell=1 }^{N_t} \Abs{ R_{P_{t,\ell}} }.
		\end{equation}
		Now decompose
		\begin{equation}
			\T^{\tops}_{\dens} 
			= \bigcup_{ u\in\Z } \T^{\tops}_{\dens,u}, 
			\qquad \T^{\tops}_{\dens,u} 
			\coloneqq \left\{ t \in \T^{\tops}_{\dens} :\, 2^{-u-1} \abs{ R_t } < \sum_{\ell=1}^{N_t} \abs{ R_{P_{t,\ell}} } \leq 2^{-u} \abs{ R_t } \right\}.
		\end{equation}
		Note that for every $ \ell \in \{1,\ldots,N_t\} $ we have that
		\begin{equation}
			P_{t,\ell} \in \T^{\tops}_{\siz,t} 
			\qquad \Longrightarrow \qquad 
			P_{t,\ell} \leq_2 t 
			\qquad \Longrightarrow \qquad 
			\bigcap_{\ell=1}^{N_t} \ecc(P_{t,\ell}) \neq \emptyset.
		\end{equation}
		Then we get by (iv) of Lemma \ref{Lem:DecomAlgorithm} that
		\begin{equation}
			\sum_{\ell=1}^{N_t} \abs{ R_{P_{t,\ell}} }
			= \Abs{ \bigcup_{\ell=1}^{N_t} R_{P_{t,\ell}} }
			\lesssim \abs{ R_t },
		\end{equation}
		which implies that we only need to consider $ u \geq u_0 $ for some negative integer $ u_0 $ depending only on the dimension. Also, since $ \T^{\tops}_{\dens,u} \subseteq \T^{\tops}_{\dens} $, we get by Lemma \ref{Lem:Dense}
		\begin{equation}
			\label{Eq:MaxDensity}
			\sum_{ t\in\T^{\tops}_{\dens,u} } \abs{R_t} \lesssim \frac{ \abs{E} }{ \delta }.
		\end{equation}
		Finally, as in the proof of Lemma \ref{Lem:Dense} we can decompose $ \T^{\tops}_{\dens,u} $ into disjoint subcollections
		\begin{equation}
			\T^{\tops}_{\dens,u} = \bigcup_{k=0}^{\infty} \T^{\tops}_{\dens,u,k},
		\end{equation}
		where $ k $ is the least integer for which
		\begin{equation}
			\label{Eq:MaxDenseLower}
			\Abs{ E \cap v_{ \sigma }^{-1}(\alpha_{t,\tau_0}) \cap 3^{k} K_n^2 R_{t} } \geq \frac{1}{100} \delta 3^{5 n k} \Abs{ 3^{k} K_n^2 R_{t} }.
		\end{equation}
		Summing up the estimates we have
		\begin{equation}
			\label{Eq:MaxSumSplit}
			\sum_{ s\in\T^{\tops}_{\siz} } \abs{ R_s }
			\lesssim \sum_{ u=u_0 }^{\infty} \sum_{ t\in\T^{\tops}_{\dens,u} } \sum_{\ell=1}^{N_t} \abs{ R_{P_{t,\ell}} }
			\simeq \sum_{ u=u_0 }^{\infty} \sum_{ t\in\T^{\tops}_{\dens,u} } 2^{-u} \abs{ R_t }
			\simeq \sum_{ k=0 }^{\infty} \sum_{ u=u_0 }^{\infty} 2^{-u} \sum_{ t\in\T^{\tops}_{\dens,u,k} } \abs{ R_t }.
		\end{equation}
		
		\emph{Completing the estimate for $ 3^k > (1+\sigma^{-\varepsilon}) $.} Let $ t \in \T^{\tops}_{\dens,u,k} $. Apply Lemma \ref{Lem:MaxSizeHypo} with $ \T_t = \{ P_{t,\ell} \}_{\ell=1}^{N_t} $ to get that 
		\begin{equation}
			\label{Eq:MaxBigSizeKBig}
			\Abs{ F \cap 3^k K_n^2 R_t } \gtrsim \sigma^{1+\varepsilon} \sum_{ \ell=1 }^{N_t} \abs{ R_{P_{t,\ell}} }
			\simeq \sigma^{1+\varepsilon} 2^{-u} \abs{ R_t }
			\simeq \sigma^{1+\varepsilon} 2^{-u} 3^{-nk} \Abs{ 3^k K_n^2 R_t }.
		\end{equation}
		Furthermore, since $ t \in \T^{\tops}_{\dens,u,k} $ we also have the density lower bound \eqref{Eq:MaxDenseLower}. Then, we can apply Proposition \ref{Prop:MaxKey} with $ 3^k K_n^2 $ in place of $ 3^k $, $ \mu \simeq 3^{5nk} \delta $ and $ \lambda \simeq \sigma^{ 1+\varepsilon } 2^{-u} 3^{-nk} $. Thus, the corresponding part of the right hand side of \eqref{Eq:MaxSumSplit} is
		\begin{equation}
			\begin{split}
				\sum_{ 3^k > 1+\sigma^{-\varepsilon} }^{\infty} \sum_{ u=u_0 }^{\infty} 2^{-u} & \sum_{ t\in\T^{\tops}_{\dens,u,k} } \abs{ R_t } \\
				& \lesssim \sum_{ 3^k > 1+\sigma^{-\varepsilon} }^{\infty} \sum_{ 2^{u_0} \leq 2^{u} \leq \frac{ \sigma \abs{ E } }{ \abs{ F } } } 2^{-u} \sum_{ t\in\T^{\tops}_{\dens,u,k} } \abs{ R_t }
				+\sum_{ 3^k > 1+\sigma^{-\varepsilon} }^{\infty} \sum_{ 2^{u} > \frac{ \sigma \abs{ E } }{ \abs{ F } } } 2^{-u} \sum_{ t\in\T^{\tops}_{\dens,u,k} } \abs{ R_t } \\
				& \lesssim \sum_{ 3^k > 1+\sigma^{-\varepsilon} }^{\infty} \sum_{ 2^{u_0} \leq 2^{u} \leq \frac{ \sigma \abs{ E } }{ \abs{ F } } } 2^{-u} \frac{1}{\delta} \frac{1}{3^{5nk}} \frac{ \abs{F} 2^{u(1+\varepsilon) } }{ \sigma^{ (1+\varepsilon)^2 } 3^{ -nk(1+\varepsilon) } }
				+\sum_{ 3^k > 1+\sigma^{-\varepsilon} }^{\infty} \sum_{ 2^{u} > \frac{ \sigma \abs{ E } }{ \abs{ F } } } 2^{-u} \frac{ \abs{ E } }{ \delta },
			\end{split}
		\end{equation}
		where we have used Proposition \ref{Prop:MaxKey} for the first summand and the density estimate \eqref{Eq:MaxDensity} for the second one. Note that the first summand in the last line of the display above is not relevant in the case $\sigma|E|\leq |F|$, in accordance with Remark~\ref{Remark:MaximalCase}. Thus, we get
		\begin{equation}
			\begin{split}
				\sum_{ 3^k > 1+\sigma^{-\varepsilon} }^{\infty} \sum_{ u=u_0 }^{\infty} 2^{-u} \sum_{ t\in\T^{\tops}_{\dens,u,k} } \abs{ R_t }
				& \lesssim \frac{ \abs{F} }{ \delta } \frac{ 1 }{ \sigma^{ (1+\varepsilon)^2 } } \sum_{ 2^{u_0} \leq 2^{u} \leq \frac{ \sigma \abs{ E } }{ \abs{ F } } } 2^{u \varepsilon }
				+ \left( \frac{ \sigma \abs{E} }{ \abs{F} } \right)^{-1} \frac{ \abs{E} }{ \delta } \\
				& \lesssim \frac{ \abs{F} }{ \delta } \frac{ \sigma^{\varepsilon} \abs{E}^{\varepsilon} }{ \sigma^{ (1+\varepsilon)^2 } \abs{F}^{\varepsilon} } + \frac{ 1 }{ \sigma \delta } \abs{ F } 
				= \frac{ \abs{F}^{1-\varepsilon} \abs{E}^{\varepsilon} }{ \delta \sigma^{ 1+\varepsilon+\varepsilon^2 } } + \frac{ \abs{ F }  }{ \sigma \delta } \\
				& \leq \frac{ \abs{F}^{1-\varepsilon-\varepsilon^2 } \abs{F}^{\varepsilon^2} \abs{E}^{\varepsilon} }{ \delta \sigma^{ 1+\varepsilon+\varepsilon^2 } }
				= \frac{ \abs{F}^{1-\varepsilon'} \abs{E}^{\varepsilon'} }{ \delta \sigma^{ 1+\varepsilon' } },
			\end{split}
		\end{equation}
		with $ \varepsilon' = \varepsilon+\varepsilon^2 $ and where in the last line we have used that $ \abs{F} \leq \abs{E} $ and $ \sigma \lesssim 1 $. This finishes the proof in the case where $ 3^k > 1+\sigma^{-\varepsilon} $.
		
		\emph{Completing the estimate for $ 3^k \leq (1+\sigma^{-\varepsilon}) $.} For all such $ k $ we just use the lower bounds for $ t \in \T^{\tops}_{\dens,u,k} $
		\begin{equation}
			\Abs{ F \cap (1+\sigma^{-\varepsilon}) K_n^2 R_t } \gtrsim \sigma^{1+\varepsilon} 2^{-u} \sigma^{\varepsilon n} \Abs{ (1+\sigma^{-\varepsilon}) K_n^2 R_t },
		\end{equation}
		which is a consequence of Lemma \ref{Lem:MaxSizeHypo} and
		\begin{equation}
			\Abs{ E \cap v_{\sigma}^{-1}(\alpha_{t,\tau_0}) \cap (1+\sigma^{-\varepsilon}) K_n^2 R_t } \gtrsim \delta \sigma^{\varepsilon n} \Abs{ (1+\sigma^{-\varepsilon}) K_n^2 R_t },
		\end{equation}
		which is a consequence of \eqref{Eq:MaxDenseLower}. Using these estimates, we can apply Proposition \ref{Prop:MaxKey} with $ (1+\sigma^{-\varepsilon}) K_n^2 $ in place of $ 3^k $, $ \mu \simeq \delta \sigma^{\varepsilon n} $ and $ \lambda \simeq \sigma^{ 1+\varepsilon } 2^{-u} \sigma^{\varepsilon n} $ to get
		\begin{equation}
				\sum_{ 3^k \leq 1+\sigma^{-\varepsilon} }  \sum_{ u=u_0 }^{\infty} 2^{-u}  \sum_{ t\in\T^{\tops}_{\dens,u,k} } \abs{ R_t } 
				\lesssim  \sum_{ 2^{u_0} \leq 2^{u} \leq \frac{ \sigma \abs{ E } }{ \abs{ F } } } 2^{-u} \frac{ \abs{F} }{ \delta \sigma^{ \varepsilon n } } \frac{ 2^{ u(1+\varepsilon) } }{ \sigma^{ (1+\varepsilon)^2 } \sigma^{ \varepsilon n (1+\varepsilon) } }
				+   \sum_{ 2^{u} > \frac{ \sigma \abs{ E } }{ \abs{ F } } } 2^{-u} \frac{ \abs{ E } }{ \delta },
		\end{equation}
		where again we have used Proposition \ref{Prop:MaxKey} for the first summand and the density estimate \eqref{Eq:MaxDensity} for the second one. Thus, we get
		\begin{equation}
			\begin{split}
				\sum_{ 3^k \leq 1+\sigma^{-\varepsilon} }^{\infty} \sum_{ u=u_0 }^{\infty} 2^{-u} \sum_{ t\in\T^{\tops}_{\dens,u,k} } \abs{ R_t }
				& \lesssim \frac{ \abs{ F } \left( \frac{ \sigma \abs{E} }{ \abs{F} } \right)^{\varepsilon} }{ \delta \sigma^{ 1+\varepsilon^2+2\varepsilon+\varepsilon n+\varepsilon^2 n } } + \frac{ \abs{F} }{ \sigma \delta }
				\leq \frac{ \abs{F}^{1-\varepsilon'} \abs{E}^{\varepsilon'} }{ \delta \sigma^{ 1+\varepsilon' } },
			\end{split}
		\end{equation}
		with $ \varepsilon' = \varepsilon^2+2\varepsilon+\varepsilon n+\varepsilon^2 n $, which tends to $ 0 $ as $ \varepsilon \to 0 $. This completes the desired estimate for the tops and with that the proof of Proposition \ref{Prop:DecomMax}.
	\end{proof}

	\section{Removing the single annulus restriction} 
	\label{S:Main}
	
	In this section we will prove Theorem~\ref{Thm:Main} by removing the restriction to a single annulus from Theorem~\ref{Thm:MainSingleBand}. As in previous sections, we will fix $ d=n-1 $.
	
	Recall that by the arguments in Subsection \ref{Ss:Reductions}, it suffices to prove the theorem for the case $ \apl{ \sigma(\cdot) }{ \R^n }{ \Sigma_{ \gamma } } $ with $ \gamma $ a small dimensional constant and $ f $ frequency supported in the cone $ \widetilde{\Gamma}_{\gamma} $. Thus it will be enough to prove
	\begin{equation}
		\Norm{ T_{\mathscr{M},\sigma(\cdot)} P_{\cone,\gamma} f }{ L^p(\R^n) } 
		\lesssim \Norm{ f }{ L^p(\R^n) } 
	\end{equation}
	under the assumption that $ \apl{\sigma}{ \R^n }{ \Sigma_{ \gamma } } $ is measurable with $ \sigma(x) = \sigma(\underline{x}) $, for $ x = (\underline{x},x_n) \in \R^n $ and $ \gamma $ a small dimensional constant. Note that under these assumptions, Theorem~\ref{Thm:MainSingleBand} implies
	\begin{equation}
		\label{Eq:MainSingleBand}
		\Norm{ T_{\mathscr{M},\sigma(\cdot)} P_{\cone,\gamma} P_{\ann} f }{ L^p(\R^n) } 
		\lesssim \Norm{ f }{ L^p(\R^n) } 
	\end{equation}
	uniformly in $ \ann \in 3^{\Z} $, where recall that $ P_{\ann} $ is defined in \eqref{Eq:ProjBandDef}. Indeed, Theorem~\ref{Thm:MainSingleBand} implies
	\begin{equation}
		\sup_{ \ann \in 3^{\Z} } \Norm{ T_{\mathscr{M},\sigma(\cdot)} P_{\ann} f }{ L^p(\R^n) } 
		\lesssim \Norm{ f }{ L^p(\R^n) } 
	\end{equation}
	and \eqref{Eq:MainSingleBand} follows since
	\begin{equation}
		\Abs{ T_{ \mathscr{M},\sigma(\cdot) } [ ( \Id - P_{\cone,\gamma} ) \circ P_{\ann} f ] } \lesssim M f
	\end{equation}
	uniformly in $ \ann \in 3^{\Z} $, where $ M $ is the Hardy--Littlewood maximal function.
	
	Let $ \apl{\Phi}{\R}{[0,1]} $ with $ \supp( \Phi ) \subseteq [\frac{1}{3},3] $ and
	\begin{equation}
		\sum_{ \ann \in 3^{\Z} } \Phi\left( \frac{ \abs{t} }{ \ann } \right) = 1, \qquad t \neq 0.
	\end{equation}
	Let 
	\begin{equation}
		\mathcal{F}\left( S_{\ann} f \right)(\xi) \coloneqq \Phi\left( \frac{ \abs{\xi_n} }{ \ann } \right) \widehat{f}(\xi)
	\end{equation}
	be a smooth band restriction. Using \eqref{Eq:MainSingleBand} we can conclude that
	\begin{equation}
		\label{Eq:MainSingleBandSmooth}
		\sup_{ \ann \in 3^{\Z} } \Norm{ T_{\mathscr{M},\sigma(\cdot)} S_{\ann} P_{\cone,\gamma} f }{ L^p(\R^n) }
		\lesssim \Norm{ f }{ L^p(\R^n) },
	\end{equation}
	for all $ 1<p<\infty $ with $ \sigma(\cdot) $ as above. Defining
	\begin{equation}
		T_{\ann} f 
		\coloneqq T_{\mathscr{M},\sigma(\cdot)} S_{\ann} P_{\cone,\gamma} f
	\end{equation}
	and using Littlewood--Paley theory in the last variable of $ \R^n $ we see that Theorem~\ref{Thm:Main} reduces to the proof of the vector-valued bound
	\begin{equation}
		\Norm{ \left( \sum_{ \ann \in 3^{\Z} } \Abs{ T_{\ann} f_{\ann} }^2 \right)^{1/2} }{ L^p(\R^n) } 
		\lesssim \Norm{ \left( \sum_{ \ann \in 3^{\Z} } \Abs{ f_{\ann} }^2 \right)^{1/2} }{ L^p(\R^n) }
	\end{equation}
	for $ 3/2 < p < \infty $.
	
	By Marcinkiewicz interpolation for $ \ell^2(\Z) $-valued operators, it will be enough to show that
	\begin{equation}
		\label{Eq:MainReductionWeak}
		\Abs{ \left\langle \bigg( \sum_{ \ann \in 3^{\Z} } \abs{ T_{\ann} f_{\ann} }^2 \bigg)^{1/2} , \indf{G} \right\rangle } \lesssim \abs{ H }^{ 1/p } \abs{ G }^{1/p'}, \qquad 3/2 < p < \infty,
	\end{equation}
	with $ \sum_{ \ann \in 3^{\Z} } \Abs{ f_{\ann} }^2 \leq \indf{H} $ with $ G,H $ measurable subsets of $ \R^n $ of finite measure.
	
	Now, note that \eqref{Eq:MainReductionWeak} for $ p=2 $ is an immediate consequence of \eqref{Eq:MainSingleBandSmooth}. Furthermore, the case $ p<2 $ of \eqref{Eq:MainReductionWeak} with $ \abs{H} \geq \abs{G} $ and the case $ p>2 $ of \eqref{Eq:MainReductionWeak} with $ \abs{H} \leq \abs{G} $ follow from the case $ p=2 $ of \eqref{Eq:MainReductionWeak}. It thus remains to show
	\addtocounter{equation}{-1}
	\begin{subequations}
		\begin{align}
		\label{Eq:MainReductionWeak<2}
		 \Abs{ \left\langle \bigg( \sum_{ \ann \in 3^{\Z} } \abs{ T_{\ann} f_{\ann} }^2 \bigg)^{1/2} , \indf{G} \right\rangle } \lesssim \abs{ H }^{ 1/p } \abs{ G }^{1/p'},& \qquad 3/2 < p < 2, \quad \abs{H} \leq \abs{G},
		\tag{5.3$_{\leq}$}
		\\
		\label{Eq:MainReductionWeak>2}
		 \Abs{ \left\langle \bigg( \sum_{ \ann \in 3^{\Z} } \abs{ T_{\ann} f_{\ann} }^2 \bigg)^{1/2} , \indf{G} \right\rangle } \lesssim \abs{ H }^{ 1/p } \abs{ G }^{1/p'},& \qquad 2 < p < \infty, \quad \abs{H}\geq  \abs{G} .
		\tag{5.3$_{\geq}$} 
		\end{align}
	\end{subequations}
	We will prove \eqref{Eq:MainReductionWeak<2} and \eqref{Eq:MainReductionWeak>2} in Subsections \ref{Ss:MainPL2} and \ref{Ss:MainPG2} respectively.

	\subsection{The case $ p < 2 $}
	\label{Ss:MainPL2}
	
	In this subsection, we suppose that $ p < 2 $ and $ \abs{ H } \leq \abs{ G } $ and the goal is to show \eqref{Eq:MainReductionWeak<2}. To do that we will recursively apply the following lemma.

	\begin{lemma}
		\label{Lem:MainInductionL2}
		Let $ G , H \subseteq \R^n $ be two measurable subsets of finite measure with $ \abs{ H } \leq \abs{ G } $ and $ 3/2 < p < 2 $. Then, there exists a subset $ G' \subseteq G $ depending on $ p, G $ and $ H $ with $ \abs{ G' } \geq \abs{ G }/2 $ such that \eqref{Eq:MainReductionWeak<2} holds for $ G' $ instead of $ G $.
	\end{lemma}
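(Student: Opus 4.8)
\textbf{Proof plan for Lemma~\ref{Lem:MainInductionL2}.} The plan is to run a single-scale exceptional-set argument in the spirit of \cite{BatemanThiele} and localize the vector-valued pairing $\langle(\sum_{\ann}|T_\ann f_\ann|^2)^{1/2},\indf{G}\rangle$ to a good portion $G'$ of $G$ of at least half the measure. First I would use the pointwise domination $\sum_{\ann}|T_{\ann}[(\Id-P_{\cone,\gamma})S_\ann f_\ann]|^2\lesssim (Mf_\ann)^2$ from the introductory reductions and Littlewood--Paley in the last variable, so that modulo acceptable maximal-function errors the operators $T_\ann$ may be treated as frequency localized to the cone $\widetilde\Gamma_\gamma$ intersected with the annulus $\{|\xi_n|\sim\ann\}$, hence as genuine single-annulus operators to which Theorem~\ref{Thm:MainSingleBand} (equivalently Proposition~\ref{Prop:Model} after the usual reductions) applies with uniform constants in $\ann$. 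The exceptional set will be built from a maximal function controlling the directional/Kakeya geometry: set $\Omega\coloneqq\{x:\, \mathcal{M}\indf{H}(x)> C|H|/|G|\}$ for a suitable maximal operator $\mathcal{M}$ adapted to the parallelepipeds $R_t$ appearing in the tile decomposition (a Córdoba--Fefferman / Nagel--Stein--Wainger type maximal function over the admissible directions in $\Sigma_\gamma$). Since $\mathcal M$ is of weak type $(1,1)$ on the relevant class of rectangles after the non-degeneracy reduction $\sigma(x)\in\Sigma_\gamma$, one gets $|\Omega|\le |G|/2$ provided $C$ is large enough, and we put $G'\coloneqq G\setminus\Omega$, which satisfies $|G'|\ge |G|/2$.

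Next I would prove the bilinear estimate $|\langle(\sum_\ann|T_\ann f_\ann|^2)^{1/2},\indf{G'}\rangle|\lesssim |H|^{1/p}|G|^{1/p'}$ for $3/2<p<2$ by combining two ingredients. The first is the $L^2$ bound: by \eqref{Eq:MainSingleBandSmooth}, Littlewood--Paley, and Cauchy--Schwarz, $\langle(\sum_\ann|T_\ann f_\ann|^2)^{1/2},\indf{G'}\rangle\lesssim |H|^{1/2}|G|^{1/2}$, which already suffices when $|H|$ and $|G|$ are comparable. The second, and the real content, is a gain coming from the restriction to $G'$: because $G'$ avoids the maximal-function set $\Omega$, for each tile $t$ contributing to a tree in the model-operator decomposition of $T_\ann$ the density-type quantity $\int_{G'_t}\chi^{10n}_{R_t,1}$ is bounded by $|H|/|G|$ up to constants, i.e. on $G'$ the density gauge $\densesup$ of every relevant collection of tiles is $\lesssim |H|/|G|$. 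Feeding this improved density bound into the tree lemma (Lemma~\ref{Lem:Tree}) and the density/size/maximal decomposition (Lemma~\ref{Lem:Decom}) — now run with the roles of $f$ and $g$ adjusted so that $\indf{H}$ plays the role of the $L^2$-normalized function and $\indf{G'}$ controls the density — yields
\begin{equation}
	\Abs{\Big\langle\Big(\sum_{\ann}|T_\ann f_\ann|^2\Big)^{1/2},\indf{G'}\Big\rangle}
	\lesssim_{\varepsilon} |H| \Big(\tfrac{|H|}{|G|}\Big)^{-\varepsilon} \quad\text{hmm, reversed}
\end{equation}
so more precisely, summing the density/size/maximal estimates \eqref{Eq:DecomEstimatesDS} over the dyadic parameters $k,j$ exactly as in the proof of Proposition~\ref{Prop:Model}, but with $\densesup\lesssim |H|/|G|$ forced by membership in $G'$, produces the bound $|G|\cdot(|H|/|G|)\cdot\log(e+|G|/|H|)\lesssim |H|^{1/p}|G|^{1/p'}$ for $p>3/2$; the extra room over $p=1$ is precisely what the logarithmic loss and the $\varepsilon$ in the maximal estimate cost, and $p=3/2$ is the break-even exponent where the maximal estimate's exponent $1+\varepsilon$ on $\size$ is balanced against the density saving.

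The main obstacle I anticipate is organizing the interaction between the single-annulus model operator machinery and the $\ell^2$-sum over annuli: one must either prove a vector-valued version of Proposition~\ref{Prop:Model} with the density gauge measured against $\indf{G'}$ uniformly in $\ann$, or linearize the square function (dualize the $\ell^2$ sum against a sequence $(h_\ann)$ with $\sum_\ann |h_\ann|^2\le 1$) and reduce to a \emph{single} model sum whose tiles carry an annulus label, checking that the orthogonality estimates of Lemma~\ref{Lem:OrthoLacTree} and the geometric Lemmas~\ref{Lem:Geom}--\ref{Lem:Geom3} survive the superposition over $\ann\in 3^{\Z}$. The key point making this work is that for fixed direction $v_\sigma$ the frequency supports $\omega_t$ at different annuli are genuinely separated in the radial variable, so tiles of different annuli are "automatically orthogonal" after a Littlewood--Paley splitting, and the $\ann$-sum can be handled by a square-function/Rademacher averaging argument rather than by re-deriving the whole tile calculus. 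Once \eqref{Eq:MainReductionWeak<2} holds for $G'$, iterating Lemma~\ref{Lem:MainInductionL2} by replacing $G$ with $G\setminus G'$ and summing a geometric series recovers \eqref{Eq:MainReductionWeak<2} for $G$ itself, completing the case $p<2$.
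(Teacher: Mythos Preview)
Your proposal has two genuine gaps that would prevent the argument from going through.

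\textbf{The maximal function is not weak-type $(1,1)$.} You build $G'=G\setminus\Omega$ with $\Omega=\{\mathcal M\indf{H}>C|H|/|G|\}$ and assert that the directional maximal operator $\mathcal M$ over the admissible parallelepipeds is of weak type $(1,1)$ once $\sigma(x)\in\Sigma_\gamma$. This is false: even in two dimensions, the maximal function over rectangles in an infinite set of directions is unbounded on every $L^p$ (Kakeya-type behaviour), and the non-degeneracy assumption does nothing to prevent this. The paper's exceptional set $\widetilde G$ is instead a union of parallelepipeds satisfying \emph{two simultaneous} conditions: large $H$-average \emph{and} large directional density $|v_\sigma^{-1}(\alpha_{t,\tau_0})\cap R|/|R|$. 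The estimate $|\widetilde G|\le\tfrac12|G|$ then comes from the Lacey--Li covering lemma (Lemma~\ref{Lem:MainConstructionG}), which gives $|\bigcup R_p|\lesssim\mu^{-1}\lambda^{-2}|H|$ only under both hypotheses together, and one sums over the dyadic level sets of the directional-density parameter to close the argument. Without the directional condition there is no such covering lemma.

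\textbf{The $\ell^2$-sum over annuli is handled much more simply.} You identify the superposition over $\ann$ as the main obstacle and propose a vector-valued model operator or a multi-annulus tile calculus. The paper avoids this entirely: it reduces Lemma~\ref{Lem:MainInductionL2} to the \emph{single-annulus} restricted-type bound \eqref{Eq:MainReductionWeak<2Subset2}, namely $\sup_\ann|\langle T_{\ann,G',H}f,\indf E\rangle|\lesssim(|G'|/|H|)^{1/2-1/p}|F|^{1/2}|E|^{1/2}$, then interpolates with Theorem~\ref{Thm:MainSingleBand} to get an $L^2\to L^2$ bound with the same gain, and finally applies Cauchy--Schwarz: $|\langle(\sum_\ann|T_\ann f_\ann|^2)^{1/2},\indf{G'}\rangle|^2\le|G|\sum_\ann\|T_{\ann,G',H}f_\ann\|_{L^2}^2$. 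The $\ell^2$ sum is absorbed trivially because $\sum_\ann|f_\ann|^2\le\indf H$. What makes this work is precisely the improved tops estimate of Lemma~\ref{Lem:MaxImproved}, which injects the factor $(|H|/|G|)^{1/2}$ into the sum $\sum_{j,\nu}|R_{\mathbf T'_{j,\nu}}|$; this is where the construction of $G'$ is actually used, to force \eqref{Eq:MainSmallSize} on the tiles contributing to the density tops. Your density-saving heuristic (``$\densesup\lesssim|H|/|G|$ on $G'$'') points in the wrong direction: $\densesup$ is measured against $E\subseteq G'$, not $H$, and the saving comes from bounding $|H\cap R_t|$ on tiles that already have large directional density, not from bounding the density itself.
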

	
	Before proving Lemma \ref{Lem:MainInductionL2}, we will see how to use it to deduce \eqref{Eq:MainReductionWeak<2} for arbitrary $ G $ with $ \abs{G} \geq \abs{H} $. Write $ G_0 = G $ and apply Lemma \ref{Lem:MainInductionL2} to get the subset $ G'_0 $ with $ \abs{ G'_0 } \geq \frac{1}{2} \abs{G_0} $ and satisfying \eqref{Eq:MainReductionWeak<2}. In particular, it satisfies \eqref{Eq:MainReductionWeak} with an implicit constant $ C $ independent of $ \{ f_{\ann} \}_{\ann} $, $ H $ and $ G_0 $. Then, splitting the left hand side of \eqref{Eq:MainReductionWeak}, we get
	\begin{equation}
		\Abs{ \left\langle \bigg( \sum_{ \ann \in 3^{\Z} } \abs{ T_{\ann} f_{\ann} }^2 \bigg)^{1/2} , \indf{G_0} \right\rangle }
		\leq C \abs{ H }^{ 1/p } \abs{ G_0' }^{1/p'} + \Abs{ \left\langle \bigg( \sum_{ \ann \in 3^{\Z} } \abs{ T_{\ann} f_{\ann} }^2 \bigg)^{1/2} , \indf{G_1} \right\rangle },
	\end{equation}
	where we have defined $ G_1 \coloneqq G_0 \setminus G'_0 $. Observe that we have $ \abs{G_1} \leq \frac{1}{2} \abs{ G_0 } $. Iterating we construct $ G_{j} \subseteq G_{j-1} \subseteq \cdots \subseteq G_0 = G $ with $ \abs{G_j} \leq 2^{-j} \abs{G} $. Letting $ j_0 $ be the first positive integer $ j $ such that $ 2^{-j_0} \abs{G} \leq \abs{H} $ we get that $ 2^{j_0} \simeq \abs{G}/\abs{H} $ and
	\begin{equation}
		\Abs{ \left\langle \bigg( \sum_{ \ann \in 3^{\Z} } \abs{ T_{\ann} f_{\ann} }^2 \bigg)^{1/2} , \indf{G_0} \right\rangle }
		\lesssim \log \left( e + \frac{\abs{G}}{\abs{H}} \right) \abs{H}^{1/p} \abs{G}^{1/p'},
	\end{equation}
	for every $ 3/2 < p < 2 $. Clearly this implies \eqref{Eq:MainReductionWeak<2} and thus \eqref{Eq:MainReductionWeak} in the open range $ 3/2 < p < 2 $.
	
	Given $ G $ and $ 3/2 < p < 2 $ we now need to construct $ G' \subseteq G $, $ \abs{G'} > \frac{1}{2} \abs{G} $ such that 
	\begin{equation}
		\label{Eq:MainReductionWeak<2Subset}
		\Abs{ \left\langle \bigg( \sum_{ \ann \in 3^{\Z} } \abs{ T_{\ann} f_{\ann} }^2 \bigg)^{1/2} , \indf{G'} \right\rangle }
		\lesssim \abs{H}^{1/p} \abs{G'}^{1/p'}, \qquad \sum_{ \ann \in 3^{\Z} } \abs{ f_{\ann} }^2 \leq \indf{H}.
	\end{equation}
	Defining 
	\begin{equation}
		T_{\ann,G',H} f \coloneqq \indf{G'} T_{\ann} \left( f \indf{H} \right)
	\end{equation}
	we claim that \eqref{Eq:MainReductionWeak<2Subset} is a consequence of
	\begin{equation}
		\label{Eq:MainReductionWeak<2Subset2}
		\sup_{ \ann \in 3^{\Z} } \Abs{ \langle T_{\ann,G',H} f , \indf{E} \rangle } 
		\lesssim \left( \frac{ \abs{ G' } }{ \abs{ H } } \right)^{ \frac{1}{2} - \frac{1}{p} } \abs{ F }^{ 1/2 } \abs{ E }^{ 1/2 },
	\end{equation}
	for every $ 3/2 < p < 2 $ and for every $ F,E \subseteq \R^n $ measurable of finite measure with $ \abs{f} \leq \indf{F} $ and $ G',H $ fixed.
	
	To see this first note that Theorem~\ref{Thm:MainSingleBand} implies 
	\begin{equation}
		\label{Eq:MainSingleEstimateWeak<2}
		\sup_{ \ann \in 3^{\Z} } \Abs{ \langle T_{\ann,G',H} f , \indf{E} \rangle } 
		\lesssim \Abs{ F }^{ 1/q } \Abs{ E }^{ 1/q' },
	\end{equation}
	for every $ 1<q<\infty $. Now, interpolating between \eqref{Eq:MainReductionWeak<2Subset2} and \eqref{Eq:MainSingleEstimateWeak<2} for $ q < 2 $ and  for $ q > 2 $,  and interpolating again between the resulting interpolation estimates, we get
	\begin{equation}
		\label{Eq:MainSingleEstimateInterpolation<2}
		\norm{ T_{\ann,G',H} f }{ L^2(\R^n) } \lesssim \left( \frac{ \abs{ G } }{ \abs{ H } } \right)^{ \frac{1}{2} - \frac{1}{p} } \norm{ f }{ L^2(\R^n) }, \qquad 1<p<\infty.
	\end{equation}
	Finally, applying the Cauchy--Schwarz inequality,
	\begin{equation}
		\Abs{ \left\langle \bigg( \sum_{\ann \in 3^{\Z} } \abs{ T_{\ann} f_{\ann} }^2 \bigg)^{1/2} , \indf{G'} \right\rangle }^2 
		\leq \abs{ G } \sum_{ \ann \in 3^{\Z} } \int \Abs{ T_{\ann,G',H} f_{\ann} }^2 
		\leq \abs{ G }^{ 2 - \frac{2}{p} } \abs{ H }^{ 2/p }.
	\end{equation}
	This is equivalent to \eqref{Eq:MainReductionWeak<2Subset}, which shows the sufficiency of \eqref{Eq:MainReductionWeak<2Subset2}. Furthermore, note that in order to prove \eqref{Eq:MainReductionWeak<2Subset2} it suffices to assume that $ E \subseteq G' $ and $ F \subseteq H $.
	
	In order to prove \eqref{Eq:MainReductionWeak<2Subset2}, we will exploit the decomposition
	\begin{equation}
		\P = \P^{\light} \cup \P^{\heavy}_{\smal} \cup \bigcup_{j} \bigcup_{\nu=1}^{V_n} \mathbf{T}_{j,\nu}'
	\end{equation}
	from Proposition \ref{Prop:CombinedDensitySize} together with the single tree estimate of \cite[Lemma 6.14]{BDPPR}. We remember that the trees $ \mathbf{T}_{j,\nu}' $ had $ \densesup(\mathbf{T}_{j,\nu}') \simeq \densesup(\P) $ for every $j,\nu$ and that for every $ j $ there exists $ \nu_0 \in \{1,\ldots,V_n\} $ and a lacunary tree $ \mathbf{T}_{j,\nu_0} \subseteq \mathbf{T}_{j,\nu_0}' $ with $ R_{ \mathbf{T}_{j,\nu_0} } = R_{ \mathbf{T}_{j,\nu_0}' } $ such that  
	\begin{equation}
		\sum_{ t \in \mathbf{T}_{j,\nu_0} } F_{10n}[f](t)^2 \gtrsim \size( \P )^2 \Abs{ R_{ \mathbf{T}_{j,\nu_0} } }.
	\end{equation}
	Besides the estimates for $ \sum_{j,\nu} \abs{ R_{ \mathbf{T}_{j,\nu}' } } $ from Proposition \ref{Prop:CombinedDensitySize} and Proposition \ref{Prop:DecomMax}, we will prove here improved estimates reflecting the localizations $ F \subseteq H $ and $ E \subseteq G' $, affecting the definitions of $ \size $ and $ \densesup $, respectively. To that end we will use the definitions and notations introduced in Section \ref{S:Decom} for the proofs of Proposition \ref{Prop:CombinedDensitySize} and Proposition \ref{Prop:DecomMax}.

	\subsubsection{Improved estimate for the proof of\hspace{.2em} \eqref{Eq:MainReductionWeak<2Subset2}} 
	\label{Sss:MainInductionSubsetL2}
	
	The proof of \eqref{Eq:MainReductionWeak<2Subset2} relies on the following lemma.
	
	\begin{lemma}
		\label{Lem:MaxImproved}
		There exists $ G' \subseteq G $ with $ \abs{G'} \geq \frac{1}{2} \abs{G} $ such that the trees $ \{ \mathbf{T}_{j,\nu}' \}_{j,\nu} $ constructed in Proposition \ref{Prop:CombinedDensitySize} satisfy
		\begin{equation}
			\sum_{j} \sum_{ \nu=1 }^{V_n}  \Abs{ R_{ \mathbf{T}_{j,\nu}' } } \lesssim_{\varepsilon} \abs{ E } \left( \frac{ \abs{H} }{ \abs{G} } \right)^{ \frac{1}{2} } \densesup(\P)^{ -\frac{3}{2} - \varepsilon } \size(\P)^{ -1 - \varepsilon },
		\end{equation}
		for every $ \varepsilon>0 $, where $ \size $ is defined with respect to $ F \subseteq H $ and $ \densesup $ with respect to $ E \subseteq G' $.
	\end{lemma}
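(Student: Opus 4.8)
## Proof proposal for Lemma \ref{Lem:MaxImproved}

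The plan is to combine the maximal estimate of Proposition \ref{Prop:DecomMax} (the rightmost bound in \eqref{Eq:MaxEstimate}) with a new geometric input that records the \emph{extra smallness} of the density coming from the localization $E\subseteq G'$ and the \emph{extra smallness} of the size coming from $F\subseteq H$. The point is that the trees $\mathbf{T}_{j,\nu}'$ of Proposition \ref{Prop:CombinedDensitySize} all have $\densesup(\mathbf{T}_{j,\nu}')\simeq\densesup(\P)$ and big size; so if we can show that the tops of these trees cover a region of controlled measure relative to $\abs{G'}/\abs{H}$, then a single application of Proposition \ref{Prop:DecomMax} with a carefully chosen exponent closes the estimate.

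First I would set $\delta\coloneqq\densesup(\P)$ and $\sigma\coloneqq\size(\P)$, and introduce the set $G'$ as the complement (inside $G$) of a sublevel set of a suitable maximal function applied to $\indf{H}$ (or of $\Delta_{\mathbf T,N}\indf H$-type square functions from Subsection \ref{Ss:BigSizeBigInter}). The threshold is chosen so that $\abs{G\setminus G'}\leq\frac12\abs{G}$, which forces the threshold to be $\simeq\abs{H}/\abs{G}$ up to constants; this is exactly the mechanism by which the factor $(\abs H/\abs G)^{1/2}$ will enter. On $G'$, every tile $t$ with $E_t$ of positive density satisfies, by construction, a \emph{two-sided} bound: a lower bound on $\abs{E\cap v_\sigma^{-1}(\alpha_{t,\tau_0})\cap 3^k R_t}$ (from $t$ being heavy, i.e. $\densesup(t)>\delta/2$, together with the stopping-time decomposition \eqref{Eq:MaxDenseLower}), and an \emph{upper} bound forcing $3^kR_t$ to miss the bad part of $G$. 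I would then re-run the proof of Proposition \ref{Prop:DecomMax} verbatim — the decompositions into $\T^{\tops}_{\dens,u}$, $\T^{\tops}_{\dens,u,k}$, the applications of Lemma \ref{Lem:DecomAlgorithm}, Lemma \ref{Lem:MaxSizeHypo}, and Proposition \ref{Prop:MaxKey} — but now additionally using the $G'$-density upper bound to improve the exponent of $\abs{E}$ in \eqref{Eq:MaxDensity} and the $H$-restriction in Lemma \ref{Lem:MaxBigSizeBigInter}, which yields an extra power of $\abs H/\abs G$ at the stage where $\abs F$ is estimated.

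More concretely, in the two cases $3^k>1+\sigma^{-\varepsilon}$ and $3^k\leq 1+\sigma^{-\varepsilon}$ of the proof of Proposition \ref{Prop:DecomMax}, the quantity $\abs F$ appearing in the output of Proposition \ref{Prop:MaxKey} is replaced by $\abs{F\cap(\cdots)R_t}$, and by the maximality defining $G'$ each such intersection is $\lesssim (\abs H/\abs G)\abs{(\cdots)R_t}$ for the relevant dilated parallelepipeds; carrying this factor through the geometric summations (which are identical to those already carried out) produces the claimed $(\abs H/\abs G)^{1/2}$ — the exponent $\tfrac12$ arises because one interpolates the improved $L^1$-type gain against the $L^2$ orthogonality of Lemma \ref{Lem:OrthoLacTree}, exactly as in Lemma \ref{Lem:MaxBigSizeBigInter}. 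Tracking the powers of $\sigma=\size(\P)$ and $\delta=\densesup(\P)$ through Lemma \ref{Lem:MaxSizeHypo} and the density estimate \eqref{Eq:MaxDensity}, and absorbing the small losses into $\varepsilon$ (renaming $\varepsilon'=\varepsilon+\varepsilon^2+\varepsilon n+\cdots$ as in the source proof), gives the exponents $\densesup(\P)^{-3/2-\varepsilon}$ and $\size(\P)^{-1-\varepsilon}$ in the statement: the $-3/2$ on the density is one extra half-power over the $-1$ in \eqref{Eq:MaxEstimate}, produced by the $(\abs H/\abs G)^{1/2}$ interpolation, while the $-1-\varepsilon$ on the size is unchanged from \eqref{Eq:MaxEstimate}.

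The main obstacle I expect is the bookkeeping at the interface between the density stopping-time (which lives in $E\subseteq G'$) and the size/square-function machinery (which lives in $F\subseteq H$): one must make sure that the \emph{same} set $G'$, defined through a single maximal-function sublevel set, simultaneously (i) has measure $\geq\frac12\abs G$, (ii) kills the relevant dilates of tops in the density argument so that the $(\abs H/\abs G)$ gain is available on \emph{every} scale $3^k$ that occurs, and (iii) is compatible with the application of Lemma \ref{Lem:MaxSizeHypo}, whose dilation parameter $(1+\sigma^{-\varepsilon})K_n^2$ depends on $\sigma$. This forces the maximal function defining $G'$ to be taken over the full family of admissible parallelepipeds $\mathcal R_{\mathcal L,\mathcal I,V}$ at \emph{all} eccentricities, and one needs its weak-$(1,1)$ bound (or rather an $L^2\to L^{2,\infty}$ bound after a $\delta$-net reduction in the directions, cf.\ \cite{DPP21}) to be uniform in the relevant parameters; checking that this is exactly what the almost-horizontal restriction $\sigma(\R^n)\subset\Sigma_\gamma$ buys us is the delicate point, and is where the non-degeneracy hypothesis is genuinely used.
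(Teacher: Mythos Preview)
Your overall instinct—define $G'$ by removing from $G$ the parallelepipeds where $\indf H$ has large average, then exploit this on the density tops—is correct, but the mechanism you propose is not the one that works, and the one you describe has a genuine gap.

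You propose to re-run the proof of Proposition \ref{Prop:DecomMax} and in particular to apply Proposition \ref{Prop:MaxKey} with improved inputs, replacing the global $\abs F$ in its output by a localized $\abs{F\cap(\cdots)R_t}$. But Proposition \ref{Prop:MaxKey} outputs a \emph{global} bound $\abs F/(\mu\lambda^{1+\varepsilon})$; there is no per-tile version to plug your localized quantity into, and no interpolation with Lemma \ref{Lem:OrthoLacTree} occurs at this stage. The paper's proof does \emph{not} use Proposition \ref{Prop:MaxKey} at all. Instead, the construction of $G'$ yields an \emph{upper} bound
\[
\Abs{H\cap 3^kK_n^2R_t}\lesssim \bigl(3^{5nk}\delta\bigr)^{-(\frac12+\varepsilon)}\Bigl(\tfrac{\abs H}{\abs G}\Bigr)^{1/2}\Abs{3^kK_n^2R_t}
\]
for every $t\in\T^{\tops}_{\dens,u,k}$, while Lemma \ref{Lem:MaxSizeHypo} with $F\subseteq H$ gives a \emph{lower} bound $\abs{H\cap 3^kK_n^2R_t}\gtrsim\sigma^{1+\varepsilon}2^{-u}3^{-nk}\abs{3^kK_n^2R_t}$. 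Squeezing these together forces $2^{-u}\lesssim \sigma^{-(1+\varepsilon)}\delta^{-\frac12-\varepsilon}(\abs H/\abs G)^{1/2}$, and then one simply sums $\sum_u 2^{-u}\abs E/\delta$ using only the density estimate \eqref{Eq:MaxDensity}. The exponent $-\tfrac32-\varepsilon$ on $\delta$ is exactly $-1$ (from $\abs E/\delta$) plus $-\tfrac12-\varepsilon$ (from the constraint on $u$); the $(\abs H/\abs G)^{1/2}$ is not the product of any interpolation but is put in by hand as the threshold $\lambda_{\ell,H,G}=C_\varepsilon 3^{(\frac12+\varepsilon)\ell}(\abs H/\abs G)^{1/2}$ in the definition of $\widetilde G$.

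Two further points. First, the set $\widetilde G$ is a \emph{two}-parameter union over dilations $\mu$ and density levels $\ell$, with the $H$-threshold depending on $\ell$; a single maximal-function sublevel set as you describe does not give the $\ell$-graded upper bound needed above. Second, the measure bound $\abs{\widetilde G}\leq\tfrac12\abs G$ is not a consequence of a weak-type bound for a directional maximal operator over all eccentricities (which would be far from obvious), but of the Lacey--Li covering Lemma \ref{Lem:MainConstructionG}, which uses both the density hypothesis $\abs{v_\sigma^{-1}(\alpha_{t,\tau_0})\cap R_t}\geq\mu\abs{R_t}$ and the $H$-hypothesis simultaneously and gives the decisive $\mu^{-1}\lambda^{-2}$ exponent; this is where the choice of threshold $\lambda_{\ell,H,G}\sim 3^{\ell/2}(\abs H/\abs G)^{1/2}$ is calibrated so that the sum over $\ell$ converges.
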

	
	\begin{proof}
		We abbreviate, as usual, $ \delta \coloneq \densesup(\P) $ and $ \sigma \coloneqq \size(\P) $. Recall that $ \P \subseteq \mathcal{T}_{\ann} $ for some $ \ann \in 3^{\Z} $. Consider any $ G' $ with $ E \subseteq G' \subseteq G $, which will be made specific shortly. Remember that for every $ j $ there exists $ \nu_0 \in \{1,\ldots,V_n\} $ and a lacunary tree $ \mathbf{T}_{j,\nu_0} \subseteq \mathbf{T}_{j,\nu_0}' $ such that
		\begin{equation}
			\label{Eq:MaxImprovedBigSize}
			\sum_{ t \in \mathbf{T}_{j,\nu_0} } F_{10n}[f](t)^2 \gtrsim \sigma^2 \Abs{ R_{\mathbf{T}_{j,\nu_0}} }
		\end{equation}
		and
		\begin{equation}
			\sum_{j,\nu} \Abs{ R_{\mathbf{T}_{j,\nu}'}}
			\lesssim \sum_{j} \Abs{ R_{ \mathbf{T}_{j,\nu_{0}} } }.
		\end{equation}
		Write $ \mathbf{T}_{j} = \mathbf{T}_{j,\nu_0} $ and decompose as in \eqref{Eq:DecomSizeTopsByDense}
		\begin{equation}
			\T^{\tops}_{\siz} 
			\coloneqq \{ \topt(\mathbf{T}_j) \}_j
			= \bigcup_{ t \in \T^{\tops}_{\dens} } \left\{ s \in \T^{\tops}_{\siz} :\, t(s) = t \right\}
			= \bigcup_{ t \in \T^{\tops}_{\dens} } \T^{\tops}_{\siz,t},
		\end{equation}
		with the map $ s \mapsto t(s) $ as defined through Lemma \ref{Lem:Dense} and Remark \ref{Rmrk:DenseMap}. We have 
		\begin{equation}
			\sum_{j} \Abs{ R_{ \mathbf{T}_{j} } }
			= \sum_{ t\in\T^{\tops}_{\dens} } \sum_{ s\in\T^{\tops}_{\siz,t} } \Abs{ R_{ s } }
			\lesssim \sum_{ t\in\T^{\tops}_{\dens} } \sum_{ \ell=1 }^{N_t} \Abs{ R_{P_{t,\ell}} },
		\end{equation}
		with $ \{ P_{t,\ell} \}_{\ell=1}^{N_t} \subseteq \T^{\tops}_{\siz,t} $ constructed via Lemma \ref{Lem:DecomAlgorithm} as in \eqref{Eq:MaxIncomparableDecom} in the proof of Proposition \ref{Prop:DecomMax}. Remember the definition 
		\begin{equation}
			\T^{\tops}_{\dens} 
			= \bigcup_{ u = u_0 }^{\infty} \T^{\tops}_{\dens,u}, 
			\qquad \T^{\tops}_{\dens,u} 
			\coloneqq \left\{ t \in \T^{\tops}_{\dens} :\, 2^{-u-1} \abs{ R_t } < \sum_{\ell=1}^{N_t} \Abs{ R_{P_{t,\ell}} } \leq 2^{-u} \abs{ R_t } \right\},
		\end{equation}
		where $ u \geq u_0 $ for some negative integer $ u_0 $ depending only on the dimension. Thus,
		\begin{equation}
			\sum_{j} \abs{ R_{ \mathbf{T}_{j} } }
			\lesssim \sum_{ u \geq u_0 } 2^{-u} \sum_{ t\in\T^{\tops}_{\dens,u} } \Abs{ R_{t} }
			\lesssim \sum_{ u \geq u_0 } 2^{-u} \frac{ \abs{E} }{ \delta },
		\end{equation}
		the last estimate following by density as in \eqref{Eq:MaxDensity} in the proof of Proposition \ref{Prop:DecomMax}. 
		
		Since $ \dense(t) \geq \delta/2 $ for every $ t \in \T^{\tops}_{\dens,u} $ we decompose as in \eqref{Eq:MaxDenseLower} 
		\begin{equation}
			\T^{\tops}_{\dens,u} = \bigcup_{k=0}^{\infty} \T^{\tops}_{\dens,u,k},
		\end{equation}
		where $ k $ is the least integer for which
		\begin{equation}
			\label{Eq:MainDenseLower}
			\Abs{ G' \cap v_{ \sigma }^{-1}(\alpha_{t,\tau_0}) \cap 3^{k} K_n^2 R_{t} } \geq \Abs{ E \cap v_{ \sigma }^{-1}(\alpha_{t,\tau_0}) \cap 3^{k} K_n^2 R_{t} }
			\geq \frac{1}{100} \delta 3^{5 n k} \Abs{ 3^{k} K_n^2 R_{t} }
		\end{equation}
		for every $ t \in \T^{\tops}_{\dens,u,k} $. Note that we used $ E \subseteq G' $ in the first inequality in \eqref{Eq:MainDenseLower}. Now we make a specific choice for $ G' $. First, construct $ \widetilde{G} $ as follows:
		\begin{equation}
			\widetilde{G} 
			\coloneqq \bigcup_{\ell=0}^{\infty} \bigcup_{\mu=0}^{\infty} \left\{ 3^{\mu} K_n^2 R_{t} :\, t \in \mathcal{T}_{\ann}, \, \frac{ \abs{ v_{ \sigma }^{-1}(\alpha_{t,\tau_0}) \cap 3^{\mu} K_n^2 R_{t} } }{ \abs{ 3^{\mu} K_n^2 R_{t} } } \geq 3^{-\ell}, \, \frac{ \abs{ H \cap 3^{\mu} K_n^2 R_{t} } }{ \abs{ 3^{\mu} K_n^2 R_{t} } } \geq \lambda_{\ell,H,G} \right\},
		\end{equation}
		with $ \lambda_{\ell,H,G} \coloneqq C_{\varepsilon} 3^{(\frac{1}{2} + \varepsilon) \ell } \left( \frac{\abs{H}}{\abs{G}} \right)^{1/2} $ and some constant $ C_{\varepsilon} $ big enough to be chosen. Then, set $ G' \coloneqq G \setminus \widetilde{G} $. Note that the tiles $ t \in \T^{\tops}_{\dens,u,k} $ satisfy $ 3^{k} K_n^2 R_{t} \cap G' \neq \emptyset $ because of \eqref{Eq:MainDenseLower}. Therefore $ 3^{k} K_n^2 R_{t} \not\subseteq \widetilde{G} $. This, together with estimate \eqref{Eq:MainDenseLower}, implies that
		\begin{equation}
			\label{Eq:MainSmallSize}
			\Abs{ H \cap 3^k K_n^2 R_t } < C_{\varepsilon} 3^{ (\frac{1}{2}+\varepsilon) \ell_0 } \left( \frac{ \abs{H} }{ \abs{G} } \right)^{1/2} \Abs{ 3^k K_n^2 R_t }
		\end{equation}
		for $ 3^{-\ell_0} \simeq \delta 3^{ 5nk } $ so that
		\begin{equation}
			\Abs{ H \cap 3^k K_n^2 R_t } \lesssim C_{\varepsilon} \left( 3^{5nk} \delta \right)^{-(\frac{1}{2}+\varepsilon)} \left( \frac{ \abs{H} }{ \abs{G} } \right)^{1/2} \Abs{ 3^k K_n^2 R_t }.
		\end{equation}
		
		If $ \T^{\tops}_{\dens,u,k} \neq \emptyset $ for some $ 3^k > 1+\sigma^{-\varepsilon} $, then there exists $ t_0 \in \T^{\tops}_{\dens,u,k} $ and we apply Lemma \ref{Lem:MaxSizeHypo} with $ \T_t = \T_{t_0} \coloneqq \{ P_{t_0,\ell} \}_{\ell=1}^{N_{t_0}} $ to get
		\begin{equation}
			\Abs{ H \cap 3^k K_n^2 R_{t_0} } 
			\geq \Abs{ F \cap (1+\sigma^{-\varepsilon}) K_n^2 R_{t_0} }
			\gtrsim \sigma^{1+\varepsilon} 2^{-u} 3^{-kn} \Abs{ 3^k K_n^2 R_{t_0} },
		\end{equation}
		similarly as in \eqref{Eq:MaxBigSizeKBig}. In order to apply Lemma \ref{Lem:MaxSizeHypo} we used that each $ P_{ t_0,\ell } $ is the top of a lacunary tree $ \mathbf{T}_{ t_0,\ell } $ satisfying \eqref{Eq:MaxImprovedBigSize}. The above calculation yields 
		\begin{equation}
			2^{-u} 
			\lesssim 2^{-u_1} 
			\coloneqq \sigma^{-(1+\varepsilon)} \delta^{-\frac{1}{2}-\varepsilon} \left( \frac{\abs{H}}{\abs{G}} \right)^{1/2}.
		\end{equation}
		Thus,
		\begin{equation}
			\sum_{j} \abs{ R_{ \mathbf{T}_{j} } }
			\lesssim \sum_{ 2^{-u} \lesssim 2^{-u_1} } 2^{-u} \frac{ \abs{E} }{ \delta }
			\lesssim \sigma^{-(1+\varepsilon)} \delta^{-\frac{3}{2}-\varepsilon} \abs{E} \left( \frac{\abs{H}}{\abs{G}} \right)^{1/2}.
		\end{equation}
		
		If $ \T^{\tops}_{\dens,u,k} \neq \emptyset $ for some $ 3^k \leq 1+\sigma^{-\varepsilon} $, then there exists $ t_0 \in \T^{\tops}_{\dens,u,k} $ for which
		\begin{equation}
			\begin{split}
				\Abs{ (1+\sigma^{-\varepsilon}) K_n^2 R_{t_0} \cap v_{\sigma}^{-1}(\alpha_{t_0,\tau_0}) }
				& \geq \Abs{ 3^k K_n^2 R_{t_0} \cap v_{\sigma}^{-1}(\alpha_{t_0,\tau_0}) }
				> \frac{1}{100} \delta 3^{kn} \Abs{ 3^k K_n^2 R_{t_0} } \\
				& \gtrsim \delta \Abs{ K_n^2 R_{t_0} } 
				= \delta (1+\sigma^{-\varepsilon})^{-n} \Abs{ (1+\sigma^{-\varepsilon}) K_n^2 R_{t_0} }.
			\end{split}
		\end{equation}
		Thus, setting $ 3^{-\ell_0} \simeq \delta (1+\sigma^{-\varepsilon})^{-n} $ we see by \eqref{Eq:MainSmallSize} that
		\begin{equation}
			\Abs{ H \cap (1+\sigma^{-\varepsilon}) K_n^2 R_{t_0} }
			\lesssim C_{\varepsilon} \left( \frac{ \delta }{ (1+\sigma^{-\varepsilon})^n } \right)^{-(\frac{1}{2}+\varepsilon)} \left( \frac{ \abs{H} }{ \abs{G} } \right)^{1/2} \Abs{ (1+\sigma^{-\varepsilon}) K_n^2 R_{t_0} }.
		\end{equation}
		By Lemma \ref{Lem:MaxSizeHypo} we conclude again that
		\begin{equation}
			\Abs{ H \cap (1+\sigma^{-\varepsilon}) K_n^2 R_{t_0} }
			\gtrsim \sigma^{1+\varepsilon} 2^{-u} (1+\sigma^{-\varepsilon})^{-n} \Abs{ (1+\sigma^{-\varepsilon}) K_n^2 R_{t_0} }.
		\end{equation} 
		Combining these estimates
		\begin{equation}
			2^{-u} \sigma^{1+\varepsilon}
			\lesssim \left( \frac{ \delta }{ (1+\sigma^{-\varepsilon})^n } \right)^{-(\frac{1}{2}+\varepsilon)} \left( \frac{ \abs{H} }{ \abs{G} } \right)^{1/2} (1+\sigma^{-\varepsilon})^{n}
		\end{equation}
		which implies
		\begin{equation}
			2^{-u} \lesssim 2^{-u_2} \coloneqq \delta^{-(\frac{1}{2}+\varepsilon)} \left( \frac{ \abs{H} }{ \abs{G} } \right)^{1/2} \sigma^{-1-\varepsilon n}.
		\end{equation}
		Summing for $ 2^{u} \gtrsim 2^{u_2} $ as before yields
		\begin{equation}
			\sum_{j} \Abs{ R_{ \mathbf{T}_{j} } }
			\lesssim \sum_{ 2^{-u} \lesssim 2^{-u_2} } 2^{-u} \frac{ \abs{E} }{ \delta }
			\lesssim \sigma^{-(1+\varepsilon')} \delta^{-\frac{3}{2}-\varepsilon} \abs{E} \left( \frac{\abs{H}}{\abs{G}} \right)^{1/2}.
		\end{equation}
		
		In order to conclude the proof we need to show that $ \abs{G'} \geq \frac{1}{2} \abs{G} $. This requires some additional effort and we give the proof in Subsection \ref{Sss:MainPL2Major} below.
	\end{proof}

	\subsubsection{Proving that $G'$ is a major subset}
	\label{Sss:MainPL2Major}
	
	The main tool is a covering lemma originally due to Lacey and Li \cite{LaceyLiMemoirs}, revisited in \cite[Theorem 8]{BatemanThiele} and in \cite[Appendix A]{DPGTZ}. We note here that a version of this lemma is valid assuming that $ \sigma(\underline{x},x_n) $ is Lipschitz in $ x_n $ for almost every $ x_n \in \R $, and that $ \sigma(\R^n)$ is almost horizontal. In that scenario one needs the restriction 
	\begin{equation}
		\ell(L)
		\leq \sup_{ \underline{x} \in \R^d } \norm{ \sigma(\underline{x},\cdot) }{\lip}^{-1}
	\end{equation}
	together with a corresponding truncation of the operator at a comparable scale. As our map $ \apl{\sigma}{\R^n}{\Sigma_{\gamma}} $ depends only on $ \underline{x} \in \R^d $, it is constant along vertical lines $ \{ (\underline{x},t) : \, t \in \R \} $ for fixed $ \underline{x} \in \R^d $, and the restriction becomes vacuous.
	
	\begin{lemma}
		\label{Lem:MainConstructionG}
		Let $ \mu \geq 0 $ and $ \lambda \leq 1 $. For a measurable set $ H \subseteq \R^n $ and a finite collection of tiles $ \P \subseteq \mathcal{T} $ such that
		\begin{equation}
			\label{Eq:MainConstructionGLemHyp}
			\Abs{ v_{ \sigma }^{-1}(\alpha_{p,\tau_0}) \cap R_p } \geq \mu \abs{ R_p }, \qquad \Abs{ H \cap R_{p} } \geq \lambda \abs{ R_p }, \qquad \text{ for every } \quad p \in \P
		\end{equation}
		there holds
		\begin{equation}
			\bigg| \bigcup_{ p \in \P } R_p \bigg| \lesssim \mu^{-1} \lambda^{-2} \abs{H}.
		\end{equation}
	\end{lemma}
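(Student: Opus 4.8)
\emph{Overall strategy.} The statement is a Lacey--Li type covering lemma, in the spirit of \cite{LaceyLiMemoirs} and \cite[Theorem 8]{BatemanThiele}, and the plan is to reduce it to an $L^2$ overlap bound for the parallelepipeds $\{R_p\}_{p\in\P}$ of the form
\[
\int_{\R^n}\Big(\sum_{p\in\P'}\indf{R_p}\Big)^{2}\;\lesssim\;\mu^{-1}\sum_{p\in\P'}|R_p|,
\]
for a suitable subfamily $\P'$, and then to close the argument by Cauchy--Schwarz using the mass hypothesis. This overlap bound is exactly the $\varepsilon=1$ instance of the Kakeya/$\BMO_{\triadic}$ estimate \eqref{Eq:MaxKeyLemDesired2} already proved inside Proposition~\ref{Prop:MaxKey}, so much of the work is to run that argument again in a setting where the tiles are allowed to range over \emph{all} annuli rather than a fixed one.

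\emph{Step 1: reduction to pairwise incomparable tops.} First I would split $\P$ according to the annulus $\ann\in3^{\Z}$; there are finitely many nonempty pieces since $\P$ is finite. Within each $\mathcal{T}_{\ann}$ I apply Lemma~\ref{Lem:DecomAlgorithm} with parameter $0$ to organize the tiles into $1$-trees with tops $\{P_j\}$, so that $\bigcup_{p\in\P}R_p\subseteq\bigcup_jK_nR_{P_j}$ and the tops within a fixed annulus are pairwise incomparable; hence it suffices to bound $\sum_j|R_{P_j}|$, and each $P_j\in\P$ still satisfies both hypotheses in \eqref{Eq:MainConstructionGLemHyp}. A further greedy selection is then performed \emph{across} annuli: among tiles sharing an eccentricity and with overlapping spatial components, keep only the one of largest horizontal scale $\ell(L)$; using the geometric Lemmas~\ref{Lem:Geom}, \ref{Lem:Geom2} and \ref{Lem:Geom3} (which are precisely tailored to the nested--eccentricity, same-scale different-annulus, and non-nested cases, respectively) one checks that each discarded tile has $R_p$ absorbed, up to a fixed dimensional dilation, by a kept one. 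This produces a family $\P'$ for which the overlap estimate above is meaningful, while $\big|\bigcup_{p\in\P}R_p\big|\lesssim\sum_{p\in\P'}|R_p|$.

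\emph{Step 2: the overlap estimate and conclusion.} To prove the displayed overlap bound I expand the square, use the grid property of the horizontal cubes $L_p$, and reduce as in the proof of Proposition~\ref{Prop:MaxKey} to the one-sided count, for fixed $p_0\in\P'$ and fixed $x=(\underline{x},x_n)\in R_{p_0}$, of the tiles $p\in\P'$ with $\underline{x}\in L_p\subseteq L_{p_0}$ subject to the directional density condition. Here the key observation, valid because $\sigma$ depends only on the first $d$ variables, is that the density condition $|v_{\sigma}^{-1}(\alpha_{p,\tau_0})\cap R_p|\ge\mu|R_p|$ is equivalent to the purely $\R^d$ statement $|v_{\sigma}^{-1}(\alpha_{p,\tau_0})\cap L_p|\ge\mu|L_p|$. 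Defining the permissible sets of tiles $B(L)$ exactly as in Definition~\ref{Def:Permissible}, now with $p$ ranging over all annuli, and running the Carleson-sequence bookkeeping of Lemma~\ref{Lem:MaxKeyHBMO}, the counting function is dominated by $\mu^{-1}$ times a $\BMO_{\triadic}$ function on $L_{p_0}$; John--Nirenberg then gives the overlap bound. Finally, from $|H\cap R_p|\ge\lambda|R_p|$ for all $p\in\P'$,
\[
\lambda\sum_{p\in\P'}|R_p|\le\sum_{p\in\P'}|H\cap R_p|=\int_H\sum_{p\in\P'}\indf{R_p}\le|H|^{1/2}\Big(\int_{\R^n}\Big(\sum_{p\in\P'}\indf{R_p}\Big)^{2}\Big)^{1/2}\lesssim|H|^{1/2}\Big(\mu^{-1}\sum_{p\in\P'}|R_p|\Big)^{1/2},
\]
so $\sum_{p\in\P'}|R_p|\lesssim\mu^{-1}\lambda^{-2}|H|$, which together with Step 1 gives the lemma.

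\emph{Main obstacle.} I expect the delicate part to be Step 1 combined with the multi-annulus version of the overlap estimate in Step 2: the clean fixed-annulus geometric Lemma~\ref{Lem:Geom} is no longer sufficient, so one must carefully separate the three regimes handled by Lemmas~\ref{Lem:Geom}, \ref{Lem:Geom2} and \ref{Lem:Geom3} and verify that the permissible-tiles counting and the Carleson property survive when tiles of different annuli over the same horizontal cube are present. The Cauchy--Schwarz step and the passage from $\bigcup_{p\in\P}R_p$ to $\sum_{p\in\P'}|R_p|$ are, by contrast, routine.
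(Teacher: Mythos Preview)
Your proposal is essentially correct and follows the same route as the paper. The paper does not write out a full argument here; it simply states that the proof ``follows as in the two-dimensional case of \cite[Theorem~8]{BatemanThiele}, with adjustments for the higher-dimensional geometry, provided by Lemmas~\ref{Lem:Geom2} and~\ref{Lem:Geom3}.'' Your outline---reduce to a well-chosen subfamily using the geometric lemmas, prove an $L^2$ overlap bound by the $\BMO_{\triadic}$/Carleson argument already used in Proposition~\ref{Prop:MaxKey} (exploiting that the directional density condition depends only on $L_p$), and close with Cauchy--Schwarz against the mass hypothesis---is exactly the Lacey--Li/Bateman--Thiele scheme the paper is invoking, and you have correctly located the only nontrivial point, namely the passage from fixed to variable annulus, as the place where Lemmas~\ref{Lem:Geom2} and~\ref{Lem:Geom3} are required.
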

	
	The proof of Lemma \ref{Lem:MainConstructionG} follows as in the two-dimensional case of \cite[Theorem 8]{BatemanThiele}, with adjustments for the higher-dimensional geometry, provided by Lemmas \ref{Lem:Geom2} and \ref{Lem:Geom3}.
	
	\begin{proof}[Proof of $ \abs{G'} > \frac{1}{2} \abs{G} $]
		Following the same arguments as in the proof of Proposition \ref{Prop:MaxKey} involving shifted triadic grids, we can find for each $ \mu,t $ a tile $ \tilde{t} $ with $ Q_{\tilde{t}} = Q_t $ and $ R_{\tilde{t}} = R( \widetilde{L}_{t}, \widetilde{I}_{t}, v_{R_t} ) $ with $ \widetilde{L}_{t} \in \bigcup_{j=1}^{2^d} \mathcal{L}_j $, $ \widetilde{I}_{t} \in \mathcal{I}_1 \cup \mathcal{I}_2 $ such that $ 3^{\mu} K_n^2 R_t \subseteq R_{\tilde{t}} \subseteq 3^{\mu+2} K_n^2 R_t $ and such that
		\begin{equation}
			\begin{split}
				\widetilde{G} 
				& \subseteq \bigcup_{ \ell \geq 0 } \bigcup_{ j_{L} = 1 }^{2^d} \bigcup_{ j_{I} = 1 }^{2} \left\{ R_{\tilde{t}} :\, \tilde{t} \in \mathcal{T}_{j_{L},j_{I}}, \, \frac{ \abs{ v_{ \sigma }^{-1}(\alpha_{\tilde{t},\tau_{0}}) \cap R_{\tilde{t}} } }{ \abs{ R_{\tilde{t}} } } \gtrsim 3^{-\ell}, \, \frac{ \abs{ H \cap R_{\tilde{t}} } }{ \abs{ R_{\tilde{t}} } } \gtrsim C_{\varepsilon} 3^{(\frac{1}{2} + \varepsilon) \ell } \left( \frac{\abs{H}}{\abs{G}} \right)^{1/2} \right\} \\
				& \eqqcolon \bigcup_{ \ell \geq 0 } \bigcup_{ j_{L} = 1 }^{2^d} \bigcup_{ j_{I} = 1 }^{2} \widetilde{G}_{j_{L},j_{I},\ell}.
			\end{split}
		\end{equation}
		Thus, using Lemma \ref{Lem:MainConstructionG} for each $ \widetilde{G}_{j_{L},j_{I},\ell} $
		\begin{equation}
			\abs{ \widetilde{G} } 
			\lesssim \sum_{ j_{L} = 1 }^{2^d} \sum_{ j_{I} = 1 }^{2} \sum_{ \ell \geq 0 } \abs{ \widetilde{G}_{j_{L},j_{I},\ell} }
			\lesssim C_{\varepsilon}^{-2} \sum_{ \ell \geq 0 } 3^{ -2 \varepsilon \ell } \abs{G}
		\end{equation}
		and so $ \abs{ \widetilde{G} } \leq \frac{1}{2} \abs{G} $, upon choosing $ C_{\varepsilon} $ sufficiently large depending only on $ \varepsilon>0 $ and the dimension. Thus, $ \abs{G'} \geq \frac{1}{2} \abs{G} $ and the proof is complete.
	\end{proof}

	\subsubsection{Completing the proof of \eqref{Eq:MainReductionWeak<2Subset2}}
		
	Using the same reduction to the model operator as in Section \ref{S:Model}, the proof of \eqref{Eq:MainReductionWeak<2Subset2} for $ 3/2 < p < 2 $ reduces to proving 
	\begin{equation}
		\Lambda_{\P;\sigma,\tau_0,M} (f\indf{F}, g\indf{E})
		\lesssim \left( \frac{ \abs{G} }{ \abs{H} } \right)^{\frac{1}{2}-\frac{1}{p}} \abs{F}^{1/2} \abs{E}^{1/2}
	\end{equation}
	whenever $ \abs{f} \leq \indf{F} $, $ \abs{g} \leq \indf{E} $, $ F \subseteq H $, $ E \subseteq G' $ with $ G' \subseteq G $ as in Lemma~\ref{Lem:MaxImproved} satisfying $ \abs{G'} > \frac{1}{2} \abs{G} $ and $ \P \subseteq \mathcal{T}_{\ann} $; the implicit constant above should be independent of $ \ann, F, H, E, G $.
	
	Iterating Proposition \ref{Prop:CombinedDensitySize} as in Lemma \ref{Lem:Decom} and combining with the improved estimates for the sums of tops of Lemma~\ref{Lem:MaxImproved} we conclude 
	\begin{equation}
		\Lambda_{\P;\sigma,\tau_0,M} (f\indf{F}, g\indf{E})
		\lesssim \sum_{ j=J }^{\infty} \sum_{ k=K }^{\infty} 2^{-j} 2^{-k} \sum_{ \tau=1 }^{N_{k,j}} \Abs{ R_{ \mathbf{T}_{k,j,\tau} } },
	\end{equation}
	where $ 2^J, 2^K \gtrsim 1 $ and $ \sum_{ \tau=1 }^{N_{k,j}} \Abs{ R_{ \mathbf{T}_{k,j,\tau} } } $ satisfies the estimate
	\begin{equation}\label{Eq:MainImprovedEstimatesL2}
		\sum_{\tau =1}^{N_{k,j}} \Abs{ R_{ \mathbf{T}_{k,j,\tau} } } 
		\lesssim \min \left\{ 2^{k} \abs{ E } , \, 2^{2j} \abs{F} , \, \abs{ E } \left( \frac{ \abs{H} }{ \abs{G'} } \right)^{ 1/2 } 2^{(\frac{3}{2}+\varepsilon) k} 2^{ ( 1+ \varepsilon) j } \right\}.
	\end{equation}
	With this, we are ready to show \eqref{Eq:MainReductionWeak<2Subset2}. First of all, we claim that we can suppose
	\begin{equation}
		\label{Eq:MainpLeq2Hypo}
		\left( \frac{ \abs{ H } }{ \abs{ G' } } \right)^{ 1/3 } \leq \frac{ \abs{ F } }{ \abs{ E } }.
	\end{equation}
	Otherwise, inequality \eqref{Eq:MainReductionWeak<2Subset2} is a consequence of Theorem~\ref{Thm:MainSingleBand} since for $ 1 < q < 2 $, we get
	\begin{equation}
		\begin{split}
			\Abs{ \langle T_{\ann,G',H} f \indf{F} , \indf{E} \rangle } 
			& \lesssim \left( \frac{ \abs{ E } }{ \abs{ F } } \right)^{ \frac{1}{2} - \frac{1}{q} } \abs{ F }^{ 1/2 } \abs{ E }^{ 1/2 } 
			\leq \left( \frac{ \abs{ G } }{ \abs{ H } } \right)^{ \frac{1}{2} - \left( \frac{1}{3} + \frac{1}{3 q} \right) } \abs{ F }^{ 1/2 } \abs{ E }^{ 1/2 },
		\end{split}
	\end{equation}
	and \eqref{Eq:MainReductionWeak<2Subset2} follows for $ 3/2 < p < 2 $. Then, suppose \eqref{Eq:MainpLeq2Hypo} and let
	\begin{equation}\label{Eq:MainpLeq2J0}
		j_{0} \coloneqq \log \left[ \frac{ \abs{ G } }{ \abs{ H } } \frac{ \abs{ E } }{ \abs{ F } } \right]^{ 1/4 }, 
		\qquad 
		k_{0,1}(j) \coloneqq \frac{2}{3} j + \log \left[ \left( \frac{ \abs{ G } }{ \abs{ H } } \right) \left( \frac{ \abs{ F } }{ \abs{ E } } \right)^{2} \right]^{1/3}, 
		\qquad 
		k_{0,2}(j) \coloneqq 2 j + \log \frac{ \abs{ F } }{ \abs{ E } }.
	\end{equation}
	Then,
	\begin{equation}
		\begin{split}
			\Lambda_{\P;\sigma,\tau_0,M} (f\indf{F}, g\indf{E})
			& \lesssim \sum_{j=J}^{j_0} \sum_{k=K}^{k_{0,1}(j)} 2^{-k} 2^{-j} \abs{ E } \left( \frac{ \abs{H} }{ \abs{G'} } \right)^{ 1/2 } 2^{(\frac{3}{2}+\varepsilon) k} 2^{ ( 1+ \varepsilon) j } 
			+ \sum_{j=J}^{j_0} \sum_{k=k_{0,1}(j)}^{\infty} 2^{-k} 2^{-j} 2^{2j} \abs{F} \\
			& \phantom{==} + \sum_{j=j_0}^{\infty} \sum_{k=K}^{k_{0,2}(j)} 2^{-k} 2^{-j} 2^{k} \abs{ E }
			+ \sum_{j=j_0}^{\infty} \sum_{k=k_{0,2}(j)}^{\infty} 2^{-k} 2^{-j} 2^{2j} \abs{F} \\
			& \lesssim \abs{F}^{ \frac{1}{4}+\varepsilon } \abs{ E }^{ \frac{3}{4}-\varepsilon } \left( \frac{ \abs{H} }{ \abs{G} } \right)^{ \frac{1}{4} - \varepsilon }
			+ \abs{F}^{1/4} \abs{ E }^{3/4} \left( \frac{ \abs{H} }{ \abs{G} } \right)^{ 1/4 } .
		\end{split}
	\end{equation}
	Finally, using \eqref{Eq:MainpLeq2Hypo},
	\begin{equation}
		\begin{split}
			\Lambda_{\P;\sigma,\tau_0,M} (f\indf{F}, g\indf{E}) 
			& \lesssim \abs{F}^{ 1/2 } \abs{ E }^{ 1/2 } \left( \frac{ \abs{G} }{ \abs{H} } \right)^{ - \frac{1}{6} + \frac{2}{3}\varepsilon }
			+ \abs{F}^{1/2} \abs{ E }^{1/2} \left( \frac{ \abs{G} }{ \abs{H} } \right)^{ -1/6 }.
		\end{split}
	\end{equation}
	Since this holds for every $ \varepsilon > 0 $, and we are assuming that $ \abs{ H } \leq \abs{ G } $, this shows \eqref{Eq:MainReductionWeak<2Subset2} as long as we take $ - \frac{1}{6} < \frac{1}{2} - \frac{1}{p} $, which is equivalent to $ p > 3/2 $.

	\subsection{The case $ p > 2 $}
	\label{Ss:MainPG2}
	
	In this subsection, we suppose that $ p > 2 $ and $ \abs{ G } \leq \abs{ H } $ and the goal is to show \eqref{Eq:MainReductionWeak>2}. As we have seen in Subsection \ref{Ss:MainPL2}, it will be enough to apply induction over the following lemma.
	
	\begin{lemma}
		\label{Lem:MainInductionG2}
		Let $ G , H \subseteq \R^n $ be two measurable subsets of finite measure with $ \abs{ G } \leq \abs{ H } $ and $ p > 2 $. Then, there exists a subset $ H' \subseteq H $ depending on $ p, G $ and $ H $ with $ \abs{ H' } \geq \abs{ H }/2 $ such that \eqref{Eq:MainReductionWeak>2} holds for $ H' $ instead of $ H $.
	\end{lemma}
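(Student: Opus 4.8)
The plan is to follow the blueprint of the proof of Lemma~\ref{Lem:MainInductionL2}, interchanging the roles of $ H $ (which carries $ f $, hence $ \size $) and $ G $ (which carries $ g $, hence $ \densesup $); equivalently, we dualise the parts played by $ \size $ and $ \densesup $ in the tile decomposition. First I would reduce \eqref{Eq:MainReductionWeak>2} for a major subset $ H'\subseteq H $ to a single-operator estimate: setting $ T_{\ann,G,H'}f\coloneqq \indf{G}T_{\ann}(f\indf{H'}) $, the Cauchy--Schwarz inequality gives
\begin{equation*}
	\Abs{ \Big\langle \Big( \sum_{\ann\in 3^\Z} \abs{T_{\ann} f_{\ann}}^2 \Big)^{1/2}, \indf{G} \Big\rangle }^2
	\le \abs{G} \sum_{\ann\in 3^\Z} \Norm{T_{\ann,G,H'} f_{\ann}}{L^2(\R^n)}^2
	\le \abs{G} \Big( \sup_{\ann\in 3^\Z} \Norm{T_{\ann,G,H'}}{L^2\to L^2} \Big)^{2} \abs{H'},
\end{equation*}
so it suffices to prove $ \Norm{T_{\ann,G,H'}}{L^2\to L^2}\lesssim (\abs{G}/\abs{H'})^{1/2-1/p} $ uniformly in $ \ann $. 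As in Subsection~\ref{Ss:MainPL2}, interpolating a restricted-type version of this estimate against the $ L^q $-bounds of Theorem~\ref{Thm:MainSingleBand} for $ q $ on both sides of $ 2 $ reduces matters to showing $ \sup_{\ann}\Abs{\langle T_{\ann,G,H'}f,\indf{E}\rangle}\lesssim (\abs{H'}/\abs{G})^{1/p-1/2}\abs{F}^{1/2}\abs{E}^{1/2} $ for $ F\subseteq H' $, $ E\subseteq G $ and $ 2<p<\infty $; through the model operator reduction of Section~\ref{S:Model} this in turn becomes a bound for $ \Lambda_{\P;\sigma,\tau_0,M}(f\indf{F},g\indf{E}) $ by the same right-hand side.

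To produce the major subset I would mirror Subsection~\ref{Sss:MainPL2Major}: set $ H'\coloneqq H\setminus\widetilde H $, where $ \widetilde H $ collects the dilated parallelepipeds $ 3^\mu K_n^2 R_t $ whose directional density is $ \gtrsim 3^{-\ell} $ and for which $ \abs{G\cap 3^\mu K_n^2 R_t}\gtrsim C_\varepsilon 3^{(\frac12+\varepsilon)\ell}(\abs{G}/\abs{H})^{1/2}\abs{3^\mu K_n^2 R_t} $, the union being over $ \ell,\mu\ge 0 $ and $ t\in\mathcal{T}_{\ann} $. Applying the covering Lemma~\ref{Lem:MainConstructionG} to the set $ G $, in the role previously played by $ H $, and summing the geometric series in $ \ell $ gives $ \abs{\widetilde H}\le\abs{H}/2 $ for $ C_\varepsilon $ large, so $ \abs{H'}\ge\abs{H}/2 $. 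The dual of the improved top estimate (Lemma~\ref{Lem:MaxImproved}) is obtained as there: by Proposition~\ref{Prop:CombinedDensitySize} each relevant top is the top of a lacunary tree of large $ \size $, so Lemma~\ref{Lem:MaxSizeHypo} forces a dilate of it to meet $ F\subseteq H' $ and hence to avoid $ \widetilde H $; combining the ensuing smallness of $ \abs{G\cap 3^k K_n^2 R_t} $ — and of $ \abs{E\cap 3^k K_n^2 R_t}\le\abs{G\cap 3^k K_n^2 R_t} $ since $ E\subseteq G $ — with the density lower bounds produced by the density sorting of the tops yields, for every $ \varepsilon>0 $,
\begin{equation*}
	\sum_{j,\nu} \Abs{ R_{\mathbf{T}_{j,\nu}'} }
	\lesssim_\varepsilon \abs{F} \Big( \frac{\abs{G}}{\abs{H'}} \Big)^{1/2} \densesup(\P)^{-1-\varepsilon}\, \size(\P)^{-3/2-\varepsilon},
\end{equation*}
the dual of the estimate of Lemma~\ref{Lem:MaxImproved}.

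Feeding this, together with the standard $ \densesup $- and $ \size $-estimates of Proposition~\ref{Prop:CombinedDensitySize}, into the iterative decomposition of Lemma~\ref{Lem:Decom} yields, for $ k\ge K $ and $ j\ge J $,
\begin{equation*}
	\sum_{\tau=1}^{N_{k,j}} \Abs{ R_{\mathbf{T}_{k,j,\tau}} }
	\lesssim_\varepsilon \min\Big\{ 2^{k}\abs{E},\ 2^{2j}\abs{F},\ \abs{F}\Big( \frac{\abs{G}}{\abs{H'}} \Big)^{1/2} 2^{(1+\varepsilon)k}\, 2^{(3/2+\varepsilon)j} \Big\},
\end{equation*}
and splitting the $ (k,j) $-sum by the same case analysis as in the computation following \eqref{Eq:MainImprovedEstimatesL2} — falling back on the $ L^q $-bounds of Theorem~\ref{Thm:MainSingleBand} when $ \abs{F}/\abs{E} $ is small, in analogy with \eqref{Eq:MainpLeq2Hypo} — gives the desired bound for $ \Lambda_{\P;\sigma,\tau_0,M} $, hence Lemma~\ref{Lem:MainInductionG2}. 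Iterating the lemma exactly as Lemma~\ref{Lem:MainInductionL2} is iterated at the start of Subsection~\ref{Ss:MainPL2}, peeling off a major subset of $ H $ roughly $ \log(\abs{H}/\abs{G}) $ times, then gives \eqref{Eq:MainReductionWeak>2} up to a logarithmic loss, harmless for the Marcinkiewicz interpolation.

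I expect the main obstacle to be that, as emphasised in the introduction, the $ \size $ and $ \densesup $ selection algorithms are intertwined and the decomposition of Proposition~\ref{Prop:CombinedDensitySize} is performed density-first, so the dualisation above is not purely formal: one must track carefully the exact power of $ \abs{G}/\abs{H} $ produced by the improved top estimate and verify that the resulting gain suffices across the whole range $ 2<p<\infty $ — the computation handles $ p $ in a neighbourhood of $ 2 $ exactly as the $ p<2 $ argument handles $ p $ near $ 2 $, while the remainder of the range is reached either by running the size and density selection in the opposite order, or through the $ \ell^2(\Z) $-valued duality $ \Norm{\{T_{\ann}\}}{L^p(\ell^2)\to L^p(\ell^2)}=\Norm{\{T_{\ann}^{*}\}}{L^{p'}(\ell^2)\to L^{p'}(\ell^2)} $, the adjoint family $ \{T_{\ann}^{*}\} $ being of the same type with the conjugate multipliers. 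A secondary point is that Lemma~\ref{Lem:MainConstructionG} is to be used with the higher-dimensional geometry supplied by Lemmas~\ref{Lem:Geom2} and \ref{Lem:Geom3}, and that the interpolation step invokes the full range $ 1<q<\infty $ of Theorem~\ref{Thm:MainSingleBand}.
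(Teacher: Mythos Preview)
Your reduction to a restricted-type estimate for $T_{\ann,G,H'}$ via Cauchy--Schwarz and interpolation is correct and matches the paper. The difficulty is in what follows, and the paper takes a genuinely different route than the dualisation you propose.

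Your claimed dual of Lemma~\ref{Lem:MaxImproved}, namely $\sum_{j,\nu}|R_{\mathbf{T}_{j,\nu}'}|\lesssim_\varepsilon |F|(|G|/|H'|)^{1/2}\densesup(\P)^{-1-\varepsilon}\size(\P)^{-3/2-\varepsilon}$, does not come out of the argument you sketch. In the $p<2$ proof the logic is: the density lower bound forces a dilate of each top to meet $G'$, hence it fails the $H$--density threshold of $\widetilde G$; since $F\subseteq H$, this yields an \emph{upper} bound on the $F$--density of that dilate, which when combined with the \emph{lower} bound from Lemma~\ref{Lem:MaxSizeHypo} produces a constraint on the parameter $u$ (and hence on the sum of tops). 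In your mirror, the size bound via Lemma~\ref{Lem:MaxSizeHypo} forces a dilate to meet $H'$, hence to fail the $G$--density threshold of your $\widetilde H$; but the quantity you then bound from below is the $E$--density (from the density sorting), and both the upper and the lower bound involve only $\delta$, not $u$ or $\sigma$. What you actually obtain is a constraint of the shape $\delta\lesssim_\varepsilon(|G|/|H|)^{1/3-\varepsilon'}$, not a third estimate in the minimum. Feeding this constraint into the standard $\min\{2^k|E|,2^{2j}|F|\}$ sum yields $(|G|/|H|)^{1/6-\varepsilon'}|E|^{1/2}|F|^{1/2}$, which suffices only for $2<p<3$ --- the exact dual of the range $3/2<p<2$ covered by Lemma~\ref{Lem:MaxImproved}. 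The exponent $1/3$ is sharp for this construction: with threshold $C\,3^{a\ell}(|G|/|H|)^{b}$ one needs $a>1/2$ and $b\le 1/2$ for $|\widetilde H|\le|H|/2$ via Lemma~\ref{Lem:MainConstructionG}, and then the resulting constraint is $\delta\lesssim(|G|/|H|)^{b/(1+a)}\le(|G|/|H|)^{1/3}$.

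Your proposed fixes for $p\ge 3$ do not work as stated. Running the density and size selections in the opposite order would require re-proving Proposition~\ref{Prop:CombinedDensitySize} with reversed roles, which is non-trivial precisely because (as you note) the algorithms are intertwined. The $\ell^2$--valued duality fails because $T_{\mathscr{M},\sigma(\cdot)}$ is not a Fourier multiplier: its kernel depends on $\sigma(x)$, so the adjoint has kernel depending on $\sigma(y)$, and $T_\ann^*$ is not an operator of the same type covered by Theorem~\ref{Thm:MainSingleBand}.

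The paper instead proves an \emph{improved size estimate} (Lemma~\ref{Lem:SizeImproved}): for every integer $m>1$ there is a major subset $H'\subseteq H$ with $\size(\mathbf{T})\lesssim_m(\mu_{G,H}/\densesup(\mathbf{T}))^m$, where $\mu_{G,H}\simeq(|G|/|H|)^{(m-1)/m}$. The set $\widetilde H$ is defined by a \emph{single} condition, $|G\cap v_\sigma^{-1}(\alpha_{t,\tau_0})\cap 3^\mu K_n^3 R_t|\ge \mu_{G,H}|3^\mu K_n^3 R_t|$, and $|\widetilde H|\le|H|/2$ is shown via Lemma~\ref{Lem:MainConstructionH} (not Lemma~\ref{Lem:MainConstructionG}). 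The point is that the density lower bound then forces a dilate $3^{\rho_0}K_n^3 R_t\subseteq\widetilde H$, hence disjoint from $H'\supseteq F$, so the wave packets see $f$ only through their tails; the decay order $m$ is at our disposal, and choosing $m$ large depending on $p$ reaches the full range $2<p<\infty$. This is the missing idea in your proposal.
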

	
	One readily checks that Lemma \ref{Lem:MainInductionG2} implies \eqref{Eq:MainReductionWeak>2} by using exactly the same argument that showed that Lemma \ref{Lem:MainInductionL2} implies \eqref{Eq:MainReductionWeak<2}. Furthermore, using exactly the same arguments as in the case $p<2$ which  showed that \eqref{Eq:MainReductionWeak<2Subset2} was sufficient for the proof of \eqref{Eq:MainReductionWeak<2Subset}, the conclusion of Lemma \ref{Lem:MainInductionG2} will follow once we prove
	\begin{equation}
		\label{Eq:MainReductionWeak>2Subset2}
		\sup_{ \ann \in 3^{\Z} } \Abs{ \langle T_{\ann,G,H'} f , g \rangle } 
		\lesssim \left( \frac{ \abs{ G } }{ \abs{ H' } } \right)^{ \frac{1}{2} - \frac{1}{p} } \abs{ F }^{ 1/2 } \abs{ E }^{ 1/2 },
	\end{equation}
	for every $ \abs{f} \leq \indf{F} $, $ \abs{g} \leq \indf{E} $ and $ 2<p<\infty $ with $ F \subseteq H' $ and $ E \subseteq G $.
	
	The construction of $ H' \subseteq H $ will guarantee improved size bounds that will allow us to conclude estimates for the sums of the tops of the trees constructed in Proposition \ref{Prop:CombinedDensitySize}. We turn to that task in the next part of this subsection.
	
	\subsubsection{Improved size estimates}
	
	\begin{lemma} \label{Lem:SizeImproved} Let $m>1$ be a positive integer. Then there exist a constant $C_m>1$ and $ H' \subseteq H $ with $ \abs{H'} \geq \frac{1}{2} \abs{H} $ such that for every tree $ \mathbf{T} \in \{ \mathbf{T}_{j,\nu}' \}_{j,\nu}$, where $ \{ \mathbf{T}_{j,\nu}' \}_{j,\nu}$ is the collection of the trees constructed in Proposition \ref{Prop:CombinedDensitySize}, there holds
		\begin{equation}
			\size( \mathbf{T} ) \lesssim_m \left( \frac{ \mu_{G,H} }{ \densesup( \mathbf{T} ) } \right)^{m}, 
			\qquad 
			\mu_{G,H} \coloneqq C_{m} \left( \frac{ \abs{ G } }{ \abs{ H } } \right)^{ \frac{m-1}{m} },
		\end{equation} 
		for every $ m \in \N $, $ m > 1 $, where $ \size $ is defined with respect to $ F \subseteq H' $ and $ \densesup $ with respect to $ E \subseteq G $.
	\end{lemma}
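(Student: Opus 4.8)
The plan is to build $H'$ by removing from $H$ a union of dilated parallelepipeds on which the mass of $F$ (measured through the $\size$ functional) is unexpectedly large relative to the directional portion carried by $E$. First I would fix a tree $\mathbf T=\mathbf T_{j,\nu}'$ from Proposition \ref{Prop:CombinedDensitySize} and recall that $\densesup(\mathbf T)\simeq\densesup(\P)$; abbreviate $\delta\coloneqq\densesup(\P)$ and $\sigma\coloneqq\size(\P)$. By Lemma \ref{Lem:ModifiedSize} we may work with a lacunary subtree $\mathbf T_0\subseteq\mathbf T$ realizing the size up to a dimensional constant, i.e. $\sum_{t\in\mathbf T_0}F_{10n}[f](t)^2\gtrsim \sigma^2|R_{\mathbf T_0}|$ with $\topt(\mathbf T_0)\in\mathbf T_0$. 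Lemma \ref{Lem:MaxBigSizeBigInter} then gives, for every $\varepsilon>0$, the lower bound $|F\cap(1+\sigma^{-\varepsilon})R_{\mathbf T_0}|\gtrsim_\varepsilon \sigma^{1+\varepsilon}|(1+\sigma^{-\varepsilon})R_{\mathbf T_0}|$, and since $F\subseteq H'$ this forces $(1+\sigma^{-\varepsilon})R_{\mathbf T_0}$ to have comparably large intersection with $H'$. On the other hand, the density bound $\densesup(\mathbf T)\geq\delta/2$ guarantees (after passing to a $3^k K_n^2$-dilate as in the proof of Proposition \ref{Prop:DecomMax}, via \eqref{Eq:MaxDenseLower}) that a dilate $3^kK_n^2R_{\mathbf T_0}$ satisfies $|E\cap v_\sigma^{-1}(\alpha_{\mathbf T_0,\tau_0})\cap 3^kK_n^2R_{\mathbf T_0}|\gtrsim \delta 3^{5nk}|3^kK_n^2R_{\mathbf T_0}|$. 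These are exactly the two hypotheses of the covering Lemma \ref{Lem:MainConstructionG}, with $\mu\simeq 3^{5nk}\delta$ playing the role of the directional density and $\lambda$ the $H'$-density of the dilated top.

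Next I would define, in analogy with $\widetilde G$ in Subsection \ref{Sss:MainInductionSubsetL2}, the exceptional set
\begin{equation}
	\widetilde H
	\coloneqq \bigcup_{\ell\geq 0}\bigcup_{m'\geq 0}\left\{ 3^{m'}K_n^2 R_t :\, t\in\mathcal T_{\ann},\ \frac{|v_\sigma^{-1}(\alpha_{t,\tau_0})\cap 3^{m'}K_n^2R_t|}{|3^{m'}K_n^2R_t|}\geq 3^{-\ell},\ \frac{|F\cap 3^{m'}K_n^2R_t|}{|3^{m'}K_n^2R_t|}\geq \Lambda_{\ell}\right\},
\end{equation}
where the threshold $\Lambda_\ell\coloneqq C_m 3^{(\frac{m-1}{m}+\varepsilon)\ell}(|G|/|H|)^{(m-1)/m}$ is chosen so that summing $\sum_\ell|\widetilde H_\ell|\lesssim C_m^{-m}\sum_\ell 3^{-\varepsilon m\ell}|H|\leq\frac12|H|$ by Lemma \ref{Lem:MainConstructionG} (applied to the shifted-triadic replacements of the dilated tops, exactly as in the proof that $|G'|>\frac12|G|$). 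Set $H'\coloneqq H\setminus\widetilde H$. Then any dilated top $3^kK_n^2R_{\mathbf T_0}$ that meets $H'$ is \emph{not} contained in $\widetilde H$, so at the relevant directional level $3^{-\ell_0}\simeq\delta 3^{5nk}$ one gets the \emph{upper} bound $|F\cap 3^kK_n^2R_{\mathbf T_0}|\lesssim C_m(\delta 3^{5nk})^{-(m-1)/m}(|G|/|H|)^{(m-1)/m}|3^kK_n^2R_{\mathbf T_0}|$, while Lemma \ref{Lem:MaxBigSizeBigInter} (through the refinement Lemma \ref{Lem:MaxSizeHypo}, with $\T_t=\{\topt(\mathbf T_0)\}$) gives the lower bound $|F\cap 3^kK_n^2R_{\mathbf T_0}|\gtrsim\sigma^{1+\varepsilon}3^{-nk}|3^kK_n^2R_{\mathbf T_0}|$. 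Comparing the two, the dimensional factors $3^{nk}$ and $3^{5nk(m-1)/m}$ are harmless (one may absorb them into $C_m$ after separating the cases $3^k\leq 1+\sigma^{-\varepsilon}$ and $3^k>1+\sigma^{-\varepsilon}$ just as in Subsection \ref{Ss:MaxProof}), and one is left with $\sigma^{1+\varepsilon}\lesssim_m C_m\,\delta^{-(m-1)/m}(|G|/|H|)^{(m-1)/m}$, i.e. $\sigma\lesssim_m (\mu_{G,H}/\delta)^{m'}$ for an exponent $m'$ which, after choosing $\varepsilon$ small relative to $1/m$, can be taken $\leq m$; renaming proves the claim. One also checks $\size(H'\text{-version})\lesssim\sigma$ trivially since shrinking $F$ only decreases $F_{10n}[f](t)$.

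The main obstacle will be bookkeeping the interaction between the two dilation parameters: the density selection naturally produces a $3^kK_n^2$-dilate whose size depends on $k$, whereas Lemma \ref{Lem:MaxBigSizeBigInter} is stated for a $(1+\sigma^{-\varepsilon})$-dilate, so one must either run the density split over $k$ as in the proof of Proposition \ref{Prop:DecomMax} and track how the powers of $\delta$ and $3^{nk}$ enter $\Lambda_\ell$, or first absorb the $K_n^2$-dilate and the $3^\mu$ in the shifted-grid step. I expect no new ideas are needed beyond those in Subsections \ref{Sss:MainInductionSubsetL2} and \ref{Sss:MainPL2Major}; the verification that $H'$ is a major subset is verbatim the argument there, relying on Lemmas \ref{Lem:Geom2}, \ref{Lem:Geom3} and \ref{Lem:MainConstructionG}. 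The only genuinely new point is that here the roles of $F$ and $E$ (equivalently of $\size$ and $\densesup$) are swapped compared to Lemma \ref{Lem:MaxImproved}, so the threshold exponent is $(m-1)/m$ rather than $1/2$, which is precisely what produces the family of estimates indexed by $m$.
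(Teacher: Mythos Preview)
Your proposal has a genuine gap on two fronts. First, $H'$ must be constructed independently of $F$, since the conclusion is required for \emph{every} $F\subseteq H'$; defining $\widetilde H$ through the condition $|F\cap 3^{m'}K_n^2R_t|\geq\Lambda_\ell|3^{m'}K_n^2R_t|$ is therefore circular. Second, even waiving this, the comparison mechanism borrowed from Lemma~\ref{Lem:MaxImproved} cannot deliver the arbitrary exponent $m$: matching $|F\cap\text{dilate}|\gtrsim\sigma^{1+\varepsilon}|\text{dilate}|$ from Lemma~\ref{Lem:MaxBigSizeBigInter} against an upper bound $\Lambda_{\ell_0}|\text{dilate}|$ yields $\sigma^{1+\varepsilon}\lesssim\delta^{-\alpha}(|G|/|H|)^{\alpha}$ with the power $\alpha$ of $\delta^{-1}$ fixed by the threshold and never exceeding $1$, regardless of relabelling, whereas Subsection~\ref{Sss:MainInductionEstimateG2} needs $m\to\infty$. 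The major-subset step also fails as written: Lemma~\ref{Lem:MainConstructionG} gives $|\widetilde H_\ell|\lesssim 3^\ell\Lambda_\ell^{-2}|F|$, and since here $|G|\leq|H|$ (the opposite of the $p<2$ regime) the resulting factor $(|H|/|G|)^{2(m-1)/m}$ is $\geq1$ and cannot be absorbed into $C_m$; your claimed bound $\sum_\ell|\widetilde H_\ell|\lesssim C_m^{-m}|H|$ is not what the covering lemma produces.

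The paper's argument is structurally different and does not invoke Lemma~\ref{Lem:MaxBigSizeBigInter} at all. One defines
\[
\widetilde H\coloneqq\bigcup_{\mu\geq0}\Big\{3^\mu K_n^3 R_t:\ t\in\mathcal T_{\ann},\ \big|G\cap v_\sigma^{-1}(\alpha_{t,\tau_0})\cap 3^\mu K_n^3R_t\big|\geq\mu_{G,H}\,\big|3^\mu K_n^3R_t\big|\Big\},
\]
a single $G$-directional-density condition with no reference to $F$ and no $\ell$-indexing. The density lower bound for the density top $t\in\T^{\tops}_{\dens}$ with $\topt(\mathbf T)\leq_2 t$ then forces $3^{\rho_0}K_n^3R_t\subseteq\widetilde H$ for some $\rho_0$ with $3^{n\rho_0}\gtrsim\delta/\mu_{G,H}$; hence $3^{\rho_0}R_{\mathbf T^{\lac}}\cap H'=\emptyset$ for every lacunary $\mathbf T^{\lac}\subseteq\mathbf T$ with top in $\mathbf T$. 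Since $|f|\leq\indf{F}\leq\indf{H'}$ vanishes on $3^{\rho_0}R_{\mathbf T^{\lac}}$, the orthogonality estimate of Lemma~\ref{Lem:OrthoLacTree} applied on the shells $3^{\rho+1}R_{\mathbf T^{\lac}}\setminus 3^\rho R_{\mathbf T^{\lac}}$, $\rho\geq\rho_0$, gives directly $\big(\sum_{p\in\mathbf T^{\lac}}F_{10n}[f](p)^2\big)^{1/2}\lesssim_m (\mu_{G,H}/\delta)^m|R_{\mathbf T^{\lac}}|^{1/2}$; the exponent $m$ enters through the wave-packet decay order and is a genuine free parameter. The major-subset verification $|\widetilde H|\leq\tfrac12|H|$ uses the single-condition covering Lemma~\ref{Lem:MainConstructionH} (not Lemma~\ref{Lem:MainConstructionG}) with $q=m/(m-1)$, which is exactly what fixes the exponent $(m-1)/m$ in $\mu_{G,H}$.
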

	
	The constant $ C_m $ will be chosen below.
	
	\begin{proof}
		We abbreviate $ \sigma \coloneqq \size(\mathbf{T}) $ and $ \delta \coloneqq \densesup(\mathbf{T}) $. Remember that for the tree $ \mathbf{T} $ there exists a tile $ t \in \T_{\dens}^{\tops} $ with $ \topt(\mathbf{T}) \leq_2 t $ and $ \dense(t) \simeq \delta $. It follows that
		\begin{equation}
			\label{Eq:SizeImprovedDensityDecomp}
			\Abs{ G \cap v_{ \sigma }^{-1}(\alpha_{t,\tau_0}) \cap 3^{k} K_n^3 R_t } 
			\geq \Abs{ E \cap v_{ \sigma }^{-1}(\alpha_{t,\tau_0}) \cap 3^{k} K_n^3 R_t }
			> \gamma_n \delta 3^{5 n k} \Abs{ 3^{k} K_n^3 R_t },
		\end{equation}
		for some nonnegative integer $ k $ and some dimensional constant $ 0 < \gamma_n \leq 1 $. Define $\mu_{G,H}$ for some constant $C_m$ to be chosen momentarily. By the trivial estimate $ \delta \lesssim 1 $ and the orthogonality estimate $ \sigma \lesssim 1 $ of Lemma \ref{Lem:OrthoLacTree}, we can assume that
		\begin{equation}
			\label{Eq:SizeImprovedMinDelta}
			\delta 
			\geq \gamma_n^{-1} \mu_{G,H},
		\end{equation}
		otherwise the desired estimate follows from 
		\begin{equation}
			\sigma \lesssim 1 \leq \left( \gamma_n^{-1} \frac{ \mu_{G,H} }{ \delta } \right)^{m}.
		\end{equation}
		We thus proceed assuming \eqref{Eq:SizeImprovedMinDelta} and then there exists some nonnegative integer $ \ell $ such that
		\begin{equation}
			\label{Eq:SizeImprovedEllDef}
			\gamma_n^{-1} 3^{\ell n} \leq \frac{\delta}{\mu_{G,H}} < \gamma_n^{-1} 3^{(\ell+1) n}.
		\end{equation}
		If $ \ell \leq k $ then \eqref{Eq:SizeImprovedDensityDecomp} implies
		\begin{equation}
			\Abs{ G \cap v_{ \sigma }^{-1}(\alpha_{t,\tau_0}) \cap 3^{k} K_n^3 R_t } 
			> \gamma_n \delta \Abs{ 3^{k} K_n^3 R_t }
			\geq 3^{\ell n} \mu_{G,H} \Abs{ 3^{k} K_n^3 R_t }
			\geq \mu_{G,H} \Abs{ 3^{k} K_n^3 R_t }.
		\end{equation}
		If $ \ell > k $ then \eqref{Eq:SizeImprovedDensityDecomp} implies
		\begin{equation}
			\Abs{ G \cap v_{ \sigma }^{-1}(\alpha_{t,\tau_0}) \cap 3^{k} K_n^3 R_t } 
			> 3^{\ell n} \mu_{G,H} \Abs{ K_n^3 R_t }
			= \mu_{G,H} \Abs{ 3^{\ell} K_n^3 R_t }.
		\end{equation}
		Defining $ \rho_0 \coloneqq \max\{\ell,k\} $ we therefore have
		\begin{equation}
			\label{Eq:SizeImprovedRNotInH}
			\Abs{ G \cap v_{ \sigma }^{-1}(\alpha_{t,\tau_0}) \cap 3^{\rho_0} K_n^3 R_t } 
			> \mu_{G,H} \Abs{ 3^{\rho_0} K_n^3 R_t }.
		\end{equation}
		Define
		\begin{equation}
			\widetilde{H} \coloneqq \bigcup_{\mu=0}^{\infty} \left\{ 3^{\mu} K_n^3 R_{t} : \, t \in \mathcal{T}_{\ann} , \, \Abs{ G \cap v_{ \sigma }^{-1}(\alpha_{t,\tau_0} ) \cap 3^{\mu} K_n^3 R_{t} } \geq \mu_{G,H} \Abs{ 3^{\mu} K_n^3 R_{t} } \right\}
		\end{equation}
		and set $ H' \coloneqq H \setminus \widetilde{H} $. Using \eqref{Eq:SizeImprovedRNotInH} we have
		\begin{equation}
			3^{\rho_0} K_n^3 R_{t} \subseteq \widetilde{H} 
			\qquad \Longrightarrow \qquad
			3^{\rho_0} K_n^3 R_{t} \cap H' = \emptyset,
		\end{equation}
		for $ \rho_0 $ as above.
		
		Now, for any lacunary tree $ \mathbf{T}^{\lac} \subseteq \mathbf{T} $ with top in $ \mathbf{T} $ we have
		\begin{equation}
			3^{\rho_0} R_{\mathbf{T}^{\lac}} \subseteq 3^{\rho_0} K_n R_{\mathbf{T}} \subseteq 3^{\rho_0} K_n^3 R_{t} 
		\end{equation}
		and so $ 3^{\rho_0} R_{\mathbf{T}^{\lac}} \subseteq (H')^{c} $. Since $ \abs{f} \leq \indf{F} \leq \indf{H'} $ we can estimate for any collection of adapted wave packets $ \{ \varphi_t \in \mathcal{F}_t^{10n} :\, t \in \mathbf{T} \}$
		\begin{equation}
			\begin{split}
				\left( \sum_{ p \in \mathbf{T}^{\lac} } \abs{ \langle f ,\varphi_p \rangle }^2 \right)^{1/2}
				& \leq \sum_{ \rho \geq \rho_0 } \left( \sum_{ p \in \mathbf{T}^{\lac} } \Abs{ \big\langle f \indf{ 3^{\rho+1} R_{\mathbf{T}^{\lac}} \setminus 3^{\rho} R_{\mathbf{T}^{\lac}} },\varphi_p \big\rangle }^2 \right)^{1/2}
				\lesssim_m \sum_{ \rho \geq \rho_0 } 3^{ - 10n \rho m } \Abs{ R_{\mathbf{T}^{\lac}} }^{1/2} \\
				& \lesssim 3^{ - 10n \rho_0 m } \Abs{ R_{\mathbf{T}^{\lac}} }^{1/2}
				\lesssim 3^{ - n \ell m } \Abs{ R_{\mathbf{T}^{\lac}} }^{1/2}
				\simeq \left( \frac{\mu_{G,H}}{\delta} \right)^m \Abs{ R_{\mathbf{T}^{\lac}} }^{1/2}.
			\end{split}
		\end{equation}
		This is the desired estimate for $ \size' $ and by Lemma \ref{Lem:ModifiedSize} we get the estimate also for $ \size $. It remains to prove that $ \abs{H'} \geq \frac{1}{2} \abs{H} $, which we do in the next subsection.
	\end{proof}

	\subsubsection{Proving that $ H' $ is a major subset} 
	\label{Sss:MainInductionSubsetG2}
	
	The proof hinges on the estimate of the following lemma, which is the higher dimensional version of \cite[Lemma 7]{BatemanThiele}.
	
	\begin{lemma}\label{Lem:MainConstructionH}
		For $ 0 \leq \mu < 1 $ and $ q > 1 $, let $ G $ be a measurable set and $ \P \subseteq \mathcal{T} $ a collection of tiles such that
		\begin{equation}
			\label{Eq:MainConstructionHLemHyp}
			\Abs{ G \cap v_{ \sigma }^{-1}(\alpha_{p,\tau_0}) \cap R_p } \geq \mu \abs{ R_p }  \qquad \text{ for every } \quad p \in \P.
		\end{equation}
		Then,
		\begin{equation}
			\bigg| \bigcup_{ p \in \P } R_p \bigg| \lesssim \mu^{-q} \abs{G}.
		\end{equation}
	\end{lemma}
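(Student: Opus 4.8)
The plan is to adapt the argument of \cite[Lemma 7]{BatemanThiele} to the present higher-dimensional setting, using the geometric Lemmas~\ref{Lem:Geom2} and~\ref{Lem:Geom3} in place of the two-dimensional facts used there, together with the $\BMO_{\triadic}$/John--Nirenberg scheme from the proof of Proposition~\ref{Prop:MaxKey} (see Lemma~\ref{Lem:MaxKeyHBMO}) to handle the overlap count. First I would reduce to the case where $\P$ is finite, and then, exactly as in the reduction to the case $k=0$ in the proof of Proposition~\ref{Prop:MaxKey} and as in the proof of Lemma~\ref{Lem:SizeImproved}, pass to $O_n(1)$ shifted triadic grids so that every bounded dilate $CR_p$ appearing below is again comparable to a parallelepiped $R(L,I,v)$ with $L$ and $I$ living in fixed triadic grids; since $\sigma$ depends only on $\underline{x}\in\R^d$, the hypothesis $|G\cap v_{\sigma}^{-1}(\alpha_{p,\tau_0})\cap R_p|\ge\mu|R_p|$ is unchanged under replacing $R_p$ by such dilates, up to adjusting $\mu$ by a dimensional factor. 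Write $\Omega\coloneqq\bigcup_{p\in\P}R_p$.

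Next I would extract a structured subcollection covering $\Omega$ efficiently. Working inside each fixed annulus separately (recall that the relation $\leq$ is only defined within a fixed annulus, cf.\ Remark~\ref{rmrk:pairincorp}), run a greedy selection ordered by decreasing scale: at each step pick the remaining tile $p$ of largest $\scl(p)$, place it in $\P^{\star}$, and discard every remaining tile $q$ with $q\leq_2 p$. Since $q\leq_2 p$ implies $R_q\subseteq K_n^2 R_p$ and, by the grid property of eccentricities at a fixed annulus (as in the proof of Lemma~\ref{Lem:DecomAlgorithm}(ii)), distinct chosen tiles end up pairwise incomparable, this yields $\Omega\subseteq\bigcup_{p\in\P^{\star}}K_n^2 R_p$ with $\P^{\star}$ a disjoint union over annuli of pairwise incomparable subcollections; in particular $|\Omega|\lesssim_n\sum_{p\in\P^{\star}}|R_p|$. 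The geometry of this step in dimension $n\ge 3$ is precisely what Lemmas~\ref{Lem:Geom2} and~\ref{Lem:Geom3} are used for.

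It then suffices to prove $\sum_{p\in\P^{\star}}|R_p|\lesssim_q\mu^{-q}|G|$. Put $A_p\coloneqq G\cap v_{\sigma}^{-1}(\alpha_{p,\tau_0})\cap R_p$, so $|A_p|\ge\mu|R_p|$ and $\bigcup_{p}A_p\subseteq G$; by Hölder on $G$,
\[
\sum_{p\in\P^{\star}}|R_p|\le\mu^{-1}\sum_{p\in\P^{\star}}|A_p|=\mu^{-1}\int_G F\le\mu^{-1}|G|^{1/q}\,\Norm{F}{L^{q'}(\R^n)},\qquad F\coloneqq\sum_{p\in\P^{\star}}\indf{A_p}.
\]
The remaining point is an $L^{q'}$-bound $\Norm{F}{L^{q'}}^{q'}\lesssim_{q'}\int F$ valid for every $q'<\infty$ with constant independent of $\mu$: using that $\sigma$ depends only on $\underline{x}$, that $\P^{\star}$ is pairwise incomparable within each annulus, and the grid structure of the caps $\alpha_{p,\tau_0}$, one shows, following the permissible-set construction of Definition~\ref{Def:Permissible} and Lemma~\ref{Lem:MaxKeyHBMO}, that the associated directional masses over triadic cubes form a Carleson sequence, whence the relevant counting function lies in $\BMO_{\triadic}$ with norm $\lesssim 1$ and the John--Nirenberg inequality yields the claimed $L^{q'}$-bound (expanding the $q'$-th power into an ordered sum over nested horizontal cubes as in \eqref{Eq:MaxKeyDefF}). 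Feeding this back and optimising with $q'=q/(q-1)$ closes the estimate with an arbitrarily small loss, giving $\mu^{-q}$ for any $q>1$; in fact, since $\int F\ge|\bigcup_p A_p|$ and $\bigcup_p A_p\subseteq G$, one recovers essentially $\mu^{-1}|G|$ in the clean single-annulus case.

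The main obstacle is this last step. Across different annuli the pointwise overlap count of the sets $A_p$ is genuinely unbounded, so one cannot argue by a uniform multiplicity bound and must run the $\BMO_{\triadic}$/John--Nirenberg argument, organising the greedy selection and the permissible-set bookkeeping so that the directional masses remain a Carleson sequence even though $\P^{\star}$ mixes annuli. This interplay between the covering step and the overlap estimate, rather than any single computation, is where the proof requires care and where the slack $q>1$ originates.
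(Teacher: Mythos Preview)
Your overall strategy matches the paper's, which defers to \cite[Lemma~7]{BatemanThiele} with Lemma~\ref{Lem:Geom2} as the sole higher-dimensional input. The per-annulus greedy selection using $\leq_2$ is fine and yields $|\Omega|\lesssim\sum_{p\in\P^\star}|R_p|$. The gap is in the overlap estimate for $F=\sum_{p\in\P^\star}\indf{A_p}$. You invoke the Carleson/John--Nirenberg scheme from the proof of Proposition~\ref{Prop:MaxKey}, but that argument is run at a \emph{single} annulus: the permissible sets $B(L)$ of Definition~\ref{Def:Permissible} are indexed by the horizontal triadic cubes alone, and the Carleson disjointness of the sets $v_\sigma^{-1}(\alpha_{p,\tau_0})\cap L$ (Lemma~\ref{Lem:MaxKeyHBMO}) relies on the one-to-one association $p_\pi\mapsto(L_p,Q_p)$ available when $\ann$ is fixed. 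Once $\P^\star$ mixes annuli, a fixed horizontal cube $L$ can carry many tiles with different vertical intervals $I_p$, and the bookkeeping that makes the directional masses a Carleson sequence breaks down as written.

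The paper singles out Lemma~\ref{Lem:Geom2} precisely because the cross-annulus step has to happen \emph{before} one can run the $\BMO$ count: for tiles sharing the same $L$ with nested eccentricities, one selects the tile with largest $|I|$ and absorbs the others into a bounded vertical dilate via Lemma~\ref{Lem:Geom2}(i), thereby reducing to a configuration where, over each horizontal cube $L$, at most one tile per eccentricity survives. Only after this pruning does the Carleson/John--Nirenberg machinery apply. You correctly flag this as ``where the proof requires care'' but do not supply the reorganisation; that is the missing piece. Also, Lemma~\ref{Lem:Geom3} is not needed here---the paper reserves it for Lemma~\ref{Lem:MainConstructionG}, where one must compare parallelepipeds whose eccentricities are \emph{not} nested.
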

	
	The proof of Lemma \ref{Lem:MainConstructionH} follows as in the two-dimensional case, using the geometric Lemma \ref{Lem:Geom2}.
	
	\emph{Proof of $ \abs{H'} \geq \frac{1}{2} \abs{H} $:} Using the familiar argument involving shifted triadic grids, as in the proof of Proposition \ref{Prop:MaxKey}, we can assume that for each $ \mu \geq 0 $ and $ t \in \P $ there exists $ \tilde{t} $ such that
	\begin{equation}
		3^{\mu} K_n^3 R_t 
		\subseteq R_{\tilde{t}} 
		\subseteq 3^{\mu+2} K_n^3 R_t, 
		\qquad Q_{t} = Q_{\tilde{t}}.
	\end{equation}
	It follows that
	\begin{equation}
		\abs{ \widetilde{H} } 
		\lesssim_{q} \mu_{G,H}^{-q} \abs{G}
		= C_{m}^{-q} \left( \frac{\abs{H}}{\abs{G}} \right) ^{ \frac{m-1}{m} q } \abs{G}
		\leq C_m^{-\frac{m}{m-1}} \abs{H}
	\end{equation}
	upon choosing $ q = \frac{m}{m-1} $. Now choosing $ C_m $ sufficiently large, depending on $m$, yields $ \abs{ \widetilde{H} } \leq \frac{1}{2} \abs{H} $ so that $ \abs{ H' } \geq \frac{1}{2} \abs{H} $ as desired.

	\subsubsection{Completing the proof of \eqref{Eq:MainReductionWeak>2Subset2}} \label{Sss:MainInductionEstimateG2}
	
	Using the reduction to the model operator as in Section \ref{S:Model} it suffices to prove, uniformly for any $ \P \subseteq \mathcal{T}_{\ann} $
	\begin{equation}
		\label{Eq:MainG2ModelDesired}
		\Lambda_{\P;\sigma,\tau_0,M} (f\indf{F}, g\indf{E})
		\lesssim \left( \frac{ \abs{G} }{ \abs{H} } \right)^{\frac{1}{2}-\varepsilon} \abs{F}^{1/2} \abs{E}^{1/2},
	\end{equation}
	for every $ 0 < \varepsilon < 1/2 $, $ \abs{f} \leq \indf{F} $, $ \abs{g} \leq \indf{E} $, $ F \subseteq H' $ and $ E \subseteq G $ with $ H' \subseteq H $ as in Lemma~\ref{Lem:SizeImproved} satisfying $ \abs{H'} > \frac{1}{2} \abs{H} $, $ \abs{G} < \abs{H} $.
	
	Since $ \abs{G}/\abs{H} < 1 $, it is clearly enough to prove \eqref{Eq:MainG2ModelDesired} for arbitrarily small $ 0 < \varepsilon \ll 1/2 $. Using the decomposition in Lemma \ref{Lem:Decom} together with Lemma \ref{Lem:SizeImproved} we have that for every positive integer $m>1$ there exists $H'\subset H$ and a constant $C_m>1$ intervening in the definition of $\mu_{G,H}$ below, such that for $\size$ defined with respect to $F\subset H'$
	\begin{equation}
		\begin{split}
			\Lambda_{\P;\sigma,\tau_0,M} (f\indf{F}, g\indf{E})
			& \lesssim_{m} \sum_{ (j,k) \in \mathcal{A}_{m} } 2^{-j} 2^{-k} \sum_{\tau=1}^{N_{k,j}} \Abs{ R_{\mathbf{T}_{k,j,\tau}}} \\
			& \lesssim \sum_{ (j,k) \in \mathcal{A}_{m} } 2^{-j} 2^{-k} \min \left\{ 2^k \abs{E}, \, 2^{2j} \abs{F} \right\},
		\end{split}
	\end{equation}
	where 
	\begin{equation}
		\mathcal{A}_{m} 
		\coloneqq \left\{ (j,k) :\, 2^{j} \gtrsim 1, \, 2^{k} \gtrsim 1, \, 2^{-j} \lesssim 2^{km} \mu_{G,H}^{m} \right\}
	\end{equation}
and, as before
	\begin{equation}
		\mu_{G,H} \coloneqq C_{m} \left( \frac{ \abs{ G } }{ \abs{ H} } \right)^{ \frac{m-1}{m} }.
	\end{equation}
	Now fix $ 0 < \varepsilon \ll 1/2 $. In the case of the  argument below, $ m=m(\varepsilon) $ will be chosen sufficiently large. We define the values
	\begin{equation}
		2^{k_0} \coloneqq \mu_{G,H}^{-1}, \qquad 2^{j_0(k)} \coloneqq 2^{-km} \mu_{G,H}^{-m}
	\end{equation}
	and note that since $ (j,k) \in \mathcal{A}_{m} $ we have
	\begin{equation}
		2^{-j} 
		\lesssim \min\{ 2^{-j_0(k)}, \, 1 \}
		= \left\{ 
		\begin{array}{lcc} 
			2^{-j_0(k)}, & \text{if} & k \leq k_0 \\ 
			1, & \text{if} & k > k_0.
		\end{array} 
		\right.
	\end{equation}
	We now estimate for $ \abs{f} \leq \indf{F} $, $ \abs{g} \leq \indf{E} $
	\begin{equation}
		\Lambda_{\P;\sigma,\tau_0,M} (f\indf{F}, g\indf{E})
		\lesssim \left[ \sum_{ 2^k \leq 2^{k_0} } \sum_{ 2^{-j} \leq 2^{-j_0(k)} } + \sum_{ 2^k > 2^{k_0} } \sum_{ 2^{-j} \lesssim 1 } \right] \sum_{\tau=1}^{N_{k,j}} \Abs{ R_{\mathbf{T}_{k,j,\tau}}} 
		\eqqcolon \text{I} + \text{II}.
	\end{equation}
	In order to estimate I and II it will be useful to define
	\begin{equation}
		2^{k_1} \coloneqq \frac{ \abs{E} }{ \abs{F} },
		\qquad 2^{k_{0,1}} \coloneqq \mu_{G,H}^{-1} \left( \frac{ \abs{F} }{ \abs{E} } \right)^{1/m},
		\qquad 2^{k_{0,2}} \coloneqq \mu_{G,H}^{-\frac{2m}{2m+1}},
		\qquad 2^{j_1(k)} \coloneqq 2^{k/2} \left( \frac{ \abs{E} }{ \abs{F} } \right)^{1/2}.
	\end{equation}
	For II we estimate
	\begin{equation}
		\begin{split}
			\text{II}
			& \leq \sum_{ 2^k > 2^{k_0} } \sum_{ 2^{-j_1(k)} \leq 2^{-j} \lesssim 1 } 2^{-k} 2^{-j} 2^{2j} \abs{F}
			+ \sum_{ 2^k > 2^{k_0} } \sum_{ 2^{-j} < 2^{-j_1(k)} } 2^{-k} 2^{-j} 2^{k} \abs{E} \\
			& \lesssim \abs{F}^{1/2} \abs{E}^{1/2} \mu_{G,H}^{1/2}
			\simeq_{m} \abs{F}^{1/2} \abs{E}^{1/2} \left( \frac{ \abs{G} }{ \abs{H} } \right)^{\frac{1}{2} - \frac{1}{2m}},
		\end{split}
	\end{equation}
	which yields the desired estimate upon choosing $ m > 1/(2 \varepsilon) $.
	
	The calculation for I is more involved, requiring a careful case-study in order to determine which estimate should be used in which range of parameters. First, we make some further reductions in order to simplify the calculations for I. We will consider two cases: $ k_{0,1} \leq k_1 $ and $ k_{0,1} \geq k_1 $. Observe that these cases correspond to
	\begin{equation}
		\label{Eq:MainG2Cases}
		\frac{ \abs{F} }{ \abs{E} } \leq C_{m}^{\frac{m}{m+1}} \left( \frac{ \abs{G} }{ \abs{H'} } \right)^{\frac{m-1}{m+1}} 
		\qquad \text{and} \qquad
		\frac{ \abs{F} }{ \abs{E} } \geq C_{m}^{\frac{m}{m+1}} \left( \frac{ \abs{G} }{ \abs{H'} } \right)^{\frac{m-1}{m+1}} 
	\end{equation}
	respectively. If $ k_{0,1} \leq k_1 $ then using Theorem~\ref{Thm:MainSingleBand} we get that for every $ 1<q<2 $ and $ \abs{f} \leq \indf{F} $, $ \abs{g} \leq \indf{E} $, 
	\begin{equation}
		\begin{split}
			\Lambda_{\P;\sigma,\tau_0,M} (f\indf{F}, g\indf{E})
			& \lesssim \abs{F}^{1/q} \abs{E}^{1-1/q}
			= \abs{F}^{1/2} \abs{E}^{1/2} \left( \frac{ \abs{F} }{ \abs{E} } \right)^{ \frac{1}{q} - \frac{1}{2} }
			\lesssim \abs{F}^{1/2} \abs{E}^{1/2} \left( \frac{ \abs{G} }{ \abs{H} } \right)^{ (1 - \frac{2}{m+1}) (\frac{1}{q} - \frac{1}{2}) },
		\end{split}
	\end{equation}
	which yields the desired estimate upon choosing $ 1/q = 1 - \frac{\varepsilon}{2} $ and $ m+1\geq \frac{4}{\varepsilon}$.  
	
	Thus, we can assume the second inequality in \eqref{Eq:MainG2Cases}, in which case one can check by a direct calculation that $ k_1 < k_{0,2} < k_{0,1} $.
	
	\emph{Case $ k_0 < k_{0,2} $:} In this case,
	\begin{equation}
		\label{Eq:MainG2Case1}
		\abs{E}^{1/2} < C_{m}^{1/2} \abs{F}^{1/2} \left( \frac{ \abs{G} }{ \abs{H} } \right)^{ \frac{m-1}{2m} }.
	\end{equation}
	Then,
	\begin{equation}
		\begin{split}
			\text{I}
			& \lesssim \sum_{ 2^k \leq 2^{k_0} } \sum_{ 2^{-j} \leq 2^{-j_0(k)} } 2^{-j} \abs{E}
			\lesssim \sum_{ 2^k \leq 2^{k_0} }  2^{-j_0(k)} \abs{E}
			= \sum_{ 2^k \leq 2^{k_0} } 2^{km} \mu_{G,H}^{m} \abs{E}
			\simeq \abs{E}
			\lesssim \abs{F}^{\frac{1}{2}} \abs{E}^{\frac{1}{2}} \left( \frac{ \abs{G} }{ \abs{H} } \right)^{\frac{1}{2} - \frac{1}{2m}},
		\end{split}
	\end{equation}
	where we have used \eqref{Eq:MainG2Case1} in the last inequality. This shows the desired inequality when $ k_0 < k_{0,2} $.
	
	\emph{Case $ k_0 \geq k_{0,2} $:} In this case,
	\begin{equation}
		\begin{split}
			\text{I}
			& \leq \sum_{ 2^k \leq 2^{k_{0,2}} } \sum_{ 2^{-j} \leq 2^{-j_0(k)} } 2^{-j} \abs{E}
			+ \sum_{ 2^{k_{0,2}} \leq 2^k \leq 2^{k_0} } \sum_{ 2^{-j_1(k)} \leq 2^{-j} \leq 2^{-j_0(k)} } 2^{j} 2^{-k} \abs{F}
			+ \sum_{ 2^{k_{0,2}} \leq 2^k \leq 2^{k_0} } \sum_{ 2^{-j} \leq 2^{-j_1(k)} } 2^{-j} \abs{E} \\
			& \eqqcolon \text{I}_1 + \text{I}_2 + \text{I}_3.
		\end{split}
	\end{equation}
	Doing the calculations for each sum we get, for $ \text{I}_1 $,
	\begin{equation}
		\text{I}_1 
		\lesssim \abs{E} \mu_{G,H}^{m} 2^{k_{0,2} m}
		\lesssim \abs{F}^{1/2} \abs{E}^{1/2} \left( \frac{ \abs{E} }{ \abs{F} } \right)^{\frac{1}{2} - \frac{m}{2m+1}} \left( \frac{ \abs{G} }{ \abs{H} } \right)^{\frac{m-1}{2m+1}}
		\lesssim \abs{F}^{1/2} \abs{E}^{1/2} \left( \frac{ \abs{G} }{ \abs{H} } \right)^{ \frac{m-1}{2m+1} - \frac{1}{2} \frac{1}{2m+1} \frac{m-1}{m+1} },
	\end{equation}
	where the last inequality holds by the second inequality in \eqref{Eq:MainG2Cases}. Note that the last exponent tends to $1/2$ as $ m\to\infty $, so this shows the desired inequality in this case. For $ \text{I}_2 $,
	\begin{equation}
		\begin{split}
			\text{I}_2 
			& \lesssim \sum_{ 2^{k} \geq 2^{k_{0,2}} } \abs{F} 2^{-k/2} \left( \frac{ \abs{E} }{ \abs{F} } \right)^{1/2}
			\lesssim \abs{F}^{1/2} \abs{E}^{1/2} 2^{-k_{0,2}/2} \\
			& \simeq \abs{F}^{1/2} \abs{E}^{1/2} \mu_{G,H}^{\frac{m}{2m+1}} \left( \frac{ \abs{E} }{ \abs{F} } \right)^{\frac{1}{2} \frac{1}{2m+1}}
			\lesssim \abs{F}^{1/2} \abs{E}^{1/2} \left( \frac{ \abs{G} }{ \abs{H} } \right)^{ \frac{m-1}{2m+1} - \frac{1}{2} \frac{1}{2m+1} \frac{m-1}{m+1} },
		\end{split}
	\end{equation}
	where again the last inequality is by the second inequality in \eqref{Eq:MainG2Cases} and the last exponent tends to $1/2$ as $ m\to\infty $. Finally,
	\begin{equation}
		\text{I}_3
		\lesssim \sum_{ 2^{k} \geq 2^{k_{0,2}} } 2^{-j_1(k)} \abs{E}
		\leq \sum_{ 2^{k} \geq 2^{k_{0,2}} } 2^{-k/2} \abs{F}^{1/2} \abs{E}^{1/2}
		\lesssim \abs{F}^{1/2} \abs{E}^{1/2} 2^{-k_{0,2}/2},
	\end{equation}
	which is the same as above and the proof is complete.

	\section{On the necessity of the non-degeneracy assumption}
	\label{S:Counterexamples}
	
	One limitation of Theorem~\ref{Thm:Main}, especially when compared to \cite[Theorem 1]{BatemanThiele}, is the assumption $ \sigma \in \Sigma_{1-\varepsilon} $ for some $ \varepsilon > 0 $. One may wonder if Theorem~\ref{Thm:Main} remains true without the non-degeneracy assumption $ \sigma \in \Sigma_{1-\varepsilon} $. Indeed, this assumption can be removed for $0$-homogeneous multipliers in the two-dimensional setting, but this essentially specifies the directional Hilbert transform; see \cite[Theorem 1]{BatemanThiele}.
	
	Failing that, one may ask if the single band version of Theorem~\ref{Thm:Main} remains true in the non-degenerate case. For the rest of the section we fix $ d=n-1 $.
	
	\begin{question}
		\label{Q:NonDegeneracy}
		Suppose that, in $\R^n $, $ \supp(\widehat{f}) \subseteq \{ A\leq |\xi_n|\leq 3A \} $ for some number $ A > 0 $ and $ \apl{\sigma}{\R^n}{\Grdn} $ depends only on the first $ d $ variables
		\begin{equation}
			\sigma(x) = \sigma(x_1,\ldots ,x_d), \qquad x=(x_1,\ldots,x_d,x_n) \in \R^n,
		\end{equation}
		and $ \sigma $ is measurable but otherwise arbitrary. Let $m\in\mathcal M_A(d)$ for large $A$ and consider the operator
		\[
        T_{m,\sigma(\cdot)}f(x)\coloneqq \int_{\R^n} m(V_{\sigma(x)}\Pi_{\sigma(x)}\xi)\widehat f(\xi) e^{2\pi i \langle x,\xi\rangle} d\xi,\qquad x\in \R^n.	
		\]
		Is it true that
		\begin{equation}
			\Norm{ T_{m,\sigma(\cdot)} f }{ L^p(\R^n) }  
			\lesssim \Norm{ f }{ L^p(\R^n) }
		\end{equation}
		for some $p\in (1,\infty)$?
	\end{question}
	
	For $ n=2 $ and $ T_{m,\sigma(\cdot)} $ being the maximal directional Hilbert transform this of course follows from the stronger estimate of \cite[Theorem 1]{BatemanThiele} for $3/2<p<\infty$; in the weaker single-band formulation of Question \ref{Q:NonDegeneracy}, this is \cite[Theorem 3]{BatemanThiele}, essentially proved by Bateman in \cite{Bateman2013Revista}. It is critical to note here that the multiplier of the Hilbert transform is a $0$-homogeneous function in $\R$ and thus Hilbert transform is dilation invariant.
	
	In the following Subsection \ref{Ss:Counterexamples2}, we show that the answer to Question \ref{Q:NonDegeneracy} is negative, in general, already in dimension $ 2 $; for this we use a counterexample from \cite{CGHS} showing the unboundedness of maximally dilated multipliers in $ \R^n $. This provides a negative answer to Question \ref{Q:NonDegeneracy} in any dimension for non-homogeneous multipliers $m\in\mathcal M_A(d)$; this of course excludes the case of the Hilbert transform. However in Subsection \ref{Ss:CounterexamplesN}, we provide examples of \emph{homogeneous} $ d $-dimensional multipliers yielding a negative answer to Question \ref{Q:NonDegeneracy} in any dimension $ n=d+1\geq3 $.
	
	Thus, in contrast with the results in \cite{BatemanThiele}, one cannot hope for a positive answer to Question \ref{Q:NonDegeneracy}, even for homogeneous multipliers unless $n=2$ whence $T_{m,\sigma(\cdot)}$ is the maximal directional Hilbert transform. Thus the non-degeneracy assumption cannot be removed, in general, from the statement of Theorem~\ref{Thm:Main}.

	\subsection{Counterexamples for non dilation invariant multipliers in $ \R^2 $}
	\label{Ss:Counterexamples2}
	
	The authors in \cite{CGHS} construct a multiplier $ m \in \mathcal{M}_{\infty}(1) $ for which the operator
	\begin{equation}
		\label{Eq:Counterexamples2MaxOp}
		f \quad 
		\mapsto \quad \sup_{ k \in \Z } \Abs{ T_{m(2^{-k} (\cdot))} f } \coloneqq \sup_{ n\in\Z } \Abs{ \int_{\R} m(2^{-k} \eta) \widehat{f}(\eta) e^{2 \pi i \langle (\cdot), \eta \rangle} d\eta }
	\end{equation}
	is unbounded on $ L^p(\R) $, for $ 1 < p < \infty $. We will see that assuming a positive answer to Question \ref{Q:NonDegeneracy} would imply the boundedness of \eqref{Eq:Counterexamples2MaxOp}, which would be a contradiction. In particular, Theorem~\ref{Thm:Main} fails without the non-degeneracy assumption.
	
	We first assume that $ m\in C_c(\R)$; this assumption will be removed at the end of the proof. Fix a Schwartz function $ f \in \mathcal{S}(\R) $. For each $ x = (x_1,x_2) \in \R^2 $, take 
	\[
	 \sigma(x) \coloneqq \spn \{ (2^{-k(x_1)} , \sqrt{1-2^{-2k(x_1)}} ) \},\qquad x=(x_1,x_2)\in\R^2,
	 \]
	 where $ \apl{k}{\R}{\N} $ is measurable. Now we consider a Schwartz function $ \psi \in \mathcal{S}(\R) $ such that $ \supp \widehat{ \psi } \subseteq [1,2] $ and define
	\begin{equation}
		F_{\varepsilon}(x) \coloneqq f(x_1) \varepsilon^{1/p} \psi( \varepsilon x_2 ), \qquad x = (x_1,x_2) \in \R^2.
	\end{equation}
	Then, we have
	\begin{equation}
		T_{m,\sigma(\cdot)} F_{\varepsilon}(x) 
		= \varepsilon^{1/p} \int_{\R^2} m \left(\xi_1 2^{-k(x_1)} + \varepsilon \xi_2 \sqrt{1-2^{-2k(x_1)}} \right) \widehat{f}(\xi_1)  \widehat{ \psi }(\xi_2) e^{2 \pi i (x_1 \xi_1 + \varepsilon x_2 \xi_2 ) }  d\xi, \quad x \in \R^2.
	\end{equation}
	Thus,
	\begin{equation}
		\norm{ T_{m,\sigma(\cdot)} F_{\varepsilon} }{ L^p(\R^2) }^p 
		= \int_{\R^2} \Abs{ \int_{\R^2} m \left(\xi_1 2^{-k(x_1)} + \varepsilon \xi_2 \sqrt{1-2^{-2k(x_1)}} \right) \widehat{f}(\xi_1)  \widehat{ \psi }(\xi_2) e^{2 \pi i (x_1 \xi_1 + x_2 \xi_2 ) } d\xi }^p dx.
	\end{equation}
	Applying the dominated convergence theorem and using that $ m $ is continuous and bounded we get, for $ f,\psi \in \mathcal{S}(\R) $,
	\begin{equation}
		\begin{split}
			& \Abs{ \int_{\R} m(\xi_1 2^{-k(x_1)}) \widehat{f}(\xi_1) e^{2 \pi i x_1 \xi_1 } d\xi_1 } \abs{ \psi(x_2) }
			= \Abs{ \int_{\R^2} m(\xi_1 2^{-k(x_1)}) \widehat{f}(\xi_1) \widehat{\psi}(\xi_2) e^{2 \pi i \langle x , \xi \rangle } d\xi } \\
			& \hspace*{40mm} = \lim_{\varepsilon \to 0}  \Abs{ \int_{\R^2} m \left(\xi_1 2^{-k(x_1)} + \varepsilon \xi_2 \sqrt{1-2^{-2k(x_1)}} \right) \widehat{f}(\xi_1)  \widehat{ \psi }(\xi_2) e^{2 \pi i \langle x , \xi \rangle } d\xi },
		\end{split}
	\end{equation}
	so taking the $ L^p(\R^2) $ norm yields
	\begin{equation}
		\Norm{ \int_{\R^2} m(2^{-k(x_1)} \xi_1) \widehat{f}(\xi_1) e^{2 \pi i (\cdot) \xi_1 } d\xi_1 }{ L^p(\R) } \norm{ \psi }{ L^p(\R) } \lesssim \liminf_{ \varepsilon \to 0 } \Norm{ T_{m} F_{\varepsilon} }{ L^p(\R^2) },
	\end{equation}
	where we have applied Fatou's theorem to take the limit out of the last integral. Finally, if we assume a positive answer to Question \ref{Q:NonDegeneracy} in the degenerate case $ \apl{\sigma}{\R^2}{\mathbb{S}^1} $ with $ \sigma(x) = \sigma(x_1) $ and any $ f $ frequency supported in the band $ \abs{ \xi_2 } \simeq 1 $, rescaling in $ x_2 $ implies the same result for function $ F \in \mathcal{S}(\R^2) $ with $ \supp(\widehat{F}) \subseteq \{ \varepsilon^{-1} \leq \abs{ \xi_2 } \leq 2 \varepsilon^{-1} \} $, uniformly in $ \varepsilon > 0 $. In particular, we would have
	\begin{equation}
		\label{Eq:Counterexamples2Assumption}
		\norm{ T_{m,\sigma(\cdot)} F_{\varepsilon} }{L^p(\R^2)}
		\lesssim \norm{ F_{\varepsilon} }{L^p(\R^2)} 
		= \norm{ f }{L^p(\R)} \norm{ \psi }{L^p(\R)}
	\end{equation}
	uniformly in $ \varepsilon $. Choosing $ \apl{ k_1 }{ \R }{ \N } $ such that $ \sup_{ k \in \N } T_{m(2^{-k} (\cdot))} f(x_1) = T_{ m( 2^{-k_1(x_1)} (\cdot) ) } f(x_1) $ yields
	\begin{equation}
		\label{Eq:Counterexample2Assumption}
		\left\| \sup_{ k \in \N } T_{m(2^{-k} (\cdot))} f \right\|_{L^p(\R)}
		\lesssim \norm{ f }{L^p(\R)} 
	\end{equation}
	uniformly over continuous multipliers $ \apl{ m }{ \R }{ \C } $ with $ m \in \mathcal{M}_A(1) $ for $ A $ sufficiently large.
	
	Now the same result holds if we drop the continuity assumption, using an approximation of $ m $ by continuous multipliers $ m_{\varepsilon} $ such that $ m_{\varepsilon} \mapsto m $ in $ \R \setminus \{0\} $ and $ \norm{ m_{\varepsilon} }{\mathcal{M}_A(1)} \lesssim \norm{ m }{\mathcal{M}_A(1)} $ uniformly in $ \varepsilon $.
	
	Also it is not hard to see that we can replace the supremum over $ k \in \N $ in \eqref{Eq:Counterexamples2Assumption} with supremum over $ k \in \Z $. Indeed,
	\begin{equation}
		\sup_{ k \in \Z } \Abs{ T_{ m(2^{-k} (\cdot)) } f }
		= \lim_{ N \to \infty } \sup_{ k \geq -N } \Abs{ T_{ m(2^{-k} (\cdot)) } f }
		= \lim_{ N \to \infty } \sup_{ \ell \in \N } \Abs{ T_{ m(2^{N} 2^{-\ell} (\cdot)) } f }
	\end{equation}
	and the multiplier $ m(2^{N} (\cdot)) $ satisfies the same H\"ormander--Mihlin bounds as $ m $, uniformly in $ N $. This contradicts the unboundedness of \eqref{Eq:Counterexamples2MaxOp}.

	\subsection{$0$-homogeneous counterexamples for $ n \geq 3 $}
	\label{Ss:CounterexamplesN}
	
	In this subsection we construct examples of $0$-homogeneous multipliers in $\R^d$ that provide a negative answer to Question \ref{Q:NonDegeneracy} in any dimension $ n = d+1 \geq 3 $. To that end let $ \varphi \in \mathcal{S}(R) $ be a Schwartz function such that $ \indf{[-1/2,1/2]} \leq \varphi \leq \indf{[-1,1]} $. Take any H\"ormander--Mihlin multiplier $ m \in \mathcal{M}_{\infty}(d-1) $ and define, for $ \eta \in \R^d $,
	\begin{equation}
		M(\eta) 
		\coloneqq m(\eta_1,\ldots ,\eta_{d-1}) \varphi \left( \frac{\eta_{d}}{\abs{ (\eta_1,\ldots ,\eta_{d-1})}} \right).
	\end{equation}
	It is not hard to check that $ M \in \mathcal{M}_{\infty}(d) $. Note that $ M $ is $0$-homogeneous whenever $ m $ is chosen to be $0$-homogeneous on $ \R^{d-1} $.
	
	Next, we construct the vector field. Consider for every $ j \in \{ 1,\ldots,d-1 \} $ a vector field $ \apl{ u_j }{\R^d}{ \mathbb{S}^{d-1} \subseteq \R^d } $ such that $ \{ u_1(y),\ldots,u_{d-1}(y) \} $ is an orthonormal set in $ \R^{d} $ for every $ y \in \R^d $. For $ x \in \R^n $ we define the map
	\begin{equation}
		\R^n = \R^d \times \R \ni x = (\underline{x},x_n) \mapsto \sigma(x)=\sigma(\underline{x}) \coloneqq \spn \left\{ u_1(\underline{x}) ,\ldots , u_{d-1}(\underline{x}) , e_n \right\} \in \Grdn;
	\end{equation}
	see Figure \ref{Fig:CounterexampleVectorField}.	This map depends only on the first $ d $ variables of $ \R^n $ but it does not satisfy the non-degeneracy condition since $ v_{\sigma(x)} \in \R^d $ for every $ x \in \R^n $.
	
	\begin{figure}
		\tdplotsetmaincoords{75}{140}
		\resizebox{0.75\textwidth}{!}{%
		\begin{tikzpicture}[scale=1.4,tdplot_main_coords] 
				
			\draw[dashed,black,thick] (0,0,0) -- (0,0,-1.5);
			
			\filldraw[draw=cyan,fill=cyan!30,fill opacity=0.6,draw opacity=0] 
			({-0.5*3/sqrt(0.75)},3,0) -- ({0.5*3/sqrt(0.75)},-3,0) --
			({0.5*3/sqrt(0.75)},-3,-1.5) -- ({-0.5*3/sqrt(0.75)},3,-1.5) -- cycle;
			
			\draw[draw=cyan,opacity=0.6] 
			({0.5*3/sqrt(0.75)},-3,0) --
			({0.5*3/sqrt(0.75)},-3,-1.5) -- ({-0.5*3/sqrt(0.75)},3,-1.5) -- ({-0.5*3/sqrt(0.75)},3,0) ;
			
			\filldraw[draw=gray, fill=gray!30, opacity=0.6] 
			(-3,-3,0) -- (-3,3,0) -- (3,3,0) -- (3,-3,0) -- cycle;
			
			\draw[gray,dashed] (0,0,0) ellipse (1cm and 0.26cm); 
			
			\draw[thick,->,black] (0,0,0) -- (3,0,0) node[anchor=north]{$\xi_1$};
			\draw[dashed,black,thick] (0,0,0) -- (-3,0,0);
			
			\draw[dashed,black,thick] (0,0,0) -- (0,-3,0);

			\filldraw[draw=cyan,fill=cyan!30,fill opacity=0.6,draw opacity=0] 
			({-0.5*3/sqrt(0.75)},3,0) -- ({0.5*3/sqrt(0.75)},-3,0) --
			({0.5*3/sqrt(0.75)},-3,1.5) -- ({-0.5*3/sqrt(0.75)},3,1.5) -- cycle;
			
			\draw[draw=cyan,draw opacity=0.6] 
			({0.5*3/sqrt(0.75)},-3,0) --
			({0.5*3/sqrt(0.75)},-3,1.5) -- ({-0.5*3/sqrt(0.75)},3,1.5) -- ({-0.5*3/sqrt(0.75)},3,0) ;
			\draw[draw=cyan,dashed,dash pattern=on 1pt off 1pt,draw opacity=0.6] 
			({-0.5*3/sqrt(0.75)},3,0) -- ({0.5*3/sqrt(0.75)},-3,0) ;
			
			\draw[thick,->,black] (0,0,0) -- (0,3,0) node[anchor=north]{$\xi_2$};
			
			\draw[thick,->,black] (0,0,0) -- (0,0,1.5) node[anchor=west]{$\xi_n$};
			
			\draw[very thick,->,blue] (0,0,0) -- (-0.5,{sqrt(0.75)},0) node[anchor=west]{$u_1$};
			
			\draw[very thick,->,blue] (0,0,0) -- (0,0,1) node[anchor=north west]{$e_n$};
			
			\draw[very thick,->,red] (0,0,0) -- ({sqrt(0.75)},0.5,0) node[anchor=north]{$v_{\sigma}$};

			\node[cyan] at (-2,2,1.45) {$\sigma$}; 
			\node[gray] at (0.6,1.2,-0.1) {$\mathbb{S}^{d-1}$};
			\node[gray] at (-2.7,2.7,0.4) {$\mathbb{R}^d$};		
				
		\end{tikzpicture}
		}
		\caption{The vector field for $ \sigma $.}
		\label{Fig:CounterexampleVectorField}
	\end{figure}
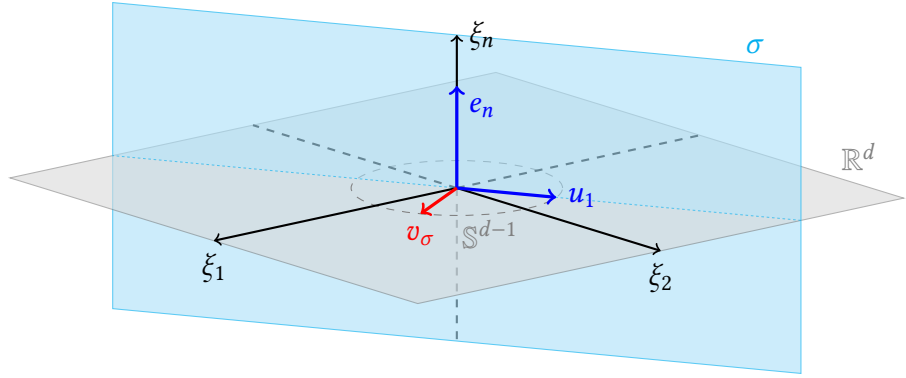
	
	Furthermore, for every $ x = (\underline{x},x_n) \in \R^n $, choose $ V_{\sigma(x)} \in \SO(n) $ such that $ V_{\sigma(x)} u_j(\underline{x}) = e_j $, $ V_{\sigma(x)} e_n = e_n $. Then,
	\begin{equation}
		V_{\sigma(x)} \Pi_{\sigma(x)} \xi
		= \left( u_1(\underline{x}) \cdot \underline{\xi} ,\ldots, u_{d-1}(\underline{x}) \cdot \underline{\xi} , \xi_n \right), \qquad \xi = (\underline{\xi},\xi_n) \in \R^d \times \R.
	\end{equation}
	
	Finally, let $ f \in \mathcal{S}(\R^d) $ and $ \psi \in \mathcal{S}(\R) $ be Schwartz functions and $ \psi $ also satisfying $ \supp (\widehat{\psi}) \subseteq [1/2,2] $ and $ 0 \leq \widehat{\psi} \leq 1 $. For $ \varepsilon>0 $, we consider the function
	\begin{equation}
		F_{\varepsilon} (x) \coloneqq f(x_1,\ldots ,x_d) \varepsilon^{1/p} \psi(\varepsilon x_n), \qquad x \in \R^n,
	\end{equation}
	which is frequency supported in the single band $ \{ \xi \in \R^n :\, \varepsilon / 2 \leq \xi_n \leq 2 \varepsilon \} $. Let
	\begin{equation}
		\begin{split}
			T_{M,\sigma(\cdot)} & F_{\varepsilon}(x) 
			\coloneqq \int_{\R^n} M\left( u_1 \cdot \underline{\xi} , \ldots , u_{d-1} \cdot \underline{\xi}, \xi_n \right) \widehat{ \psi } \left( \frac{ \xi_n }{ \varepsilon } \right) \widehat{ f }(\underline{\xi}) e^{ 2 \pi i \langle x , \xi \rangle } \varepsilon^{-1/p'} d\xi \\
			& = \varepsilon^{1/p} \int_{\R^n} m\left( u_1 \cdot \underline{\xi} , \ldots , u_{d-1} \cdot \underline{\xi} \right) \varphi \left( \frac{ \varepsilon \xi_n }{ \Abs{ \left( u_1 \cdot \underline{\xi} , \ldots , u_{d-1} \cdot \underline{\xi} \right) } } \right) \widehat{ \psi } \left( \xi_n \right) \widehat{ f }(\underline{\xi}) e^{ 2 \pi i ( \underline{x} \cdot \underline{\xi} + \varepsilon x_n \xi_n ) } d\xi.
		\end{split}
	\end{equation}
	Assuming a positive answer to Question \ref{Q:NonDegeneracy} yields
	\begin{equation}
		\begin{split}
			& \int_{\R^n} \Abs{ \int_{\R^n} m\left( u_1 \cdot \underline{\xi} , \ldots , u_{d-1} \cdot \underline{\xi} \right) \varphi \left( \frac{ \varepsilon \xi_n }{ \Abs{ \left( u_1 \cdot \underline{\xi} , \ldots , u_{d-1} \cdot \underline{\xi} \right) } } \right) \widehat{ \psi } \left( \xi_n \right) \widehat{ f }(\underline{\xi}) e^{ 2 \pi i \langle x , \xi \rangle } d\xi }^p dx \\
			& \hspace{120mm} \lesssim \int_{\R^d} \abs{f}^p \int_{\R} \abs{ \psi }^p,
		\end{split}
	\end{equation}
	with implicit constant independent of $ \varepsilon $. Since 
	\begin{equation}
		\lim_{ \varepsilon \to 0 } \varphi \left( \frac{ \varepsilon \xi_n }{ \Abs{ \left( u_1 \cdot \underline{\xi} , \ldots , u_{d-1} \cdot \underline{\xi} \right) } } \right) = \varphi(0) = 1, \qquad \text{for a.e.} \quad \xi \in \R^n,
	\end{equation}
	dominated convergence implies that the operator
	\begin{equation}
		\label{Eq:CounterexampleNMax}
		f \quad \mapsto \quad \sup_{ u_1,\ldots ,u_{d-1} \in \mathbb{S}^{d-1} } \Abs{ \int_{ \R^d } m( u_1 \cdot \eta, \ldots , u_{d-1} \cdot \eta) \widehat{ f } (\eta) e^{2 \pi i \langle (\cdot) , \eta \rangle} d\eta },
	\end{equation}
	where the supremum is taken over orthonormal sets $ u_1,\ldots,u_{d-1} $, is bounded on $ L^p(\R^d) $, upon suitable choosing measurable vector fields $ u_1,\ldots,u_{d-1} $. However these operators are in general unbounded, even if $ m $ is $0$-homogeneous.
	
	For example, if $ n = d+1 = 3 $ let us choose $ m \in \mathcal{M}_{\infty}(1) $ to be the multiplier of the Hilbert transform. Then, the operator in \eqref{Eq:CounterexampleNMax} becomes the maximal directional Hilbert transform in $ \R^2 $. This operator is known to be unbounded on all $ L^p $-spaces whenever the set of directions in the supremum is infinite; see \cites{Karagulyan,LMP}. Alternatively, the Kakeya set provides a counterexample since the supremum in \eqref{Eq:CounterexampleNMax} is over $ u \in \mathbb{S}^1 $.
	
	For $ n = d+1 = 4 $ take $ m \in \mathcal{M}_{\infty}(d-1) $ to be the Riesz transform
	\begin{equation}
		m(\eta) = \frac{ \eta_1 }{ \abs{\eta} }, \qquad \eta \in \R^{d-1}
	\end{equation}
	and writing $ \R^d = \R^2 \times \R^{d-2} $ consider $ u_1 \in \mathbb{S}^1 \subseteq \R^2 $, $ u_2,\ldots ,u_{d-1} \in \R^{d-2} $. The boundedness of the corresponding maximal directional Riesz transform would imply the boundedness of the maximal directional Hilbert transform in $ \R^2 $, so Question \ref{Q:NonDegeneracy} has a negative answer for $0$-homogeneous multipliers in any dimension $ n = d+1 \geq 3 $.

	\bibliography{codim1}{}
	\bibliographystyle{amsplain}

\end{document}